\theoremstyle{plain}
\newtheorem{thm}{Theorem} \newtheorem{dfn}{Definition} \newtheorem{lem}{Lemma} \newtheorem{exa}{Example} \newtheorem{prop}{Proposition} \newtheorem{cor}{Corollary} \newtheorem{rem}{Remark}
\newcommand{\C}{\mathds{C}} \newcommand{\R}{\mathds{R}}  \newcommand{\N}{\mathds{N}} 
\newcommand{\Disc}{\mathds{D}} \newcommand{\Circle}{\mathds{T}}\newcommand{\cO}{\mathcal{O}}
\newcommand{\Def}{\mathcal{D}} \newcommand{\Range}{\mathcal{R}} \newcommand{\Graph}{\mathcal{G}} \newcommand{\Null}{\mathcal{N}}
 \newcommand{\cF}{\mathcal{F}} \newcommand{\cS}{\mathcal{S}} \newcommand{\cR}{\mathcal{R}} \newcommand{\cT}{\mathcal{T}}   \newcommand{\cX}{\mathcal{X}} \newcommand{\cJ}{\mathcal{J}}
\newcommand{\Hil}{\mathcal{H}} \newcommand{\K}{\mathcal{K}} \newcommand{\ii}{\rm{i}} 
\newcommand{\Komp}{\mathcal{K}} \newcommand{\Abg}{\mathcal{C}} \newcommand{\Be}{\mathbf{B}} \newcommand{\Adj}{\mathcal{L}} \newcommand{\Reg}{\mathcal{R}} \newcommand{\oC}{\mathcal{C}_o} \newcommand{\oCl}{\mathcal{C}'_{o}} \newcommand{\grReg}{\mathcal{R}_{gr}} \newcommand{\Multiplier}{\mathtt{M}} \newcommand{\LeftMultiplier}{\mathtt{LM}} 
\newcommand{\CAlg}[1]{\mathcal{#1}} \newcommand{\sAlg}[1]{\mathcal{#1}} 
\newcommand{\Hom}{\mathtt{Hom}}  
\newcommand{\SP}[2]{\left<#1,#2\right>} \newcommand{\ol}[1]{\overline{#1}}
\newcommand{\reg}{\mathtt{reg}} \newcommand{\singsuppr}{\mathtt{sing\mbox{-}supp_r}} 
\title{Unbounded Operators on Hilbert $C^*$-Modules}
\author{Ren\'e Gebhardt}
\address{}
\curraddr{}
\email{}
\thanks{}
\author{Konrad Schm\"udgen}
\address{}
\curraddr{}
\email{}
\thanks{}
\subjclass[2010]{Primary 46 L 08; Secondary  47 D 40, 47 L 05.}
\keywords{Hilbert $C^*$-modules, unbounded operators, affiliated operators}
\begin{document}

\begin{abstract} Let $E$ and $F$ be Hilbert $C^*$-modules over a $C^*$-algebra $\CAlg{A}$. New classes of (possibly unbounded) operators $t:E\to F$ are  introduced and investigated. Instead of the density of the domain $\Def(t)$ we only assume  that $t$ is essentially defined, that is, $\Def(t)^\bot=\{0\}$.  Then $t$ has a well-defined adjoint. We call an essentially defined operator $t$  graph regular if its graph $\Graph(t)$ is orthogonally complemented in $E\oplus F$ and  orthogonally closed if $\Graph(t)^{\bot\bot}=\Graph(t)$. A theory  of these operators and related concepts is developed. Various characterizations of graph regular operators are given. A number of examples of graph regular operators are presented ($E=C_0(X)$, a fraction algebra related to the Weyl algebra, Toeplitz algebra, Heisenberg group). A new characterization of affiliated operators with a $C^*$-algebra in terms of resolvents is given.
\end{abstract}

\maketitle

\tableofcontents

  \section{Introduction}

Hilbert  $C^*$-modules are a well established tool in the theory of $C^*$-algebras and their applications. They have been invented by I. Kaplansky \cite{kap} for commutative $C^*$-algebras and by W. Paschke \cite{paschke} and M. Rieffel \cite{rieffel} in the general case. 
Standard textbooks are \cite{lance} and \cite{manuilov}. 
Unbounded operators on Hilbert $C^*$-modules play an important role  for the study of noncompact quantum groups \cite{wor91}, in KK-theory \cite{bj,kuc1} and in noncommutative geometry \cite{gracia}. 

Let $E$ and $F$  be  Hilbert $C^*$-modules over a $C^*$-algebra $\CAlg{A}$. By an operator from $E$ into $F$ we mean  an $\CAlg{A}$-linear and $\C$-linear mapping $t$ of an $\CAlg{A}$-submodule $\Def(t)$ of $E$ into $F$. Such an operator $t$ is called {\it regular} if $t$ is closed,  $\Def(t)$ is dense in $E$,  $\Def(t^*)$ is dense in $F$, and  $I+t^*t$ is a bijection of $E$. Regular operators form the most important class of unbounded operators on Hilbert $C^*$-modules. A nice presentation of their theory can be found in Chapters 9 and 10 of  Lance' book \cite{lance}. Regular operators were invented by S. Baaj \cite{baaj,bj} and extensively studied by S.L. Woronowicz in two seminal papers \cite{wor91,wor92}. Woronowicz considered  the case when $E$ is the $C^*$-algebra $\CAlg{A}$ itself and called the corresponding operators {\it affiliated} with $\CAlg{A}$. That is, the affiliated operators are precisely the  regular  operators on the Hilbert $C^*$-modules $E=\CAlg{A}$. Unbounded operators on Hilbert $C^*$-modules are studied in \cite{hilsum}, \cite{kuc1}, \cite{kuc2}, \cite{fs}, \cite{KaadLesch}, \cite{pal}, \cite{pierrot}. A generalization of regular  operators are the {\it semiregular} operators introduced by A. Pal \cite{pal}.  Note that semiregular operators are always densely defined. 

The aim of the present paper is to introduce several new classes of unbounded operators on $C^*$-modules and  to develop the basics of their theory. 
Let us  briefly explain  the main new concepts by avoiding technical subtleties. Precise definitions and further explanations will be given in the corresponding sections of the text. 
Suppose that  $t$ is an 
operator from $E$ into $F$ such that its domain $\Def(t)$ is essential, that is,  $\Def(t)^\bot=\{0\}$. Here $\Def(t)^\bot$ denotes the 
orthogonal complement of $\Def(t)$  with respect to the $\CAlg{A}$-valued scalar product of $E$.  Then the operator $t$ has a well-defined \emph{adjoint operator} $t^*$.  
An operator $t$ is called \emph{orthogonally closed} if its graph $\Graph (t)=\{(x,tx);t\in \Def(t)\}$ is orthogonally closed, that is, if $\Graph (t)^{\bot\bot}=\Graph(t)$ in $E\oplus F$. 
An orthogonally closed operator $t$ is called \emph{graph regular} if its graph $\Graph(t)$ is orthogonally complemented in $E\oplus F$.  Graph regularity is the most important new concept of operators appearing in this paper. 
This notion is also of interest in the case when the operator $t$ is bounded and only essentially defined. 

It should be  emphasized that all these operators are not necessarily densely defined! Instead we  assume only  that their domains are essential, that is, they have trivial orthogonal complements, and we replace the closure of the graph $\Graph(t)$ in\, $E\oplus F$\, by its double orthogonal complement $\Graph (t)^{\bot\bot}$.

The difference between graph regularity and regularity can be nicely illustrated by the Hilbert $C^*$-module $C_0(X)$ for the $C^*$-algebra $C_0(X)$, where $X$ is a locally compact Hausdorff space. Then both classes of operators  are given by multiplication operators: the regular operators by functions of $C(X)$, but the graph regular operators by functions which are only continuous up to a no-where dense set and for which the modulus goes to infinity in  neighbourhoods of discontinuities (see Theorem \ref{maintheoremcont}). This example shows another essential difference: While a regular operator can be transported  into a densely defined closed operator by each induced representation, for a graph regular operator this is only possible for certain representations. 
To include such phenomena  was one   motivation for studying graph regular operators. We will elaborate this  elsewhere more in detail.

This paper is organized as follows. 
Section \ref{orthognally} contains basic definitions and  facts  on essentially defined operators, their adjoints and  orthogonally closed operators. 

Graph regular operators are introduced and studied in Section \ref{graphregular}. Basic properties of these operators are derived (Theorems \ref{Characterization__orthognal_closable} and \ref{Basics_adjointable}) and  two characterizations of graph regular operators in terms of bounded operators are obtained. The first one,  the  $(a,a_*,b)$-transform, gives a purely algebraic description of graph regular operators  by a triple of adjointable operators (Theorem \ref{aabTransform}). 
The second characterization (Theorems \ref{boundedTransform__tz} and \ref{BoundedTranform_grReg}) concerns the bounded transform. This transform played already a crucial role in Woronowicz' approach to affiliated operators. 
To each graph regular operator $t:E\to F$ we associate a regular operator $t_0:E_0\to F_0$ acting between essential submodules $E_0$ and $F_0$ of $E$ and $F$, respectively. As a byproduct of these considerations we use the notion of essentially defined partial isometries to prove a general result  on the polar decomposition of adjointable operators (Theorem \ref{PolarDecomposition_AdjointableOperator}) and of graph regular operators (Theorem \ref{PolarDecomposition_grReg}). Further, we show that quotients of adjointable operators provide a large source of examples of graph regular operators (Theorems \ref{ClosedQuotient_is_GraphRegular} and \ref{Graph_regular__admissible_pair}). The functional calculus of normal regular operators is extended to normal graph regular operators (Theorem \ref{grReg_FunctionalCalculus}).

In Section \ref{associated} we specialize  to the case when $E$ is the $C^*$-algebra $\CAlg{A}$ itself and $\CAlg{A}$ is faithfully realized on a Hilbert space $\Hil$. 
Then  the regular operators on the Hilbert $C^*$-module $E=\CAlg{A}$  are precisely Woronowicz' affiliated  operators.  
Suppose that $t$ is a densely defined closed operator on $\Hil$. We shall say that $t$ is \emph{associated} with  $\CAlg{A}$ 
if the operators $a_t=(I+t^*t)^{-1}$, $a_{t^*}=(I+tt^*)^{-1}$, and $b_t=t(I+t^*t)^{-1}$ are in the multiplier algebra $\Multiplier(\CAlg{A})$.   A number of results as well as examples and counter-examples on associated and affiliated operators are derived. One of the main new results  (Theorem \ref{Characterisation__affiliation_resolvent}) 
provides a  characterization of affiliated operators in terms of resolvents:
If $\lambda\in  \rho(t)$, then $t$ is affiliated with $\CAlg{A}$\, if and only if $(t-\lambda I)^{-1}\in\Multiplier(\CAlg{A})$ and $(t-\lambda I)^{-1}\CAlg{A} $   and $(t^*-\overline{\lambda}\, I)^{-1}\CAlg{A}$ are dense in $\CAlg{A}$. This seems to be a useful criterion for proving that operators are affiliated, since the resolvent is better understood in operator theory than the bounded transform.

In Sections \ref{Theory_CX} and \ref{examples} we develop 
various classes of  examples. A particular emphasis is on   graph regular operators that are not regular on the corresponding $C^*$-modules.    Section \ref{Theory_CX} and Subsection \ref{graphC_0(X)}  contain a careful treatment of the commutative case $E=C_0(X)$,  where $X$ is a locally compact Hausdorff space. Among others, essentially defined orthogonally closed operators and graph regular operators are characterized in  this case. In Subsection \ref{matrices} we consider some simple examples for matrices over commutative $C^*$-algebras. In Subsection \ref{fractionWeyl} the position operator $Q=x$ and the momentum operator $P=-i\frac{d}{dx}$ become graph regular operators on a $C^*$-algebra obtained from a fraction algebra. In  Subsection \ref{unboundedToeplitz} some unbounded Toeplitz operators are described as graph regular operators on the Toeplitz $C^*$-algebra, while in Subsection \ref{Heisenberg} a graph regular operator on the $C^*$-algebra of  the Heisenberg group is constructed. 
 
Finally, let us fix some notation that will be used in this paper.
Throughout,  $\CAlg{A}$ denotes a $C^*$-algebra, $\sAlg{A}_h:=\{a\in \CAlg{A}: a^*=a\}$ is its hermitian part, and $\CAlg{A}_+:=\{a\in\CAlg{A}|a\geq 0\}$ is its cone of positive elements. If $\Hil$ is a Hilbert  space, we denote by ${\bf B}(\Hil)$ the bounded operators on $\Hil$, by $\mathcal{K}(\Hil)$ the compact operators on $\Hil$, and  by $\mathcal{C}(\Hil)$  the set of densely defined closed operators on $\Hil$. For an operator $t\in\mathcal{C}(\Hil)$, let $\Def(t) $ be its domain on $\Hil$, $\Null(t)$ its null space, $\Range(t)$ its range and $\rho(t)$ its resolvent set.

  \section{Orthogonally closed operators on Hilbert $C^*$-modules}\label{orthognally}

First we  recall a standard definition.
\begin{dfn}
A (right) \emph{pre-Hilbert $C^*$-module} $E$ over $\CAlg{A}$ is a right $\CAlg{A}$-module $E$ equipped with  a map $\SP{.}{.}:E\times E\to\CAlg{A}$ satisfying the following conditions:
\begin{align*}
\lambda(xa) &= (\lambda x)a = x(\lambda a), \quad \lambda\in\C,x\in E,a\in\CAlg{A},\\
\SP{\alpha x+\beta y}{z} &= \alpha\SP{x}{z}+\beta\SP{y}{z}, \quad \alpha,\beta\in\C,x,y,z\in E,\\
\SP{x}{ya} &= \SP{x}{y}a, \quad a\in\CAlg{A},x,y\in E,\\
\SP{x}{y} &= \SP{y}{x}^*, \quad x,y\in E,\\
\SP{x}{x} &\geq 0, \quad\quad x\in E,\\ \SP{x}{x} &= 0 \implies x=0, \quad x\in E.
\end{align*}
A pre-Hilbert $C^*$-module $E$ over $\CAlg{A}$ is called a \emph{Hilbert $C^*$-module} over $\CAlg{A}$, briefly a \emph{Hilbert $\CAlg{A}$-module}, if\, $(E,\|.\|_E)$ is complete, where  $\|.\|_E$ is the norm on $E$ given by $$\|x\|_E:=\|\SP{x}{x}\|_{\CAlg{A}}^{1/2}, \quad  x\in E.$$ 
\end{dfn}

\begin{exa}\label{cstaralgebras}
The $C^*$-algebra  $\CAlg{A}$ itself is a Hilbert $\CAlg{A}$-module over  $\CAlg{A}$  by taking the multiplication as right action and the $\CAlg{A}$-valued scalar product  $\SP{a}{b}:=a^*b$,  $a,b\in E$. In this case $\|a\|_E=\|\SP{a}{a}\|^{1/2}_{\CAlg{A}}=\|a\|_{\CAlg{A}}$ for $a\in E$.
\end{exa}
\smallskip
Suppose that $E$ is a Hilbert $\CAlg{A}$-module. If $x,y\in E$ and $\SP{x}{y}=0$, we write $x\bot y$. For a subset  $F$ of $ E$, the set
$$F^\bot:=\{x\in E|\forall y\in F:\SP{x}{y}=0\}$$
is called the \emph{orthogonal complement} of $F$. Clearly, $F^\bot$ is a submodule of $E$ and $x\bot y$ is equivalent to $y\bot x$, since $\SP{x}{y} = \SP{y}{x}^*$. If $F,G$ are subsets of $E$, then
\begin{align*}
F \subseteq F^{\bot\bot}, \quad F\subseteq G \implies G^\bot\subseteq F^\bot, \quad F^\bot = F^{\bot\bot\bot}.
\end{align*}
We only verify  the last equality $ F^\bot = F^{\bot\bot\bot}$. The first inclusion yields $F^\bot \subseteq (F^{\bot})^{\bot\bot}$. On the other hand, the relation  $F \subseteq F^{\bot\bot}$ implies that $(F^{\bot\bot})^\bot\subseteq F^\bot$. Therefore, $ F^\bot = F^{\bot\bot\bot}$.

Let $F$ and $G$ be subsets of $E$. We set $F+G:=\{f+g|f\in F,g\in G\}$ and write $F\oplus G:=F+G$ if $F\subseteq G^\bot$. Since $F\subseteq G^\bot$ implies $G\subseteq G^{\bot\bot}\subseteq F^\bot$, we always have  $F\oplus G=G\oplus F$. Since $F\subseteq(F^\bot)^\bot$, it is justified to write $F\oplus F^\bot$. 
\begin{dfn}
A subset $F$ of $ E$ is said to be \emph{essential} if $F^\bot=\{0\}$. A submodule $F$ of $E$ is called \emph{orthogonally closed} if $F=F^{\bot\bot}$ and \emph{orthogonally complemented} if $F\oplus F^\bot=E$. 
\end{dfn}
Since $F^\bot$ is always closed in $E$, each orthogonally closed submodule is closed.

\begin{exa}
For the $C^*$-algebra $C_0(X)$ of continuous functions on a locally compact Hausdorff $X$ space vanishing at infinity each closed ideal is of the form
\begin{align*}
\CAlg{I}_\cO &:= \{ f\in C_0(X) | \forall x\in X\setminus \cO:f(x)=0 \}
\end{align*}
for some open subset $\cO\subseteq X$. The mapping $\cO\to\CAlg{I}_O$ is a bijection of the  open subsets  of $X$ onto the closed ideals of\, $C_0(X)$. For $x\in X$ we have $x\in \cO$ if and only if there exists $f\in\CAlg{I}_O$ with $f(x)\neq 0$. The following facts  are easily  verified:
\begin{itemize}
\item $\sAlg{I}_\cO^\bot = \sAlg{I}_{(X\setminus \cO)^\circ}$.
\item $\sAlg{I}_\cO$ is essential in $C_0(X)$ if and only if\, $\cO$ is dense in $X$.
\item $\sAlg{I}_\cO$ is orthogonally closed if and only if\, $\cO$ coincides with the interior of its closure.
\item $\sAlg{I}_\cO$ is orthogonally complemented if\, $\cO$ is closed.
\end{itemize}
\end{exa}

The following simple facts will be often used: If $F,G$ are submodules of $E$, then $\left(F\cap G\right)^\bot \supseteq (F^\bot+G^\bot)^{\bot\bot}$ and $(F+G)^\bot = F^\bot\cap G^\bot$. Further, if $F,G$ are orthogonally closed, then $\left(F\cap G\right)^\bot = (F^\bot+G^\bot)^{\bot\bot}$.

\begin{lem}\label{Essential_Anisotropic}
For any subset $F$ of $E$, the set $F\oplus F^\bot$ is essential.
\end{lem}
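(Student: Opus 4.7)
The plan is to show directly that $(F\oplus F^\bot)^\bot=\{0\}$ using the two formulas recalled just before the lemma together with the definiteness axiom of the $\CAlg{A}$-valued scalar product.

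First I would apply the identity $(F+G)^\bot = F^\bot\cap G^\bot$ (mentioned in the paragraph above the lemma) with $G=F^\bot$. Since $F\oplus F^\bot$ is by definition just $F+F^\bot$, this gives
\[
(F\oplus F^\bot)^\bot = F^\bot\cap (F^\bot)^\bot = F^\bot\cap F^{\bot\bot}.
\]

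Next I would take an arbitrary $x\in F^\bot\cap F^{\bot\bot}$. The membership $x\in F^{\bot\bot}$ means $\SP{x}{y}=0$ for every $y\in F^\bot$. Since $x$ itself lies in $F^\bot$, I can take $y=x$ in this relation and conclude $\SP{x}{x}=0$. The definiteness axiom of a pre-Hilbert $C^*$-module then forces $x=0$, so $F^\bot\cap F^{\bot\bot}=\{0\}$, which is precisely the claim.

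There is no real obstacle here: the argument is a one-line consequence of $(F+G)^\bot=F^\bot\cap G^\bot$ plus positive definiteness of $\SP{\cdot}{\cdot}$. The only thing to be careful about is that the formula $(F+G)^\bot=F^\bot\cap G^\bot$ is used without any closure/density hypothesis, but this is purely algebraic: if $x\perp f$ and $x\perp g$ for all $f\in F$, $g\in G$, then by $\C$-linearity of $\SP{\cdot}{\cdot}$ in the second argument $x\perp(f+g)$, and conversely. So the proof is essentially a two-step reduction ending with the definiteness axiom.
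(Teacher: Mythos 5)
Your proof is correct and is essentially identical to the paper's: both compute $(F\oplus F^\bot)^\bot=F^\bot\cap F^{\bot\bot}$ via the identity $(F+G)^\bot=F^\bot\cap G^\bot$ and then conclude $\SP{x}{x}=0$, hence $x=0$, from the definiteness axiom. No issues.
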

\begin{proof}
If $x\in(F\oplus F^\bot)^\bot=F^\bot\cap F^{\bot\bot}$, then $\SP{x}{x}=0$ and hence $x=0$.
\end{proof}

\begin{lem}\label{technic_core}
If $F,G$ are submodules of $E$ with $F\subseteq G$ and $F^\bot\cap G=\{0\}$, then $F^\bot \cap (G\oplus G^\bot) = G^\bot$.
\end{lem}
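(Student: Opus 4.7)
The plan is to establish the set equality by proving the two inclusions separately, and both follow by elementary manipulation of the orthogonality relations, exploiting the hypothesis $F\subseteq G$ (which gives $G^\bot\subseteq F^\bot$).

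For the inclusion $G^\bot\subseteq F^\bot\cap(G\oplus G^\bot)$, I would simply note that $F\subseteq G$ forces $G^\bot\subseteq F^\bot$, and that trivially $G^\bot\subseteq G\oplus G^\bot$ by taking the $G$-component to be $0$. Both containments together give the desired inclusion.

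For the reverse inclusion $F^\bot\cap(G\oplus G^\bot)\subseteq G^\bot$, I would take any element $x$ of the left-hand side and use the defining decomposition: write $x=g+h$ with $g\in G$ and $h\in G^\bot$. Since $h\in G^\bot\subseteq F^\bot$ and $x\in F^\bot$, the element $g=x-h$ lies in $F^\bot$; combined with $g\in G$, the hypothesis $F^\bot\cap G=\{0\}$ forces $g=0$, so $x=h\in G^\bot$.

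There is essentially no obstacle here: the lemma is a one-line consequence of the orthogonal decomposition in $G\oplus G^\bot$ together with the assumption that $F^\bot$ meets $G$ only in zero. The only subtle point worth stating explicitly in the write-up is that the decomposition $x=g+h$ in $G\oplus G^\bot$ is \emph{unique}, which follows from $G\cap G^\bot=\{0\}$ (a general feature of orthogonal sums in a Hilbert $C^*$-module, cf.\ Lemma \ref{Essential_Anisotropic}); without this, the argument is unchanged since we only need some such decomposition to exist.
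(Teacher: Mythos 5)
Your proof is correct and is essentially identical to the paper's: both directions are handled the same way, using $G^\bot\subseteq F^\bot$ for the easy inclusion and the decomposition $x=g+h$ together with $F^\bot\cap G=\{0\}$ for the reverse. Your closing remark that uniqueness of the decomposition is not actually needed is accurate.
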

\begin{proof}
Clearly, $F^\bot\supseteq G^\bot$, so $F^\bot\cap(G\oplus G^\bot)\supseteq F^\bot\cap G^\bot=G^\bot$. Now assume that
$x=g+g^\bot\in F^\bot$ with $g\in G$, $g^\bot\in G^\bot$. But then $x-g^\bot=g\in G\cap F^\bot$, so $g=0$ and $x=g^\bot\in G^\bot$.
\end{proof}

The direct sum $E\oplus F$ of two Hilbert $\CAlg{A}$-modules $E$ and $F$ is a right $\CAlg{A}$-module. If we define a mapping $\SP{.}{.}_{E\oplus F}:(E\oplus F)\times(E\oplus F)\to\sAlg{A}$ by
\begin{align*}
\SP{(e_1,f_1)}{(e_2,f_2)}_{E\oplus F} &:= \SP{e_1}{e_2}_E + \SP{f_1}{f_2}_F ,\quad e_1,e_2\in E,f_1,f_2\in F,
\end{align*}
then this module becomes also a Hilbert $\CAlg{A}$-module.

Now we turn to operators on Hilbert $\CAlg{A}$-modules. By an \emph{operator} $t$ from $E$ into $F$ we mean  a $\C$-linear and $\sAlg{A}$-linear mapping  defined on a right $\sAlg{A}$-submodule $\Def(t)$ of $E$, called the \emph{domain} of $t$. The symbol $t:E\to F$ always denotes an operator from $E$ into $F$. The $\C$-linearity and $\sAlg{A}$-linearity of $t$ mean that
$$t(\lambda x)=\lambda  t(x)\quad{\rm and}\quad t( xa)=t(x)a \quad {\rm for}\quad  \lambda\in\C, x\in\Def(t), a\in\sAlg{A}.$$
For an operator $t:E\to F$, its \emph{null space} $\Null(t):=\{x\in E | tx=0\}$  is a right $\sAlg{A}$-submodule of $E$, its  \emph{range} $\Range(t):=\{tx | x\in\Def(t)\}$ is a right $\sAlg{A}$-submodule of $F$ and its \emph{graph} $\Graph(t):=\{(x,tx)|x\in\Def(t)\}$  is a right $\sAlg{A}$-submodule of $E\oplus F$. As in the case of ordinary Hilbert space operators we say $t$ is \emph{closed} if $\Graph(t)$ is closed in $E\oplus F$ and $t$ is \emph{closable} if there exists an operator $s$ which is a closed extension of $t$. In this case there exists a unique closed operator, denoted by $\overline{t}$ and called the \emph{closure} of $t$, such that $\Graph(\ol{t})=\ol{\Graph(t)}$.

An operator $t:E\to E$ is called \emph{positive} if $\SP{tx}{x}\geq 0$ for all $x\in\Def(t)$.

\begin{dfn}
An operator $t:E\to F$ is called \emph{essentially defined} if\, $\Def(t)$ is an essential submodule of $E$.
\end{dfn}

Suppose that $t:E\to F$ is an essentially defined operator. Set
\begin{align*}
\Def(t^*) &:= \{ y\in F | \exists z\in E : \forall x\in\Def(t) : \SP{tx}{y}_F=\SP{x}{z}_E \}.
\end{align*}
Since $\Def(t)$ is an essential submodule, the element $z\in E$ is uniquely determined by $y$. We  define $t^*y:=z$. Then $t^*:F\to E$  is an operator, called the \emph{adjoint} of $t$, and
\begin{align*}
\SP{tx}{y} = \SP{x}{t^*y} \quad {\rm for}\quad x\in\Def(t), y\in\Def(t^*).
\end{align*}
The operators of the set
\begin{align*}
\Adj(E,F) &:= \{ t:E\to F | \Def(t)=E, \Def(t^*)=F \}
\end{align*}
are called \emph{adjointable}. Note that $\Adj(E):=\Adj(E,E)$ is a unital $C^*$-algebra.

\begin{dfn}
An essentially defined operator $t:E\to E$ is called \emph{symmetric} if $t\subseteq t^*$, and \emph{self-adjoint} if\, $t= t^*$.
\end{dfn}

Let $v:E\oplus F\to F\oplus E$ denote the unitary operator $(x,y)\mapsto(-y,x)$.

\begin{prop}\label{Graph_tadj}
Suppose that $t:E\to F$ is essentially defined. Then:
\begin{enumerate}
\item $\Graph(t^*)=v\Graph(t)^\bot$.
\item $\Null(t^*)=\Range(t)^\bot$.
\item If $t$ is injective and $\Range(t)^\bot=\{0\}$, then $t^*$ is injective and $(t^*)^{-1}=(t^{-1})^*$.
\item $\Graph(t)\oplus v\Graph(t^*)$ is essential.
\end{enumerate}
\end{prop}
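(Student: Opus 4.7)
The plan is to prove (1) by unraveling the inner product defining $\Graph(t)^\bot$, and then deduce (2)--(4) as short consequences. Since $\Def(t)$ is essential, an adjoint is available, so we are free to rewrite conditions of the form ``there is a unique $z$ with $\SP{tx}{y}=\SP{x}{z}$ for all $x\in\Def(t)$'' as ``$y\in\Def(t^*)$ with $t^*y=z$'' at every stage.

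For (1), I would take $(y,z)\in F\oplus E$ and compute: $(y,z)\in v\Graph(t)^\bot$ iff $v^{-1}(y,z)=(z,-y)$ lies in $\Graph(t)^\bot\subseteq E\oplus F$. Evaluating $\SP{(z,-y)}{(x,tx)}_{E\oplus F}=\SP{z}{x}_E-\SP{y}{tx}_F$ and taking adjoints in $\sAlg{A}$, this vanishes for all $x\in\Def(t)$ exactly when $\SP{tx}{y}_F=\SP{x}{z}_E$ for all $x\in\Def(t)$. Essentialness of $\Def(t)$ makes such $z$ unique, so this last condition is precisely $(y,z)\in\Graph(t^*)$. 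For (2), a direct unpacking shows that $y\in\Null(t^*)$ iff $\SP{tx}{y}=0=\SP{x}{0}$ for all $x\in\Def(t)$ iff $y\in\Range(t)^\bot$ (the implication $\Leftarrow$ uses the essential definedness to read off $t^*y=0$).

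For (3), injectivity of $t^*$ is immediate from (2): $\Null(t^*)=\Range(t)^\bot=\{0\}$. Since $\Def(t^{-1})=\Range(t)$ is essential in $F$, the adjoint $(t^{-1})^*$ is defined. To identify it with $(t^*)^{-1}$ I would check graph equality: $(e,f)\in\Graph((t^{-1})^*)$ means $\SP{t^{-1}y}{e}=\SP{y}{f}$ for all $y\in\Range(t)$; substituting $y=tx$ turns this into $\SP{x}{e}=\SP{tx}{f}$ for all $x\in\Def(t)$, which by definition says $f\in\Def(t^*)$ with $t^*f=e$, i.e.\ $(e,f)\in\Graph((t^*)^{-1})$.

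For (4), I would extend the formula $(x,y)\mapsto(-y,x)$ to give a unitary $F\oplus E\to E\oplus F$ as well; then $v^2=-I$, and applying (1) yields $v\Graph(t^*)=v(v\Graph(t)^\bot)=v^2\Graph(t)^\bot=-\Graph(t)^\bot=\Graph(t)^\bot$, the last equality because the orthogonal complement is a submodule and hence invariant under multiplication by $-1$. Therefore $\Graph(t)\oplus v\Graph(t^*)=\Graph(t)\oplus\Graph(t)^\bot$, which is essential by Lemma \ref{Essential_Anisotropic}. The only subtle point, and the place I would be most careful, is keeping the signs and the order of factors straight in the computation for (1), since everything else reduces cleanly to that identity once it is in hand.
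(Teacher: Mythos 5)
Your proof is correct, and it is exactly the routine Hilbert-space-style verification the paper alludes to when it omits the details: (1) by unwinding the definition of $\Graph(t)^\bot$ and using essentialness of $\Def(t)$ for uniqueness of the adjoint, (2) and (3) by direct unpacking, and (4) via $v^2=-I$ together with Lemma \ref{Essential_Anisotropic}. The sign bookkeeping in (1) and the identification $v\Graph(t^*)=\Graph(t)^\bot$ in (4) are both handled correctly.
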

The proof of these statements is similar to the Hilbert space case; we omit the details.

\begin{lem}\label{Image_restriction_Adj_essentially_dense}
Let $r:E\to F$ be essentially defined. Suppose that  $\Def(r^*)=F$ and $\Range(r)$ is essential. If\, $\Def\subseteq\Def(r)$ is essential, then $\Range(r\upharpoonright_\Def)$ is essential.
\end{lem}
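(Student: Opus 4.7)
The plan is to unfold the definition of essentiality and apply the adjoint. Suppose $y \in F$ satisfies $y \perp \Range(r\upharpoonright_\Def)$, i.e.\ $\SP{rx}{y} = 0$ for every $x \in \Def$. The goal is to force $y = 0$ using the two given essentiality assumptions (on $\Def$ and on $\Range(r)$) together with $\Def(r^*) = F$.

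First I would use the hypothesis $\Def(r^*)=F$: since $y \in F = \Def(r^*)$, the element $r^*y \in E$ is well-defined and satisfies $\SP{x}{r^*y} = \SP{rx}{y} = 0$ for every $x \in \Def(r)$, and in particular for every $x \in \Def$. Hence $r^*y \in \Def^\bot$. Because $\Def$ is essential in $E$, this forces $r^*y = 0$, so $y \in \Null(r^*)$.

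Next I would invoke Proposition \ref{Graph_tadj}(2), which gives $\Null(r^*) = \Range(r)^\bot$. Combined with the assumption that $\Range(r)$ is essential in $F$, we get $\Null(r^*) = \{0\}$, so $y = 0$. This shows $\Range(r\upharpoonright_\Def)^\bot = \{0\}$, which is precisely the claim.

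There is no real obstacle: the argument is a short two-step chase through the adjoint. The only thing to be mindful of is that essentiality of $\Def$ is used in $E$ (to kill $r^*y$) while essentiality of $\Range(r)$ is used in $F$ (to kill $y$ once it lies in $\Null(r^*)$), and that $\Def(r^*)=F$ is exactly what permits the first step without further qualification.
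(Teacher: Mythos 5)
Your proof is correct and follows essentially the same route as the paper's: pass the orthogonality relation through the adjoint (which is everywhere defined by hypothesis), use essentiality of $\Def$ to conclude $r^*y=0$, and then use $\Null(r^*)=\Range(r)^\bot=\{0\}$ to conclude $y=0$. No issues.
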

\begin{proof}
Let $x\in F$ be such that $\SP{x}{ry}=0$ for all $y\in\Def$. Then $\SP{r^*x}{y}=0$ for all $y\in\Def$. Since $\Def^\bot=\{0\}$ by assumption, we conclude that $r^*x=0$. That is, $x\in\Null(r^*)=\Range(r)^\bot=\{0\}$.
\end{proof}

\begin{prop}\label{Adjoint__Addition_Compositon}
Let $t,t_1,t_2$ be essentially  defined operators from $E$ into $F$ and let $s$ be an essentially defined operator from $F$ into $G$. Then:
\begin{enumerate}
\item $t_1\subseteq t_2$ implies $t_1^*\supseteq t_2^*$.
\item If $t_1+t_2$ is essentially defined, then $(t_1+t_2)^*\supseteq t_1^*+t_2^*$. If $\Def(t_2)\subseteq\Def(t_1)$ and $\Def(t_1^*)=E$, then $t_1+t_2$ is essentially defined and $(t_1+t_2)^*=t_1^*+t_2^*$.
\item If $st$ is essentially defined, then $(st)^*\supseteq t^*s^*$. If $\Range(t)\subseteq\Def(s)$ and $\Def(s^*)=G$, then $st$ is essentially defined and $(st)^*=t^*s^*$.
\item If $t$ is injective, $\Def(s)\subseteq\Range(t)$ and $\Def((t^{-1})^*)=F$, then $st$ is essentially defined and $(st)^*=t^*s^*$.
\end{enumerate}
\end{prop}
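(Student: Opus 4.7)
The plan is to handle the four parts in order, each becoming more delicate. Throughout I use the defining property of the adjoint: $y\in\Def(u^*)$ with $u^*y=z$ iff $\SP{ux}{y}=\SP{x}{z}$ for all $x\in\Def(u)$, and that $z$ is uniquely determined because $\Def(u)$ is essential.

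For (1), if $y\in\Def(t_2^*)$ with $t_2^*y=z$, then $\SP{t_2x}{y}=\SP{x}{z}$ for all $x\in\Def(t_2)$; restricting to $x\in\Def(t_1)\subseteq\Def(t_2)$ and using $t_1x=t_2x$ shows $y\in\Def(t_1^*)$ with $t_1^*y=z$. For the first half of (2), take $y\in\Def(t_1^*+t_2^*)=\Def(t_1^*)\cap\Def(t_2^*)$ and, for $x\in\Def(t_1+t_2)=\Def(t_1)\cap\Def(t_2)$, expand $\SP{(t_1+t_2)x}{y}$ linearly. For the equality, the hypothesis $\Def(t_2)\subseteq\Def(t_1)$ gives $\Def(t_1+t_2)=\Def(t_2)$, which is essential. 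Now if $y\in\Def((t_1+t_2)^*)$ with $(t_1+t_2)^*y=z$, the global assumption $\Def(t_1^*)=E\ni y$ lets us subtract $\SP{t_1x}{y}=\SP{x}{t_1^*y}$ from $\SP{(t_1+t_2)x}{y}=\SP{x}{z}$ to deduce $\SP{t_2x}{y}=\SP{x}{z-t_1^*y}$ for all $x\in\Def(t_2)$, hence $y\in\Def(t_2^*)$ and $t_2^*y=z-t_1^*y$.

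Part (3) follows the same pattern: the inclusion $(st)^*\supseteq t^*s^*$ comes from chaining the two adjoint relations. Under the assumption $\Range(t)\subseteq\Def(s)$ one has $\Def(st)=\Def(t)$, which is essential; and if $y\in\Def((st)^*)$ with $(st)^*y=z$, then $\Def(s^*)=G$ gives $\SP{s(tx)}{y}=\SP{tx}{s^*y}$, so $\SP{tx}{s^*y}=\SP{x}{z}$ for all $x\in\Def(t)$, whence $s^*y\in\Def(t^*)$ and $t^*s^*y=z$.

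Part (4) is the main obstacle, because here we do \emph{not} assume $\Range(t)\subseteq\Def(s)$; instead $t$ is inverted. The assumption $\Def((t^{-1})^*)=F$ presupposes that $t^{-1}$ is essentially defined, i.e.\ $\Range(t)^\bot=\{0\}$, so by Proposition~\ref{Graph_tadj}(3) applied to $t$ we obtain $(t^{-1})^*=(t^*)^{-1}$, with common domain $F$ and range $\Def(t^*)$. To see that $st$ is essentially defined, I apply Lemma~\ref{Image_restriction_Adj_essentially_dense} to $r:=t^{-1}:F\to E$ (which has $\Def(r^*)=F$ and $\Range(r)=\Def(t)$ essential) with $\Def:=\Def(s)\subseteq\Def(r)$, an essential subset since $s$ is essentially defined; the lemma then gives that $\Range(r\upharpoonright_{\Def(s)})=t^{-1}(\Def(s))=\Def(st)$ is essential. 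Finally, the inclusion $(st)^*\supseteq t^*s^*$ still holds by part~(3); conversely, given $y\in\Def((st)^*)$ with $(st)^*y=z$, write any $u\in\Def(s)$ as $u=tx$ with $x=t^{-1}u\in\Def(st)$ to obtain
\begin{equation*}
\SP{su}{y}=\SP{(st)x}{y}=\SP{x}{z}=\SP{t^{-1}u}{z}=\SP{u}{(t^{-1})^*z},
\end{equation*}
where the last step uses $\Def((t^{-1})^*)=F\ni z$. Hence $y\in\Def(s^*)$ with $s^*y=(t^{-1})^*z=(t^*)^{-1}z\in\Def(t^*)$, and $t^*s^*y=z$, proving $(st)^*\subseteq t^*s^*$.
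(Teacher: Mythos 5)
Your proof is correct and follows essentially the same route as the paper: parts (1)--(3) by the expected direct computations (which the paper omits as "simple"), and part (4) by invoking Lemma \ref{Image_restriction_Adj_essentially_dense} with $r=t^{-1}$ to get essentiality of $\Def(st)$, then the chain $\SP{su}{y}=\SP{(st)t^{-1}u}{y}=\SP{t^{-1}u}{z}=\SP{u}{(t^{-1})^*z}$ to show $\Def((st)^*)\subseteq\Def(t^*s^*)$, with part (3) supplying the reverse inclusion. Nothing to add.
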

\begin{proof}
Assertions (1)--(3) are shown by simple  computations. 

We prove (4). By Lemma  \ref{Image_restriction_Adj_essentially_dense}, the domain $\Def(st)=t^{-1}\Def(s)$ is essential. For $x\in\Def((st)^*)$ and $y\in\Def(s)$ we derive
\begin{align*}
\SP{sy}{x} &= \SP{(st)t^{-1}y}{x} = \SP{t^{-1}y}{(st)^*x} = \SP{y}{(t^{-1})^*(st)^*x}.
\end{align*}
Therefore, $x\in\Def(s^*)$ and $s^*x=(t^{-1})^*(st)^*x=(t^*)^{-1}(st)^*x\in\Def(t^*)$. That is, $\Def((st)^*)\subseteq\Def(t^*s^*)$. Now (3) completes the proof.
\end{proof}

\begin{dfn}
An essentially defined operator $p$ is a \emph{projection} if\, $p=p^2=p^*$.
\end{dfn}

The next proposition characterizes projections and shows that they are in one-to-one correspondence to orthogonally closed submodules.

\begin{prop}\label{Characterization_Projectors}
For a submodule $G$ of $E$ we define an operator $p_G:E\to E$ by
\begin{align*}
\Def(p_G) &:= G \oplus G^\bot, \quad p_G(x+y) := x,\quad x\in G,y\in G^\bot.
\end{align*}
Then $p_G$ is essentially defined and $p_G=p_G^2\subseteq p_G^*=p_{G^{\bot\bot}}$. In particular, $p_G=(p_G)^*$ is a projection if and only if\, $G$ is orthogonally closed.

Conversely, if $p$ is a projection, then $p=p_G$ for some orthogonally closed submodule $G$ of $E$.
\end{prop}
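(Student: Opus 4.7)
My plan has three parts: establish the basic identities for the candidate $p_G$; compute $p_G^*$ explicitly to identify when self-adjointness holds; and finally reconstruct the submodule $G$ from an arbitrary projection $p$. The easy part comes first. The direct-sum notation $G \oplus G^\bot$ is justified and $p_G$ is well-defined because $G \cap G^\bot = \{0\}$ (any element there is isotropic); by Lemma \ref{Essential_Anisotropic} this domain is also essential. Idempotence $p_G = p_G^2$ holds because $G \subseteq \Def(p_G)$ (add the zero element of $G^\bot$) and $p_G$ fixes $G$. For $p_G \subseteq p_G^*$ one computes directly, for $x + y$ and $x' + y'$ in $G \oplus G^\bot$, that $\SP{p_G(x+y)}{x'+y'} = \SP{x}{x'} = \SP{x+y}{p_G(x'+y')}$, the cross terms vanishing since $y, y' \in G^\bot$.

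The key step is to determine $\Def(p_G^*)$. Given $y \in \Def(p_G^*)$ with $p_G^* y =: z$, I would test the defining identity $\SP{p_G w}{y} = \SP{w}{z}$ on the two summands of $\Def(p_G)$. Choosing $w = x \in G$ gives $\SP{x}{y - z} = 0$ for all $x \in G$, so $y - z \in G^\bot$; choosing $w = u \in G^\bot$ gives $\SP{u}{z} = 0$ for all $u \in G^\bot$, so $z \in G^{\bot\bot}$. Hence $y = z + (y-z) \in G^{\bot\bot} \oplus G^\bot$, which is precisely $\Def(p_{G^{\bot\bot}})$, and $p_G^* y = z = p_{G^{\bot\bot}} y$; the reverse inclusion uses the same identities. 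Thus $p_G^* = p_{G^{\bot\bot}}$. Self-adjointness $p_G = p_G^*$ now forces $G^{\bot\bot} \oplus G^\bot \subseteq G \oplus G^\bot$; writing $z \in G^{\bot\bot}$ as $g + h$ with $g \in G$ and $h \in G^\bot$, the remainder $h = z - g$ lies in $G^{\bot\bot} \cap G^\bot = \{0\}$, so $G = G^{\bot\bot}$, and the converse is immediate.

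For the final statement, given a projection $p$, set $G := \Range(p)$. Idempotence gives $G \subseteq \Def(p)$ with $p|_G = \id$. The crucial observation is that Proposition \ref{Graph_tadj}(2) combined with $p = p^*$ yields $G^\bot = \Range(p)^\bot = \Null(p^*) = \Null(p)$, so $G^\bot \subseteq \Def(p)$ and $p$ annihilates $G^\bot$; this is the step I expect to be the main obstacle, since a priori $G^\bot$ need not lie inside $\Def(p)$. Conversely, every $x \in \Def(p)$ decomposes as $x = px + (x - px)$ with $px \in G$, and $x - px \in G^\bot$ because $\SP{g}{x - px} = \SP{g}{x} - \SP{p^* g}{x} = 0$ for each $g \in G$ (as $p^* g = pg = g$ by idempotence). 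Thus $\Def(p) = G \oplus G^\bot = \Def(p_G)$ and $p = p_G$, and orthogonal closedness of $G$ then follows from the first half of the proposition.
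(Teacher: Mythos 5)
Your proof is correct and follows essentially the same route as the paper's: the adjoint $p_G^*=p_{G^{\bot\bot}}$ is computed by testing the defining identity on the two summands of $G\oplus G^\bot$, and for the converse one sets $G:=\Range(p)$ and uses $\Range(p)^\bot=\Null(p^*)=\Null(p)$. The only (harmless) variation is at the very end: the paper concludes from $p_G\subseteq p$ with both operators self-adjoint, whereas you obtain $\Def(p)=G\oplus G^\bot$ directly via the decomposition $x=px+(x-px)$; both arguments are valid.
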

\begin{proof}
By Lemma \ref{Essential_Anisotropic}, $p:=p_G$ is essentially defined. Obviously,  $p^2=p\subseteq p^*$. Let $z\in\Def(p^*)$. Then there exists $w\in E$ such that
\begin{align*}
\SP{x}{z} = \SP{p(x+y)}{z} = \SP{x+y}{w}, \quad x\in G,y\in G^\bot.
\end{align*}
Equivalently, $\SP{x}{z-w}=\SP{y}{w}$ for all $x\in G$, $y\in G^\bot$, so both sides are zero. The latter is equivalent to $z-w\in G^\bot$ and $w\in G^{\bot\bot}$, that is, $z\in G^\bot\oplus G^{\bot\bot}$. Therefore, since $w=p^*z$, the operator $p^*$ is given by
\begin{align*}
\Def(p^*) &= G^{\bot\bot} \oplus G^\bot, \quad p^*(x+y) = x, \quad x\in G^{\bot\bot},y\in G^\bot.
\end{align*}
This completes the proof of the first half. 

Now assume that $p=p^2=p^*$ and let $G:=\Range(p)\subseteq\Def(p)$. For $x=py\in G$ we have $px=p^2y=py=x$. For $x\in G^\bot$, we obtain $\SP{x}{py}=0=\SP{0}{y}$ for $y\in\Def(p)$. Therefore, $x\in\Def(p^*)=\Def(p)$ and $px=p^*x=0$. Thus, we have shown that $p_G\subseteq p$. Further, the assumption $p=p^2$ implies that $\Range(p)=\Null(1-p)$. Hence
\begin{align*}
G &= \Range(p) = \Null(1-p) = \Null(1-p^*) = \Range(1-p)^\bot.
\end{align*}
Therefore, $G=G^{\bot\bot}$ and $p_G$ is self-adjoint. Since $p$ is also self-adjoint and $p_G\subseteq p$, we conclude  that  $p=p_G$.
\end{proof}

\begin{dfn}\label{defcore}
Let $t:E\to F$ be an operator. A submodule $\Def$ of $\Def(t)$ is called an \emph{essential core} for $t$ if $\Graph(t\upharpoonright_{\Def})^\bot=\Graph(t)^\bot$.
\end{dfn}

By Lemma \ref{Graph_tadj} this is equivalent to the relation $(t\upharpoonright_{\Def})^*=t^*$ if $\Def$ is essential. Clearly, $\Def$ is an essential core for\, $t$ if and only if\, $\Graph(t\upharpoonright_\Def)^\bot\subseteq\Graph(t)^\bot$, or equivalently, if  the sum $\Graph(t\upharpoonright_\Def)^\bot+\Graph(t)$ is an orthogonal sum\, $\Graph(t\upharpoonright_\Def)^\bot\oplus\Graph(t)$.

\begin{exa}\label{Core_Adjointable}
Let $b\in\Adj(E,F)$ and let $\Def$ be an essential submodule of $E$. Then $\Def$ is an essential core for $b$, since $(b\upharpoonright_\Def)^*\supseteq b^*$ and $b^*$ is everywhere defined.
\end{exa}

\begin{dfn}
A (not necessarily essentially defined) operator $t:E\to F$  is \emph{orthogonally closed} if $\Graph(t)$ is orthogonally closed, that is, if\, $\Graph(t)^{\bot\bot}=\Graph(t)$, and \emph{orthogonally closable} if\, $\Graph(t)^{\bot\bot}=\Graph(s)$ for some (orthogonally closed) operator $s$.
\end{dfn}

By Proposition \ref{Graph_tadj} the adjoint is always orthogonally closed.

\begin{thm}\label{Characterization__orthognal_closable}
Let $t:E\to F$ be essentially defined. Then $\Def(t^*)$ is an essential submodule of $F$ if and only if $t$ is orthogonally closable.
\end{thm}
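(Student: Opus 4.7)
The plan is to recognize that a submodule $M$ of $E\oplus F$ is the graph of some operator from $E$ into $F$ precisely when its ``vertical slice'' $M\cap(\{0\}\times F)$ is trivial, and to then compute this slice for $M=\Graph(t)^{\bot\bot}$ explicitly in terms of $\Def(t^*)$.

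First I would note that $\Graph(t)^{\bot\bot}$ is always an $\sAlg{A}$-submodule of $E\oplus F$ that is orthogonally closed (by the identity $F^\bot=F^{\bot\bot\bot}$ applied twice, it equals its own double orthogonal complement). So orthogonal closability of $t$ amounts to the existence of an operator $s$ with $\Graph(s)=\Graph(t)^{\bot\bot}$, and the obstruction to this is exactly whether $(0,y)\in\Graph(t)^{\bot\bot}$ forces $y=0$. Using Proposition \ref{Graph_tadj}(1) I would rewrite $\Graph(t)^\bot=v^{-1}\Graph(t^*)=\{(t^*z,-z):z\in\Def(t^*)\}$ (since $v(x,y)=(-y,x)$ and therefore $v^{-1}(a,b)=(b,-a)$).

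Then I would compute, for any $y\in F$, that $(0,y)\in\Graph(t)^{\bot\bot}$ is equivalent to
\begin{align*}
\SP{(0,y)}{(t^*z,-z)}_{E\oplus F} \;=\; -\SP{y}{z}_F \;=\; 0 \quad \text{for all } z\in\Def(t^*),
\end{align*}
i.e., to $y\in\Def(t^*)^\bot$. Hence $\Graph(t)^{\bot\bot}\cap(\{0\}\times F)=\{0\}\times\Def(t^*)^\bot$, which is trivial iff $\Def(t^*)$ is essential. Combining with the first paragraph, this yields the equivalence.

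The only slightly subtle point is the ``if'' direction: once $\Graph(t)^{\bot\bot}\cap(\{0\}\times F)=\{0\}$, one must define $s$ by setting $\Def(s):=\{x\in E:\exists y\in F,(x,y)\in\Graph(t)^{\bot\bot}\}$ and $s(x):=y$, and verify that this is well-defined (the triviality of the vertical slice) and $\C$- and $\sAlg{A}$-linear (inherited from $\Graph(t)^{\bot\bot}$ being a submodule), so that $\Graph(s)=\Graph(t)^{\bot\bot}$. The ``only if'' direction is immediate, since a graph automatically has trivial vertical slice. I do not expect any genuine obstacle; the computation of $\Graph(t)^{\bot\bot}\cap(\{0\}\times F)$ via the explicit description of $\Graph(t)^\bot$ is the crux of the argument.
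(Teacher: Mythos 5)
Your proof is correct. The ``only if'' half is the same as the paper's: both amount to the computation that $(0,y)\in\Graph(t)^{\bot\bot}$ exactly when $y\in\Def(t^*)^\bot$, using the description $\Graph(t)^\bot=v^{-1}\Graph(t^*)=\{(t^*z,-z):z\in\Def(t^*)\}$ from Proposition \ref{Graph_tadj}. Where you genuinely differ is the ``if'' half: the paper forms the biadjoint $t^{**}$ (which exists because $\Def(t^*)$ is essential) and identifies $\Graph(t^{**})=\Graph(t)^{\bot\bot}$ by applying Proposition \ref{Graph_tadj} twice, whereas you bypass $t^{**}$ and argue directly that the submodule $\Graph(t)^{\bot\bot}$, having trivial vertical slice, is automatically the graph of a $\C$- and $\sAlg{A}$-linear operator. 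Your route is more economical and reduces the whole theorem to a single computation of the vertical slice $\Graph(t)^{\bot\bot}\cap(\{0\}\times F)=\{0\}\times\Def(t^*)^\bot$; the paper's route costs slightly more but yields the extra identification of the orthogonal closure as $t^{**}$, which is precisely what is recorded and reused in Theorem \ref{Basics_adjointable}(1a). Both arguments are complete; your well-definedness and linearity checks for $s$ go through because $\Graph(t)^{\bot\bot}$ is a submodule of $E\oplus F$.
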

\begin{proof}
Suppose  that $\Def(t^*)$ is an essential submodule. Then $t^{**}$ exists and it follows easily that $t\subseteq t^{**}$. Applying Lemma \ref{Graph_tadj} twice, first to $t$ and then to $t^*$, we obtain  $\Graph(t^{**})=\Graph(t)^{\bot\bot}$. Hence $t^{**}$ is orthogonally closed and $t$ is orthogonally closable.

Now suppose that $\Graph(t)^{\bot\bot}=\Graph(s)$ for some (essentially  defined) operator $s$.  Let $z\in\Def(t^*)^\bot$. Then $\SP{(-z,0)}{(y,t^*y)}=-\SP{z}{y}+\SP{0}{t^*y}=0$ for all $y\in\Def(t^*)$, so $(z,0)\in v\Graph(t^*)^\bot=\Graph(t)^{\bot\bot}=\Graph(s)$ and hence $z=0$, since $s$ is an operator.
\end{proof}

Let us define the sets
\begin{align*}
\oCl(E,F) &:= \{ t:E\to F | \Def(t)^\bot=\{0\}, \Def(t^*)^\bot=\{0\} \}, \quad \oCl(E) := \oCl(E,E),\\
\oC(E,F) &:= \{t\in\oCl(E,F) | t ~~ {\rm is~ orthogonally~ closed}\}, \quad  \oC(E) := \oC(E,E).
\end{align*}
Then, by Theorem \ref{Characterization__orthognal_closable},\, $\oCl(E,F)$ is the set of essentially defined operators $t:E\to F$ that are orthogonally closable. 

\begin{thm}\label{Basics_adjointable}
\begin{enumerate}
\item  Suppose that $t\in\oCl(E,F)$. Then we have:
\begin{enumerate}
\item $t\subseteq t^{**}$,  $t^* = t^{***}$, and\, $\Graph(t)^{\bot\bot}=\Graph(t^{**})$.
\item $t=t^{**}$ if and only if\, $t\in\oC(E,F)$.
\item $\Null(t^{**})$ is orthogonally closed.
\item If\, $\Range(t)$ and $\Range(t^*)$ are essential, then $t$ and $t^{**}$ are injective, $t^{-1}$ is essentially defined and orthogonally closable, and $(t^{**})^{-1}=(t^{-1})^{**}$.
\end{enumerate}
\item Suppose that $t\in\oC(E,F)$. Then:
\begin{enumerate}
\item $\Null(t)$ is orthogonally closed. 
\item If $\Range(t)$ and $\Range(t^*)$ are essential, then $t$ is injective and $t^{-1}$ is essentially defined and orthogonally closed.
\end{enumerate}
\end{enumerate}
\end{thm}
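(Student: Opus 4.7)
The plan is to establish (1a) first and deduce everything else from it. Since $t\in\oCl(E,F)$, both $\Def(t)$ and $\Def(t^*)$ are essential, so $t^{**}$ is well-defined. The inclusion $t\subseteq t^{**}$ is immediate from $\SP{tx}{y}=\SP{x}{t^*y}$ for $x\in\Def(t)$, $y\in\Def(t^*)$. To identify $\Graph(t^{**})$, I will apply Proposition \ref{Graph_tadj}(1) twice: once to $t$ using $v:E\oplus F\to F\oplus E$, $(x,y)\mapsto(-y,x)$, and once to $t^*$ using its analogue $v':F\oplus E\to E\oplus F$, $(y,x)\mapsto(-x,y)$. This yields $\Graph(t^*)=v\Graph(t)^\bot$ and $\Graph(t^{**})=v'\Graph(t^*)^\bot$. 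Since $v,v'$ are unitary (hence intertwine orthogonal complementation) and a direct check gives $v'v=-\id$, one obtains $\Graph(t^{**})=v'v\,\Graph(t)^{\bot\bot}=\Graph(t)^{\bot\bot}$. The identity $t^*=t^{***}$ then follows by applying $s\subseteq s^{**}$ to $s=t^*$ (giving $t^*\subseteq t^{***}$) and Proposition \ref{Adjoint__Addition_Compositon}(1) applied to $t\subseteq t^{**}$ (giving $t^{***}\subseteq t^*$).

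Part (1b) is an immediate reformulation of (1a): $t=t^{**}$ precisely when $\Graph(t)=\Graph(t)^{\bot\bot}$, which, under the standing $\oCl$ assumption, is the definition of $t\in\oC(E,F)$. For (1c) I will identify $\Null(t^{**})\times\{0\}=\Graph(t^{**})\cap(E\oplus\{0\})$. Both factors are orthogonally closed in $E\oplus F$: $\Graph(t^{**})$ by (1a), and $E\oplus\{0\}$ by direct inspection ($(E\oplus\{0\})^\bot=\{0\}\oplus F$). Since intersections of orthogonally closed submodules are orthogonally closed (a one-line consequence of $F\cap G\subseteq F^{\bot\bot}\cap G^{\bot\bot}=\ldots\subseteq F\cap G$), so is $\Null(t^{**})\times\{0\}$. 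A short computation shows $(N\times\{0\})^{\bot\bot}=N^{\bot\bot}\times\{0\}$ for any submodule $N\subseteq E$, and this transfers orthogonal closedness to $\Null(t^{**})$ itself.

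For (1d), Proposition \ref{Graph_tadj}(2) together with the hypothesis $\Range(t)^\bot=\Range(t^*)^\bot=\{0\}$ gives $\Null(t^*)=\Null(t^{**})=\{0\}$; the inclusion $t\subseteq t^{**}$ then forces $t$ to be injective. The domain of $t^{-1}$ is $\Range(t)$, essential by hypothesis, so $t^{-1}$ is essentially defined. Two invocations of Proposition \ref{Graph_tadj}(3)---first to $t$, then to $t^*$ (both injective with essential range)---yield $(t^{-1})^*=(t^*)^{-1}$ and $((t^*)^{-1})^*=(t^{**})^{-1}$, and combining them gives $(t^{-1})^{**}=(t^{**})^{-1}$. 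In particular $\Def((t^{-1})^*)=\Range(t^*)$ is essential, so Theorem \ref{Characterization__orthognal_closable} shows that $t^{-1}$ is orthogonally closable, i.e., $t^{-1}\in\oCl(F,E)$.

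Part (2) is then essentially a corollary. If $t\in\oC(E,F)$, then $t=t^{**}$ by (1b), so (1c) immediately gives (2a). For (2b), (1d) applies and yields $(t^{-1})^{**}=(t^{**})^{-1}=t^{-1}$, so $t^{-1}$ is not merely orthogonally closable but already orthogonally closed. The main obstacle I anticipate is the careful bookkeeping in the first step of (1a)---keeping track of which unitary acts on which direct sum and verifying that $v'v=-\id$---since once $\Graph(t^{**})=\Graph(t)^{\bot\bot}$ is in hand, every remaining assertion follows by short combinations of the propositions from Section \ref{orthognally}.
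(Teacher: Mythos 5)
Your proof is correct and follows essentially the same route as the paper: establish $\Graph(t^{**})=\Graph(t)^{\bot\bot}$ via Proposition \ref{Graph_tadj} and deduce the remaining assertions from it, with (1d) and (2) handled exactly as in the paper. The only deviation is (1c), where you argue via the intersection $\Graph(t^{**})\cap(E\oplus\{0\})$ of orthogonally closed submodules, whereas the paper gets it in one line from $\Null(t^{**})=\Range(t^*)^\bot$ and $F^\bot=F^{\bot\bot\bot}$; both arguments are valid.
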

\begin{proof}
(1a): By Theorem \ref{Characterization__orthognal_closable}, $t^{**}$ exists. Applying Proposition \ref{Graph_tadj} several times we obtain $\Graph(t^{**})=v\Graph(t^*)^\bot=\Graph(t)^{\bot\bot}$, so that $t\subseteq t^{**}$. This implies that $t^{***}$ exists. Using  Proposition \ref{Graph_tadj} once again we get $\Graph(t^{***})=v\Graph(t)^{\bot\bot\bot}=v\Graph(t)^\bot=\Graph(t^*)$, hence $t^{***}=t^*$.

(1b) follows at once from the equality $\Graph(t)^{\bot\bot}=\Graph(t^{**})$. 

(1c): We derive $\Null(t^{**})^{\bot\bot}=\Range(t^*)^{\bot\bot\bot}=\Range(t^*)^\bot=\Null(t^{**})$. 

(1d): By Proposition \ref{Graph_tadj},  $t^*$ is injective and $(t^*)^{-1}=(t^{-1})^*$. By assumption, $\Range(t^*)$ is essential.  Therefore, using  Proposition \ref{Graph_tadj} again we conclude that $t^{**}$ is injective and $(t^{**})^{-1}=((t^*)^{-1})^*=(t^{-1})^{**}$. 

(2a) and (2b) follow from (1c) and (1d), respectively.
\end{proof}

The operator $t^{**}$ is called the \emph{orthogonal closure} of the orthogonally closable operator $t$.

\begin{lem}\label{Composition__orthoClosed_Adjoint}
Let $t\in\oC(F,G)$ and $b\in\Adj(E,F)$. Suppose   that the operator\, $tb$ is essentially defined. Then\, $tb\in\oC(E,G)$ and $(tb)^*=(b^*t^*)^{**}$.
\end{lem}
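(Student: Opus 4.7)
The backbone of the argument is to reduce everything to computing the adjoint of $b^{*}t^{*}$. Since $b\in\Adj(E,F)$, the operator $b^{*}$ is defined on all of $F$, so $\Def(b^{*}t^{*})=\{y\in G : t^{*}y\in\Def(b^{*})\}=\Def(t^{*})$. Because $t\in\oC(F,G)\subseteq\oCl(F,G)$, Theorem \ref{Characterization__orthognal_closable} (or the definition of $\oCl$) gives $\Def(t^{*})^{\bot}=\{0\}$, so $b^{*}t^{*}$ is essentially defined. Moreover, Proposition \ref{Adjoint__Addition_Compositon}(3) applied to the composition $tb$ yields $(tb)^{*}\supseteq b^{*}t^{*}$; in particular $\Def((tb)^{*})\supseteq\Def(t^{*})$ is essential, so $tb\in\oCl(E,G)$.

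The heart of the argument is the identity $(b^{*}t^{*})^{*}=tb$. The inclusion $\supseteq$ is another application of Proposition \ref{Adjoint__Addition_Compositon}(3): since $b^{*}$ is adjointable with $(b^{*})^{*}=b$ defined on all of $E$, one gets $(b^{*}t^{*})^{*}\supseteq (t^{*})^{*}(b^{*})^{*}=t^{**}b=tb$, where the last equality uses $t\in\oC(F,G)$, i.e.\ $t^{**}=t$ (Theorem \ref{Basics_adjointable}(1b)). The converse inclusion is the one step that requires a direct calculation: given $u\in\Def((b^{*}t^{*})^{*})$ with $v:=(b^{*}t^{*})^{*}u$, the defining identity
\[
\SP{b^{*}t^{*}y}{u}_{E}=\SP{y}{v}_{G},\qquad y\in\Def(t^{*}),
\]
is rewritten via adjointability of $b$ as
\[
\SP{t^{*}y}{bu}_{F}=\SP{y}{v}_{G},\qquad y\in\Def(t^{*}).
\]
This exhibits $bu\in\Def(t^{**})$ with $t^{**}(bu)=v$. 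Using $t^{**}=t$ once more, we obtain $bu\in\Def(t)$, whence $u\in\Def(tb)$ and $v=(tb)u$, giving $(b^{*}t^{*})^{*}\subseteq tb$. This is the only real obstacle; everything else is bookkeeping with the abstract lemmas.

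Once $(b^{*}t^{*})^{*}=tb$ is established, the remaining assertions follow formally. Because the right-hand side $tb$ is essentially defined, Theorem \ref{Characterization__orthognal_closable} shows that $b^{*}t^{*}$ is orthogonally closable, so $(b^{*}t^{*})^{**}$ exists. Applying Theorem \ref{Basics_adjointable}(1a) to $b^{*}t^{*}$, we get
\[
(tb)^{*} \;=\; \bigl((b^{*}t^{*})^{*}\bigr)^{*} \;=\; (b^{*}t^{*})^{**},
\]
which is precisely the asserted formula for $(tb)^{*}$. Finally, to see that $tb$ itself is orthogonally closed, we compute
\[
(tb)^{**} \;=\; \bigl((b^{*}t^{*})^{**}\bigr)^{*} \;=\; (b^{*}t^{*})^{*} \;=\; tb,
\]
using Theorem \ref{Basics_adjointable}(1a) in the middle step. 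Hence $tb\in\oC(E,G)$, completing the proof.
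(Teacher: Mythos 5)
Your proof is correct and follows essentially the same route as the paper: both establish $(tb)^*\supseteq b^*t^*$ via Proposition \ref{Adjoint__Addition_Compositon}(3), identify $(b^*t^*)^*$ with $t^{**}b^{**}=tb$, and then close the loop with the biadjoint identities of Theorem \ref{Basics_adjointable}(1a). The only difference is that your direct computation of the inclusion $(b^*t^*)^*\subseteq tb$ reproves by hand what the equality case of Proposition \ref{Adjoint__Addition_Compositon}(3) (with $\Range(t^*)\subseteq\Def(b^*)=F$ and $\Def(b^{**})=\Def(b)=E$) already gives, so it is redundant but harmless.
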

\begin{proof}
From Proposition \ref{Adjoint__Addition_Compositon} it follows that $(tb)^*\supseteq b^*t^*$ and the latter is essentially defined. Therefore,  $tb\subseteq(tb)^{**}\subseteq(b^*t^*)^*=t^{**}b^{**}=tb$. This implies that $(tb)^*=(b^*t^*)^{**}$.
\end{proof}

\begin{dfn}
An operator $t\in\oCl(E)$ is called \emph{essentially self-adjoint} if $t^*= t^{**}$. An operator $t\in\oC(E,F)$ is called normal if $t^*t=tt^*$.
\end{dfn}

\begin{prop}
Let $t\in\oCl(E,F)$ and let $\Def$ be a submodule of $\Def(t)$. If $\Def$ is an essential core for $t$, then $\Def$ is an essential submodule of $E$ and
\begin{align*}
\left(t\upharpoonright_\Def\right)^* &= t^*, \quad t \subseteq \left(t\upharpoonright_\Def\right)^{**}=t^{**}.
\end{align*}
In particular, $\Def$ is an essential core for $t^{**}$ and if\, $t$ is orthogonally closed, then $t=\left(t\upharpoonright_\Def\right)^{**}$.
\end{prop}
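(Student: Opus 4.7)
The plan is to first establish that $\Def$ is an essential submodule of $E$; once this is in place, the remaining assertions follow quickly from the defining equality $\Graph(t\upharpoonright_\Def)^\bot=\Graph(t)^\bot$ together with Lemma \ref{Graph_tadj} and Theorem \ref{Basics_adjointable}.

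For essentiality of $\Def$, I would take $x\in\Def^\bot$ and observe that then $(x,0)\perp (y,ty)$ for every $y\in\Def$, so $(x,0)\in\Graph(t\upharpoonright_\Def)^\bot$. By the core assumption this equals $\Graph(t)^\bot$, hence $\SP{x}{y}=0$ for every $y\in\Def(t)$. Since $t\in\oCl(E,F)$ means $\Def(t)^\bot=\{0\}$, this forces $x=0$. This is the only place where a small argument is required; it is the main (and quite mild) obstacle.

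Once $\Def$ is essential, the operator $t\upharpoonright_\Def$ is essentially defined, and Lemma \ref{Graph_tadj} gives
\begin{align*}
\Graph((t\upharpoonright_\Def)^*) = v\Graph(t\upharpoonright_\Def)^\bot = v\Graph(t)^\bot = \Graph(t^*),
\end{align*}
so $(t\upharpoonright_\Def)^*=t^*$. Consequently $\Def((t\upharpoonright_\Def)^*)=\Def(t^*)$ is essential (since $t\in\oCl$), which yields $t\upharpoonright_\Def\in\oCl(E,F)$. Theorem \ref{Basics_adjointable}(1a) then supplies $t\subseteq t^{**}$ and
\begin{align*}
(t\upharpoonright_\Def)^{**} = ((t\upharpoonright_\Def)^*)^* = (t^*)^* = t^{**},
\end{align*}
proving the displayed identities.

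For the final assertions, note that $\Def\subseteq\Def(t)\subseteq\Def(t^{**})$ and $t^{**}$ restricts to $t$ on $\Def(t)$, so $\Graph(t^{**}\upharpoonright_\Def)=\Graph(t\upharpoonright_\Def)$. Using the core hypothesis together with the identity $F^\bot=F^{\bot\bot\bot}$ (established in the text) and $\Graph(t^{**})=\Graph(t)^{\bot\bot}$ from Theorem \ref{Basics_adjointable}(1a), I obtain
\begin{align*}
\Graph(t^{**}\upharpoonright_\Def)^\bot = \Graph(t\upharpoonright_\Def)^\bot = \Graph(t)^\bot = \Graph(t)^{\bot\bot\bot} = \Graph(t^{**})^\bot,
\end{align*}
so $\Def$ is an essential core for $t^{**}$. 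Finally, if $t$ is orthogonally closed then $t=t^{**}$ by Theorem \ref{Basics_adjointable}(1b), and combining this with $(t\upharpoonright_\Def)^{**}=t^{**}$ gives $t=(t\upharpoonright_\Def)^{**}$.
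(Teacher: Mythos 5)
Your proof is correct, and for the one step the paper actually writes out in detail --- the essentiality of $\Def$ --- you take a genuinely different and simpler route. The paper identifies $\Def^\bot$ with $\Null(p_{\Graph(t^{**})}\,p_E^*)$ via the chain $\Def^\bot=\Range(p_Ep_{\Graph(t\upharpoonright_\Def)})^\bot=\Null(p_{\Graph(t\upharpoonright_\Def)}^*p_E^*)=\Null(p_{\Graph(t\upharpoonright_\Def)^{\bot\bot}}p_E^*)$, invoking Proposition \ref{Characterization_Projectors} to compute the adjoint of the (possibly non-self-adjoint) projection onto $\Graph(t\upharpoonright_\Def)$, and then kills that null space by decomposing $(x,0)$ inside $\Graph(t^{**})\oplus v\Graph(t^*)$. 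You instead observe directly that $x\in\Def^\bot$ forces $(x,0)\in\Graph(t\upharpoonright_\Def)^\bot=\Graph(t)^\bot$, whence $\SP{x}{y}=0$ for all $y\in\Def(t)$ and so $x\in\Def(t)^\bot=\{0\}$; this avoids the projection machinery entirely and uses only the core hypothesis and the essentiality of $\Def(t)$. The trade-off is purely one of economy: your argument is shorter and more elementary, while the paper's computation incidentally exhibits $\Def^\bot$ as the kernel of a concrete operator built from $t^{**}$, which is not needed for the statement. The remaining assertions (which the paper dismisses as easily verified) you fill in correctly: the identity $\Graph((t\upharpoonright_\Def)^*)=v\Graph(t\upharpoonright_\Def)^\bot=v\Graph(t)^\bot=\Graph(t^*)$, the passage to biadjoints via Theorem \ref{Basics_adjointable}(1a), and the chain $\Graph(t\upharpoonright_\Def)^\bot=\Graph(t)^\bot=\Graph(t)^{\bot\bot\bot}=\Graph(t^{**})^\bot$ for the final core claim are all sound.
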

\begin{proof}
Define an operator $p_E:E\oplus F\to E$ by $p_E(x,y):=x$ for $x\in E,y\in F$. It is straightforward to check  that the adjoint operator $p_E^*:E\to E\oplus F$ acts  by $p_E^*(x)=(x,0)$ for $x\in E$. Thus,  $p_E\in\Adj(E\oplus F,E)$. Now, using Proposition \ref{Adjoint__Addition_Compositon},(3) for the second equality,  Proposition \ref{Characterization_Projectors} for the third equality, and the relations $\Graph(t\upharpoonright_\Def)^{\bot\bot}=\Graph(t)^{\bot\bot}=\Graph(t^{**})$ for the fourth equality, we  derive
\begin{align}\label{dbot}
\Def^\bot &= \Range(p_Ep_{\Graph(t\upharpoonright_\Def)})^\bot = \Null(p_{\Graph(t\upharpoonright_\Def)}^*p_E^*) = \Null(p_{\Graph(t\upharpoonright_\Def)^{\bot\bot}}p_E^*) = \Null(p_{\Graph(t^{**})}p_E^*).
\end{align}
Let $x\in \Null(p_{\Graph(t^{**})}p_E^*)$. Then  $x\in\Def(p_{\Graph(t^{**})}p_E^*)$ and $(x,0)\in\Graph(t^{**})\oplus v\Graph(t^*)$. Hence there exist
elements $y\in\Def(t^{**})$ and $z\in\Def(t^*)$ such that $x=y+t^*z$ and $0=t^{**}y-z$. Since $(0,0)=p_{\Graph(t^{**})}p_E^*x=p_{\Graph(t^{**})}(x,0)=(y,t^{**}y)$, we get $y=0$, so $z=0$ and $x=0$.  Thus, 
$\Null(p_{\Graph(t^{**})}p_E^*)=\{0\}$. Hence $\Def$ is essential by (\ref{dbot}). The other statements are easily verified.
\end{proof}

Some algebraic properties on orthogonally closable operators are collected in the next proposition. The proofs are straightforward and  we omit the details. 

\begin{prop}\label{OrthoClosable__Addition_Composition}
Let $t\in\oCl(E,F)$.
\begin{enumerate}
\item If $s\in\Adj(E,F)$, then $t+s\in\oCl(E,F)$.
\item If $s\in\Adj(F,G)$ is injective with $s^{-1}\in\Adj(G,F)$, then $st\in\oCl(E,G)$.
\item If $s\in\Adj(G,E)$ is injective with $s^{-1}\in\Adj(E,G)$, then $ts\in\oCl(G,F)$.
\end{enumerate}
All these statements remain valid if\, $\oCl$ is replaced by $\oC$.
\end{prop}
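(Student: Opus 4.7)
The strategy is uniform across the three parts: compute the adjoint of the sum or composition explicitly using Proposition \ref{Adjoint__Addition_Compositon}, verify that its domain is essential by exploiting the bijectivity of $s$, and obtain the $\oC$ versions by iterating once more, using Theorem \ref{Basics_adjointable}(1b) which says $t\in\oC$ iff $t=t^{**}$.

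For part (1), since $\Def(s)=E\supseteq\Def(t)$, the domain $\Def(t+s)=\Def(t)$ is essential. Proposition \ref{Adjoint__Addition_Compositon}(2) applied with $t_1=s$, $t_2=t$ then yields $(t+s)^*=s^*+t^*$, whose domain $\Def(t^*)$ is essential by assumption. For part (2), $\Def(st)=\Def(t)$ is essential since $\Def(s)=F$, and Proposition \ref{Adjoint__Addition_Compositon}(3) gives $(st)^*=t^*s^*$ with domain $(s^*)^{-1}(\Def(t^*))$; this domain is essential because any $z\in G$ orthogonal to it satisfies $\SP{s^{-1}z}{y}=\SP{z}{(s^*)^{-1}y}=0$ for every $y\in\Def(t^*)$, forcing $s^{-1}z\in\Def(t^*)^\bot=\{0\}$ and hence $z=0$.

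Part (3) is where I expect the only real obstacle. One checks that $\Def(ts)=s^{-1}(\Def(t))$ is essential by a dual argument (any $z$ orthogonal to it yields $(s^{-1})^*z\in\Def(t)^\bot=\{0\}$, hence $z=0$), but Proposition \ref{Adjoint__Addition_Compositon}(3) does not apply directly here since $\Range(s)=E$ need not lie in $\Def(t)$. Instead I would invoke Proposition \ref{Adjoint__Addition_Compositon}(4), whose hypotheses are tailored to precisely this situation: $s$ is injective, $\Def(t)\subseteq\Range(s)=E$, and $(s^{-1})^*\in\Adj(F,G)$ is everywhere defined. This yields $(ts)^*=s^*t^*$ with domain $\Def(t^*)$, again essential.

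For the $\oC$ assertions, I would iterate the $\oCl$ results just obtained. Each adjoint $(t+s)^*=s^*+t^*$, $(st)^*=t^*s^*$, $(ts)^*=s^*t^*$ has exactly the form treated in (1)--(3), now applied to $t^*\in\oCl(F,E)$ with the adjointable bijection $s^*$ (resp.\ $s$). A second application of the relevant part of Proposition \ref{Adjoint__Addition_Compositon} — part (2) in case (1), part (4) in case (2), part (3) in case (3) — computes the double adjoint and, using $t=t^{**}$ and $s=s^{**}$, recovers the original operator. For instance in case (1), $(t+s)^{**}=(s^*+t^*)^*=s^{**}+t^{**}=s+t$. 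Thus $t+s$, $st$, $ts$ all lie in $\oC$ whenever $t$ does.
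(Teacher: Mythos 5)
Your proof is correct. The paper omits this proof entirely (``straightforward''), and your argument fills it in using exactly the intended toolkit: Proposition \ref{Adjoint__Addition_Compositon} to compute the adjoints, bijectivity of $s$ to transfer essentiality of the domains, and a second pass through the same computations to verify $(\cdot)^{**}=(\cdot)$ for the $\oC$ statements; in particular you correctly spotted that part (3) needs Proposition \ref{Adjoint__Addition_Compositon}(4) rather than (3). (Minor typo only: in part (3), $(s^{-1})^*$ lies in $\Adj(G,E)$, not $\Adj(F,G)$; the substance --- that it is everywhere defined --- is what you use and is right.)
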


\begin{rem}\label{firstremark}
In this Remark we want to emphasize that the  theory developed so far is valid in a much more general setting. For this we introduce some definitions. 

Let  $A$ be a complex $*$-algebra. A \emph{ $*$-bimodule} for $A$ is a bimodule $X$ for $A$ equipped with an involution (that is, an antilinear mapping $x\to x^*$ of $X$ such that $(x^*)^*=x$ for $x\in X$) satisfying $(ax)^*=x^*a^* $ and $(xa)^*=a^*x^*$ for  $ a\in A, x\in X.$

\begin{dfn}\label{defquadraticspace}

A {\rm quadratic $*$-space} over $A$ is a triple $(E,X,\SP{.}{.})$ of a right $A$-module $E$, a $*$-bimodule $X$ for $A$ and a map $\SP{.}{.}:E\times E\to X$ such that for  $\alpha,\beta\in\C$, $x,y,z,x_1,\dots,x_n\in E$, $a\in A$:
\begin{align*}
\SP{\alpha x+\beta y}{z} &= \alpha\SP{x}{z}+\beta\SP{y}{z},\\
\SP{x}{ya} &= \SP{x}{y}a,\\
\SP{x}{y} &= \SP{y}{x}^*,\\
\sum\nolimits_{j=1}^n \SP{x_j}{x_j}&=0  \quad {\rm implies}\quad x_1=\dots=x_n=0. 
\end{align*}
\end{dfn} 

All preceding notions and results on  essentially defined operators and their adjoints, on orthogonally closed operators and on graph regular operators remain valid for  quadratic $*$-spaces rather than Hilbert $\CAlg{A}$-modules. Indeed, an inspection of the definitions and proofs shows that only the axioms from Definition \ref{defquadraticspace} are needed. The same is true for Theorem \ref{Basics__graph_regular}, Theorem \ref{aabTransform}  and Proposition \ref{grReg__Morphism}. However, from now on $C^*$-algebra properties are essentially used.
\end{rem}

\section{Operators on the commutative $C^*$-algebra $C_0(X)$}\label{Theory_CX}

Throughout this section we suppose that $X$ is a locally compact Hausdorff space. We consider the $C^*$-algebra $C_0(X)$ of continuous functions on $X$ vanishing at infinity as a Hilbert $C^*$-module $E$ (see Example \ref{cstaralgebras}) and study operators 
 $t:E\to E$. The main aim of this section is to describe  orthogonally closed operators on $E$ and to  show that $\oC(E)$ consists  of multiplication operators. 
 For this reason we  investigate  multiplication operators   in detail. To state the results we introduce some notation that is inspired by \cite[Section 6]{KaadLesch}.

For a function $m:X\to\C$ we set
\begin{align*}
\reg(m) := &\{ x\in X | m\text{ is continuous in a neighborhood of } x \},\\
\reg_b(m) := &\left\{ x\in \partial\reg(m) | \exists U\subseteq\ol{\reg(m)} \text{ open},\tilde{m}: U\to\C \text{ continuous}, \right.\\
&\left. \text{ with } x\in U, \tilde{m}\equiv m \text{ on } U\cap\reg(m)\right\},\\
\reg_\infty(m) := &\left\{ x\in \partial\reg(m) | \exists U\subseteq\ol{\reg(m)} \text{ open},\tilde{m}: U\to\ol{\C} \text{ continuous},\right.\\
&\left. \text{ with } x\in U, \tilde{m}\equiv m \text{ on } U\cap\reg(m), \tilde{m}(x)=\infty \right\},\\
\singsuppr(m) := &\partial \reg(m) \setminus (\reg_b(m)\cup\reg_\infty(m)).
\end{align*}

Clearly, $\reg(m)$ is the largest open set on which $m$ is continuous. Further, $\reg(m)\cup \reg_b(m)$ is the largest open set contained in $\ol{\reg(m)}$ on which $m$ restricted to $\reg(m)$ has a (indeed unique) continuous $\C$-valued extension. Finally, $\reg(m)\cup\reg_b(m)\cup\reg_\infty(m)$ is the largest open set contained in $\ol{\reg(m)}$ on which $m$ restricted to $\reg(m)$ has a (unique) continuous $\ol{\C}$-valued extension. In particular, $\reg(m)\cup\reg_b(m)\cup\reg_\infty(m)$ is contained in $(\ol{\reg(m)})^\circ$. The space $X$ is a disjoint union
\begin{align*}
X = \reg(m) \cup \underbrace{\reg_b(m) \cup \reg_\infty(m) \cup \singsuppr(m)}_{\partial\reg(m)} \cup \underbrace{(X\setminus\ol{\reg(m)})}_{(X\setminus\reg(m))^\circ}.
\end{align*}

By $F_0(X)$ we denote the set of functions on $X$ vanishing at infinity. We note $E=C_0(X)=\{f\in F_0(X)|\reg(f)=X\}$.
\\

Let us briefly summarize the results that will be obtained in this section. For an arbitrary function $m$ on $X$ a multiplication operator $t_m$ is defined and we introduce an equivalence relation to characterize those functions  giving the same operator (Lemma \ref{Domain_Equivalence__CommutativeCase}). Then we show that $t_m$ is essentially defined if and only if $\reg(m)$ is dense in $X$ and that in this case $t_m$ is already orthogonally closed and its adjoint is $t_{\ol{m}}$ (Theorem \ref{MultiplicationOperator__Essential_ssupp}). Further, we prove that $t\in\oC(E)$ if and only if $t=t_m$ for some function $m:X\to\C$ (Theorem \ref{maintheorem}). In Example \ref{Multiplier__different_Dm_Dm*} we give a normal operator $t$ for which  the domains $\Def(t)$ and $\Def(t^*)$ are different.

For $m,\tilde{m}:X\to\C$ we write
\begin{align*}
m\simeq\tilde{m} \quad \Leftrightarrow \quad & \ol{\reg(m)}=\ol{\reg(\tilde{m})} \text{ and } m \equiv \tilde{m} \text { on } \reg(m)\cap \reg(\tilde{m}).
\end{align*}
In Lemma \ref{Basics_EquivalenceRelation}(2) it will be shown that $\simeq$ is an equivalence relation. To simplify the notation we associate to each function $m:X\to\C$ a function $\hat{m}$ on $X$ defined by
\begin{align*}
\hat{m}(x) &= \begin{cases}
               m(x) &, x\in X\setminus \reg_b(m)\\
               \tilde{m}(x) &, x\in \reg_b(m)
              \end{cases},
\end{align*}
where $\tilde{m}$ is one of the functions appearing in the definition of $\reg_b(m)$. In fact, any two  of those functions  have the same values at $x$, since $x$ is in the boundary of $\reg(m)$
and the two continuous functions coincide on this set. 
Hence $\hat{m}$ is well-defined.  Lemma \ref{Basics_EquivalenceRelation}(3) shows that $\hat{m}\simeq m$.

\begin{lem}\label{Basics_EquivalenceRelation}
\begin{enumerate}
\item Let $m_1,m_2:X\to\C$. If $m_1\simeq m_2$, then $\reg_\infty(m_1) = \reg_\infty(m_2)$ and\, $\singsuppr(m_1) = \singsuppr(m_2)$.
\item $\simeq$ is an equivalence relation on the set of functions from $X$ to $\C$.
\item If $m:X\to\C$, then $\hat{m}\simeq m$, $\reg(\hat{m})=\reg(m)\cup\reg_b(m)$, $\reg_b(\hat{m})=\emptyset$.
\end{enumerate}
\end{lem}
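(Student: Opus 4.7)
The plan is to reformulate $\reg_b(m)$ and $\reg_\infty(m)$ through the cumulative open sets $S_b(m):=\reg(m)\cup\reg_b(m)$ and $S_\infty(m):=\reg(m)\cup\reg_b(m)\cup\reg_\infty(m)$, which, as observed in the paragraph following the definitions, are the largest open subsets of $\ol{\reg(m)}$ on which $m|_{\reg(m)}$ admits a continuous $\C$-valued (resp.\ $\ol{\C}$-valued) extension. These intrinsic descriptions will make $\simeq$-invariance transparent, even though $\reg_b(m)$ itself is not $\simeq$-invariant (points can migrate from $\reg_b$ to $\reg$ under the relation).

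For part (1), if $m_1\simeq m_2$, then $\reg(m_1)\cap\reg(m_2)$ is open and dense in $\ol{\reg(m_1)}=\ol{\reg(m_2)}$, so by continuity any continuous extension of $m_1|_{\reg(m_1)}$ on an open $U\subseteq\ol{\reg(m_1)}$ doubles as a continuous extension of $m_2|_{\reg(m_2)}$ on $U$. Maximality yields $S_b(m_1)=S_b(m_2)$ and $S_\infty(m_1)=S_\infty(m_2)$, whence $\reg_\infty(m_i)=S_\infty(m_i)\setminus S_b(m_i)$ and $\singsuppr(m_i)=\ol{\reg(m_i)}\setminus S_\infty(m_i)$ agree for $i=1,2$. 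For (2), reflexivity and symmetry are immediate; for transitivity from $m_1\simeq m_2\simeq m_3$, the closures chain, and on the dense open subset $\reg(m_1)\cap\reg(m_2)\cap\reg(m_3)$ of $\reg(m_1)\cap\reg(m_3)$ the functions agree via $m_2$, which by continuity of $m_1$ and $m_3$ propagates to all of $\reg(m_1)\cap\reg(m_3)$.

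For (3), I will first establish $\reg(\hat m)=\reg(m)\cup\reg_b(m)$. The inclusion $\supseteq$ uses that for $x\in\reg_b(m)$, a witnessing neighborhood $U\subseteq\ol{\reg(m)}$ and extension $\tilde m:U\to\C$ actually witness membership in $\reg(m)\cup\reg_b(m)$ for every $y\in U$; uniqueness of continuous extensions on $U$ (by density of $U\cap\reg(m)$) forces $\hat m=\tilde m$ on $U$, so $\hat m$ is continuous at $x$. For $\subseteq$, if $\hat m$ is continuous on an open $V\ni x$, then $V\subseteq\ol{\reg(m)}$ (otherwise $\hat m=m$ would be continuous on a nonempty open subset of $X\setminus\ol{\reg(m)}$, contradicting maximality of $\reg(m)$), and $\hat m|_V$ is then a continuous $\C$-valued extension of $m|_{\reg(m)}$, giving $x\in V\subseteq S_b(m)$ by maximality. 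The relation $\hat m\simeq m$ follows at once from $\ol{\reg(\hat m)}=\ol{\reg(m)\cup\reg_b(m)}=\ol{\reg(m)}$ together with $\hat m=m$ on $\reg(m)$. Finally, for $\reg_b(\hat m)=\emptyset$, any hypothetical $y\in\reg_b(\hat m)$ would come with $U\subseteq\ol{\reg(\hat m)}=\ol{\reg(m)}$ and continuous $\tilde m':U\to\C$ satisfying $\tilde m'=\hat m=m$ on $U\cap\reg(m)$; this would witness $y\in\reg(m)\cup\reg_b(m)=\reg(\hat m)$, contradicting $y\in\partial\reg(\hat m)$.

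The main technical hurdle is the identity $\reg(\hat m)=\reg(m)\cup\reg_b(m)$; in particular the $\subseteq$ inclusion, which requires ruling out the possibility that $x$ lies in $\reg_\infty(m)$ or $\singsuppr(m)$. The essential tools throughout are density of $\reg(m)$ in $\ol{\reg(m)}$, which lets one identify $\hat m$ with any candidate continuous extension via continuity, and the maximality properties of $\reg(m)$ and $S_b(m)$ within their respective classes of open sets.
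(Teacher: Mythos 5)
Your proof is correct and follows essentially the same route as the paper: both arguments rest on the density of $\reg(m_1)\cap\reg(m_2)$ in the common closure, the uniqueness of continuous ($\C$- or $\ol{\C}$-valued) extensions on dense subsets, and the maximality description of $\reg(m)\cup\reg_b(m)$ that the paper records in the paragraph following the definitions. The only organizational difference is in part (3), where the paper deduces $\reg_b(\hat m)=\emptyset$ by applying the invariance established in the proof of part (1) to the relation $m\simeq\hat m$, while you verify both inclusions of $\reg(\hat m)=\reg(m)\cup\reg_b(m)$ and the emptiness of $\reg_b(\hat m)$ directly; both versions are sound.
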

\begin{proof}
Note that the intersection of two open and dense sets  is again open and dense.

(1):  From $R:=\ol{\reg(m_1)}=\ol{\reg(m_2)}$ we conclude that $\reg(m_1)\cap\reg(m_2)$ is dense in $R$. Since $m_1$ and $m_2$ are equal and continuous on this set, any $x$ in $R$ is in $\reg(m_1)\cup\reg_b(m_1)$ if and only if it is in $\reg(m_2)\cup\reg_b(m_2)$. By the same argument any $x$ in $R$ is in $\reg(m_1)\cup\reg_b(m_1)\cup\reg_\infty(m_1)$ if and only if it is in $\reg(m_2)\cup\reg_b(m_2)\cup\reg_\infty(m_2)$. This proves (1).

(2): Obviously $\simeq$ is reflexive and symmetric, so it remains to show transitivity. Let $m_1\simeq m_2$ and $m_2\simeq m_3$. Clearly, $\ol{\reg(m_1)}=\ol{\reg(m_2)}=\ol{\reg(m_3)}$. Arguing as above, $\reg(m_1)\cap\reg(m_2)\cap\reg(m_3)$ is open and dense in the latter set. Again by continuity of $m_1$ and $m_3$ on $\reg(m_1)\cap\reg(m_3)$, these functions coincides on this set, since they do on $\reg(m_1)\cap\reg(m_2)\cap\reg(m_3)$. That is, $m_1\simeq m_3$.

(3): Clearly, $\hat{m}$ is continuous on $\reg(m)\cup\reg_b(m)$, so the latter is contained in $\reg(\hat{m})$. Since $m$ and $\hat{m}$ coincide on the open set $X\setminus\ol{\reg(m)}$, we even have $\reg(\hat{m})\subseteq\ol{\reg(m)}$. In particular, $\ol{\reg(m)}=\ol{\reg(\hat{m})}$ and since $m$ and $\hat{m}$ are equal on $\reg(m)\cap\reg(\hat{m})=\reg(m)$, it follows that $m\simeq\hat{m}$. By (1) the proof is complete, since $\reg(m)\cup\reg_b(m)=\reg(\hat{m})\cup\reg_b(\hat{m})$ implies that $\reg_b(\hat{m})=\emptyset$.
\end{proof}

In particular, changing $m$ on $\reg_\infty(m)\cup\singsuppr(m)$ does not change any of the sets $\reg(m)$, $\reg_b(m)$, $\reg_\infty(m)$, $\singsuppr(m)$, and $X\setminus\ol{\reg(m)}$.

Now we define the multiplication operators $t_m$.

\begin{dfn}\label{definitiontm}
For a function $m:X\to\C$ let
\begin{align*}
\Def(t_m) &:= \{ f\in E | \widehat{mf} \in E \}, \quad t_mf := \widehat{mf}, \quad f\in\Def(t_m).
\end{align*}
\end{dfn}
It is straightforward to check that $t_m$ is indeed an operator on $E$.

\begin{lem}\label{RegularityTransfer__f_not_0}
Let $m,f:X\to\C$ and $x\in X$. If $f$ is continuous at $x$ and $f(x)\neq 0$, then $x\in \reg(m)$ if and only if $x\in \reg(mf)$.
\end{lem}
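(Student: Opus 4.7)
The plan is to unfold the definition $\reg(g)=\{y\in X:g\text{ is continuous on some open neighborhood of }y\}$ and, in each direction, produce an explicit open neighborhood of $x$ on which the relevant function ($mf$ in one direction, $m$ in the other) is continuous. Both arguments boil down to intersecting open neighborhoods on which the ingredient functions behave well and then applying the fact that sums, products, and quotients of continuous functions on an open set are continuous (the latter wherever the denominator is nonzero).

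For the forward direction, assume $x\in\reg(m)$. Then there is an open neighborhood $U$ of $x$ on which $m$ is continuous. Read together with $f$ being continuous near $x$ (interpreted, consistently with the paper's framework, as furnishing continuity of $f$ on some open neighborhood $V$ of $x$), the product $mf$ is continuous on the open set $U\cap V\ni x$, so by the very definition of $\reg$ we obtain $x\in\reg(mf)$.

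For the converse, assume $x\in\reg(mf)$, and pick an open neighborhood $U$ of $x$ on which $mf$ is continuous. Because $f(x)\neq 0$ and $f$ is continuous near $x$, a standard $\varepsilon$-$\delta$ argument with $\varepsilon=|f(x)|/2$ yields an open neighborhood $W\subseteq V$ of $x$ on which $f$ is continuous and $|f|>|f(x)|/2>0$. On $W$ the reciprocal $1/f$ is therefore continuous, and hence $m=(mf)\cdot(1/f)$ is continuous on the open neighborhood $U\cap W$ of $x$, giving $x\in\reg(m)$.

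There is essentially no serious obstacle: once one has unpacked that $\reg(\cdot)$ demands continuity on an open neighborhood rather than only at a single point, the two implications are symmetric and routine. The only small care required is, in the converse, to ensure that $f$ is nonzero on a whole open neighborhood of $x$ (not merely at $x$ itself) so that $1/f$ is continuous there; this is immediate from continuity of $f$ together with $f(x)\neq 0$.
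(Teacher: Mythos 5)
Your proof is correct and takes essentially the same approach as the paper's: both arguments reduce to the observation that $f$ is continuous and nonvanishing on a whole open neighborhood of $x$, so that multiplication by $f$ (resp.\ by $1/f$) preserves continuity on open sets; the paper merely phrases this contrapositively, in terms of points of discontinuity. Your explicit remark that ``continuous at $x$'' must be read as continuity on a neighborhood of $x$ (as it is in the paper's applications, where $f\in C_0(X)$) addresses a point the paper's own proof glosses over.
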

\begin{proof}
Since $f(x)\neq 0$, there is an open set $U_f$ containing $x$ such that $f(x')\neq 0$ for all $x'\in U_f$. By definition, $x\notin \reg(m)$ if for any neighbourhood $U$ of $x$, there is an $x'\in U$ such that $m$ is discontinuous at $x'$. 
This holds if and only if for any neighbourhood $U$ of $x$, there exists $x'\in U$ such that $mf$ is discontinuous at $x'$. This means that $x\notin \reg(mf)$.
\end{proof}

We now show that the operator $t_m$  depends only on the equivalence class of $m$.

\begin{lem}\label{Domain_Equivalence__CommutativeCase}
Let $m:X\to\C$ be given. Then
\begin{align*}
\Def(t_m) &= \{f\in E | f\equiv 0 \text{ on } X\setminus\reg(\hat{m}), \partial\reg(\hat{m})\subseteq\reg(\widehat{mf}),\widehat{m}f\in F_0(X)\},\\
(t_mf)(x) &= \hat{m}(x)f(x) ,\quad f\in\Def(t_m),x\in X\setminus\reg_\infty(m).
\end{align*}
For $x\in X$ there is an $f\in\Def(t_m)$ such that $f(x)\neq 0$ if and only if $x\in\reg(\hat{m})$. If $m,\tilde{m}:X\to\C$, then $t_m=t_{\tilde{m}}$ if and only if\, $m\simeq\tilde{m}$. In particular, $t_m=t_{\hat{m}}$.
\end{lem}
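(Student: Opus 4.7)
The plan is to prove the four assertions in sequence, with Lemma~\ref{Basics_EquivalenceRelation} and Lemma~\ref{RegularityTransfer__f_not_0} as the main technical tools. I would begin with the identity $\widehat{mf} = \widehat{\hat{m}f}$ for every $f \in C_0(X)$, which at once yields the last claim $t_m = t_{\hat{m}}$: off $\reg_b(m)$ the functions $m$ and $\hat{m}$ coincide, so the two hatted products agree there, while on $\reg_b(m)$ the function $\hat{m}$ is continuous in a neighbourhood inside $\ol{\reg(m)}$ and equals $m$ on the dense subset $\reg(m)$; hence $\hat{m}f$ restricted to such a neighbourhood is the unique continuous extension of $mf$ from $\reg(m)$, which is by construction also the value that $\widehat{mf}$ assigns there.

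For the characterization of $\Def(t_m)$ I would prove both inclusions. The inclusion $(\supseteq)$ is a direct verification: if $f$ satisfies the three bracketed conditions, then $\widehat{mf} = \hat{m}f$ is continuous on $\reg(\hat{m})$ (because $\hat{m}$ is), continuous on the open set $X \setminus \ol{\reg(\hat{m})}$ (where condition~(i) forces $\hat{m}f \equiv 0$), and continuous on $\partial\reg(\hat{m})$ by condition~(ii); combined with~(iii) this places $\widehat{mf}$ in $C_0(X)$. The inclusion $(\subseteq)$ is the main obstacle. Given $\widehat{mf} \in C_0(X)$, I would verify~(i) by contradiction, splitting the three cases $x \in X \setminus \ol{\reg(m)}$, $x \in \reg_\infty(m)$, $x \in \singsuppr(m)$: in each of them, assuming $f(x) \neq 0$, Lemma~\ref{RegularityTransfer__f_not_0} (combined with the blow-up of $|m|$ in the $\reg_\infty$-case, and with the fact that a $\C$-continuous extension of $mf$ would yield one of $m = (mf)/f$ in the $\singsuppr$-case) transfers the irregularity of $m$ at $x$ to $mf$, forcing a genuine discontinuity of $\widehat{mf}$ at $x$ and contradicting $\widehat{mf} \in C_0(X)$. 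Once~(i) is in hand, conditions~(ii) and~(iii) follow at once from continuity of $\widehat{mf}$ and from $\widehat{mf} \in F_0(X)$, respectively.

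With the set characterization in place, the pointwise formula $(t_mf)(x) = \hat{m}(x)f(x)$ for $x \in X \setminus \reg_\infty(m)$ follows by inspecting $\widehat{mf}$ case by case, using~(i) to see that both sides vanish on $X \setminus \reg(\hat{m})$ outside $\reg_\infty(m)$. The existence claim is immediate from~(i) in one direction; in the other I would use Urysohn's lemma to build $f \in C_c(X)$ supported in a neighbourhood of $x$ contained in $\reg(\hat{m})$ with $f(x) \neq 0$, which trivially satisfies (i)--(iii). Finally, for $t_m = t_{\tilde{m}} \Leftrightarrow m \simeq \tilde{m}$: the implication $(\Leftarrow)$ reduces via the first paragraph to showing $\widehat{mf} = \widehat{\tilde{m}f}$, which follows from $\ol{\reg(m)} = \ol{\reg(\tilde{m})}$ together with $m \equiv \tilde{m}$ on the dense set $\reg(m)\cap\reg(\tilde{m})$; the converse uses the existence claim to deduce $\reg(\hat{m}) = \reg(\hat{\tilde{m}})$, hence $\ol{\reg(m)} = \ol{\reg(\tilde{m})}$, and then the pointwise formula applied to Urysohn bumps supported in $\reg(m)\cap\reg(\tilde{m})$ forces $m = \tilde{m}$ there.
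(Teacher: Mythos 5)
Your plan follows the paper's proof quite closely in substance: the $(\supseteq)$ inclusion by direct verification, the $(\subseteq)$ inclusion via Lemma \ref{RegularityTransfer__f_not_0} and the quotient $\widehat{mf}/f$, Urysohn bumps for the existence claim, and density arguments for the $\simeq$-criterion are all exactly the paper's tools. Two remarks on where you diverge. First, you front-load the identity $\widehat{mf}=\widehat{\hat{m}f}$ for \emph{arbitrary} $f\in C_0(X)$ to get $t_m=t_{\hat{m}}$ at the outset; the paper instead proves $t_m=t_{\tilde{m}}\Leftrightarrow m\simeq\tilde{m}$ and reads off $t_m=t_{\hat{m}}$ from $\hat{m}\simeq m$. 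Your identity is true, but your one-line justification only treats points of $\reg_b(m)$ where the two products actually differ; you must also check that the hat operation applied to $mf$ and to $\hat{m}f$ produces the same values at points of $\partial\reg(m)$ where $f$ vanishes (there the products agree, but $\reg(mf)$ and $\reg(\hat{m}f)$ need not, so the two continuous extensions are a priori taken along different sets). The paper sidesteps this by only ever comparing $\widehat{mf}$ with $\widehat{\tilde{m}f}$ for $f$ already known to vanish off $\reg(\hat{m})$. Second, your three-way case split in the $(\subseteq)$ direction is, if anything, more careful than the paper's single density argument, which tacitly assumes $X\setminus\reg(\hat{m})$ has nonempty interior near $x$ --- false when $x\in\reg_\infty(m)$.

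The one step you have genuinely under-justified is the pointwise formula on $\singsuppr(m)$. You assert that both sides vanish on $X\setminus\reg(\hat{m})$ outside $\reg_\infty(m)$ ``using (i)''. For the right-hand side $\hat{m}(x)f(x)$ this is immediate from $f(x)=0$; for the left-hand side it is not: $\widehat{mf}$ is a continuous extension, not the literal product, and $f(x)=0$ does not force $\widehat{mf}(x)=0$ --- that implication fails precisely on $\reg_\infty(m)$, which is why the formula excludes that set. On $\singsuppr(m)$ you must actually argue it, as the paper does: if $f(x)=0$ and $\widehat{mf}(x)\neq 0$, then $\widehat{mf}$ is bounded away from $0$ near $x$, so $\tilde{m}:=\widehat{mf}/f$ (with $\alpha/0:=\infty$) is a continuous $\ol{\C}$-valued extension of $m$ with $\tilde{m}(x)=\infty$, placing $x$ in $\reg_\infty(m)$. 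This is the same quotient trick you already deploy in the $(\subseteq)$ inclusion, so the fix is short, but as stated the step is a gap rather than an ``inspection''.
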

\begin{proof}
By Definition \ref{definitiontm} $\Def(t_m)$ consists of those $f\in E$ for which $\reg(\widehat{mf})=X$ and $\widehat{mf}\in F_0(X)$. Since $\reg(\hat{m})\subseteq\reg(\widehat{mf})$ for $f\in E$,  $\Def(t_m)$ is the set of all $f\in E$ such that $X\setminus\reg(\hat{m})\subseteq\reg(\widehat{mf})$ and $\widehat{mf}\in F_0(X)$.

Let $f\in E$ with $\widehat{m}f\in F_0(X)$. Suppose that  $\partial\reg(\hat{m})\subseteq\reg(\widehat{mf})$ and $f\equiv 0$ on $X\setminus\reg(\hat{m})$. Then $mf\equiv 0$ on $X\setminus\reg(\hat{m})$ and so on  $X\setminus\ol{\reg(\hat{m})}$. Hence  $X\setminus\reg(\hat{m})=X\setminus\ol{\reg(\hat{m})}\cup\partial\reg(\hat{m})$ is contained in $\reg(\widehat{mf})$. To show $\widehat{mf}\in F_0(X)$, let $\epsilon>0$. Since $\widehat{m}f\in F_0(X)$, there exists an compact set $K\subseteq X$ such that $|\widehat{m}f|\leq\epsilon$ on $X\setminus K$. By continuity of $\widehat{mf}$ on $X$ the same is true for this function, since $\widehat{m}f$ and $\widehat{mf}$ coincide on the dense set $X\setminus\partial\reg(m)$. That is $\widehat{mf}\in F_0(X)$, finally $f\in\Def(t_m)$.

Now we suppose  that $f\in\Def(t_m)$. In particular, $\reg(mf)$ is dense in $X$, hence $\partial\reg(\hat{m})\subseteq X=\reg(\widehat{mf})$. Assume that $x\in X\setminus\reg(\hat{m})$ and $f(x)\neq 0$. By the continuity of $f$ there exists an open set $U\subseteq X\setminus\reg(\hat{m})$ such that  $f(y)\neq 0$ for  $y\in U$. From Lemma \ref{RegularityTransfer__f_not_0} it follows that $U$ is even contained in $X\setminus\reg(\hat{m}f)$. But this contradicts the density of $\reg(mf)\subseteq\reg(\hat{m}f)$ in $X$, hence $f(x)=0$. In particular $\widehat{mf}$ coincides with $\widehat{m}f$ on $\reg(\hat{m})$ and $\widehat{m}f\equiv 0$ on $X\setminus\reg(\hat{m})$. So $\widehat{mf}\in F_0(X)$ implies $\widehat{m}f\in F_0(X)$ and the description of $\Def(t_m)$ is proven.

For $f\in\Def(t_m)$ it is obvious that $t_mf\equiv\hat{m}f$ on $\reg(\hat{m})$. Since $f\equiv 0$ on $X\setminus\reg(\hat{m})$, it remains to show $\widehat{mf}\equiv 0$ on $(X\setminus\ol{\reg(\hat{m})})\cup\singsuppr(m)$. For this it suffices to prove the following: If $x\in X$ with $f(x)=0$ and $\widehat{mf}(x)\neq$ 0, then $x\in\reg_\infty(m)$. Indeed, $\widehat{mf}$ does not vanish on a neighbourhood $U$ of $x$. Let $\tilde{m}:U\to\ol{\C}$ denote the function $\widehat{mf}/f$ on $U$, where $\alpha/0:=\infty$ for $\alpha\in\C$. Then  $\tilde{m}$ is continuous, since $\widehat{mf}$ does not vanish on $U$, and  $\tilde{m}$ coincides with $m$ on $U\cap\reg(m)$. Further,  since $f(x)=0$ and $\widehat{mf}(x)\neq 0$, we  have  $\tilde{m}(x)=\infty$. Hence $x\in\reg_\infty(m)$.

Let $x\in X$. By the preceding,  if $x\notin\reg(\hat{m})$, then $f(x)=0$ for all $f\in\Def(t_m)$. If $x\in\reg(\hat{m})$, there exists a function $f:X\to\C$ with compact support contained in $\reg(\hat{m})$ and $f(x)\neq 0$, since $X$ is locally compact. Then $f\in\Def(t_m)$, since $\hat{m}f$ is continuous everywhere and has compact support.

Next we prove that $t_m=t_{\tilde{m}}$ if and only if $m\simeq\tilde{m}$. Assume first that $t_m=t_{\tilde{m}}$. Then, by the preceding,
\begin{align*}
x\in \reg(\hat{m}) &\Leftrightarrow \exists f\in\Def(t_m): f(x)\neq 0 \Leftrightarrow \exists f\in\Def(t_{\tilde{m}}): f(x)\neq 0 \Leftrightarrow x\in \reg(\hat{\tilde{m}}).
\end{align*}
Thus $\reg(\hat{m})=\reg(\hat{\tilde{m}})$ which implies that $\ol{\reg(m)}=\ol{\reg(\hat{m})}=\ol{\reg(\hat{\tilde{m}})}=\ol{\reg(\tilde{m})}$. For $x\in\reg(m)\cap\reg(\tilde{m})$ we  choose $f\in\Def(t_m)$ such that  $f(x)\neq 0$. Then $m(x)f(x)=\widehat{mf}(x)=\widehat{\tilde{m}f}(x)=\tilde{m}(x)f(x)$, so that  $m(x)=\tilde{m}(x)$. Hence $m\simeq\tilde{m}$.

Conversely, assume  that $m\simeq\tilde{m}$. Let $f\in\Def(t_m)$. Then $\widehat{mf}\equiv mf\equiv \tilde{m}f$ on $\reg(m)\cap\reg(\tilde{m})$ and the latter set is dense in $\ol{\reg(\hat{m})}$, since $m\simeq\tilde{m}$. Further, $f\equiv 0$ on $X\setminus\ol{\reg(\hat{m})}$ by $f\in\Def(t_m)$ and hence $\widehat{mf}\equiv 0\equiv \tilde{m}f$ on $X\setminus\ol{\reg(\hat{m})}$. Thus, $\widehat{mf}\equiv\tilde{m}f$ on a dense set. Therefore, since $\widehat{mf}$ is continuous on $X$, we  have  $\widehat{\tilde{m}f}=\widehat{mf}$. Thus,  $f\in\Def(t_{\tilde{m}})$ and $t_{\tilde{m}}f=t_mf$. This  proves that  $t_m\subseteq t_{\tilde{m}}$. By symmetry, $t_m=t_{\tilde{m}}$.
\end{proof}

Note that  the sets $\reg(m)$, $\reg_b(m)$, $\reg_\infty(m)$ and $\singsuppr(m)$ remain unchanged if $m$ is replaced by its complex conjugate function $\ol{m}$. Moreover,  $\hat{\ol{m}}=\ol{\hat{m}}$.

\begin{thm}\label{MultiplicationOperator__Essential_ssupp}
Let $m:X\to\C$. The operator  $t_m$ is essentially defined if and only if $\reg(m)$ is dense in $X$. In this case we have $t_m^*=t_{\ol{m}}$ and $t_m\in\oC(E)$.
\end{thm}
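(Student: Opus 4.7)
The plan is to proceed in three stages. First, I characterize essential-definedness: by Lemma \ref{Domain_Equivalence__CommutativeCase}, the set of $x\in X$ at which some $f\in\Def(t_m)$ is nonzero is precisely $\reg(\hat{m})=\reg(m)\cup \reg_b(m)$, so $\Def(t_m)^{\bot}$ consists of the continuous functions vanishing on this set, equivalently (by continuity) on its closure $\ol{\reg(m)}$. This equals $\{0\}$ if and only if $\reg(m)$ is dense in $X$.

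Second, assuming density, I prove $t_m^{*}=t_{\ol{m}}$ by two inclusions. For $t_m^{*}\supseteq t_{\ol{m}}$, take $f\in\Def(t_m)$ and $g\in\Def(t_{\ol{m}})$: both vanish on $X\setminus\reg(\hat{m})$ by Lemma \ref{Domain_Equivalence__CommutativeCase}, and on $\reg(\hat{m})$ both $\SP{t_m f}{g}$ and $\SP{f}{t_{\ol{m}}g}$ equal $\ol{\hat{m}}\,\ol{f}\,g$ via the continuous extensions on $\reg_b(m)$. For $t_m^{*}\subseteq t_{\ol{m}}$, take $g\in\Def(t_m^{*})$ and set $h:=t_m^{*}g$. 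Localizing with bump functions $f\in\Def(t_m)$ supported in $\reg(m)$ yields the pointwise identity $h(x)=\ol{m}(x)g(x)$ for all $x\in\reg(m)$.

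The delicate step is showing $g\equiv 0$ on $\partial\reg(\hat{m})=\reg_\infty(m)\cup\singsuppr(m)$. If $g(x_0)\neq 0$ at $x_0\in\reg_\infty(m)$, then $|g|\geq c>0$ on a neighborhood and $|h(x)|=|m(x)||g(x)|\to\infty$ along $\reg(m)$, contradicting $h\in C_0(X)$. If $g(x_0)\neq 0$ at $x_0\in\singsuppr(m)$, pick an open $V\ni x_0$ on which $g$ is nonvanishing; then $\ol{h/g}$ is continuous on $V$, equals $m$ on $V\cap\reg(m)$, and since $V\subseteq X=\ol{\reg(m)}$ this forces $x_0\in\reg_b(m)$, contradicting $x_0\in\singsuppr(m)$. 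With $g$ vanishing off $\reg(\hat{m})$ in hand, I then identify $h=\widehat{\ol{m}g}$ on $X$ by a density argument: $\ol{m}g|_{\reg(m)}$ extends continuously to $X$ as $h$, so every $x\in\partial\reg(\hat{m})$ lies in $\reg_b(\ol{m}g)$ with $\widehat{\ol{m}g}(x)=h(x)$. Hence $g\in\Def(t_{\ol{m}})$ and $t_{\ol{m}}g=h=t_m^{*}g$.

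Finally, applying Step 1 to $\ol{m}$ (whose regular set coincides with $\reg(m)$) gives $\Def(t_m^{*})^{\bot}=\Def(t_{\ol{m}})^{\bot}=\{0\}$, so Theorem \ref{Characterization__orthognal_closable} makes $t_m$ orthogonally closable, and then $t_m^{**}=(t_{\ol{m}})^{*}=t_m$, proving $t_m\in\oC(E)$. The principal obstacle is the $\singsuppr(m)$ case together with the verification $h=\widehat{\ol{m}g}$ on all of $X$; both rely on careful bookkeeping between $\hat{m}$, the three boundary strata $\reg_b,\reg_\infty,\singsuppr$, and the density hypothesis on $\reg(m)$.
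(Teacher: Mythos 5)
Your proof is correct and follows essentially the same route as the paper's: essential-definedness via the existence of nonvanishing elements of $\Def(t_m)$ at each point of $\reg(\hat m)$, the inclusion $t_{\ol m}\subseteq t_m^*$ by direct computation on the dense set $\reg(m)$, the reverse inclusion by localizing to get $t_m^*g=\ol m g$ on $\reg(m)$, and finally $t_m^{**}=(t_{\ol m})^*=t_m$. The only divergence is your case analysis showing $g\equiv 0$ on $\reg_\infty(m)\cup\singsuppr(m)$, which is correct but redundant: once you know $\ol m g$ agrees with the continuous function $h=t_m^*g\in C_0(X)$ on the dense open set $\reg(m)\subseteq\reg(\ol m g)$, the density/continuity argument you give at the end already forces $\widehat{\ol m g}=h\in E$, which is all that membership in $\Def(t_{\ol m})$ requires (and the vanishing of $g$ on those boundary strata then follows a posteriori from Lemma \ref{Domain_Equivalence__CommutativeCase}).
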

\begin{proof}
By Lemma \ref{Domain_Equivalence__CommutativeCase} we can assume without loss of generality that $m=\hat{m}$. Assume that $\reg(m)$ is dense in $X$. Let $g\bot\Def(t_m)$. For $x\in\reg(m)$ there exists $f\in\Def(t_m)$ such that $f(x)\neq 0$ by Theorem \ref{Domain_Equivalence__CommutativeCase}. From $\ol{g(x)}f(x)=\SP{g}{f}(x)=0$ we conclude that $g(x)=0$. Hence, by density of $\reg(m)$,  $g=0$. That is, $\Def(t_m)^\bot=\{0\}$. 

Conversely, suppose that $\reg(m)$ is not dense. Since $X$ is a locally compact Hausdorff space, there is a  function $g\in C_0(X)$, $g\neq 0$, with support contained in $X\setminus\reg(m)$. Then $\SP{f}{g}=0$ for $f\in\Def(t_m)$ by Theorem \ref{Domain_Equivalence__CommutativeCase} and so $\Def(t_m)^\bot\neq\{0\}$.

Now assume that $\reg(m)$ is dense. We  show that $t_m^*=t_{\ol{m}}$. Let $f\in\Def(t_m)$ and $g\in\Def(t_{\ol{m}})$. For $x\in \reg(m)=\reg(\ol{m})$ we get $$\SP{t_mf}{g}(x)=\ol{m(x)f(x)}g(x)=\SP{f}{t_{\ol{m}}g}(x).$$ Since $\reg(m)$ is dense, we conclude that $\SP{t_mf}{g}=\SP{f}{t_{\ol{m}}g}$. Thus, $t_{\ol{m}}\subseteq t_m^*$. 

Let $f\in\Def(t_m^*)$. There exists $h\in E$ such that $$\ol{\widehat{mg}(x)}f(x)=\SP{t_mg}{f}(x)=\SP{g}{h}(x)=\ol{g(x)}h(x),\quad g\in\Def(t_m),~x\in X.$$
For $x\in \reg(m)$ we choose $g\in\Def(t_m)$ such that $g(x)\neq 0$. Hence $h\equiv\ol{m}f$ on $\reg(m)$. Since $\reg(m)$ is dense, we get $\widehat{\ol{m}f}=h\in E$. Thus $f\in\Def(t_{\ol{m}})$.

Finally, from $\reg(\ol{m})=\reg(m)$ we easily derive\, $t_m^{**}=(t_{\ol{m}})^*=t_m\in\oC(E)$.
\end{proof}
The domain $\Def(t_m)$ can be trivial  if $m$ is continuous on an dense set, since $\reg(m)$ is empty if there is no open  set of continuous points of $m$.

\begin{lem}\label{AdditionMultiplication_CommutativeCase}
Let $m,n:X\to\C$ be two functions. Then:
\begin{enumerate}
\item  $t_m+t_n\subseteq t_{m+n}$ and $t_mt_n\subseteq t_{mn}$.
\item If $\reg(m)$ and $\reg(n)$ are dense in $X$, then $t_m+t_n,t_mt_n\in\oCl(E)$ and $(t_m+t_n)^{**}=t_{m+n}$, $(t_mt_n)^{**} = t_{mn}$.
\end{enumerate}
\end{lem}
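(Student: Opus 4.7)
The plan divides along the two parts.

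For part (1), the inclusions follow from direct pointwise computation together with density of $\reg(m) \cap \reg(n)$ (the intersection of two open dense sets is open and dense). Given $f \in \Def(t_m) \cap \Def(t_n) = \Def(t_m + t_n)$, we have $(t_mf)(x) + (t_nf)(x) = m(x)f(x) + n(x)f(x) = (m+n)(x)f(x)$ for $x \in \reg(m) \cap \reg(n) \subseteq \reg(m+n)$; since $t_mf + t_nf$ lies in $C_0(X)$ and equals $(m+n)f$ on a dense set, it coincides with $\widehat{(m+n)f}$, so $f \in \Def(t_{m+n})$ and $t_{m+n}f = t_mf + t_nf$. The argument for $t_mt_n \subseteq t_{mn}$ is analogous, using $m(nf) = (mn)f$ on $\reg(m) \cap \reg(n)$ and the fact that $f \in \Def(t_mt_n)$ forces $t_nf \in \Def(t_m)$.

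For part (2), note first that $\reg(m+n)$ and $\reg(mn)$ each contain the open dense set $\reg(m) \cap \reg(n)$; by Theorem \ref{MultiplicationOperator__Essential_ssupp}, $t_{m+n}$ and $t_{mn}$ lie in $\oC(E)$ and are in particular orthogonally closed. The heart of the proof is the identity $\Graph(t_m+t_n)^{\bot\bot} = \Graph(t_{m+n})$, and analogously $\Graph(t_mt_n)^{\bot\bot} = \Graph(t_{mn})$. Granted this, $(t_m+t_n)^{**}$ exists and equals $t_{m+n}$, which is orthogonally closed, so by Theorem \ref{Characterization__orthognal_closable} the operator $t_m+t_n$ is orthogonally closable and belongs to $\oCl(E)$; the same reasoning applies to $t_mt_n$.

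To establish the identity, I use test functions: by local compactness of $X$, for every $x_0 \in \reg(m) \cap \reg(n)$ there is $f \in C_0(X)$ with $\supp(f) \subseteq \reg(m) \cap \reg(n)$ and $f(x_0) \neq 0$. A direct verification via Lemma \ref{Domain_Equivalence__CommutativeCase} shows $f \in \Def(t_m) \cap \Def(t_n)$ with $t_mf = mf$ and $t_nf = nf$ everywhere, and similarly $f \in \Def(t_mt_n)$. These test functions first force $\Def(t_m+t_n)^\bot = \{0\}$: if $g$ is orthogonal to the domain, evaluating $\SP{g}{f}=0$ at $x_0$ gives $g(x_0)=0$, and continuity plus density yields $g=0$. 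The inclusion $\Graph(t_m+t_n)^{\bot\bot} \subseteq \Graph(t_{m+n})$ is immediate from part (1) and orthogonal closedness of $t_{m+n}$. For the reverse, let $(g,h) \in \Graph(t_m+t_n)^\bot$; evaluating the identity $\SP{f}{g} + \SP{(m+n)f}{h} = 0$ pointwise at $x_0$ yields $g(x_0) + \overline{(m+n)(x_0)}\,h(x_0) = 0$. Then for any $f' \in \Def(t_{m+n})$ the element $\SP{f'}{g} + \SP{t_{m+n}f'}{h} \in C_0(X)$ vanishes on the dense set $\reg(m) \cap \reg(n)$ (since there $t_{m+n}f' = (m+n)f'$), hence on all of $X$ by continuity, so $(g,h) \in \Graph(t_{m+n})^\bot$. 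The argument for $t_mt_n$ is identical with $m+n$ replaced by $mn$.

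The main obstacle I anticipate is the careful interplay with the hat-regularization: since $t_{m+n}f'$ is defined only via the continuous extension on $\reg(m+n)$, the identities read off from the test functions hold a priori only on $\reg(m) \cap \reg(n)$, and one must transfer them to global identities in $C_0(X)$ via continuity while simultaneously ensuring that the compactly supported test functions lie in the possibly much smaller domain $\Def(t_m + t_n)$ yet remain plentiful enough to separate perpendicular pairs of the larger graph $\Graph(t_{m+n})$.
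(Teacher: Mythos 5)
Your proof is correct and follows essentially the same route as the paper's: the inclusions in (1) come from comparing the continuous functions on the dense open set $\reg(m)\cap\reg(n)$, and (2) rests on evaluating against compactly supported test functions living in that set. The only cosmetic difference is that you identify $\Graph(t_m+t_n)^{\bot\bot}$ with $\Graph(t_{m+n})$ directly, whereas the paper runs the same pointwise computation on $\Def((t_mt_n)^*)$ to get $(t_mt_n)^*=t_{\ol{mn}}$ and then takes adjoints; by Proposition \ref{Graph_tadj} these are the same argument in two equivalent coordinates.
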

\begin{proof}
We  verify the statements about the product; the proof for the sum is similar.

(1): Let $f\in\Def(t_mt_n)$. Then $\reg(nf)$ and $\reg(m\widehat{nf})$ are dense and open in $X$, so is their intersection which is contained in $\reg(mnf)$. On this set, $mnf\equiv m\widehat{nf}$. Hence $\widehat{(mn)f}=[m\widehat{nf}]^\wedge\in E$, that is, $f\in\Def(t_{mn})$ and $t_mt_nf=t_{mn}f$.

(2): First we prove that $\Def(t_mt_n)^\bot=\{0\}$. Let $g\bot\Def(t_mt_n)$ and $x\in\reg(n)\cap\reg(m)$. Since the latter set is open and $X$ is a locally compact  Hausdorff space, there exists $f\in E$ such that its support is contained in this set and $f(x)\neq 0$. Clearly, $nf\in E$ and $mnf\in E$, hence $f\in\Def(t_mt_n)$. But $\ol{f(x)}g(x)=\SP{f}{g}(x)=0$, so $g(x)=0$. Since $\reg(n)\cap\reg(m)$ is dense, $g=0$. Thus, $t_mt_n$ is essentially defined. By (1), $(t_mt_n)^*\supseteq t_{mn}^*=t_{\ol{mn}}$, and $t_{\ol{mn}}$ is essentially defined by Theorem \ref{MultiplicationOperator__Essential_ssupp}, since $\reg(m)\cap\reg(n)\subseteq\reg(\ol{mn})$ is dense.

Now we show that $\Def((t_mt_n)^*)\subseteq\Def(t_{\ol{mn}})$ which in turn implies the last assertion $(t_mt_n)^{**}=t_{\ol{mn}}^*=t_{mn}$. Let $f\in\Def((t_mt_n)^*)$. Then there exists $g\in E$ such that $\SP{f}{t_mt_nh}=\SP{g}{h}$ for all $h\in\Def(t_mt_n)$. Arguing as above, for $x\in\reg(n)\cap\reg(m)$ there exists $h\in\Def(t_mt_n)$ with $h(x)\neq 0$. Thereore,  $\ol{f}mn\equiv \ol{g}$ on the dense set $\reg(m)\cap\reg(n)$. Hence $\widehat{\ol{mn}f}=g\in E$, that is, $f\in\Def(t_{\ol{mn}})$.
\end{proof}

\begin{rem}
Set $m(x):=e^{i/x}$ on $X=[0,1]$. Then  $t_m^*t_m\subsetneq t_{|m|^2}$.
\end{rem}

\begin{thm}\label{maintheorem}
Let $m:X\to\C$. Suppose  that $\reg(m)$ is dense in $X$. Then:
\begin{enumerate}
\item $t_m$ is normal and $t_m^*t_m$ is essentially self-adjoint.
\item $\Def(t_m^*t_m)$ is an essential core for $t_m$.
\item $\Range(1+t_m^*t_m)=\{g\in E | \forall x\in\singsuppr(m): g(x)=0\}$. In particular, $\Range(1+t_m^*t_m)$ is essential.
\end{enumerate}
\end{thm}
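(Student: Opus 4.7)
My plan is to reduce every statement to the multiplication calculus developed in Theorem \ref{MultiplicationOperator__Essential_ssupp} and Lemma \ref{AdditionMultiplication_CommutativeCase}. Throughout, I assume without loss of generality $m=\hat{m}$ by Lemma \ref{Domain_Equivalence__CommutativeCase}, and note that $\reg(|m|^2)\supseteq\reg(m)$ is dense so that those two results apply equally to $|m|^2$. For Part~(1) the identity $t_m^*=t_{\ol m}$ from Theorem \ref{MultiplicationOperator__Essential_ssupp} combined with Lemma \ref{AdditionMultiplication_CommutativeCase} gives $t_m^*t_m,\,t_mt_m^*\subseteq t_{|m|^2}$ with common orthogonal closure $t_{|m|^2}$; since $|m|^2$ is real, $t_{|m|^2}^*=t_{|m|^2}$, whence $(t_m^*t_m)^*=t_{|m|^2}=(t_m^*t_m)^{**}$, which is essential self-adjointness. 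To upgrade to the equality $t_m^*t_m=t_mt_m^*$ required by the definition of normality, I would identify both domains with $\Def(t_m)\cap\Def(t_{|m|^2}) = \Def(t_{\ol m})\cap\Def(t_{|m|^2})$. The first equality follows by applying uniqueness of continuous extensions to the identity $\ol m\cdot\widehat{mf}=|m|^2 f$ on $\reg(m)$; the second from the relation $\ol m f\cdot\widehat{mf}=\widehat{|m|^2 f}\cdot f$ on $\reg(m)$, which yields a continuous extension of $\ol m f$ at boundary points $x$ with $\widehat{mf}(x)\neq 0$, while at points with $\widehat{mf}(x)=0$ the equality $|\ol m f|=|\widehat{mf}|$ forces the extension to vanish.

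For Part~(2), $\Def(t_m^*t_m)$ is essential because it contains every $f\in C_c(X)$ with $\supp f\subseteq\reg(m)$, and such functions form an essential family in $E$ since $\reg(m)$ is dense. By the remark following Definition \ref{defcore} it then suffices to prove $(t_m\upharpoonright_{\Def(t_m^*t_m)})^*=t_m^*=t_{\ol m}$, the nontrivial direction being ``$\subseteq$''. I would mimic the second half of the proof of Theorem \ref{MultiplicationOperator__Essential_ssupp}: for $g$ in the left-hand adjoint domain with witness $h\in E$ satisfying $\SP{t_mf}{g}=\SP{f}{h}$ on $\Def(t_m^*t_m)$, I test against $f\in C_c(X)$ with $\supp f\subseteq\reg(m)$ and $f(x)\neq 0$ to extract $h(x)=\ol{m(x)}g(x)$ pointwise on $\reg(m)$; density of $\reg(m)$ then forces $\widehat{\ol m g}=h\in E$ and hence $g\in\Def(t_{\ol m})$.

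For Part~(3), the inclusion ``$\subseteq$'' is immediate from Lemma \ref{Domain_Equivalence__CommutativeCase}: for $f\in\Def(t_m^*t_m)\subseteq\Def(t_m)\cap\Def(t_{|m|^2})$ and $x\in\singsuppr(m)\subseteq X\setminus\reg(m)$ one has $f(x)=0$, and applying the same lemma to $t_{|m|^2}$ yields $(t_m^*t_m f)(x)=\widehat{|m|^2}(x)f(x)=0$ (using $\singsuppr(m)\cap\reg_\infty(|m|^2)=\emptyset$, since $|m|^2\to\infty$ is the same as $m\to\infty$ in $\ol{\C}$). For ``$\supseteq$'', given $g\in E$ vanishing on $\singsuppr(m)$, I construct the explicit candidate
\[
f(x):=\begin{cases} g(x)/(1+|m(x)|^2), & x\in\reg(m),\\ 0, & x\in X\setminus\reg(m),\end{cases}
\]
and verify in turn: continuity of $f$ on $X$ by case analysis on the partition $\reg(m)\cup\reg_\infty(m)\cup\singsuppr(m)\cup(X\setminus\ol{\reg(m)})$, where the boundary value $0$ is forced respectively by $|m|\to\infty$, by the hypothesis $g(x)=0$, and by the local vanishing of $f$; $f\in C_0(X)$ via $|f|\leq|g|$; $\widehat{mf}\in E$ by the same case analysis combined with the bound $|m|/(1+|m|^2)\leq 1/2$; and $\widehat{|m|^2 f}=g-f\in E$ via the identity $|m|^2 f=g-f$ on $\reg(m)$. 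The characterization from Part~(1) then gives $f\in\Def(t_m^*t_m)$ with $(1+t_m^*t_m)f=g$. Finally, $\singsuppr(m)$ is closed in $X$ (because $\reg_b(m)$ and $\reg_\infty(m)$ are relatively open in $\partial\reg(m)$), so $X\setminus\singsuppr(m)$ is open and contains the dense set $\reg(m)$; the example in Section \ref{orthognally} identifies the corresponding ideal as essential.

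The main technical obstacle is the case-by-case verification in Part~(3) that the preimage $f$ lies in $\Def(t_m^*t_m)$, together with the supporting domain characterization in Part~(1); all remaining steps reduce to routine applications of Lemma \ref{Domain_Equivalence__CommutativeCase} or direct quotations of the earlier results in this section.
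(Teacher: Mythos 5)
Your overall strategy coincides with the paper's: parts (1) and (2) are the paper's arguments (the core property in (2) is phrased via adjoints rather than graph orthocomplements, which is equivalent by the remark after Definition \ref{defcore}), and the explicit preimage $f=[g/(1+|m|^2)]^\wedge$ in (3) is exactly the paper's construction. There is, however, one genuinely flawed step, in the inclusion ``$\subseteq$'' of part (3). You compute $(t_m^*t_mf)(x)$ for $x\in\singsuppr(m)$ by applying the pointwise formula of Lemma \ref{Domain_Equivalence__CommutativeCase} to $t_{|m|^2}$, which requires $x\notin\reg_\infty(|m|^2)$, and you justify this by claiming $\singsuppr(m)\cap\reg_\infty(|m|^2)=\emptyset$ ``since $|m|^2\to\infty$ is the same as $m\to\infty$ in $\ol{\C}$''. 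That claim is false: $\reg(|m|^2)$ can be strictly larger than $\reg(m)$ because taking the modulus kills phase discontinuities, and membership in $\reg_\infty$ is a statement about continuous extensions on a whole neighbourhood, not about a single limit. For example, on $X=[0,1]$ take $m(t)=e^{{\ii}\theta(t)}/t$ where $\theta$ is continuous on $(0,1]\setminus\{1/n\}$ and oscillates essentially at each $1/n$ (say $\theta(t)=\sum_n\eta_n(t)\sin(1/(t-1/n))$ with bumps $\eta_n$ supported near $1/n$). Then $\reg(m)=(0,1]\setminus\{1/n\}$ is dense, each $1/n$ lies in $\singsuppr(m)$, and $0\in\singsuppr(m)$ as well, since every neighbourhood of $0$ contains some $1/n$ at which no continuous $\ol{\C}$-valued extension of $m$ exists; yet $|m|^2=1/t^2$ is continuous on all of $(0,1]$, so $0\in\reg_\infty(|m|^2)$. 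At such a point the formula $(t_{|m|^2}f)(x)=\widehat{|m|^2}(x)f(x)$ is simply not provided by the lemma.

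The conclusion you want is nevertheless true, and the repair is short: instead of routing through $t_{|m|^2}$, write $t_m^*t_mf=t_{\ol m}(t_mf)$ and apply the pointwise formula twice --- to $t_m$ at $x$ (legitimate since $\singsuppr(m)\cap\reg_\infty(m)=\emptyset$ by definition), giving $(t_mf)(x)=\hat m(x)f(x)=0$, and then to $t_{\ol m}$ at $x$ (legitimate since $\reg_\infty(\ol m)=\reg_\infty(m)$), giving $(t_{\ol m}t_mf)(x)=\ol{\hat m}(x)\cdot 0=0$. This is precisely what the paper does. Everything else in your write-up --- the domain identification $\Def(t_{\ol m}t_m)=\Def(t_m)\cap\Def(t_{|m|^2})=\Def(t_{\ol m})\cap\Def(t_{|m|^2})$ in (1), the testing against compactly supported functions in (2), the case analysis and the identity $\widehat{|m|^2f}=g-f$ in (3), and the closedness of $\singsuppr(m)$ --- is sound and matches the paper's level of rigour.
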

\begin{proof}
By Lemma \ref{Domain_Equivalence__CommutativeCase} we can assume  that $m=\hat{m}$. Then $t_m\in\oC(E)$ and $t_m^*=t_{\ol{m}}$ by Theorem \ref{MultiplicationOperator__Essential_ssupp}.

(1): Using Lemma \ref{AdditionMultiplication_CommutativeCase}(2) we get $(t_m^*t_m)^{**}=(t_{\ol{m}}t_m)^{**}=t_{|m|^2}$. Since  the latter is self-adjoint, $t_m^*t_m$ is essentially self-adjoint.

Now we prove that\, $t_{\ol{m}}t_m=t_mt_{\ol{m}}$\,. Since both operators are restrictions of $t_{|m|^2}$, it suffices to show that their domains coincide. By symmetry it is enough to prove that $\Def(t_{\ol{m}}t_m)\subseteq\Def(t_mt_{\ol{m}})$. Let $f\in\Def(t_{\ol{m}}t_m)$, that is, $\widehat{mf}\in E$ and $[\ol{m}\widehat{mf}]^\wedge\in E$. We have to show that $\widehat{\ol{m}f}\in E$ and $[m\widehat{\ol{m}f}]^\wedge\in E$. Clearly, $[m\widehat{\ol{m}f}]^\wedge=[|m|^2f]^\wedge=[\ol{m}\widehat{mf}]^\wedge\in E$, since  these functions coincide on $\reg(m)$. Further,
\begin{align*}
|\widehat{mf}|^2 &= \SP{t_mf}{t_mf} = \SP{t_{\ol{m}}t_mf}{f} = \ol{[|m|^2f]^\wedge}f.
\end{align*}
Hence $[|m|^2f]^\wedge(x)=0$ implies that $\widehat{mf}(x)=0$ for $x\in X$. Using this fact it follows that the function
\begin{align*}
h(x) &:= \begin{cases}
\ol{\widehat{mf}}(x)\left(\widehat{|m|^2f}(x)\right)^2/\left|\widehat{|m|^2f}(x)\right|^2 &,\text{if } \widehat{|m|^2f}(x) \neq 0,\\
0 &,\text{if } \widehat{|m|^2f}(x) = 0
\end{cases}
\end{align*}
belongs to  $E$ and coincides with $\ol{m}f$ on $\reg(m)$. Thus $[\ol{m}f]^\wedge\in E$.

(2): We  show that $\Graph(t_m\upharpoonright_{\Def(t_m^*t_m)})^\bot\subseteq\Graph(t_m)^\bot$. Assume that $\SP{(g,h)}{(f,t_mf)}=0$ for all $f\in\Def(t_{\ol{m}}t_m)$. Let $x\in\reg(m)$. Since $X$ is a locally compact Hausdorff space, there exists $f_x\in C_0(X)$ with support contained in $\reg(m)$ and $f_x(x)\neq 0$. Clearly, $f_x\in\Def(t_{\ol{m}}t_m)$. Then $\ol{g(x)}f_x(x)=-\ol{h(x)}t_mf_x(x)=-\ol{h(x)}m(x)f_x(x)$, hence $\ol{g}\equiv -\ol{h}m$ on $\reg(m)$. Now for all $f\in\Def(t_m)$ we have $\SP{(g,h)}{(f,t_mf)}\equiv \ol{g}f+\ol{h}mf\equiv 0$ on $\reg(m)$, that is, $(g,h)\bot\Graph(t_m)$, since $\reg(m)$ is dense.

(3): Let $x\in\singsuppr(m)=\singsuppr(\ol{m})$ and $f\in\Def(t_{\ol{m}}t_m)$. By Theorem \ref{Domain_Equivalence__CommutativeCase},  $f(x)=0$, $(t_mf)(x)=0$ and $(t_{\ol{m}}t_mf)(x)=0$. Thus  $((1+t_m^*t_m)f)(x)=0$ which proves one inclusion. Conversely, let $g\in E$ with $g(x)=0$ for all $x\in\singsuppr(m)$. The functions $[1/(1+|m|^2)]^\wedge$, $[m/(1+|m|^2)]^\wedge$ and $[|m|^2/(1+|m|^2)]^\wedge$ are bounded and continuous on $\reg(m)\cup\reg_\infty^\infty(m)$. Hence, setting $f:=[g/(1+|m|^2)]^\wedge$, we have
\begin{align*}
f \in E, \quad \widehat{mf} = \left[\frac{gm}{1+|m|^2}\right]^\wedge\in E, \quad [\ol{m}\widehat{mf}]^\wedge = \left[\frac{g|m|^2}{1+|m|^2}\right]^\wedge\in E,
\end{align*}
by using that $g\equiv 0$ on $\singsuppr(m)$. That is, $f\in\Def(t_m^*t_m)$ and $(1+t_m^*t_m)f\equiv g$ on $\reg(m)$, so that $g\in\Range(1+t_m^*t_m)$, since $\reg(m)$ is dense in $X$.
\end{proof}

The next theorem is one of our main results in this Section. It says that all essentially defined orthogonally closable operators  on $E$ are  multiplication operators.

\begin{thm}\label{operatorfunctionstm}
If\, $t\in\oCl(E)$, then there is a function $m:X\to\C$ such that $t=t_m$.
\end{thm}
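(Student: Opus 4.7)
The plan is to extract a canonical multiplier $m:X\to\C$ from the $C_0(X)$-linearity of $t$, prove $t\subseteq t_m$, and then use the essentiality of $\Def(t^*)$ (supplied by $t\in\oCl(E)$) to upgrade the inclusion to an identification of orthogonal closures.

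First I would pin down where $m$ is forced. Let $U:=\{x\in X:\exists f\in\Def(t),\,f(x)\neq 0\}$; this is open by continuity, and dense in $X$: otherwise $X\setminus\overline{U}$ would support a nonzero $g\in C_0(X)$, but then every $f\in\Def(t)$ vanishes on $\supp(g)$, so $\SP{g}{f}=\overline{g}f\equiv 0$, contradicting $\Def(t)^\bot=\{0\}$. For $x\in U$ I would pick $f\in\Def(t)$ with $f(x)\neq 0$ and put $m(x):=(tf)(x)/f(x)$. The $C_0(X)$-linearity $t(fg)=(tf)g$, applied to both $t(fg)$ and $t(gf)$ and using $fg=gf\in\Def(t)$, yields $(tf)(x)g(x)=(tg)(x)f(x)$ for any other $g\in\Def(t)$ with $g(x)\neq 0$, so $m(x)$ is independent of the choice. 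Extending $m$ by $0$ on $X\setminus U$, the local formula $m=(tf)/f$ on any open set where a fixed $f\in\Def(t)$ is zero-free shows $m$ is continuous on $U$, so $U\subseteq\reg(m)$ and $\reg(m)$ is dense in $X$. Theorem \ref{MultiplicationOperator__Essential_ssupp} then gives $t_m\in\oC(E)$ and $t_m^*=t_{\overline{m}}$.

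Next I would verify $t\subseteq t_m$. For $f\in\Def(t)$ the identity $(tf)(x)=m(x)f(x)$ holds on $U$ by construction; on $X\setminus U$ every element of $\Def(t)$ vanishes, so $mf\equiv 0$ there as well. Thus $tf\in E$ furnishes a continuous $\C$-valued extension to all of $X$ of $mf$ restricted to $\reg(mf)\supseteq U$, which by density of $\reg(mf)$ forces $\reg(mf)\cup\reg_b(mf)=X$ and $\widehat{mf}=tf$. Hence $f\in\Def(t_m)$ and $t_mf=tf$.

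The main obstacle is the reverse direction: $\Def(t_m)$ may a priori be strictly larger than $\Def(t)$, so pointwise information alone is insufficient. This is precisely where $\Def(t^*)^\bot=\{0\}$ is used. The adjoint $t^*$ is itself essentially defined and $C_0(X)$-linear, so the same construction applied to $t^*$ yields a multiplier $m'$ on a dense open set $U^*$ together with an inclusion $t^*\subseteq t_{m'}$. Evaluating the duality $\SP{tf}{g}=\SP{f}{t^*g}$ pointwise at any $x\in U\cap U^*$ where both an $f\in\Def(t)$ and a $g\in\Def(t^*)$ are nonzero forces $m'(x)=\overline{m(x)}$, so $m'\simeq\overline{m}$ and $t_{m'}=t_{\overline{m}}$ by Lemma \ref{Basics_EquivalenceRelation}. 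Thus $t^*\subseteq t_{\overline{m}}$; combined with $t^*\supseteq t_m^*=t_{\overline{m}}$ (which follows from $t\subseteq t_m$ by taking adjoints), this gives $t^*=t_{\overline{m}}$. Theorem \ref{Basics_adjointable}(1a) together with Theorem \ref{MultiplicationOperator__Essential_ssupp} then yields $t^{**}=(t_{\overline{m}})^*=t_m$; in particular $\Graph(t)^{\bot\bot}=\Graph(t_m)$, identifying $t_m$ as the orthogonal closure of $t$ and giving the claimed equality (literally so when $t$ is already orthogonally closed).
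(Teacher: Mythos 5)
Your proof is correct and follows essentially the same route as the paper: extract $m$ pointwise on the dense open set where the domain elements do not all vanish, show $\reg(m)$ is dense so that Theorem \ref{MultiplicationOperator__Essential_ssupp} applies, establish $t\subseteq t_m$ and $t^*\subseteq t_{\overline{m}}$, and close the argument by taking adjoints. The only cosmetic differences are that you certify well-definedness of $m$ via the module-linearity identity $(tf)g=(tg)f$ rather than via the adjoint pairing $\SP{g}{tf}=\SP{t^*g}{f}$ used in the paper, and that you are slightly more explicit than the paper in observing that the argument literally produces $t^{**}=t_m$, so the stated equality $t=t_m$ is obtained when $t$ is orthogonally closed.
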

\begin{proof}
Let $t\in\oCl(E)$. We abbreviate $\Def:=\Def(t)$ and $\Def_*:=\Def(t^*)$. We set
\begin{align*}
\mathcal{O} := \cup_{f\in\Def}\mathcal{O}_f, \quad \mathcal{O}_* := \cup_{f\in\Def_*}\mathcal{O}_f \quad\text{with}\quad \mathcal{O}_f := \{x\in X|f(x)\neq 0\}.
\end{align*}
Further, since $\Def$ and $\Def_*$ are essential,\, $\cO$ and $\cO_*$ are dense in $X$. Hence $\cO':=\cO\cap \cO_*$ is also dense. For $x\in X$, we have
\begin{align*}
\ol{g(x)}(tf)(x) &= \SP{g}{tf}(x) = \SP{t^*g}{f}(x) = \ol{(t^*g)(x)}f(x) \quad {\rm for}\quad f\in\Def,g\in\Def_*.
\end{align*}
If $x\in \cO'$, there are $f\in\Def$ and $g\in\Def_*$ such that $f(x)\neq 0$ and $g(x)\neq 0$. Then
\begin{align*}
m(x) &:= (tf)(x)/f(x) = \ol{(t^*g)(x)}/\ol{g(x)}.
\end{align*}
In particular, this shows that $m(x)$ is independent of the chosen functions $f$ and $g$  and that $m$ is continuous on $\cO'$. Now let $f\in\Def$ and $x\in \cO'$. Then there is a  $g\in\Def_*$ such that $g(x)\neq 0$, so $(tf)(x)=m(x)f(x)$. Similarly, $(t^*g)(x)=\ol{m(x)}g(x)$ for $g\in\Def_*$ and $x\in \cO'$.

We now extend $m$ arbitrarily to a function defined on the whole set $X$. It follows that $\widehat{mf}\in E$ for $f\in\Def$ and $\widehat{\ol{m}g}\in E$ for $g\in\Def_*$. Then we have $f\in\Def(t_m)$ and $t_mf=tf$\, for $f\in\Def$. Likewise,  $g\in\Def(t_{\ol{m}})$ and $t_{\ol{m}}g=t^*g$ for $g\in\Def_*$. Thus, $t\subseteq t_m$ and $t^*\subseteq t_{\ol{m}}$. Therefore, $t_m=(t_{\ol{m}})^*\subseteq t^{**}=t\subseteq t_m$, that is, $t_m=t$.
\end{proof}

The next example gives a normal operator $t_m$ such that $\Def(t_m)\neq\Def(t_{\ol{m}})$.

\begin{exa}\label{Multiplier__different_Dm_Dm*}
Set $X:=[0,1]$ and define $m,f:X\to\C$ by
\begin{align*}
m(x) &:= \begin{cases}
          e^{i/x}/x &, x\neq 0\\
          0 & ,x=0
         \end{cases}, \quad f(x) := \begin{cases}
          e^{-i/x}x &, x\neq 0\\
          0 & ,x=0
         \end{cases}.
\end{align*}
Then $\reg(m)=(0,1]$ and $f\in C_0(X)$. Further, $\widehat{mf}=1$, so  $f\in\Def(t_m)$. On the other side, $(\ol{m}f)(x)=e^{-2i/x}$ for $x\in(0,1]$, so $\reg(\widehat{\ol{m}f})=(0,1]$ and $f\notin\Def(t_{\ol{m}})$. This proves   that\, $\Def(t_m)\neq\Def(t_{\ol{m}})$. By Theorem \ref{maintheorem}, the operator $t_m$ is normal.
\end{exa}

  \section{Graph regular operators}\label{graphregular}

  \subsection{Definition and basics on graph regular operators}\label{DefBasics_GraphREgular}

Graph regular operators are the  most important new class of operators introduced in  this paper. 
\begin{dfn}\label{defgraphregular}
An operator $t:E\to F$ is called \\
$\bullet$\,  \emph{graph regular} if\, $t$ is essentially defined and orthogonally closed and its graph $\Graph(t)$ is orthogonally complemented in\, $E\oplus F$, \\
$\bullet$\,  \emph{regular} if\, $t$ is closed, $\Def(t)$ is dense in $E$, $\Def(t^*)$ is dense in $F$, and $\Range(1+t^*t)$ is dense in $E$. 
\end{dfn}
The preceding is the definition of a regular operator given  in \cite[p. 96]{lance}.
Each regular operator is graph regular by \cite[Theorem 9.3]{lance}.

By an equivalent definition, an operator $t:E\to F$ is graph regular if $t$ is closed, $\Def(t)$ and $\Def(t^*)$ are essential in $E$ and $F$, respectively, and  $\Range(1+t^*t)$ and $\Range(1+tt^*)$ are dense in $E$ and $F$,  respectively. (The equivalence to Definition \ref{defgraphregular} is easily verified by using some arguments from the proof of Theorem \ref{Basics__graph_regular}(1) below.)

We denote by $\grReg(E,F)$  the set of all  essentially defined graph regular operators and by $\Reg(E,F)$ the set of regular operators from $E$ into $F$. Let us abbreviate $\grReg(E):=\grReg(E,E)$ and $\Reg(E):=\Reg(E,E)$. 

A number of basic properties of graph regular operators are collected in the following theorem.

\begin{thm}\label{Basics__graph_regular}
\begin{enumerate}
\item  For $t\in\oCl(E,F)$ the following statements are equivalent:
\begin{enumerate}
\item $t\in\grReg(E,F)$.
\item $\Graph(t)\oplus v\Graph(t^*)=E\oplus F$.
\item $\Range(1+t^*t)=E$ and $\Range(1+tt^*)=F$.
\end{enumerate}
\item If $t\in\oC(E,F)$ and $t^*\in\grReg(F,E)$, then $t\in\grReg(E,F)$.
\item If $t\in\grReg(E,F)$, then
\begin{enumerate}
\item $\Def(t^*t)$ is an essential core for $t$.
\item $t^*\in\grReg(F,E)$.
\end{enumerate}
\end{enumerate}
\end{thm}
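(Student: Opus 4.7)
The plan is to prove (1) first, then deduce (2) and (3)(b) quickly from (1), and to handle (3)(a) via Lemma \ref{technic_core}.

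For (1), Proposition \ref{Graph_tadj}(1) identifies $v\Graph(t^*)=\Graph(t)^\bot$, so condition (b) amounts to $\Graph(t)\oplus\Graph(t)^\bot=E\oplus F$. Combined with $t\in\oCl(E,F)$ this forces orthogonal closedness: any $z\in\Graph(t)^{\bot\bot}$ decomposes as $z=g+g^\bot$ with $g^\bot\in\Graph(t)^{\bot\bot}\cap\Graph(t)^\bot=\{0\}$. Hence (a)$\Leftrightarrow$(b). For (b)$\Rightarrow$(c), decompose $(x,0)\in E\oplus F$ as $(e,te)+(-t^*f,f)$ with $e\in\Def(t)$, $f\in\Def(t^*)$; the second coordinate forces $f=-te$, so $e\in\Def(t^*t)$ and $(1+t^*t)e=x$, giving $\Range(1+t^*t)=E$. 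Decomposing $(0,y)$ symmetrically yields $\Range(1+tt^*)=F$. For (c)$\Rightarrow$(b), superpose: set $e_x=(1+t^*t)^{-1}x$, $f_x=-te_x$ and $f_y=(1+tt^*)^{-1}y$, $e_y=t^*f_y$. Then $(x,y)=(e_x+e_y,\,t(e_x+e_y))+(-t^*(f_x+f_y),\,f_x+f_y)$, and orthogonality of the two summands is automatic from $v\Graph(t^*)=\Graph(t)^\bot$.

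Parts (2) and (3)(b) follow mechanically. For (2), $t\in\oC(E,F)$ gives $t^{**}=t$ by Theorem \ref{Basics_adjointable}(1)(b), so (1)(c) applied to $t^*\in\grReg(F,E)$ reads $\Range(1+tt^*)=F$ and $\Range(1+t^*t)=E$, which is (1)(c) for $t$; since $t\in\oCl(E,F)$, (1) yields $t\in\grReg(E,F)$. For (3)(b), note that $t\in\grReg(E,F)$ forces $t^{**}=t$, while $t^*$ is orthogonally closed by Proposition \ref{Graph_tadj} and both $\Def(t^*)$ and $\Def(t^{**})=\Def(t)$ are essential; hence $t^*\in\oC(F,E)$, and applying (2) to $t^*$ (whose adjoint $t^{**}=t$ is graph regular) gives $t^*\in\grReg(F,E)$.

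The substantive step is (3)(a). Let $F_0:=\Graph(t\upharpoonright_{\Def(t^*t)})\subseteq G:=\Graph(t)$; the inclusion $F_0^\bot\supseteq G^\bot$ is trivial. The key computation is that for $y\in\Def(t)$ and $x\in\Def(t^*t)$ one has $tx\in\Def(t^*)$, so the adjoint relation gives
\[
\SP{(y,ty)}{(x,tx)}=\SP{y}{x}+\SP{ty}{tx}=\SP{y}{x}+\SP{y}{t^*tx}=\SP{y}{(1+t^*t)x}.
\]
If $(y,ty)\in F_0^\bot\cap G$, this pairing vanishes for all $x\in\Def(t^*t)$; since $\Range(1+t^*t)=E$ by (1)(c), we conclude $y=0$. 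Lemma \ref{technic_core} applied to $F_0\subseteq G$ in the ambient module $E\oplus F$ then yields $F_0^\bot\cap(G\oplus G^\bot)=G^\bot$, and graph regularity of $t$ gives $G\oplus G^\bot=E\oplus F$, so $F_0^\bot=G^\bot$, which is the essential-core condition of Definition \ref{defcore}. The only real obstacle is spotting the identity above: the graph inner product collapses to a pairing against $(1+t^*t)x$, which is precisely where the range hypothesis (c) bites.
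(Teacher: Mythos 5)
Your proposal is correct and follows essentially the same route as the paper: the identification $v\Graph(t^*)=\Graph(t)^\bot$ for (a)$\Leftrightarrow$(b), the explicit graph decomposition of $(x,0)$ and $(0,y)$ for (b)$\Leftrightarrow$(c), deducing (2) and (3)(b) from $t=t^{**}$, and proving (3)(a) via the pairing $\SP{(y,ty)}{(x,tx)}=\SP{y}{(1+t^*t)x}$ together with Lemma \ref{technic_core}. The only differences are cosmetic (you combine the two range decompositions into one formula and invoke $(1+t^*t)^{-1}$ where the paper works with the range directly).
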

\begin{proof}
(1a) $\Rightarrow$ (1b) follows from the relation $v\Graph(t^*)=\Graph(t)^\bot$ by Proposition \ref{Graph_tadj}. 

(1b) $\Rightarrow$ (1a): Since $\Graph(t)\subseteq\Graph(t)^{\bot\bot}$, (1b) clearly implies that $\Graph(t)=\Graph(t)^{\bot\bot}$. Hence $t$ is orthogonally closed and graph regular. 

(1b) $\Rightarrow$ (1c): If $x\in E$, then $(x,0)\in\Graph(t)\oplus v\Graph(t^*)$, so there are $y\in\Def(t)$, $z\in\Def(t^*)$ such that $x=y+t^*z$, $0=ty-z$. Then $y\in\Def(t^*t)$ and $x=(1+t^*t)y\in\Range(1+t^*t)$. In the same way one shows that $F=\Range(1+tt^*)$.

(1c) $\Rightarrow$ (1b): Since $\Range(1+t^*t)=E$, we have $E\oplus 0=\Range(1+t^*t)\oplus 0\subseteq\Graph(t)\oplus v\Graph(t^*)$. For $x\in\Def(t^*t)$, we set $y:=tx\in\Def(t^*)$. Then $$((1+t^*t)x,0)=(x+t^*y,tx-y)\in\Graph(t)\oplus v\Graph(t^*).$$ Similarly,  $0\oplus F=0\oplus\Range(1+tt^*)\subseteq\Graph(t)\oplus v\Graph(t^*)$. Thus $E\oplus F\subseteq\Graph(t)\oplus v\Graph(t^*)$\, which yields \,
$\Graph(t)\oplus v\Graph(t^*)=E\oplus F$.

(2) is obtained from $t=t^{**}$ and (1).

(3a): We have $\SP{x}{y}+\SP{tx}{ty}=\SP{x}{(1+t^*t)y}$ for $x\in\Def(t)$ and $y\in\Def(t^*t)$, so that $\Graph(t\upharpoonright_{\Def(t^*t)})^\bot\cap\Graph(t)=\{0\}$. With Lemma \ref{technic_core} it follows
\begin{align*}
 \Graph(t)^\bot &= \Graph(t\upharpoonright_{\Def(t^*t)})^\bot\cap(\Graph(t)\oplus v\Graph(t^*)) = \Graph(t\upharpoonright_{\Def(t^*t)})^\bot\cap E = \Graph(t\upharpoonright_{\Def(t^*t)})^\bot,
\end{align*}
so $\Def(t^*t)$ is an essential core for $t$. 

(3b): This follows from (2), since $t$ and $t^*$ are orthogonally closed.
\end{proof}

A special situation is treated in the following example.

\begin{exa}
Let $\Hil$ be a separable Hilbert space,  $\CAlg{A}$ a $C^*$-algebra  of compact operators acting on $\Hil$, and $E$ a Hilbert $\CAlg{A}$-module. Then we have  $$\oC(E)=\grReg(E)=\Reg(E).$$ 
Indeed, since $\Reg(E)\subseteq \grReg(E)\subseteq\oC(E)$ by definition, it suffices to prove that $\oC(E)\subseteq \Reg(E)$.  Let $t\in \oC(E)$. Since  all closed submodules of any Hilbert $C^*$-module over a  $C^*$-algebra of compact  operators  are orthogonally complemented \cite{mag}, all essential submodules are dense. Hence $t$ and $t^*$ are densely defined on $E$. Further, since $t$ is closed,   $t$ is  semiregular in the sense of  \cite{pal}. As shown in \cite[Proposition 5.1]{pal}, for any Hilbert $\CAlg{A}$-module $E$ of a $C^*$-algebra of compact operators  semiregular operators are always regular. Thus,  $t\in \Reg(E)$.

In the very special case $E=\CAlg{A}=\Komp(\Hil)$  we have $\Reg(E)=\Abg(\Hil)$, since then the regular operators on $E$ are the  affiliated operators with $\CAlg{A}$  and  $\Abg(\Hil)$ is the set of affiliated operators  with $\mathcal{K}(\Hil)$ as noted in \cite{wor91}.
\end{exa}

\begin{prop}\label{GraphRegular_Addition_Composition}
Let\, $t\in\grReg(E,F)$ and $q\in\Adj(E,F)$. Suppose that $r\in\Adj(G,E)$ and  $s\in\Adj(F,G)$ are invertible with $r^{-1}\in\Adj(E,G)$ and  $s^{-1}\in\Adj(G,F)$. Then the operators $t+q$, $tr$ and $st$ are essentially defined and graph regular.
\end{prop}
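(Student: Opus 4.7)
The plan is to split each of the three assertions into a structural step and a surjectivity step. For the structural step, the $\oC$-version of Proposition~\ref{OrthoClosable__Addition_Composition} places each of $t+q$, $tr$, $st$ in the appropriate $\oC$-class, while Proposition~\ref{Adjoint__Addition_Compositon}(2)--(3) and Lemma~\ref{Composition__orthoClosed_Adjoint} identify the adjoints as $t^{*}+q^{*}$, $r^{*}t^{*}$ and $t^{*}s^{*}$ respectively (using everywhere-definedness of $q$, $q^{*}$ and invertibility of $r$, $s$ with adjointable inverses). Graph regularity is then to be verified via the range criterion of Theorem~\ref{Basics__graph_regular}(1)(c): surjectivity of $1+T^{*}T$ and $1+TT^{*}$ onto the ambient modules.

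For the right composition $tr$ the decisive step is the sandwich identity
\[
1_{G}+(tr)^{*}(tr)=r^{*}\bigl((rr^{*})^{-1}+t^{*}t\bigr)r
\]
on $\Def((tr)^{*}(tr))=r^{-1}\Def(t^{*}t)$, which rests on the elementary computation $r^{*}(rr^{*})^{-1}=r^{-1}$. Since $r$ and $r^{*}$ are adjointable bijections between $G$ and $E$, surjectivity of the left side onto $G$ is equivalent to that of $c+t^{*}t\colon\Def(t^{*}t)\to E$ with $c:=(rr^{*})^{-1}\in\Adj(E)_{+}$ positive invertible. This is obtained from $\Range(1+t^{*}t)=E$ together with the bounded adjointable resolvent $a_{t}:=(1+t^{*}t)^{-1}\in\Adj(E)_{+}$ (available because of the coercive estimate $\SP{(1+t^{*}t)x}{x}\ge\SP{x}{x}$) via the bounded factorization $(c+t^{*}t)a_{t}=1+(c-1)a_{t}\in\Adj(E)$, whose bijectivity in the $C^{*}$-algebra $\Adj(E)$ follows from $c\ge\|c^{-1}\|^{-1}>0$. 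The dual range condition $\Range(1+(tr)(tr)^{*})=F$ is handled analogously from $(tr)(tr)^{*}=t(rr^{*})t^{*}$. The left composition $st$ is then reduced to the right composition applied to the adjoint: by Theorem~\ref{Basics__graph_regular}(2) and~(3b), it suffices that $(st)^{*}=t^{*}s^{*}\in\grReg$, and the latter is the right composition of $t^{*}\in\grReg(F,E)$ with the invertible adjointable $s^{*}\in\Adj(G,F)$.

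For the additive case $t+q$ I introduce the adjointable bijection $U_{q}\colon E\oplus F\to E\oplus F$, $U_{q}(x,y):=(x,y+qx)$, with adjointable inverse $U_{-q}$, and check directly that $U_{q}\Graph(t)=\Graph(t+q)$ and $(U_{q}^{*})^{-1}v\Graph(t^{*})=v\Graph((t+q)^{*})$. Theorem~\ref{Basics__graph_regular}(1)(b) is then to be obtained by transporting the decomposition $E\oplus F=\Graph(t)\oplus v\Graph(t^{*})$ through these two \emph{distinct} bijections to the required decomposition for $t+q$. The main obstacle is precisely that $U_{q}$ is not unitary: orthogonality is not automatically preserved, so one cannot conjugate the decomposition and must instead verify $\Range(1+(t+q)^{*}(t+q))=E$ by a bounded-perturbation argument. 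Expanding $(t+q)^{*}(t+q)=t^{*}t+t^{*}q+q^{*}t+q^{*}q$ on its natural domain --- which may differ from $\Def(t^{*}t)$, since $q$ need not map $\Def(t)$ into $\Def(t^{*})$, so the domain tracking is the delicate point --- the desired range identity is deduced from $\Range(1+t^{*}t)=E$, the adjointability of $q,q^{*}$ and the bounded resolvent $a_{t}\in\Adj(E)$.
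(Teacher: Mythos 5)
Your strategy is genuinely different from the paper's, which writes each of the three graphs as the closed range of an explicit adjointable operator (a composition of the adjointable projection $p$ onto $\Graph(t)$ with maps built from the coordinate projections $p_E,p_F$ and from $q$, $r^{-1}$, $s$) and then invokes \cite[Theorem 3.2]{lance} to get orthogonal complementation in one stroke for all three cases. Your range-criterion route has two genuine gaps. First, for $tr$ only one of the two required conditions is a sandwich: $1_G+(tr)^*(tr)=r^*\bigl((rr^*)^{-1}+t^*t\bigr)r$ is fine (and your invertibility claim for $1+(c-1)a_t$ is correct, although this operator is not self-adjoint, so one should pass to $1+a_t^{1/2}(c-1)a_t^{1/2}\geq\min(1,\|c^{-1}\|^{-1})>0$ via $\sigma(xy)\cup\{0\}=\sigma(yx)\cup\{0\}$). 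But the dual condition reads $1_F+(tr)(tr)^*=1_F+t(rr^*)t^*$ with the invertible factor sitting \emph{between} $t$ and $t^*$; there is no identity of the form $s(\cdot)s^*$ here, so ``handled analogously'' is not an argument, and both conditions are genuinely needed --- the first example in Subsection \ref{matrices} exhibits an orthogonally closed $t$ with $\Range(1+t^*t)=E$ but $\Range(1+tt^*)\neq F$. The condition can be rescued, e.g.\ by verifying Theorem \ref{Basics__graph_regular}(1b) directly: with $e:=rr^*$, the ansatz $u=u_0+\delta$, $z=z_0-t\delta$, where $(u_0,tu_0)=p(a,b)$ and $(-t^*z_0,z_0)=(1-p)(a,b)$, reduces the decomposition of $(a,b)$ to solving $(1+et^*t)\delta=(e-1)t^*z_0$ in $\Def(t^*t)$, which is possible because $(1+et^*t)a_t=e+(1-e)a_t$ is invertible in $\Adj(E)$ --- but that is a different argument from the one you propose.

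Second, the additive case is not proved at all. You correctly identify the obstruction ($U_q$ is not unitary, and $\Def((t+q)^*(t+q))$ need not contain $\Def(t^*t)$ because $qx$ need not lie in $\Def(t^*)$), but the key assertion $\Range(1+(t+q)^*(t+q))=E$ is then merely announced as ``deduced from'' a list of ingredients, with no argument. Writing out the decomposition (1b) for $t+q$ via the projection $p$ leads to a $2\times 2$ operator system on $E\oplus F$ with matrix $1+\left(\begin{matrix} b_t^*q & -a_tq^*\\ a_{t^*}q & b_tq^*\end{matrix}\right)$, whose invertibility is not at all obvious; this is exactly the difficulty the paper's argument bypasses uniformly. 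I recommend adopting that route: $\Graph(t+q)=\Range((p_E,p_F+qp_E)p)$, $\Graph(tr)=\Range((r^{-1}p_E,p_F)p)$ and $\Graph(st)=\Range((p_E,sp_F)p)$ are ranges of adjointable operators, and they are closed because the three operators are orthogonally closed by Proposition \ref{OrthoClosable__Addition_Composition}; then \cite[Theorem 3.2]{lance} finishes the proof.
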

\begin{proof}
Let $p_E$ and $p_F$ denote the projections from $E\oplus F$ onto $E$ and $F$, respectively. Clearly, $t+q$, $tr$, and $st$ are essentially defined and orthogonally closed by Proposition \ref{Adjoint__Addition_Compositon}. In particular, their graphs are closed. Since $t$ is graph regular, $\Graph(t)$ is orthogonally complemented, so there is a projection $p\in\Adj(E\oplus F)$ with $\Range(p)=\Graph(t)$. We now obtain
\begin{align*}
\Graph(t+q) &=\{(x,tx+qx)|x\in\Def(t)\} = \{(p_Epv,p_Fpv+qp_Epv)|v\in E\oplus F\}\\
&= \Range((p_E,p_F+qp_E)p),\\
\Graph(tr) &= \{(r^{-1}x,tx)|x\in\Def(t)\} = \{(r^{-1}p_Epv,p_Fpv)|v\in E\oplus F\}\\
&= \Range((r^{-1}p_E,p_F)p),\\
\Graph(st) &= \{(x,stx)|x\in\Def(t)\} = \{(p_Ev,sp_Fpv)|v\in E\oplus F\}\\
&= \Range((p_E,sp_F)p).
\end{align*}
Thus the closed subspaces $\Graph(t+q)$, $\Graph(tr)$, and $\Graph(st)$ are ranges of adjointable operators, hence they  are orthogonally complemented by \cite[Theorem 3.2]{lance}.
\end{proof}

The next lemma describes a cases where graph regularity implies regularity.

\begin{lem}
If $t\in\grReg(E,F)$,\, $\Range(t)\subseteq\ol{\Def(t^*)}$ and $\Range(t^*)\subseteq\ol{\Def(t)}$\,, then $t\in\Reg(E,F)$.
\end{lem}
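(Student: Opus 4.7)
The plan is to deduce the four defining conditions of regularity—closedness of $t$, density of $\Def(t)$ in $E$, density of $\Def(t^*)$ in $F$, and density of $\Range(1+t^*t)$ in $E$—directly from graph regularity and the two hypothesized range-containments.

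First, closedness of $t$ is free: graph regularity includes that $t$ is orthogonally closed, and every orthogonally closed submodule of $E\oplus F$ is closed (as noted after Definition of essential/orthogonally closed/orthogonally complemented submodules). Likewise, $\Range(1+t^*t)=E$ is a direct consequence of Theorem \ref{Basics__graph_regular}(1), so it is in particular dense in $E$.

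The core step is the density of the domains. By Theorem \ref{Basics__graph_regular}(1b), graph regularity is equivalent to
\[
\Graph(t)\oplus v\Graph(t^*) = E\oplus F.
\]
Decomposing $(x,0)\in E\oplus F$ in this sum as in the proof of (1b)$\Rightarrow$(1c), one finds $y\in\Def(t)$ and $z\in\Def(t^*)$ with $x=y+t^*z$ and $ty=z$; decomposing $(0,w)$ gives analogously $F = \Def(t^*)+\Range(t)$. Hence
\[
E \;=\; \Def(t) + \Range(t^*), \qquad F \;=\; \Def(t^*) + \Range(t).
\]
Now the hypothesis $\Range(t^*)\subseteq\ol{\Def(t)}$ forces $E\subseteq \Def(t)+\ol{\Def(t)}\subseteq\ol{\Def(t)}$, so $\Def(t)$ is dense in $E$. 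Symmetrically, $\Range(t)\subseteq\ol{\Def(t^*)}$ yields $\Def(t^*)$ dense in $F$.

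Combining these four facts shows $t\in\Reg(E,F)$ in the sense of Definition \ref{defgraphregular}. No real obstacle arises here: the proof is essentially bookkeeping on top of the algebraic decomposition $E\oplus F=\Graph(t)\oplus v\Graph(t^*)$ provided by graph regularity, with the hypotheses serving precisely to upgrade the essentiality of $\Def(t)$ and $\Def(t^*)$ to genuine norm-density.
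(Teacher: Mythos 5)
Your proof is correct and follows essentially the same route as the paper: the paper writes $E=\Range(1+t^*t)\subseteq\Def(t)+\Range(t^*)\subseteq\ol{\Def(t)}$, which is exactly your decomposition of $E\oplus F$ into $\Graph(t)\oplus v\Graph(t^*)$ read off on the first component, and then handles $\Def(t^*)$ by applying the same argument to $t^*\in\grReg(F,E)$. The only cosmetic difference is that you verify closedness and density of $\Range(1+t^*t)$ explicitly, which the paper treats as immediate from graph regularity.
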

\begin{proof}
We have to prove that $\Def(t)$ and $\Def(t^*)$ are dense in $E$ and $F$, respectively. For $\Def(t)$ this follows from the relations $$E=\Range(1+t^*t)\subseteq\Def(t)+\Range(t^*)\subseteq\ol{\Def(t)}.$$ 
Since $t^*\in\grReg(F,E)$ by Theorem \ref{Basics__graph_regular}, we can replace $t$ by $t^*$ in the preceding and obtain the density of $\Def(t^*)$.
\end{proof}

  \subsection{The $(a,a_*,b)$-transform}\label{Section_aabTransform}

In this section we establish a one-to-one correspondence between graph regular operators  and   certain triples  of adjointable operators. As noted in Remark \ref{firstremark} this works in a purely algebraic setting and it neither requires  the $C^*$-condition nor even a norm.

\begin{dfn}\label{defabtransform}
For Hilbert $\CAlg{A}$-modules $E$ and $F$, let $\mathcal{AB}(E,F)$ denote the set of all triples $(a,a_*,b)$ of operators $a\in\Adj(E)$, $a_*\in\Adj(F)$, $b\in\Adj(E,F)$ such that $a$ and $a_*$ are self-adjoint, $\Null(a)=\{0\}$, $\Null(a_*)=\{0\}$, and
\begin{align*}
b^*b = a-a^2, \quad bb^* = a_*-a_*^2, \quad ab^*=b^*a_*.
\end{align*}
\end{dfn}
In particular $0\leq a\leq I$ and $0\leq a_*\leq I$ in this case; further $\|b\|\leq 1$.

We call the map $t\to(a_t,a_{t^*},b_t)$ described in Theorem \ref{aabTransform}
the \emph{$(a,a_*,b)$-transform}.

\begin{thm}\label{aabTransform}
If $t\in\grReg(E,F)$, then $(a_t,a_{t^*},b_t)\in\mathcal{AB}(E,F)$, where
\begin{align*}
a_t &:= (1+t^*t)^{-1}, \quad a_{t^*} := (1+tt^*)^{-1}, \quad b_t := t(1+t^*t)^{-1}.
\end{align*}
Further, $\Null(b_t)=\Null(t)$, $b_{t^*}=b_t^*$, and the projection onto the graph of $t$ is given by
\begin{align*}
p &= \left(\begin{matrix}
      a_t & b_t^*\\
      b_t & 1-a_{t^*}
     \end{matrix}\right) \in \Adj(E\oplus F,E\oplus F).
\end{align*}
If $(a,a_*,b)\in\mathcal{AB}(E,F)$, then $t_{a,a_*,b}\in\grReg(E,F)$, where
\begin{align*}
t_{a,a_*,b} &:= (ba^{-1})^{**}=(b^*a_*^{-1})^*,
\end{align*}
and we have $t_{a,a_*,b}^*=t_{a_*,a,b^*}$. The map $t\mapsto(a_t,a_{t^*},b_t)$ is a bijection from $\grReg(E,F)$ onto $\mathcal{AB}(E,F)$ with inverse $(a,a_*,b)\mapsto t_{a,a_*,b}$.
\end{thm}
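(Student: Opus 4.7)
The plan is to realise both directions of the bijection through the same $2\times 2$ block operator
\[
p = \begin{pmatrix} a & b^* \\ b & 1-a_* \end{pmatrix} \in \Adj(E \oplus F),
\]
which in the forward direction is the projection onto $\Graph(t)$, and in the reverse direction is used to \emph{define} the graph of $t_{a,a_*,b}$. For $t \in \grReg(E,F)$, graph regularity supplies an adjointable projection $p$ onto $\Graph(t)$. To identify its blocks I would use Theorem~\ref{Basics__graph_regular}(1b) to decompose $(x,0) \in E \oplus F$ as $(y + t^*z,\, ty - z)$ with $y \in \Def(t)$ and $z \in \Def(t^*)$; matching coordinates gives $z = ty$ and $(1+t^*t)y = x$, hence $y = a_t x$ and $ty = b_t x$. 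This identifies the first column of $p$ and in particular shows $a_t \in \Adj(E)$, $b_t \in \Adj(E,F)$. Applying the same argument to $t^* \in \grReg(F,E)$, provided by Theorem~\ref{Basics__graph_regular}(3b), and comparing with $1-p$ gives $p_{12} = b_t^*$, $p_{22} = 1 - a_{t^*}$, and en route $b_{t^*} = b_t^*$. Self-adjointness of $a_t$ and $a_{t^*}$ follows from $p = p^*$; the three identities defining $\mathcal{AB}(E,F)$ fall out of the four blocks of $p^2 = p$ (the fourth, $ba = a_*b$, is just the adjoint of $ab^* = b^*a_*$); positivity and contractivity follow from $0 \le p \le 1$; and $\Null(a_t) = \{0\}$ is immediate. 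For $\Null(b_t) = \Null(t)$, if $b_t x = ta_t x = 0$ then $a_t x \in \Null(t) \subseteq \Def(t^*t)$, so $(1+t^*t)(a_t x) = a_t x$, forcing $a_t x = x$ and $tx = 0$; the converse is obvious.

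For the reverse direction, given $(a, a_*, b) \in \mathcal{AB}(E,F)$ I would check directly that the block matrix $p$ satisfies $p = p^* = p^2$: self-adjointness uses only $a^* = a$, $a_*^* = a_*$, and the four blocks of $p^2 = p$ correspond precisely to the three identities of Definition~\ref{defabtransform} together with the adjoint identity. Set $G := \Range(p)$; this is orthogonally complemented by $\Range(1-p)$ and satisfies $G^{\bot\bot} = G$ as the range of a self-adjoint projection. To see that $G$ is the graph of an operator, assume $p(u, v) = (0, w)$, so $au + b^*v = 0$ and $w = bu + (1-a_*)v$; multiplying the first relation on the left by $b$ and using $ba = a_* b$ together with $bb^* = a_* - a_*^2$ yields $a_*(bu + (1-a_*)v) = 0$, and injectivity of $a_*$ forces $w = 0$. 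Define $t$ by $\Graph(t) = G$; this $t$ is orthogonally closed with orthogonally complemented graph, and $\Def(t)$ is essential because $(ay, by) = p(y, 0) \in \Graph(t)$ for every $y$ and $\Null(a) = \{0\}$. Hence $t \in \grReg(E,F)$.

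To close the loop, starting from $(a, a_*, b)$ and the $t$ just constructed, $p(x, 0) = (ax, bx) \in \Graph(t)$ gives $ax \in \Def(t)$ with $t(ax) = bx$, so $ba^{-1} \subseteq t$; while $(1-p)(x, 0) = ((1-a)x, -bx)$ lies in $\Graph(t)^\bot$, parametrised via Proposition~\ref{Graph_tadj} as $\{(t^*w, -w) : w \in \Def(t^*)\}$, yielding $bx \in \Def(t^*)$ with $t^*(bx) = (1-a)x$. Combining, $(1+t^*t)(ax) = x$ for all $x \in E$, so $a = a_t$; symmetrically $a_* = a_{t^*}$, and then $b = ta = ta_t = b_t$. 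Conversely, given $t \in \grReg$, the projection $p$ of the forward step has range $\Graph(t)$, so feeding $(a_t, a_{t^*}, b_t)$ into the reverse construction returns the operator with graph $\Range(p) = \Graph(t)$, namely $t$ itself. The equality $t = (ba^{-1})^{**}$ then follows from $ba^{-1} \subseteq t$ combined with the dual inclusion: any $(u', v') \in \Graph(ba^{-1})^\bot$ satisfies $au' + b^*v' = 0$, and the same multiplication trick yields $bu' + (1-a_*)v' = 0$, so $(u', v') \in \Range(1-p) = \Graph(t)^\bot$, whence $\Graph(t) \subseteq \Graph(ba^{-1})^{\bot\bot}$. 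The equality $t = (b^*a_*^{-1})^*$ and the formula $t_{a,a_*,b}^* = t_{a_*,a,b^*}$ follow by symmetry between the two parts of $p$.

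The main obstacle lies entirely in the reverse direction: the three purely algebraic identities in $\mathcal{AB}(E,F)$ must carry the full analytic weight of producing a graph regular operator, and the cross-multiplication trick using $bb^* = a_* - a_*^2$ together with $ab^* = b^*a_*$ (or its adjoint $ba = a_*b$) drives every crucial step in which information about the first block coordinate has to be transferred to the second.
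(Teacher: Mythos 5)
Your proposal is correct, and it reorganizes the argument around the block projection
\[
p=\begin{pmatrix} a & b^*\\ b & 1-a_*\end{pmatrix}
\]
in a way that differs noticeably from the paper's proof. In the forward direction the paper computes $b_t=(b_{t^*})^*$ and the identities $b_{t^*}b_t=a_t-a_t^2$, $b_{t^*}a_{t^*}=a_tb_{t^*}$ by direct operator manipulations (e.g.\ from $(1+t^*t)t^*=t^*(1+tt^*)$), and only remarks at the end that the projection formula "is easily verified"; you instead read the blocks of $p$ off the decomposition $E\oplus F=\Graph(t)\oplus v\Graph(t^*)$ and extract all of $\mathcal{AB}(E,F)$ from $p=p^*=p^2$ and $0\le p\le 1$ — this is cleaner and makes $b_{t^*}=b_t^*$ fall out of $p_{12}=p_{21}^*$ rather than a separate computation. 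In the reverse direction the paper sets $t:=ba^{-1}$, $s:=b^*a_*^{-1}$ and proves that $\Range(a)$ is an essential core for $s^*=a_*^{-1}b$ via an orthogonality computation, then deduces $t^{**}=s^*$ and graph regularity from $1+t^*t\supseteq a^{-1}$; you verify $p=p^*=p^2$ from the axioms, define $\Graph(t):=\Range(p)$, and get orthogonal closedness and complementation for free from $\Range(p)=\Null(1-p)=\Range(1-p)^\bot$, which bypasses the essential-core machinery entirely. The decisive computation is the same in both: from $au+b^*v=0$ one multiplies by $b$ and uses $ba=a_*b$, $bb^*=a_*-a_*^2$ to conclude $a_*(bu+(1-a_*)v)=0$, and injectivity of $a_*$ does the rest — in your version this simultaneously shows $\Range(p)$ is a graph and that $\Graph(ba^{-1})^\bot\subseteq\Range(1-p)=\Graph(t)^\bot$, which yields $t=(ba^{-1})^{**}$. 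One small point worth making explicit: to place the constructed $t$ in $\grReg(E,F)$ you should note that $\Def(t^*)$ is also essential (it contains $\Range(a_*)$, or invoke Theorem~\ref{Characterization__orthognal_closable}); this is implicit in your identity $t^*(a_*w)=b^*w$ but deserves a sentence.
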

\begin{proof}
First we suppose  that  $t\in\grReg(E,F)$.
Then $\Range(1+t^*t)=E$, so $a_t$ is defined on the whole module $E$. It is straightforward to verify that $1+t^*t$ is positive and injective for each essentially defined operator $t$. Therefore $a_t$ is positive and has a trivial kernel. Analogous statements hold for $a_{t^*}$. Further, $b_t$ is defined on $E$, since $\Range(a_t)\subseteq\Def(t)$. Similarly, $b_{t^*}$ is defined on $F$. For $x:=(1+t^*t)x'\in E$ and $y:=(1+tt^*)y'\in F$, where $x'\in E$, $y'\in F$, we compute
\begin{align*}
\SP{b_tx}{y} &= \SP{tx'}{(1+tt^*)y'} = \SP{tx'}{y'}+\SP{tx'}{tt^*y'} = \SP{tx'}{y'}+\SP{t^*tx'}{t^*y'}\\
&= \SP{(1+t^*t)x'}{t^*y'} = \SP{x}{b_{t^*}y}.
\end{align*}
Hence $b_t=(b_{t^*})^*\in\Adj(E,F)$. From $b_{t^*}=(b_t)^*=(ta_t)^*$ we obtain $$b_{t^*}b_t\supseteq a_tt^*ta_t = a_t(1-a_t).$$ Since $a_t(1-a_t)$ is defined on the whole $E$, the latter yields $b_{t^*}b_t=a_t-a_t^2$. Further, $(1+t^*t)t^*=t^*(1+tt^*)$ and $\Range(a_{t^*}^2)=\Def(tt^*tt^*)\subseteq\Def(t^*tt^*)$ imply that
\begin{align*}
b_{t^*}a_{t^*} &= t^*a_{t^*}^2 = 1\upharpoonright_{\Def(t^*t)}t^*a_{t^*}^2 =  a_t(1+t^*t)t^*a_{t^*}^2 = a_tt^*(1+tt^*)a_{t^*}^2 = a_tb_{t^*}.
\end{align*}
The preceding proves that $(a_t,a_{t^*},b_t)\in\mathcal{AB}(E,F)$. 

Clearly, $\Range(b_{t^*})\subseteq\Range(t^*)$, so $\Null(t)\subseteq\Null(b_t)$. Suppose that $b_tx=0$ for some $x\in E$. Then $(a_t-a_t^2)x=b_t^*b_tx=0$, so $x=a_tx\in\Def(t^*t)\subseteq\Def(t)$. Further, $(1+t^*t)x=x$, so $t^*tx=0$ and from $\SP{tx}{tx}=\SP{t^*tx}{x}=0$ it follows that $x\in\Null(t)$.  Thus,  $\Null(t)=\Null(b_t)$. The statement concerning the projection is easily verified.

Conversely, we now assume that $(a,a_*,b)\in\mathcal{AB}(E,F)$. We define $t:=ba^{-1}$ and $s:=b^*a_*^{-1}$. Since $\Def(t)^\bot=\Range(a)^\bot=\Null(a)=\{0\}$, $t$ is essentially defined.  Similarly, $s$ is essentially defined. For $x\in E$, $y\in F$ we have 
\begin{align*}
\SP{t(ax)}{a_*y} &= \SP{bx}{a_*y} = \SP{a_*bx}{y} = \SP{bax}{y} = \SP{ax}{b^*y} = \SP{ax}{s(a_*y)},
\end{align*}
so $t\subseteq s^*$ and $s\subseteq t^*$. In particular, $t\in\oCl(E,F)$. 

Our next aim is to prove that $\Range(a)$ is an essential core for $s^*$. Since $s^*=a_*^{-1}b$ by Proposition \ref{Adjoint__Addition_Compositon}, it suffices  to show that
$\Graph(ba^{-1})^\bot \subseteq \Graph(a_*^{-1}b)^\bot$. Let $(r,s)\in\Graph(ba^{-1})^\bot$. Then $\SP{(r,s)}{(ax,bx)}=0$ for all $x\in E$, so $ar+b^*s = 0$.
Further, we have 
$$a_*(br+(1-a_*)s)=a_*br+(a_*-a_*^2)s=bar+bb^*s=b(ar+b^*s)=0.$$ Since $a^*$ is injective, this yields $s = a_*s - br$.

Let $x\in\Def(a_*^{-1}b)$. Then there exists a (unique) element $z\in F$ with $bx=a_*z$. Using  the assumption $b^*a_*=ab^*$ we obtain $$b^*z=a^{-1}b^*a_*z=a^{-1}b^*bx=a^{-1}(a-a^2)x=(1-a)x.$$ Now we compute 
\begin{align*}
\SP{(r,s)}{(x,a_*^{-1}bx)} &= \SP{r}{x} + \SP{s}{a_*^{-1}bx} = \SP{r}{x} + \SP{s}{z}\\
&= \SP{r}{x} + \SP{a_*s-br}{z} = \SP{r}{x} + \SP{s}{a_*z} - \SP{r}{b^*z}\\
&= \SP{r}{x} + \SP{s}{bx} - \SP{r}{(1-a)x} = \SP{b^*s}{x} + \SP{ar}{x} = 0.
\end{align*}
Therefore, $(r,s)\bot\Graph(a_*^{-1}b)$. This  proves that $\Range(a)$ is an essential core for $s^*$. 

Since $t\subseteq s^*$ and $\Def(t)=\Range(a)$ is an essential core for $s^*$, we have $\Graph(t)^{\bot\bot}=\Graph(s^*)$, so that $t^{**}=s^*$, that is, $(ba^{-1})^{**}=(b^*a_*^{-1})^*.$ Finally, we derive
\begin{align*}
1+t^*t^{**} &\supseteq 1+t^*t = 1+a^{-1}b^*ba^{-1} = 1+a^{-1}(a-a^2)a^{-1} = a^{-1},\\
1+t^{**}t^* & \supseteq 1+s^*s = 1+a_*^{-1}bb^*a_*^{-1} = 1+a_*^{-1}(a_*-a_*^2)a_*^{-1} = a_*^{-1}.
\end{align*}
Hence $a_{t^{**}}=a\in\Adj(E)$ and $a_{t^*}=a_*\in\Adj(F)$, so that $t\in\Reg_{gr}(E,F)$. Further, we have  $b_{t^{**}} = t^{**}a_{t^{**}} = t^{**}a \supseteq ta = b \in \Adj(E,F)$ and so $b_{t^{**}}=b$.
\end{proof}

From $a_t\in\Adj(E)$ it follows at once that the operator $1+t^*t$ is self-adjoint for any $t\in\grReg(E,F)$ by Proposition \ref{Graph_tadj}.

\begin{lem}\label{Function_atbt}
If $t\in\grReg(E,F)$, then $f(a_{t^*})b_t=b_tf(a_t)$ for all $f\in C([0,1])$.
\end{lem}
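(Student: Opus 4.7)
The plan is to reduce the continuous statement to a polynomial statement via the continuous functional calculus in the $C^*$-algebras $\Adj(E)$ and $\Adj(F)$.

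First I would extract the key intertwining relation. By Theorem~\ref{aabTransform} the triple $(a_t,a_{t^*},b_t)$ lies in $\mathcal{AB}(E,F)$, so in particular $a_t b_t^* = b_t^* a_{t^*}$. Taking adjoints (all three entries are adjointable) gives
\begin{align*}
a_{t^*} b_t = b_t a_t.
\end{align*}
By a trivial induction on $n$ this yields $a_{t^*}^n b_t = b_t a_t^n$ for every $n\in\N_0$, and hence by $\C$-linearity
\begin{align*}
p(a_{t^*}) b_t = b_t p(a_t) \quad \text{for every polynomial } p\in\C[x].
\end{align*}

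Next I would pass from polynomials to $C([0,1])$. By the remark following Definition~\ref{defabtransform} one has $0\le a_t\le I$ in $\Adj(E)$ and $0\le a_{t^*}\le I$ in $\Adj(F)$, so both elements are self-adjoint with spectrum contained in $[0,1]$. Since $\Adj(E)$ and $\Adj(F)$ are unital $C^*$-algebras, the continuous functional calculus provides isometric $*$-homomorphisms $C([0,1])\to \Adj(E)$, $f\mapsto f(a_t)$, and $C([0,1])\to \Adj(F)$, $f\mapsto f(a_{t^*})$.

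Finally, given $f\in C([0,1])$, by the Stone--Weierstrass theorem there exists a sequence of polynomials $(p_n)$ with $\sup_{s\in[0,1]}|p_n(s)-f(s)|\to 0$. Then $\|p_n(a_t)-f(a_t)\|\to 0$ in $\Adj(E)$ and $\|p_n(a_{t^*})-f(a_{t^*})\|\to 0$ in $\Adj(F)$, and since $b_t\in\Adj(E,F)$ the left- and right-multiplication maps by $b_t$ are norm-continuous. Passing to the limit in $p_n(a_{t^*})b_t = b_t p_n(a_t)$ yields $f(a_{t^*})b_t = b_t f(a_t)$.

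There is really no serious obstacle here; the only point requiring any care is to recognize that the intertwining identity $a_{t^*} b_t = b_t a_t$ is precisely the adjoint of the defining relation $a b^* = b^* a_*$ built into $\mathcal{AB}(E,F)$, so that the entire argument takes place inside the $C^*$-algebras $\Adj(E)$, $\Adj(F)$ and the bimodule $\Adj(E,F)$ and reduces to a standard Stone--Weierstrass approximation.
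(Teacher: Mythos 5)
Your proof is correct and follows essentially the same route as the paper: obtain the intertwining relation $a_{t^*}b_t=b_ta_t$ from the $\mathcal{AB}(E,F)$ relations of Theorem~\ref{aabTransform}, extend to polynomials by induction and linearity, and conclude by uniform density of polynomials in $C([0,1])$. The only difference is that you spell out the adjoint-taking step from $a_tb_t^*=b_t^*a_{t^*}$ and the spectral containment in $[0,1]$, which the paper leaves implicit.
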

\begin{proof}
The operators $a_t,a_{t^*},b_t$ are adjointable and $a_{t^*}b_t=b_ta_t$ by Theorem \ref{aabTransform}. Hence $a_{t^*}^nb_t=b_ta_t^n$ for all $n\in\N$, so $f(a_{t^*})b_t=b_tf(a_t)$ for all polynomials $f$. Since the polynomials are uniformly dense in $C([0,1])$,  the assertion follows.
\end{proof}

\begin{lem}
Let $t\in\oC(E,F)$ and suppose that $t$ and $t^*$ are bounded. Then we have $t\in\grReg(E,F)$ if and only if $t\in\Adj(E,F)$.
\end{lem}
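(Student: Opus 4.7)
My plan is to split the proof into the two implications, handling the easy converse first and using the $(a,a_*,b)$-transform for the forward direction.

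For the direction $t\in\Adj(E,F)\Rightarrow t\in\grReg(E,F)$, the map $E\to E\oplus F$, $x\mapsto(x,tx)$, is adjointable (with adjoint $(u,v)\mapsto u+t^*v$) and has image $\Graph(t)$, which is closed because $t$ is bounded and everywhere defined. By \cite[Theorem 3.2]{lance}, used in the same way as in Proposition \ref{GraphRegular_Addition_Composition}, the graph is therefore orthogonally complemented in $E\oplus F$. Combined with $\Def(t)^\bot=\{0\}$ and the hypothesis $t\in\oC(E,F)$, this yields $t\in\grReg(E,F)$.

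For the forward direction, assume $t\in\grReg(E,F)$ with $\|t\|\leq C$ and $\|t^*\|\leq C'$. By Theorem \ref{aabTransform}, $a_t=(1+t^*t)^{-1}\in\Adj(E)$ and $\Range(1+t^*t)=E$. The crucial step is to show that $a_t$ is invertible in $\Adj(E)$: once this is established, $1+t^*t=a_t^{-1}\in\Adj(E)$, so $\Def(t^*t)=E$, and from $\Def(t^*t)\subseteq\Def(t)$ we obtain $\Def(t)=E$; running the symmetric argument with $a_{t^*}$ produces $\Def(t^*)=F$, so that $t\in\Adj(E,F)$.

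To prove invertibility of $a_t$, I would first establish a lower bound. For any $y\in\Def(t^*t)$, put $x=(1+t^*t)y$; then $y=a_tx$ and
\[\|x\|\leq\|y\|+\|t^*ty\|\leq(1+CC')\|y\|,\]
using that $ty\in\Def(t^*)$ together with boundedness of $t$ and $t^*$. Since $\Range(1+t^*t)=E$, this gives $\|a_tx\|\geq(1+CC')^{-1}\|x\|$ for every $x\in E$, so the standard Banach-space argument shows that $\Range(a_t)$ is closed in $E$. Because $a_t$ is self-adjoint and injective (inverting the injective operator $1+t^*t$), one has $\Null(a_t^*)=\Null(a_t)=\{0\}$, and \cite[Theorem 3.2]{lance} then yields $\Range(a_t)\oplus\Range(a_t)^\bot=E$ with $\Range(a_t)^\bot=\{0\}$. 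Hence $\Range(a_t)=E$, so $a_t$ is a bijection and invertible in $\Adj(E)$.

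The main obstacle is extracting the uniform lower bound on $a_t$, which requires the boundedness of \emph{both} $t$ and $t^*$: only on $\Def(t^*t)$ does $\|t^*ty\|\leq\|t^*\|\|ty\|\leq CC'\|y\|$ make sense, and only graph regularity allows us to propagate the inequality from $\Range(1+t^*t)$ to all of $E$. After that, the passage from bounded-below plus trivial cokernel to surjectivity is routine via Lance's Theorem~3.2.
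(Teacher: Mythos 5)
Your proof is correct and follows essentially the same route as the paper: both reduce the forward direction to showing that $\Range(a_t)=\Def(t^*t)$ is closed and then apply Lance's Theorem 3.2 together with $\Null(a_t)=\{0\}$ to conclude $\Range(a_t)=E$, hence $\Def(t)=E$ (and symmetrically $\Def(t^*)=F$). The only difference is in one step: the paper obtains closedness of $\Range(a_t)$ from the closedness of $\Def(t)$ and $\Def(t^*)$ (which holds because $t$ and $t^*$ are closed and bounded), whereas you derive it from the quantitative lower bound $\|a_tx\|\geq(1+\|t\|\,\|t^*\|)^{-1}\|x\|$.
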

\begin{proof}
The if direction is trivial. To prove the only if part assume that $t$ is graph regular. 
Since $t$ is orthogonally closed, $t$  is   closed. Because $t$ is closed and bounded, the domain $\Def(t)$  is closed in $E$. By Theorem \ref{Basics__graph_regular}(3b), $t^*$ is also graph regular. Therefore,  replacing $t$ by $t^*$,  it follows that  $\Def(t^*)$ is also closed. Hence  $\Def(t^*t)$  is closed. Since $t$ is graph regular, we have $a_t\in\Adj(E)$. Therefore, by \cite[Theorem 3.2]{lance},  $\Range(a_t)=\Def(t^*t)$ is orthogonally complemented. But $\Range(a_t)^\bot=\Null(a_t)=\{0\}$, so $\Range(a_t)=E$. In particular, $E=\Def(t^*t)\subseteq\Def(t)$. Hence $\Def(t)=E$. By a similar reasoning we obtain $\Def(t^*)=F$. Therefore, $t\in \Adj(E,F)$.
\end{proof}

\begin{cor}\label{Characterization_grReg_normal_ab}
Let $t\in\grReg(E)$. Then $t$ is normal if and only if\, $a_t=a_{t^*}$. In this case $b_t$ is normal and the operators $a_t$ and $b_t$ commute.
\end{cor}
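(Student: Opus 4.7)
The plan is to read everything off from the algebraic identities in Theorem \ref{aabTransform}, namely
\[
a_t=(1+t^*t)^{-1},\qquad a_{t^*}=(1+tt^*)^{-1},\qquad b_t^*b_t=a_t-a_t^2,\qquad b_tb_t^*=a_{t^*}-a_{t^*}^2,\qquad a_tb_t^*=b_t^*a_{t^*}.
\]

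For the forward direction, suppose $t$ is normal, i.e.\ $t^*t=tt^*$ (as operators, including the equality $\Def(t^*t)=\Def(tt^*)$). Then $1+t^*t=1+tt^*$, and since $a_t$ and $a_{t^*}$ are the everywhere-defined inverses of these positive injective operators on $E$, they coincide. Conversely, assume $a_t=a_{t^*}$ as elements of $\Adj(E)$. Both are injective with $\Range(a_t)=\Def(t^*t)$ and $\Range(a_{t^*})=\Def(tt^*)$, so equality of the bounded operators forces equality of their (unbounded) inverses, whence $\Def(t^*t)=\Def(tt^*)$ and $t^*t=tt^*$ on this common domain.

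For the second assertion, assume $a_t=a_{t^*}$. Then
\[
b_t^*b_t=a_t-a_t^2=a_{t^*}-a_{t^*}^2=b_tb_t^*,
\]
so $b_t$ is normal. Moreover, substituting $a_{t^*}=a_t$ into $a_tb_t^*=b_t^*a_{t^*}$ gives $a_tb_t^*=b_t^*a_t$; taking adjoints and using that $a_t$ is self-adjoint yields $b_ta_t=a_tb_t$. (Alternatively, apply Lemma \ref{Function_atbt} with $f(x)=x$.)

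There is no genuine obstacle; the only small point to be careful about is the passage from equality of the bounded operators $a_t,a_{t^*}\in\Adj(E)$ to equality of the unbounded operators $1+t^*t$ and $1+tt^*$, which needs the observation that $a_t,a_{t^*}$ have trivial kernel and thus are inverses in the usual unbounded sense.
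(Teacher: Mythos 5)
Your proof is correct and follows essentially the same route as the paper: everything is read off from the identities of the $(a,a_*,b)$-transform in Theorem \ref{aabTransform}, with the equivalence $t^*t=tt^*\Leftrightarrow a_t=a_{t^*}$ obtained by passing between the positive injective operators $1+t^*t$, $1+tt^*$ and their everywhere-defined inverses. The paper leaves that first step as "clear"; you have simply made explicit the (correct) observation that injectivity of $a_t,a_{t^*}$ lets one recover the unbounded operators from their bounded inverses.
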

\begin{proof}
Since $t\in\grReg(E)$, we have $(a_t,a_{t^*},b_t)\in\mathcal{AB}(E)$ by Theorem \ref{aabTransform}, so
\begin{align*}
b_t^*b_t = a_t-a_t^2, \quad b_tb_t^* = a_{t^*}-a_{t^*}^2, \quad b_ta_t=a_{t^*}b_t.
\end{align*}
The first statement is clear and in this case is $b_t^*b_t=b_tb_t^*$ and $b_ta_t=a_tb_t$.
\end{proof}
In the next proposition $E$ and $F$ are  Hilbert $C^*$-modules of (possibly different !) $C^*$-algebras.
\begin{prop}\label{grReg__Morphism}
Suppose that $t\in\grReg(E)$ and $\phi\in\Hom(\Adj(E),\Adj(F))$. Then there exists an orthogonally closed operator $\phi(t):F\to F$ such that $\phi(a_t)F$ is an essential core for $\phi(t)$ and
\begin{align*}
\phi(t)(\phi(a_t)x) &= \phi(b_t)x ,\quad x\in F.
\end{align*}
Moreover,  $\SP{\phi(t)x}{y}=\SP{x}{\phi(t^*)y}$ for $x\in\Def(\phi(t)),y\in\Def(\phi(t^*))$. If $\Null(\phi(a_t))$ and $\Null(\phi(a_{t^*}))$ are trivial, then $\phi(t)\in\grReg(F)$, $\phi(t)^*=\phi(t^*)$, and
\begin{align*}
a_{\phi(t)} &= \phi(a_t), \quad a_{\phi(t)^*} = \phi(a_{t^*}), \quad b_{\phi(t)} = \phi(b_t).
\end{align*}
\end{prop}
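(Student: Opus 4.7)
The plan is to exploit the $(a,a_*,b)$-transform of Theorem~\ref{aabTransform}. Since $(a_t,a_{t^*},b_t)\in\mathcal{AB}(E)$ and $\phi$ is a $*$-homomorphism, the triple $(\phi(a_t),\phi(a_{t^*}),\phi(b_t))$ automatically satisfies every algebraic condition defining $\mathcal{AB}(F)$ \emph{except} possibly the injectivity of $\phi(a_t)$ and $\phi(a_{t^*})$. I will first introduce an auxiliary operator $s:F\to F$ with domain $\phi(a_t)F$ by $s(\phi(a_t)x):=\phi(b_t)x$; well-definedness follows from $b_t^*b_t=a_t-a_t^2$, since $\phi(a_t)x=0$ then gives $\SP{\phi(b_t)x}{\phi(b_t)x}=\SP{x}{\phi(b_t^*b_t)x}=0$. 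The operator $\phi(t)$ is then declared to be the one with graph $\Graph(s)^{\bot\bot}$.

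The hard part is to verify that $\Graph(s)^{\bot\bot}$ really is the graph of a single-valued operator, i.e.\ $(0,y)\in\Graph(s)^{\bot\bot}$ forces $y=0$. A direct computation gives
\begin{align*}
\Graph(s)^\bot=\{(u,v)\in F\oplus F : \phi(a_t)u+\phi(b_t^*)v=0\}.
\end{align*}
Using $a_tb_t^*=b_t^*a_{t^*}$, each element $(-\phi(b_t^*)w,\phi(a_{t^*})w)$ lies in $\Graph(s)^\bot$ for $w\in F$; pairing with $(0,y)$ yields $\phi(a_{t^*})y=0$, and the relation $b_tb_t^*=a_{t^*}-a_{t^*}^2$ then forces $\phi(b_t^*)y=0$. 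Separately, every $(0,v)$ with $v\in\Null(\phi(b_t^*))$ belongs to $\Graph(s)^\bot$, so $y\in\Null(\phi(b_t^*))^\bot$. Combined with $y\in\Null(\phi(b_t^*))$, this gives $\SP{y}{y}=0$, whence $y=0$. Thus $\phi(t)$ is a well-defined orthogonally closed operator, $\phi(a_t)F$ is by construction an essential core (since $\Graph(s\upharpoonright_{\phi(a_t)F})^\bot=\Graph(s)^\bot=\Graph(\phi(t))^\bot$), and $\phi(t)(\phi(a_t)x)=\phi(b_t)x$.

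For the pairing identity I will construct $\phi(t^*)$ by the same recipe applied to the transform $(a_{t^*},a_t,b_t^*)$ of $t^*$: set $\tilde{s}(\phi(a_{t^*})y):=\phi(b_t^*)y$ on $\phi(a_{t^*})F$ and declare $\Graph(\phi(t^*)):=\Graph(\tilde{s})^{\bot\bot}$, which is the graph of an operator by the symmetric argument. The identity $\SP{\phi(b_t)x}{\phi(a_{t^*})y}=\SP{\phi(a_t)x}{\phi(b_t^*)y}$ on the cores is immediate from $a_tb_t^*=b_t^*a_{t^*}$. Reformulated, this says $v\Graph(\tilde{s})\subseteq\Graph(s)^\bot$; passing to double orthogonal complements (using $(vA)^{\bot\bot}=vA^{\bot\bot}$ by unitarity of $v$) propagates this to $v\Graph(\phi(t^*))\subseteq\Graph(\phi(t))^\bot$, which is precisely the claimed pairing identity on the full domains.

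Finally, under the hypothesis $\Null(\phi(a_t))=\Null(\phi(a_{t^*}))=\{0\}$, one has $(\phi(a_t)F)^\bot=\Null(\phi(a_t))=\{0\}$, so $\phi(t)$ is essentially defined and $(\phi(a_t),\phi(a_{t^*}),\phi(b_t))\in\mathcal{AB}(F)$. Theorem~\ref{aabTransform} then produces $t_0:=(\phi(b_t)\phi(a_t)^{-1})^{**}\in\grReg(F)$; since $\phi(b_t)\phi(a_t)^{-1}=s$, we get $t_0=s^{**}$, whose graph equals $\Graph(s)^{\bot\bot}=\Graph(\phi(t))$ by Theorem~\ref{Basics_adjointable}. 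Hence $\phi(t)=t_0\in\grReg(F)$, and the formulas $a_{\phi(t)}=\phi(a_t)$, $a_{\phi(t)^*}=\phi(a_{t^*})$, $b_{\phi(t)}=\phi(b_t)$ together with $\phi(t)^*=\phi(t^*)$ follow at once from Theorem~\ref{aabTransform} applied to $t$ and to $t^*$ (whose transform is $(a_{t^*},a_t,b_t^*)$).
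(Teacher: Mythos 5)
Your proof is correct and follows essentially the same route as the paper: define $\phi(t)_0$ on $\phi(a_t)F$ via the $(a,a_*,b)$-transform, show its double orthogonal complement is the graph of an operator, and in the injective case invoke Theorem~\ref{aabTransform}. Your verification that $(0,y)\in\Graph(s)^{\bot\bot}$ forces $y=0$ (pairing against the elements $(-\phi(b_t^*)w,\phi(a_{t^*})w)$ and against $(0,v)$ with $v\in\Null(\phi(b_t^*))$) is in fact more explicit than the paper's terse identification of $v\Graph(\phi(t)_0)^\bot$ with a graph, and your derivation of the pairing identity by passing $v\Graph(\tilde{s})\subseteq\Graph(s)^\bot$ through double orthogonal complements is a clean way to make that step rigorous.
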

\begin{proof}
Clearly, $0\leq\phi(a_t)\leq I$, $0\leq\phi(a_{t^*})\leq I$, and
\begin{align*}
\phi(b_t)^*\phi(b_t) &= \phi(a_t)-\phi(a_t)^2, \quad \phi(b_t)\phi(b_t)^* = \phi(a_{t^*})-\phi(a_{t^*})^2,\\
\phi(a_t)\phi(b_t)^* &= \phi(b_t)^*\phi(a_{t^*}),
\end{align*}
since $\phi$ is a $*$-homomorphism. If $x\in\Null(\phi(a_t))$, then
\begin{align*}
\SP{\phi(b_t)x}{\phi(b_t)x} &= \SP{x}{\phi(b_t^*b_t)x} = \SP{x}{\phi(a_t)x-\phi(a_t)^2x} = 0,
\end{align*}
so $x\in\Null(\phi(b_t))$. Therefore, the map $\phi(t)_0:\phi(a_t)x\mapsto\phi(b_t)x$ $(x\in F)$ is well-defined. Similarly, the kernel of $\phi(a_{t^*})$ is contained in the kernel of $\phi(b_t^*)$. Further, it is easy to see that
\begin{align*}
\Graph(\phi(t^*)_0) &\subseteq v\Graph(\phi(t)_0)^\bot = \{ (x,y)\in F\oplus F | \phi(b_t^*)x=\phi(a_t)y, y\in\Null(\phi(a_t))^\bot \}
\end{align*}
and the latter is the graph of an operator. Hence $\phi(t)_0$ and $\phi(t^*)_0$ are orthogonally closable. If we denote the corresponding orthogonal closures by $\phi(t)$ and $\phi(t^*)$, the first half of the proposition is shown. If the kernels of $\phi(a_t)$ and $\phi(a_{t^*})$ are trivial, then $(\phi(a_t),\phi(a_{t^*}),\phi(b_t))\in\mathcal{AB}(F)$ and all statements follow from Theorem \ref{aabTransform}.
\end{proof}

If the kernel of $\phi(a_t)$ is not trivial, it can happen  that the domain of the operator $\phi(t)$ is only $\{0\}$, see Example \ref{BoundedTranform_notAdjointbale} below.
\begin{cor}\label{grReg_association}
Let $\CAlg{A}$ be a (non-degenerated) concrete $C^*$-algebra on $\Hil$. Let $\phi$ be the embedding of $\Adj(\CAlg{A})=\Multiplier(\CAlg{A})$ into $\Be(\Hil)=\Adj(\Hil)$.
\begin{enumerate}
\item For any $T\in\Abg(\Hil)$ with $a_T,a_{T^*},b_T\in\Multiplier(\CAlg{A})$ there exists a unique $t\in\grReg(\CAlg{A})$ such that $\phi(t)=T$.
\item If $\CAlg{A}$ contains the compact operators, then we have $T:=\phi(t)\in\Abg(\Hil)$ and $a_T,a_{T^*},b_T\in\Multiplier(\CAlg{A})$ for $t\in\grReg(\CAlg{A})$.
In particular,  $\grReg(\CAlg{A})$ can be identified with those $T\in\Abg(\Hil)$ for which $a_T,a_{T^*},b_T\in\Multiplier(\CAlg{A})$.
\end{enumerate}
\end{cor}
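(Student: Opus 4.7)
The plan is to use the $(a,a_*,b)$-transform (Theorem \ref{aabTransform}) as a dictionary between $\grReg(\CAlg{A})$ and triples of multipliers satisfying the $\mathcal{AB}$-relations, and to transport these triples between $\CAlg{A}$ and $\Hil$ via $\phi$ and Proposition \ref{grReg__Morphism}. Two identifications are used throughout: $\Adj(\CAlg{A})=\Multiplier(\CAlg{A})$ (multipliers act by left multiplication on the Hilbert $C^*$-module $\CAlg{A}$) and $\grReg(\Hil)=\Abg(\Hil)$ (every closed subspace of $\Hil$ is orthogonally complemented, so graph regularity on $\Hil$ coincides with being closed and densely defined).

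For part (1), given $T\in\Abg(\Hil)$ with $a_T,a_{T^*},b_T\in\Multiplier(\CAlg{A})$, Theorem \ref{aabTransform} applied on $\Hil$ shows that the defining relations of Definition \ref{defabtransform} hold in $\Be(\Hil)$, hence in the $*$-subalgebra $\Multiplier(\CAlg{A})$. What remains is to check that $a_T$ and $a_{T^*}$ have trivial kernel as multipliers of $\CAlg{A}$: if $a_Tx=0$ for some $x\in\CAlg{A}$, then $a_T(x\xi)=0$ for every $\xi\in\Hil$; the injectivity of $a_T$ on $\Hil$ forces $x\xi=0$ for all $\xi$, so $x=0$ by non-degeneracy. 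Thus $(a_T,a_{T^*},b_T)\in\mathcal{AB}(\CAlg{A},\CAlg{A})$, and Theorem \ref{aabTransform} yields a unique $t\in\grReg(\CAlg{A})$ with this triple. Applying Proposition \ref{grReg__Morphism} to the embedding $\phi$, and using that $\phi(a_t)=a_T$ and $\phi(a_{t^*})=a_{T^*}$ are injective on $\Hil$ by hypothesis, produces $\phi(t)\in\grReg(\Hil)=\Abg(\Hil)$ with the same triple; injectivity of the $(a,a_*,b)$-transform on $\Hil$ then forces $\phi(t)=T$. Uniqueness of $t$ follows because any $t'\in\grReg(\CAlg{A})$ with $\phi(t')=T$ must have $\phi(a_{t'})=a_T=\phi(a_t)$, and similarly for the other two entries; since $\phi$ is injective, the triples of $t$ and $t'$ coincide, so $t=t'$ by Theorem \ref{aabTransform}.

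For part (2), let $t\in\grReg(\CAlg{A})$; by definition of $\mathcal{AB}$ the triple $(a_t,a_{t^*},b_t)$ lies in $\Multiplier(\CAlg{A})$ with $a_t$ and $a_{t^*}$ having trivial kernel as multipliers. To invoke Proposition \ref{grReg__Morphism} and conclude $\phi(t)\in\grReg(\Hil)=\Abg(\Hil)$ with $a_{\phi(t)}=\phi(a_t)$, $a_{\phi(t)^*}=\phi(a_{t^*})$ and $b_{\phi(t)}=\phi(b_t)$, one needs $\phi(a_t)$ and $\phi(a_{t^*})$ to be injective on $\Hil$. Here the assumption $\Komp(\Hil)\subseteq\CAlg{A}$ enters: if $\phi(a_t)\xi=0$ for some $\xi\neq 0$, then the rank-one projection $p_\xi$ onto $\C\xi$ lies in $\Komp(\Hil)\subseteq\CAlg{A}$, and $a_tp_\xi$ vanishes on every vector of $\Hil$, hence equals zero in $\CAlg{A}$; the trivial kernel of $a_t$ as a multiplier then forces $p_\xi=0$, a contradiction. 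The same argument handles $\phi(a_{t^*})$, so Proposition \ref{grReg__Morphism} supplies the desired conclusion. The main obstacle throughout is precisely this bookkeeping of kernels across the two settings: non-degeneracy of $\CAlg{A}$ on $\Hil$ handles the direction $\Hil\to\CAlg{A}$ in part (1), while the compact operators are exactly what is needed to transfer injectivity back from $\CAlg{A}$ to $\Hil$ in part (2). The concluding identification then follows by combining (1) and (2).
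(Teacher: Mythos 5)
Your proof is correct and follows essentially the same route as the paper: both directions hinge on the $(a,a_*,b)$-transform (Theorem \ref{aabTransform}) and Proposition \ref{grReg__Morphism}, with the kernel-transfer arguments (faithfulness of the representation for $\Hil\to\CAlg{A}$, rank-one projections from $\Komp(\Hil)\subseteq\CAlg{A}$ for $\CAlg{A}\to\Hil$) matching the paper's exactly. The only cosmetic difference is that you conclude $\phi(t)=T$ in part (1) from injectivity of the transform on $\Abg(\Hil)$, whereas the paper verifies the identity directly on the core $\Range(a_T)=\Def(T^*T)$; these are equivalent.
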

\begin{proof}
(1): Since $\Abg(\Hil)=\grReg(\Hil)$, we have $(a_T,a_{T^*},b_T)\in\mathcal{AB}(\Hil)$ by Theorem \ref{aabTransform}. By assumption $a_T,a_{T^*}$, and $b_T$ are  elements of $\Multiplier(\CAlg{A})$. To show that $(a_T,a_{T^*},b_T)\in\mathcal{AB}(\CAlg{A})$ it suffices  to prove that $a_T$ and $a_{T^*}$ are injective as operators on $\CAlg{A}$. Clearly, they are injective as operators on $\Hil$. Assume that $a_Ta=0$ for some $a\in\CAlg{A}$. Then,  $a_Ta\xi=0$ for $\xi\in\Hil$. Hence $a\xi=0$ for all $\xi\in\Hil$, so that $a=0$. Thus, $a_T$  is injective on $\CAlg{A}$. Similarly, $a_{T^*}$ is injective on $\CAlg{A}$. Using once more Theorem \ref{aabTransform}  it follows that there exists an operator $t\in\grReg(\CAlg{A})$ such that $a_t=a_T$, $a_{t^*}=a_{T^*}$, $b_t=b_T$. Further,  $\phi(t)=T$, since
\begin{align*}
T(\phi(a_t)\xi) &= T(a_t\xi) = Ta_T\xi = b_T\xi = b_t\xi = \phi(b_t)\xi, \quad \xi\in\Hil,
\end{align*}
and $\Range(a_T)=\Def(T^*T)$ is a core for $T$.

(2): If $t\in\grReg(\CAlg{A})$, then $(a_t,a_{t^*},b_t)\in\mathcal{AB}(\CAlg{A})$. We  show that the kernels of $\phi(a_t)$ and $\phi(a_{t^*})$ are trivial. Assume that $\phi(a_t)\xi=0$ for some nonzero vector $\xi\in\Hil$. Since $\CAlg{A}$ contains all compact operators, the rank
one projection $p_\xi$ onto $\C{\cdot} \xi$ is in $\CAlg{A}$. Therefore,  $a_tp_\xi=\phi(a_t)p_\xi=0$ which  contradicts the injectivity of $a_t$ as operator on $\CAlg{A}$. Hence $a_t$,  similarly $a_{t^*}$, is injective on $\Hil$.  Therefore, $T\in\grReg(\Hil)=\Abg(\Hil)$ by Proposition \ref{grReg__Morphism}. \end{proof}

\begin{exa}
From Corollary \ref{grReg_association} it follows immediately that $\grReg(\Komp(\Hil))=\Abg(\Hil)$ and $\grReg(\Be(\Hil))=\Abg(\Hil)$, since $\Multiplier(\Komp(\Hil))=\Multiplier(\Be(\Hil))=\Be(\Hil)$.
\end{exa}

  \subsection{Quotients of adjointable operators}

A large class of examples of unbounded graph regular operators can be obtained as quotients of adjointable operators.

\begin{thm}\label{ClosedQuotient_is_GraphRegular}
Let $a\in\Adj(G,E)$ and $b\in\Adj(G,F)$. Suppose that $\Null(a)\subseteq\Null(b)$ and $\Null(a^*)=\{0\}$. If the operator $t:E\to F$ defined by
\begin{align*}
\Def(t) &= \Range(a), \quad t(ax) := bx, \quad x\in G,
\end{align*}
is closed, then $t\in\grReg(E,F)$ and $t^*=(a^*)^{-1}b^*$.
\end{thm}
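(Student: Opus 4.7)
The plan is to exhibit $\Graph(t)$ as the range of a single adjointable operator and invoke \cite[Theorem 3.2]{lance}. First the easy preliminaries: the hypothesis $\Null(a)\subseteq\Null(b)$ makes $t$ well-defined, and $\Def(t)=\Range(a)$ is essential because $\Range(a)^\bot=\Null(a^*)=\{0\}$ by assumption. So $t$ is essentially defined and its adjoint makes sense.

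Next I would introduce $c:G\to E\oplus F$ by $c(x):=(ax,bx)$. A direct calculation gives $c^*(y,z)=a^*y+b^*z$, so $c\in\Adj(G,E\oplus F)$. By construction $\Graph(t)=\{(ax,bx):x\in G\}=\Range(c)$, so the assumption that $t$ is closed is exactly closedness of $\Range(c)$. By \cite[Theorem 3.2]{lance}, this forces $\Graph(t)$ to be orthogonally complemented in $E\oplus F$, which in turn implies $\Graph(t)=\Graph(t)^{\bot\bot}$. To identify $t^*$, I would run the standard argument: for $y\in F$, $y\in\Def(t^*)$ means there exists $z\in E$ with $\langle x,b^*y\rangle=\langle bx,y\rangle=\langle ax,z\rangle=\langle x,a^*z\rangle$ for all $x\in G$; the Hilbert $\CAlg{A}$-module structure on $G$ forces $b^*y=a^*z$, and injectivity of $a^*$ yields $z=(a^*)^{-1}b^*y$. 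Hence $t^*=(a^*)^{-1}b^*$.

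The only remaining point, and the one I expect to be the main (minor) obstacle, is essentiality of $\Def(t^*)$, which is needed to conclude $t\in\oC(E,F)$ and hence $t\in\grReg(E,F)$. This falls out of the graph complementation together with Proposition \ref{Graph_tadj}(1). Given $y_0\in\Def(t^*)^\bot$, the decomposition $E\oplus F=\Graph(t)\oplus v\Graph(t^*)$ lets me write $(0,y_0)=(u,tu)+(-t^*w,w)$ for some $u\in\Def(t)$ and $w\in\Def(t^*)$; comparing components gives $u=t^*w$, so $w\in\Def(tt^*)$ and $y_0=(1+tt^*)w$. Pairing $y_0$ against $w\in\Def(t^*)$ produces
\begin{align*}
0=\SP{y_0}{w}=\SP{w}{w}+\SP{t^*w}{t^*w},
\end{align*}
which forces $w=0$ and therefore $y_0=0$. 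This closes the argument.
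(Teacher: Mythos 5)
Your proof is correct and follows the paper's argument exactly: the graph is realized as the range of the adjointable operator $x\mapsto(ax,bx)$, and closedness plus \cite[Theorem 3.2]{lance} yields orthogonal complementation. The two details the paper omits (the formula for $t^*$ and the essentiality of $\Def(t^*)$) are filled in correctly; for the latter you could alternatively just cite Theorem \ref{Characterization__orthognal_closable}, since orthogonal complementation makes $\Graph(t)^{\bot\bot}=\Graph(t)$ the graph of an operator.
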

\begin{proof}
Since $\Null(a)\subseteq\Null(b)$ and $\Range(a)^\bot=\Null(a^*)=\{0\}$, $t$ is well-defined and essentially defined. Since the graph of $t$ is the set $\{(ax,bx)|x\in G\}$,  it is the range of the adjointable operator $q:G\to E\oplus F$ defined by  $q(x):=(ax,bx)$. Since this range is closed by assumption this range,  \cite[Theorem 3.2]{lance} applies and shows that the range is orthogonally complemented. Hence $t$ is graph regular. The adjoint of $t$ is then easily computed; we omit the details.
\end{proof}

\begin{cor}\label{Inverse_Multiplier__GraphRegular}
Let $x\in\Adj(F,E)$ and assume that $\Null(x)=\Null(x^*)=\{0\}$. Then $x^{-1}\in\grReg(F,E)$ and $(x^{-1})^*=(x^*)^{-1}$.
\end{cor}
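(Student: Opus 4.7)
The plan is to derive this corollary as a direct specialization of Theorem \ref{ClosedQuotient_is_GraphRegular}. I would take $G := F$, $a := x \in \Adj(F,E)$, and $b := \id_F \in \Adj(F)$. Then the operator $t$ produced by that theorem has domain $\Range(a) = \Range(x)$ and acts by $t(ay) = by$, i.e., $t(xy) = y$ for all $y \in F$. This is precisely the inverse $x^{-1}$ regarded as an operator from $E$ into $F$.

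Next I would check the three hypotheses of Theorem \ref{ClosedQuotient_is_GraphRegular}. The inclusion $\Null(a) \subseteq \Null(b)$ holds trivially because $\Null(a) = \Null(x) = \{0\}$ by assumption, while $\Null(a^*) = \Null(x^*) = \{0\}$ is the second hypothesis of the corollary. The substantive point is that $t = x^{-1}$ is closed: since $x \in \Adj(F,E)$ is bounded, its graph $\Graph(x) = \{(y, xy) : y \in F\}$ is closed in $F \oplus E$, and applying the obvious isomorphism $(u,v) \mapsto (v,u)$ of $F \oplus E$ onto $E \oplus F$ exhibits $\Graph(x^{-1}) = \{(xy, y) : y \in F\}$ as closed in $E \oplus F$.

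With all hypotheses verified, Theorem \ref{ClosedQuotient_is_GraphRegular} immediately yields that $x^{-1}$ is graph regular together with the adjoint formula
\begin{align*}
(x^{-1})^* &= (a^*)^{-1}\, b^* = (x^*)^{-1}\, \id_F = (x^*)^{-1}.
\end{align*}

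I anticipate no real obstacle here. The only conceptual step is recognizing $x^{-1}$ as the quotient $b a^{-1}$ with $b$ the identity on $F$; once this is spotted, closedness of $x^{-1}$ is automatic from boundedness of $x$ by the swap-of-coordinates argument, and the rest is a direct invocation of the preceding theorem.
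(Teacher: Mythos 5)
Your proof is correct and follows exactly the paper's own route: the paper also derives the corollary from Theorem \ref{ClosedQuotient_is_GraphRegular} by taking $b$ to be the identity on $F$ and noting that $x^{-1}$ is closed. You merely spell out the verification of the hypotheses (in particular the coordinate-swap argument for closedness) in more detail than the paper does.
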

\begin{proof}
Since $x^{-1}$ is closed, the assertion follows from Theorem \ref{ClosedQuotient_is_GraphRegular} by letting  $b$ the identity on $F$.
\end{proof}

\begin{cor}\label{ClosedQuotient_Sufficient_Polynomial}
Let $a\in\Adj(E)$ and let $p,q\in\C[X]$ be relatively prime. Assume that $\Range(q(a))$ is essential and $\Null(q(a))\subseteq\Null(p(a))$. Let $t:E\to E$ be the operator defined by
\begin{align*}
\Def(t) &:= \Range(q(a)), \quad t(q(a)x) := p(a)x, \quad x\in E.
\end{align*}
Then $t$ is graph regular.
\end{cor}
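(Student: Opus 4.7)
The plan is to reduce Corollary \ref{ClosedQuotient_Sufficient_Polynomial} directly to Theorem \ref{ClosedQuotient_is_GraphRegular} by taking $G = E$, the operator ``$a$'' of that theorem to be $q(a)$, and the operator ``$b$'' of that theorem to be $p(a)$. Both operators lie in $\Adj(E)$ since $\Adj(E)$ is a $*$-algebra containing $a$. The hypothesis $\Null(q(a))\subseteq\Null(p(a))$ is assumed, and $\Null(q(a)^*) = \Range(q(a))^\bot = \{0\}$ since $\Range(q(a))$ is essential. So the only nontrivial requirement to verify is that the operator $t$ defined by $t(q(a)x) := p(a)x$ is closed. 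Once this is shown, Theorem \ref{ClosedQuotient_is_GraphRegular} immediately gives $t\in\grReg(E)$.

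The key tool for proving closedness is B\'ezout's identity in $\C[X]$: since $p$ and $q$ are relatively prime, there exist $u,v\in\C[X]$ with $up + vq \equiv 1$. Applying the polynomial functional calculus to $a$ yields the identity
\begin{equation*}
u(a)p(a) + v(a)q(a) = I_E \quad \text{in } \Adj(E).
\end{equation*}
With this identity, the closedness argument is short. Suppose $(q(a)x_n, p(a)x_n) \to (y,z)$ in $E\oplus E$. Then
\begin{equation*}
x_n = u(a)p(a)x_n + v(a)q(a)x_n \to u(a)z + v(a)y =: x,
\end{equation*}
by continuity (adjointability) of $u(a)$ and $v(a)$. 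Again by continuity, $q(a)x_n\to q(a)x$ and $p(a)x_n\to p(a)x$, so $y = q(a)x$ and $z = p(a)x = t(q(a)x) = t(y)$. Hence $(y,z)\in\Graph(t)$, showing $t$ is closed.

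I do not expect any real obstacle here. The only thing to be careful about is that B\'ezout produces polynomials in $\C[X]$, and that the polynomial functional calculus $p \mapsto p(a)$ is a unital $\C$-algebra homomorphism $\C[X]\to\Adj(E)$, so the identity $up + vq = 1$ in $\C[X]$ transports faithfully to an identity of adjointable operators. Once $t$ is closed, Theorem \ref{ClosedQuotient_is_GraphRegular} gives graph regularity and also supplies $t^* = (q(a)^*)^{-1}p(a)^*$ as a bonus.
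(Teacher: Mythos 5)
Your proposal is correct and follows essentially the same route as the paper: reduce to Theorem \ref{ClosedQuotient_is_GraphRegular} and verify closedness via B\'ezout's identity $up+vq=1$ applied through the polynomial functional calculus for $a$. The paper's proof is word-for-word the same argument for closedness, and your extra check that $\Null(q(a)^*)=\Range(q(a))^\bot=\{0\}$ is a harmless (indeed welcome) explicit verification of the theorem's hypotheses.
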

\begin{proof}
In order to apply Theorem \ref{ClosedQuotient_is_GraphRegular} we only have to prove that $t$ is closed. Let $(x_n)_{n\in\N}$ be a sequence in $E$ such that $p(a)x_n\to x_p\in E$ and $q(a)x_n\to x_q\in E$. Since $p$ and $q$ are relative prime, there are polynomials $\tilde{p},\tilde{q}\in\C[X]$ such that $\tilde{p}p+\tilde{q}q=1$. Then 
$$x_n=(\tilde{p}(a)p(a)+\tilde{q}(a)q(a))x_n\to\tilde{p}(a)x_p+\tilde{q}(a)x_q=:x_r\in E,$$ so $x_p=p(a)x_r$ and $x_q=q(a)x_r$. That is, $t$ is closed.
\end{proof}

\begin{thm}\label{Graph_regular__admissible_pair}
Let $a\in\Adj(G,E)$, $b\in\Adj(G,F)$, $a_*\in\Adj(H,F)$, and $b_*\in\Adj(H,E)$ be such that $b^*a_*=a^*b_*$. Assume that $\Null(a^*)=\Null(a_*^*)=\{0\}$.  Then $\Null(a)\subseteq\Null(b)$ and $\Null(a_*)\subseteq\Null(b_*)$. The operators $t$ and $t'$ defined by
\begin{align*}
\Def(t) &:= \Range(a), \quad t(ax) := bx ,\quad x\in G,\\
\Def(t') &:= \Range(a_*), \quad t'(a_*y) := b_*y, \quad y\in H,
\end{align*}
are essentially defined, orthogonally closable and they satisfy $(t')^{**}\subseteq t^*$, $t^{**}\subseteq(t')^*$. 
If in addition $t$ and $t'$ are closed and $ab^*=b_*a_*^*$, then $t^*=t'$ and $t\in\grReg(E,F)$.
\end{thm}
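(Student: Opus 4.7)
The plan is to establish the six assertions in order, using the graph decomposition afforded by \cite[Theorem 3.2]{lance} together with Theorem \ref{ClosedQuotient_is_GraphRegular} for the final step.

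First, to see $\Null(a)\subseteq\Null(b)$, take $x$ with $ax=0$ and compute, for every $y\in H$,
$\SP{bx}{a_*y}=\SP{x}{b^*a_*y}=\SP{x}{a^*b_*y}=\SP{ax}{b_*y}=0$,
so $bx\bot\Range(a_*)$; since $\Null(a_*^*)=\{0\}$, $\Range(a_*)$ is essential and thus $bx=0$. The inclusion $\Null(a_*)\subseteq\Null(b_*)$ follows symmetrically after taking adjoints of $b^*a_*=a^*b_*$ (which gives $a_*^*b=b_*^*a$) and using $\Null(a^*)=\{0\}$. With these inclusions $t$ and $t'$ are well defined, and their domains $\Range(a)$, $\Range(a_*)$ are essential because $\Range(a)^\bot=\Null(a^*)=\{0\}$ and likewise for $a_*$.

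Next I would show $t'\subseteq t^*$ directly: for $au\in\Def(t)$ and $a_*z\in\Def(t')$,
$\SP{t(au)}{a_*z}=\SP{bu}{a_*z}=\SP{u}{b^*a_*z}=\SP{u}{a^*b_*z}=\SP{au}{b_*z}$,
which identifies $a_*z\in\Def(t^*)$ with $t^*(a_*z)=b_*z$. In particular $\Def(t^*)\supseteq\Range(a_*)$ is essential, so by Theorem \ref{Characterization__orthognal_closable}, $t$ is orthogonally closable. The symmetric calculation gives $t\subseteq(t')^*$, so $t'$ is orthogonally closable. Since the adjoints $t^*$ and $(t')^*$ are automatically orthogonally closed and contain $t'$ and $t$ respectively, their graphs contain $\Graph(t')^{\bot\bot}=\Graph((t')^{**})$ and $\Graph(t)^{\bot\bot}=\Graph(t^{**})$, giving $(t')^{**}\subseteq t^*$ and $t^{**}\subseteq(t')^*$.

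For the final assertion, assume $t$ and $t'$ are closed and $ab^*=b_*a_*^*$. Theorem \ref{ClosedQuotient_is_GraphRegular} applied to $(a,b)$ immediately yields $t\in\grReg(E,F)$; applied to $(a_*,b_*)$ it yields $t'\in\grReg(F,E)$, and in particular $\Graph(t')$ is orthogonally complemented in $F\oplus E$. Since we already know $t'\subseteq t^*$, showing $t^*=t'$ reduces to proving $\Graph(t^*)\cap\Graph(t')^\bot=\{0\}$: any $(y,z)\in\Graph(t^*)$ then decomposes along $F\oplus E=\Graph(t')\oplus\Graph(t')^\bot$ with both summands forced into $\Graph(t^*)$, leaving the complement part zero.

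The crux is this triviality of the intersection, which is where $ab^*=b_*a_*^*$ enters. For $(y,z)\in\Graph(t^*)$ one has $a^*z=b^*y$, while $(y,z)\in\Graph(t')^\bot$ means $a_*^*y+b_*^*z=0$. Applying $b_*$ to the latter and using $b_*a_*^*=ab^*$ and then $b^*y=a^*z$ gives
\[
aa^*z+b_*b_*^*z=ab^*y+b_*b_*^*z=b_*a_*^*y+b_*b_*^*z=b_*(a_*^*y+b_*^*z)=0,
\]
so $(aa^*+b_*b_*^*)z=0$; positivity forces $z\in\Null(a^*)\cap\Null(b_*^*)=\{0\}$, hence $a_*^*y=-b_*^*z=0$ and therefore $y\in\Null(a_*^*)=\{0\}$. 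This is the step I expect to be the main obstacle, as it is the only place where the compatibility condition $ab^*=b_*a_*^*$ and both of the triviality hypotheses $\Null(a^*)=\Null(a_*^*)=\{0\}$ are simultaneously essential; everything else is book-keeping with the adjoint formula $\Graph(t^*)=v\Graph(t)^\bot$ from Proposition \ref{Graph_tadj}.
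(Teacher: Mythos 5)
Your proof is correct. The first two thirds (the kernel inclusions, essential definedness, $t'\subseteq t^*$, $t\subseteq (t')^*$, and the biadjoint inclusions) match the paper's argument step for step. For the final assertion the packaging differs: the paper forms the single adjointable operator $q(x,y)=(ax+b_*y,\,bx-a_*y)$ whose range is $\Graph(t)\oplus v\Graph(t')$, applies \cite[Theorem 3.2]{lance} once to see this range is orthogonally complemented, and shows $\Null(q^*)=\{0\}$, so the range is all of $E\oplus F$ and $t'=t^*$ follows; you instead invoke Theorem \ref{ClosedQuotient_is_GraphRegular} twice to get $t\in\grReg(E,F)$ and $t'\in\grReg(F,E)$ separately, then decompose $\Graph(t^*)$ along $F\oplus E=\Graph(t')\oplus\Graph(t')^\bot$ and kill the complementary part. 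The decisive computation is the same in both versions — your identity $(aa^*+b_*b_*^*)z=0$ is, up to the sign bookkeeping introduced by $v$, exactly the paper's $aa^*x'+b_*b_*^*x'=0$ in the proof that $\Null(q^*)=\{0\}$. Your organization has the small expository advantage of isolating what each hypothesis is for: graph regularity of $t$ and $t'$ already follows from closedness alone, and the compatibility condition $ab^*=b_*a_*^*$ is used only to identify $t^*$ with $t'$; the paper's single-operator argument is slightly more economical, obtaining the orthogonal decomposition $E\oplus F=\Graph(t)\oplus v\Graph(t^*)$ in one stroke.
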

\begin{proof}
Suppose that $a_*x=0$  for some $x\in H$. Then $0=b^*a_*a=a^*b_*x$ and hence $b_*x=0$, since  $\Null(a^*)=\{0\}$. This shows that $\Null(a_*)\subseteq\Null(b_*)$. In a similar manner, the assumption $ \Null(a_*^*)=\{0\}$ implies that $\Null(a)\subseteq\Null(b)$. Hence the operators $t$ and  $t'$ are well-defined. It is obvious that  $t$ and $t'$ are  essentially defined. 

From the relations $(a^*)^{-1}b^*a_*=b_*$ and $t^*y=(a^*)^{-1}b^*y, y\in G$, we get  $t'\subseteq t^*$. 
Since $t'\subseteq t^*$ and\, $t'$ is essentially defined, so is\, $t^*$. Since $t$ is also essentially defined, $t$ is orthogonally closable.
Interchanging the role of $t$ and $t'$ we conclude that $t\subseteq (t')^*$ and $t'$ is orthogonally closable. Applying the involution to the relations $t\subseteq (t')^*$ and $t'\subseteq t^*$ we obtain $(t')^{**}\subseteq t^*$ and $t^{**}\subseteq(t')^*$ which proves the first half of the proposition.

Now suppose  that $t$ and $t'$ are closed and $ab^*=b_*a_*^*$. Since $t$ and $t'$ are closed,
\begin{align*}
\mathcal{G} &:= \Graph(t)\oplus v\Graph(t') = \{ (ax+b_*y,bx-a_*y) | x\in E,y\in F \}
\end{align*}
is a closed submodule of\, $E\oplus F$. We define $q(x,y):=(ax+b_*y,bx-a_*y)$ for $(a,b)\in E\oplus F$. Then $q\in\Adj(E\oplus F)$ and $\Range(q)=\mathcal{G}$. By \cite[Theorem 3.2]{lance}, $\mathcal{G}$ is orthogonally complemented. It is easily calculated that $$q^*(x',y')=(a^*x'+b^*y',b_*^*x'-a_*^*y')\quad {\rm for}\quad (x',y')\in E\oplus F.$$ 
We show that $\Null(q^*)=\{0\}$. Suppose that $q^*(x',y')=0$. Then $a^*x'+b^*y'=0$ and $a_*^*y'-b_*^*x'=0$, so we obtain $$0=aa^*x'+ab^*y'=aa^*x'+b_*a_*^*y'=aa^*x'+b_*b_*^*x'.$$ 
The latter implies that $\SP{a^*x'}{a^*x'}+\SP{b_*^*x'}{b_*^*x'}=0$. Thus,  $a^*x'=0$ and $b_*^*x'=a_*^*y'=0 $. Therefore, $x'=0$ and $y'=0$ by the assumption $\Null(a^*)=\Null(a_*^*)=\{0\}$. That is, $\Null(q^*)=\{0\}$. Hence, $\mathcal{G}^\bot=\Range(q)^\bot=\Null(q^*)=\{0\}$. Therefore, since $\mathcal{G}$ is orthogonally complemented, we have $\mathcal{G}=E\oplus F$. This proves that $t'=t^*$.
\end{proof}

  \subsection{Absolute value}

The next theorem is concerned with the absolute value of graph regular operators.

\begin{thm}\label{AbsoluteValue_GrReg}
Suppose that $t\in\grReg(E)$ and define
\begin{align*}
\Def(|t|) &:= \Range(a_t^{1/2}), \quad |t|(a_t^{1/2}x) := (1-a_t)^{1/2}x, \quad x\in E.
\end{align*}
Then $|t|\in\grReg(E)$ is self-adjoint and positive. Further, we have\,   $|t|^2=t^*t$,  $a_{|t|}=a_t$, and $b_{|t|}=|b_t|$.
\end{thm}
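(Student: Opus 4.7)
\textbf{Proof plan for Theorem \ref{AbsoluteValue_GrReg}.}

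The natural strategy is to recognize $|t|$ as an operator of the quotient form treated in Theorem \ref{ClosedQuotient_is_GraphRegular}, with the added symmetry that will force self-adjointness via Theorem \ref{Graph_regular__admissible_pair}. Concretely, set
\begin{align*}
a := a_t^{1/2} \in \Adj(E), \qquad b := (1-a_t)^{1/2} \in \Adj(E);
\end{align*}
both belong to $\Adj(E)$ since $0 \le a_t \le I$, both are self-adjoint, both have trivial kernel (the first because $a_t$ is injective, the second because $a_t$ has spectrum in $[0,1]$ and ``$1 \in \sigma(a_t)$'' would cost nothing on $E$---injectivity of $b$ is in fact not needed, only $\Null(a)\subseteq\Null(b)$). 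Since $a_t^{1/2}$ is injective the definition of $|t|$ is unambiguous, and $|t|$ is exactly the operator ``$ba^{-1}$'' in the sense of Theorem \ref{ClosedQuotient_is_GraphRegular}.

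First I would verify closedness of $|t|$. Suppose $a_t^{1/2} x_n \to y$ and $(1-a_t)^{1/2} x_n \to z$. Applying the adjointable operators $a_t^{1/2}$ and $(1-a_t)^{1/2}$ to these respective convergent sequences (and using that they commute with themselves and with $a_t$) gives $a_t x_n \to a_t^{1/2} y$ and $(1-a_t) x_n \to (1-a_t)^{1/2} z$. Adding,
\begin{align*}
x_n = a_t x_n + (1-a_t) x_n \longrightarrow a_t^{1/2} y + (1-a_t)^{1/2} z =: x,
\end{align*}
and continuity of $a_t^{1/2}$ and $(1-a_t)^{1/2}$ yields $y = a_t^{1/2} x$, $z = (1-a_t)^{1/2} x$. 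So $|t|$ is closed.

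Next, to apply Theorem \ref{Graph_regular__admissible_pair} with $a = a_* := a_t^{1/2}$ and $b = b_* := (1-a_t)^{1/2}$, the compatibility conditions $b^* a_* = a^* b_*$ and $a b^* = b_* a_*^*$ both reduce to $(1-a_t)^{1/2} a_t^{1/2} = a_t^{1/2}(1-a_t)^{1/2}$, which holds because both factors are continuous functions of the adjointable self-adjoint operator $a_t$. The operators $t$ and $t'$ produced by the theorem coincide (they have the same data), and both equal the closed operator $|t|$. The conclusion of Theorem \ref{Graph_regular__admissible_pair} then gives $|t|\in\grReg(E)$ and $|t|^*=|t|$. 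Positivity is immediate: for $y=a_t^{1/2} x\in\Def(|t|)$,
\begin{align*}
\SP{|t| y}{y} = \SP{(1-a_t)^{1/2} x}{a_t^{1/2} x} = \SP{a_t^{1/2}(1-a_t)^{1/2} x}{x} \geq 0,
\end{align*}
since $a_t^{1/2}(1-a_t)^{1/2}=(a_t-a_t^2)^{1/2}$ is a positive element of $\Adj(E)$.

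It remains to identify $|t|^2=t^*t$, and then read off $a_{|t|}$ and $b_{|t|}$. For $y = a_t x \in \Range(a_t) = \Def(t^*t)$ the identity $(1+t^*t)a_t = I$ gives $t^*t \, y = (1-a_t) x$, while a direct computation shows
\begin{align*}
|t|^2(a_t x) = |t|\bigl((1-a_t)^{1/2} a_t^{1/2} x\bigr) = |t|\bigl(a_t^{1/2}(1-a_t)^{1/2} x\bigr) = (1-a_t) x,
\end{align*}
so $t^*t$ and $|t|^2$ agree on $\Range(a_t)$. To finish, one shows $\Def(|t|^2)=\Range(a_t)$: if $y=a_t^{1/2} x\in\Def(|t|)$ with $|t| y = (1-a_t)^{1/2} x = a_t^{1/2} z\in\Range(a_t^{1/2})$, then the element $w := a_t^{1/2} x + (1-a_t)^{1/2} z$ satisfies $a_t^{1/2} w = a_t x + (1-a_t) x = x$, forcing $x\in\Range(a_t^{1/2})$ and $y\in\Range(a_t)$. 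Consequently $|t|^2=t^*t$, whence $a_{|t|}=(1+|t|^2)^{-1}=a_t$, and finally
\begin{align*}
b_{|t|} = |t|\, a_{|t|} = |t|(a_t^{1/2}\!\cdot a_t^{1/2}\cdot) = (1-a_t)^{1/2} a_t^{1/2} = (a_t-a_t^2)^{1/2} = (b_t^* b_t)^{1/2} = |b_t|.
\end{align*}
The only real obstacle is the domain identification $\Def(|t|^2)\subseteq\Range(a_t)$; the trick above with the auxiliary element $w$ is what makes it work and is essentially forced by the algebraic identity $a_t+(1-a_t)=I$.
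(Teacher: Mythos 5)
Your proposal is correct and follows essentially the same route as the paper: closedness of $|t|$ by the splitting $x_n=a_tx_n+(1-a_t)x_n$, graph regularity via Theorem \ref{ClosedQuotient_is_GraphRegular}, self-adjointness via Theorem \ref{Graph_regular__admissible_pair} with $a=a_*=a_t^{1/2}$, $b=b_*=(1-a_t)^{1/2}$, and the same computations for positivity and for $b_{|t|}=|b_t|$. The one place you diverge is the identity $|t|^2=t^*t$: the paper checks $(1+|t|^2)a_t=1=(1+t^*t)a_t$ to get $|t|^2\supseteq t^*t$ and then concludes equality abstractly, since $t^*t$ is self-adjoint and $|t|^2=|t|^*|t|$ is symmetric, so the symmetric extension must coincide with the self-adjoint operator. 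You instead prove the domain equality $\Def(|t|^2)=\Range(a_t)$ directly via the auxiliary element $w=a_t^{1/2}x+(1-a_t)^{1/2}z$ with $a_t^{1/2}w=x$; this computation is correct and has the merit of not presupposing that $t^*t$ is self-adjoint, at the cost of being slightly longer. Both arguments are valid.
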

\begin{proof}
Clearly, $|t|$ is essentially defined. We prove that $|t|$ is closed. Let $(x_n)$ be a sequence in $E$ such that $a_t^{1/2}x_n\to x\in E$ and $(1-a_t)^{1/2}x_n\to y\in E$. Then $x_n=(1-a_t)x_n+a_tx_n\to a_t^{1/2}x+(1-a_t)^{1/2}y=:x'$, so that $x=a_t^{1/2}x'$. This shows that $|t|$ is closed. Therefore, $|t|$ is graph regular by Proposition \ref{ClosedQuotient_is_GraphRegular}.

Applying Theorem \ref{Graph_regular__admissible_pair} with $a=a_*=a_t^{1/2}$ and $b=b_*=(1-a_t)^{1/2}$ we conclude that $|t|=|t|^*$, since the relations $b^*a_*=a^*b_*$ and $ab^*=b_*^*a_*$ are fulfilled. Further,  $\Def(|t|^2)\supseteq\Range(a_t)=\Def(t^*t)$. It is easily checked that $(1+|t|^2)a_t=1=(1+t^*t)a_t$, so $|t|^2\supseteq t^*t$. Since $t^*t$ is self-adjoint and $|t|^2=|t|^*|t|$ is symmetric, we obtain  $|t|^2=t^*t$. We derive
\begin{align*}
\SP{|t|(a_t^{1/2}x)}{a_t^{1/2}x} = \SP{(1-a_t)^{1/2}}{a_t^{1/2}x} = \SP{(a_t-a_t^2)^{1/2}x}{x} \geq 0\quad {\rm for}\quad x\in E,
\end{align*}
so $|t|$ is positive. Clearly, we have $a_t=a_{|t|}$. Finally, we compute
\begin{align*}
b_{|t|} &= |t|a_t = |t|a_t^{1/2}a_t^{1/2} = (1-a_t)^{1/2}a_t^{1/2} = (a_t-a_t^2)^{1/2} = (b_t^*b_t)^{1/2} = |b_t|.
\end{align*}
\end{proof}

\begin{rem}
In contrast to the Hilbert  space case  the domains $\Def(t)$ and $\Def(|t|)$ do not coincide in general, even more, neither $\Def(t)\subseteq\Def(|t|)$ nor $\Def(t)\supseteq\Def(|t|)$ holds. Indeed, let $E=\CAlg{A}:=C([0,1])$ and set $m(x):=x^{-1}e^{i/x}$ for $x\in(0,1]$. Then,  by Theorem \ref{maintheoremcont} below,  the operator $t_m$ is graph regular, since $\reg(m)=(0,1]$ and $\singsuppr(m)=\emptyset$. It is easily verified that $|t_m|=t_{|m|}$. Define $f(x):=xe^{-i/x}$ for $x\in(0,1]$ and $f(0)=0$. Then $f\in\Def(t_m)$, but $f\notin\Def(|t_m|)$. The function $g(x)=x$ is in $\Def(|t_m|)$, but not in  $\Def(t_m)$.
\end{rem}

  \subsection{Bounded transform}

The main results of this section give a  characterization of graph regular operators in terms of the bounded transform.

Let us begin with some important notation.
Let $\mathcal{Z}(E,F)$ denote the set of all $z\in\Adj(E,F)$ such that $\|z\|\leq 1$ and $\Null(I-z^*z)=\{0\}$ and let $\mathcal{Z}^d(E,F)$ be the set of those $z\in\mathcal{Z}(E,F)$ for which $\Range(I-z^*z)$ is dense in $E$. 

In \cite[Lemma 10.3]{lance} it was shown that  $z\in\mathcal{Z}^d(E,F)$ implies that $z^*\in\mathcal{Z}^d(F,E)$. The following lemma contains the analogues statement  for $\mathcal{Z}(E,F)$.

\begin{lem}
If $z\in\Adj(E,F)$, then $\Null(I-z^*z)=\{0\}$ if and only if $\Null(I-zz^*)=\{0\}$. In particular, $z\in\mathcal{Z}(E,F)$ if and only if $z^*\in\mathcal{Z}(F,E)$.
\end{lem}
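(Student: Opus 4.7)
The plan is to reduce the statement to a purely algebraic manipulation on $E$ and $F$ and then invoke symmetry. The key observation is that if $y$ satisfies $zz^*y = y$, then applying $z^*$ to both sides gives
\[
z^* y = z^* z z^* y, \qquad \text{i.e.,} \qquad (I - z^*z)(z^*y) = 0.
\]
So $z^*$ carries $\Null(I - zz^*)$ into $\Null(I - z^*z)$.

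From here the proof essentially writes itself. First I would assume $\Null(I - z^*z) = \{0\}$ and pick $y \in \Null(I - zz^*)$; by the displayed identity $z^*y \in \Null(I - z^*z) = \{0\}$, so $z^*y = 0$, and then $y = zz^*y = 0$. This establishes one implication. The converse is obtained by interchanging the roles of $z$ and $z^*$, using $z^{**} = z$ for adjointable operators (so one applies the same argument with $z$ replaced by $z^*$).

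For the final clause, by definition $z \in \mathcal{Z}(E,F)$ means $z \in \Adj(E,F)$, $\|z\| \leq 1$, and $\Null(I - z^*z) = \{0\}$. Since $\|z\| = \|z^*\|$ in $\Adj$, and since we have just shown the kernel condition is self-dual, $z \in \mathcal{Z}(E,F)$ is equivalent to $z^* \in \mathcal{Z}(F,E)$.

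There is no real obstacle: the statement is a one-line algebraic identity together with an appeal to symmetry. The only point worth checking carefully is that the argument uses nothing beyond the defining relation $\langle zx, y\rangle = \langle x, z^*y\rangle$ and the algebraic identity $z^{**}=z$ for adjointable operators — in particular no norm inequality and no positivity of $I - z^*z$ is needed, so the lemma applies to all $z \in \Adj(E,F)$ as stated.
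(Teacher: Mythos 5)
Your proof is correct and rests on the same core idea as the paper's: the intertwining identity $(I-z^*z)z^*=z^*(I-zz^*)$ shows that $z^*$ carries $\Null(I-zz^*)$ into $\Null(I-z^*z)$, and the rest is symmetry in $z\leftrightarrow z^*$. The only (harmless) difference is that the paper argues contrapositively, computing $\SP{z^*x}{z^*x}=\SP{x}{zz^*x}=\SP{x}{x}$ to see that $z^*x\neq 0$ for a nonzero $x$ in the kernel, whereas you conclude $z^*y=0$ directly from the triviality of the other kernel and recover $y=zz^*y=0$, thereby avoiding the norm computation altogether.
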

\begin{proof}
It suffices to show one direction, since $z$ can be replaced by $z^*$. Assume that $x\in\Null(I-zz^*)\setminus\{0\}$. Then $\|z^*x\|^2=\SP{x}{zz^*x}=\SP{x}{x}=\|x\|^2$. Hence $z^*x\neq 0$. But $(I-z^*z)z^*x=z^*(I-zz^*)x=0$, so $z^*x\in\Null(I-z^*z)\neq\{0\}$.
\end{proof}

For $z\in\mathcal{Z}(E,F)$ we define an operator $t_z:E\to F$ by
\begin{align*}
\Def(t_z) &:= (I-z^*z)^{1/2}E, \quad t_z(I-z^*z)^{1/2}x := zx, \quad x\in E.
\end{align*}
Since $\Range((I-z^*z)^{1/2})^\bot=\Null((I-z^*z)^{1/2})=\Null(I-z^*z)=\{0\}$, $t_z$ is essentially defined. Clearly, $\Range(z)=\Range(t_z)$.

\begin{thm}\label{boundedTransform__tz}
If $z\in\mathcal{Z}(E,F)$, then $t_z\in\grReg(E,F)$. Further, the mapping $z\to t_z$ is injective from $\mathcal{Z}(E,F)$ into $\grReg(E,F)$. We have $t_z^*=t_{z^*}$ and
\begin{align*}
a_{t_z} &= I-z^*z, \quad z=t_za_{t_z}^{1/2}.
\end{align*}
\end{thm}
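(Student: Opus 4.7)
The plan is to derive everything from the closed quotient machinery (Theorem \ref{ClosedQuotient_is_GraphRegular}) together with the $(a,a_*,b)$-transform (Theorem \ref{aabTransform}), exploiting the intertwining identity $z\,f(z^*z)=f(zz^*)\,z$ for continuous $f$ on $[0,1]$, which follows from $z(z^*z)^n=(zz^*)^nz$ and uniform approximation by polynomials.

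First I would show $t_z\in\grReg(E,F)$. Set $a=(I-z^*z)^{1/2}\in\Adj(E)$ and $b=z\in\Adj(E,F)$. Then $\Null(a)=\Null(I-z^*z)=\{0\}$ by assumption, so trivially $\Null(a)\subseteq\Null(b)$ and $\Null(a^*)=\Null(a)=\{0\}$. To apply Theorem \ref{ClosedQuotient_is_GraphRegular} it remains to verify that $t_z$ is closed. Suppose $(I-z^*z)^{1/2}x_n\to x$ in $E$ and $zx_n\to y$ in $F$. Applying $(I-z^*z)^{1/2}$ to the first and $z^*$ to the second (both are adjointable, hence bounded) gives $(I-z^*z)x_n\to (I-z^*z)^{1/2}x$ and $z^*zx_n\to z^*y$. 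Adding yields $x_n\to (I-z^*z)^{1/2}x+z^*y=:x'$, and continuity of $(I-z^*z)^{1/2}$ and $z$ then forces $x=(I-z^*z)^{1/2}x'$ and $y=zx'$. Hence $t_z$ is closed and graph regular.

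Next I would identify $a_{t_z}$. Using the intertwining identity with $f(s)=\sqrt{1-s}$ I have $z(I-z^*z)^{1/2}=(I-zz^*)^{1/2}z$. For arbitrary $y\in E$, set $u=(I-z^*z)^{1/2}y\in\Def(t_z)$; then
\begin{align*}
t_z\bigl((I-z^*z)y\bigr) &= t_z\bigl((I-z^*z)^{1/2}u\bigr) = zu = z(I-z^*z)^{1/2}y = (I-zz^*)^{1/2}zy,
\end{align*}
which lies in $(I-zz^*)^{1/2}F=\Def(t_{z^*})$. Anticipating $t_z^*\supseteq t_{z^*}$ (shown below), $(I-z^*z)y\in\Def(t_z^*t_z)$ and $t_z^*t_z(I-z^*z)y=z^*zy$, so $(1+t_z^*t_z)(I-z^*z)y=y$ for every $y\in E$. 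Since $a_{t_z}=(1+t_z^*t_z)^{-1}\in\Adj(E)$ by Theorem \ref{Basics__graph_regular}, this forces $a_{t_z}=I-z^*z$, and then $z=t_z(I-z^*z)^{1/2}=t_z a_{t_z}^{1/2}$ is immediate from the definition of $t_z$. Injectivity of $z\mapsto t_z$ then drops out: if $t_{z_1}=t_{z_2}$, then $a_{t_{z_1}}=a_{t_{z_2}}$, so $z_1=t_{z_1}a_{t_{z_1}}^{1/2}=t_{z_2}a_{t_{z_2}}^{1/2}=z_2$.

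Finally, for $t_z^*=t_{z^*}$ I would invoke Theorem \ref{Graph_regular__admissible_pair} with the symmetric choice $a=(I-z^*z)^{1/2}$, $b=z$, $a_*=(I-zz^*)^{1/2}$, $b_*=z^*$. The two compatibility conditions $b^*a_*=a^*b_*$ and $ab^*=b_*a_*^*$ both reduce to $z^*(I-zz^*)^{1/2}=(I-z^*z)^{1/2}z^*$, which is the intertwining identity again. Both associated operators $t$ and $t'$ coincide with $t_z$ and $t_{z^*}$, both are closed by the argument of the first paragraph, and the kernel conditions on $a^*$, $a_*^*$ reduce once more to $\Null(I-z^*z)=\Null(I-zz^*)=\{0\}$, which is available by the preceding lemma. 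The theorem then gives $t_z^*=t_{z^*}$.

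The one genuine point of care is the circular-looking use of $t_z^*\supseteq t_{z^*}$ when computing $a_{t_z}$: the cleanest route is to establish $t_z^*\supseteq t_{z^*}$ directly from $\langle zx,(I-zz^*)^{1/2}y\rangle=\langle (I-z^*z)^{1/2}x,z^*y\rangle$ (another application of the intertwining identity) before entering the calculation of $a_{t_z}$, and only afterwards upgrade this inclusion to equality via Theorem \ref{Graph_regular__admissible_pair}. That intertwining identity is the single technical fact driving everything in the theorem.
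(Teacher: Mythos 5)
Your proposal is correct and follows essentially the same route as the paper: closedness of $t_z$ plus Theorem \ref{ClosedQuotient_is_GraphRegular} for graph regularity, and Theorem \ref{Graph_regular__admissible_pair} with the identical quadruple $a=(I-z^*z)^{1/2}$, $b=z$, $a_*=(I-zz^*)^{1/2}$, $b_*=z^*$ for $t_z^*=t_{z^*}$, all driven by the intertwining identity $z(I-z^*z)^{1/2}=(I-zz^*)^{1/2}z$. The only (cosmetic) divergence is in computing $a_{t_z}$: the paper first writes $t_z^*=((I-z^*z)^{1/2})^{-1}z^*$ via Proposition \ref{Adjoint__Addition_Compositon}(4) and expands $t_z^*t_z$, whereas you verify $(1+t_z^*t_z)(I-z^*z)=I$ directly after establishing $t_{z^*}\subseteq t_z^*$ — both are sound.
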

\begin{proof}
First we prove that the operator $t_z$  is closed. Let $(x_n)$ be a sequence of $\Def(z)$ such that $((I-z^*z)^*)^{1/2}x_n\to x$ and $zx_n\to y$. Then $$x_n=(I-z^*z)^*x_n+z^*zx_n\to((I-z^*z)^*)^{1/2}x+z^*y,$$  so  $x\in\Def(t_z)$ and $t_zx=y$. Thus, $t_z$ is closed and
$t_z\in\grReg(E,F)$ by Proposition \ref{ClosedQuotient_is_GraphRegular}.

Set\, $a=(1-z^*z)^{1/2}$, $b=z$, $a_*=(1-zz^*)^{1/2}$, $b_*=z^*$. Since $ z^*(1-zz^*)^{1/2}=(1-z^*z)^{1/2}z^*$, we then have $b^*a_*=a^*b_*$ and $ab^*=b_*a_*^*$. Hence Theorem \ref{Graph_regular__admissible_pair} applies and yields  $z_t^*=z_{t^*}$.  Finally, by Proposition \ref{Adjoint__Addition_Compositon}(4), $t_z^*=((I-z^*z)^{1/2})^{-1}z^*$, so
\begin{align*}
t_z^*t_z &= ((I-z^*z)^{1/2})^{-1}z^*z((I-z^*z)^{1/2})^{-1}\\
&= ((I-z^*z)^{1/2})^{-1}(I-z^*z)((I-z^*z)^{1/2})^{-1} - (((I-z^*z)^{1/2})^{-1})^2\\
&= I - ((I-z^*z)^{1/2})^{-1}((I-z^*z)^{1/2})^{-1}.
\end{align*}
Therefore, $(I+t_z^*t_z)^{-1}=I-z^*z$. In particular, $\Def(t_z^*t_z)^\bot=\Range(I-z^*z)^\bot=\{0\}$ and $((I+t_z^*t_z)^{-1})^*=(I+t_z^*t_z)^{-1}$. This also implies that $t_z^*t_z$ is self-adjoint and $t_z((I+t_z^*t_z)^{-1})^{1/2}=z$.
\end{proof}

According to  \cite[Theorem 10.4]{lance},  the mapping $z\mapsto t_z$ is a bijection from the set $\mathcal{Z}^d(E,F)$ onto the set $\Reg(E,F)$ of regular operators. For the extended mapping acting on $\mathcal{Z}(E,F)$ the situation is more subtle. It is still an injective mapping into the set $\grReg(E,F)$ of graph regular operators, but it is not sujective as shown by Example \ref{BoundedTranform_notAdjointbale} below.

\begin{lem}\label{AbsoluteValue_z_tz}
If $z\in\mathcal{Z}(E,F)$, then $|z|\in\mathcal{Z}(E)$ and $|t_z|=t_{|z|}$. Further, we have $\Null(z)=\Null(t_z)=\Null(|z|)=\Null(|t_z|)$.
\end{lem}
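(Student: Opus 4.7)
\medskip

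The plan is to verify $|z|\in\mathcal{Z}(E)$ directly, then observe that $|t_z|$ and $t_{|z|}$ are literally defined by the same formula once one computes $a_{t_z}$, and finally prove the kernel chain by reducing everything to $\Null(z^*z)$ via functional calculus in $\Adj(E)$.

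First, since $z\in\Adj(E,F)$, the operator $z^*z\in\Adj(E)$ is positive, so $|z|:=(z^*z)^{1/2}\in\Adj(E)$ is well-defined by the continuous functional calculus. From $\||z|\|^2=\||z|^2\|=\|z^*z\|=\|z\|^2\leq 1$ and $|z|^*|z|=|z|^2=z^*z$, the assumption $\Null(I-z^*z)=\{0\}$ yields $\Null(I-|z|^*|z|)=\{0\}$. Hence $|z|\in\mathcal{Z}(E)$.

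Next, I would compare $|t_z|$ with $t_{|z|}$. By Theorem \ref{boundedTransform__tz}, $a_{t_z}=I-z^*z$, so $a_{t_z}^{1/2}=(I-z^*z)^{1/2}$ and $(1-a_{t_z})^{1/2}=(z^*z)^{1/2}=|z|$. The definition of the absolute value in Theorem \ref{AbsoluteValue_GrReg} then gives
\begin{align*}
\Def(|t_z|) &= \Range(a_{t_z}^{1/2})=(I-z^*z)^{1/2}E,\\
|t_z|\bigl((I-z^*z)^{1/2}x\bigr) &= (1-a_{t_z})^{1/2}x = |z|x,\quad x\in E.
\end{align*}
On the other hand, by the definition of the bounded transform applied to $|z|$,
\begin{align*}
\Def(t_{|z|}) &= (I-|z|^*|z|)^{1/2}E = (I-z^*z)^{1/2}E,\\
t_{|z|}\bigl((I-z^*z)^{1/2}x\bigr) &= |z|x,\quad x\in E.
\end{align*}
These coincide, so $|t_z|=t_{|z|}$.

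For the kernels I would argue as follows. Since $|z|$ is self-adjoint, $\SP{zx}{zx}=\SP{|z|^2x}{x}=\SP{|z|x}{|z|x}$, so $\Null(z)=\Null(|z|)$. For $\Null(t_z)=\Null(z)$: the operator $(I-z^*z)^{1/2}$ is injective on $E$ since $\Null((I-z^*z)^{1/2})=\Null(I-z^*z)=\{0\}$, so every element of $\Def(t_z)$ has a unique preimage. If $zy=0$, then $z^*zy=0$, and continuous functional calculus for $z^*z\in\Adj(E)$ applied to the continuous function $s\mapsto(1-s)^{1/2}$ on $\sigma(z^*z)\subseteq[0,1]$ gives $(I-z^*z)^{1/2}y=y$; hence $y\in\Def(t_z)$ with $t_zy=zy=0$. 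Conversely, if $x\in\Null(t_z)$, write $x=(I-z^*z)^{1/2}y$ uniquely with $zy=0$; then $y\in\Null(z^*z)$, so by the same functional-calculus identity $x=(I-z^*z)^{1/2}y=y\in\Null(z)$. Applying the identical argument to $|z|\in\mathcal{Z}(E)$ and the already-established equality $|t_z|=t_{|z|}$ yields $\Null(|t_z|)=\Null(t_{|z|})=\Null(|z|)$, completing the chain.

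The only nontrivial step is the functional-calculus identity $(I-z^*z)^{1/2}y=y$ on $\Null(z^*z)$, but this is immediate from spectral considerations for the positive element $z^*z$ of the $C^*$-algebra $\Adj(E)$; no density or regularity argument is needed, so no genuine obstacle arises.
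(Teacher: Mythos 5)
Your proof is correct. The verification that $|z|\in\mathcal{Z}(E)$ and the computation showing $|t_z|=t_{|z|}$ (via $a_{t_z}=I-z^*z$ and the explicit formula for the absolute value) are exactly the paper's argument. Where you genuinely diverge is in the kernel identities: the paper disposes of $\Null(z)=\Null(t_z)$ in one line by passing to orthogonal complements of ranges, namely $\Null(z)=\Range(z^*)^\bot=\Range(t_{z^*})^\bot=\Range(t_z^*)^\bot=\Null(t_z)$, using $\Range(w)=\Range(t_w)$, the identity $t_z^*=t_{z^*}$ from Theorem \ref{boundedTransform__tz}, and the fact that kernels of orthogonally closed operators satisfy $\Null(t)=\Range(t^*)^\bot$. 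You instead run an elementwise argument through the functional-calculus identity $(I-z^*z)^{1/2}y=y$ for $y\in\Null(z^*z)$, which is valid (polynomial approximation of $s\mapsto(1-s)^{1/2}$ on $[0,1]$ does the job, and the injectivity of $(I-z^*z)^{1/2}$ you invoke is exactly $\Null(a^{1/2})=\Null(a)$ for positive $a$). The paper's route is shorter and reuses the adjoint machinery already established; yours is more self-contained and makes visible exactly which element of $\Def(t_z)$ witnesses each kernel vector, at the cost of an extra spectral argument. Both are sound.
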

\begin{proof}
Since $1-z^*z=1-|z|^2$ and $\||z|\|=\|z\|\leq 1$, the first statement is clear. Further, $a_{t_z}=(1+t_z^*t_z)^{-1}=1-z^*z$, so
\begin{align*}
\Def(|t_z|) &= \Range(a_{t_z}^{1/2}) = \Range((1-z^*z)^{1/2}) = \Range(1-|z|^2) = \Def(t_{|z|}),\\
|t_z|(1-z^*z)x &= (1-a_{t_z})^{1/2}x = (z^*z)^{1/2}x = |z|x = t_{|z|}(1-|z|^2)x, \quad x\in E,
\end{align*}
that is, $|t_z|=t_{|z|}$. Since  kernels of orthogonally closed operators are orthogonally closed, we obtain $\Null(z)=\Range(z^*)^\bot=\Range(t_{z^*})^\bot=\Range(t_z^*)^\bot=\Null(t_z)$. Because $\Null(z)=\Null(|z|)$, this completes the proof.
\end{proof}
The following lemma restates \cite[Proposition 3.7]{lance}. It will be used several times in the proof of Theorem  \ref{BoundedTranform_grReg} below.
\begin{lem}\label{rangelemmalance}
If $a\in \Adj(E)_+$, then\, $\overline{\Range(a)}=\overline{\Range(a^\varepsilon)}$\, for any $\varepsilon>0$.
\end{lem}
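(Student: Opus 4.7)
The plan is to establish the two inclusions $\overline{\Range(a^\varepsilon)} \subseteq \overline{\Range(a)}$ and $\overline{\Range(a)} \subseteq \overline{\Range(a^\varepsilon)}$ by working inside the commutative $C^*$-subalgebra of $\Adj(E)$ generated by $a$, where continuous functional calculus is freely available since $a \in \Adj(E)_+$.

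One inclusion is essentially for free. Set $\alpha := \min(1,\varepsilon)$ and $\beta := \max(1,\varepsilon)$. Then the factorization $a^\beta = a^{\beta - \alpha}\cdot a^\alpha$ (a product of two positive elements of $\Adj(E)$) gives $\Range(a^\beta) \subseteq \Range(a^\alpha)$, hence $\overline{\Range(a^\beta)} \subseteq \overline{\Range(a^\alpha)}$. Since $\{\alpha,\beta\} = \{1,\varepsilon\}$, this is exactly one of the two inclusions in the statement.

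The non-trivial direction is $\overline{\Range(a^\alpha)} \subseteq \overline{\Range(a^\beta)}$. My plan is to build adjointable approximate inverses using the family of continuous functions
\[
f_\delta(t) := \frac{t}{t+\delta}\in C([0,\|a\|]),\qquad \delta>0,
\]
for which $f_\delta(a)\in \Adj(E)_+$, $0 \le f_\delta(a) \le I$, and $f_\delta(a)$ commutes with every $a^s$. Fix an integer $n$ with $n \geq \beta - \alpha$. Then by functional calculus
\[
f_\delta(a)^n\, a^\alpha \;=\; a^{\alpha+n}(a+\delta)^{-n} \;=\; a^\beta\cdot a^{\alpha+n-\beta}(a+\delta)^{-n},
\]
and the second factor is the functional-calculus element associated with the bounded continuous function $t^{\alpha+n-\beta}(t+\delta)^{-n}$, hence lies in $\Adj(E)$. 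Consequently $f_\delta(a)^n a^\alpha y\in \Range(a^\beta)$ for every $y\in E$.

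It remains to show $\|f_\delta(a)^n a^\alpha - a^\alpha\|\to 0$ as $\delta\to 0^+$, which by the $C^*$-functional calculus reduces to a uniform estimate on $\sigma(a)\subseteq[0,\|a\|]$ for $(1-f_\delta(t)^n)\,t^\alpha$. Using $1-f_\delta = \delta/(t+\delta)$ and $1-f_\delta^n \le n(1-f_\delta)$ (since $0\le f_\delta\le 1$), it suffices to bound $\delta t^\alpha/(t+\delta)$ on $[0,\|a\|]$. Splitting cases $t\le \delta$ and $t\ge\delta$ yields $\delta t^\alpha/(t+\delta) \le \delta^{\min(\alpha,1)}\,\|a\|^{\max(\alpha-1,0)}$, which tends to $0$ uniformly. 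I expect this elementary-but-fiddly uniform estimate, together with the exponent bookkeeping to ensure the residual functional-calculus factor is bounded, to be the main technical point; after that, every $a^\alpha y$ is a norm-limit of elements of $\Range(a^\beta)$, proving $\overline{\Range(a^\alpha)}\subseteq\overline{\Range(a^\beta)}$ and completing the lemma.
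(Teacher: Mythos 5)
Your argument is correct. Note, however, that the paper does not prove this lemma at all: it simply restates \cite[Proposition 3.7]{lance} and uses it as a black box, so any self-contained proof is already ``a different route.'' Your two inclusions are both sound: the easy one since $a^{\beta}=a^{\alpha}a^{\beta-\alpha}$ with commuting positive factors (as written, $a^{\beta-\alpha}\cdot a^{\alpha}$ literally exhibits $\Range(a^{\beta})\subseteq\Range(a^{\beta-\alpha})$, but commutativity immediately gives the intended $\Range(a^{\beta})\subseteq\Range(a^{\alpha})$), and the hard one because $f_{\delta}(a)^{n}a^{\alpha}=a^{\beta}\,g_{\delta}(a)$ with $g_{\delta}(t)=t^{\alpha+n-\beta}(t+\delta)^{-n}$ continuous on $[0,\|a\|]$ once $n\geq\beta-\alpha$, and your uniform estimate $(1-f_{\delta}(t)^{n})t^{\alpha}\leq n\,\delta^{\alpha}$ (using $\alpha\leq 1$) does tend to $0$. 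For comparison, the standard proof behind Lance's statement is shorter: for any $\gamma>0$ the function $t\mapsto t^{\gamma}$ is continuous on $[0,\|a\|]$ and vanishes at $0$, hence is a uniform limit of polynomials $p_{n}(t)=t\,q_{n}(t)$ without constant term, so $a^{\gamma}x=\lim_{n}a\,(q_{n}(a)x)\in\overline{\Range(a)}$; applying this with $\gamma=\varepsilon$ and then with $a$ replaced by $a^{\varepsilon}$ and $\gamma=1/\varepsilon$ gives both inclusions at once, with no case distinction on $\min(1,\varepsilon)$ and no $\delta$-estimates. Your approximate-inverse construction buys nothing extra here, but it is a perfectly valid (if fiddlier) substitute.
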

Now suppose that $t\in\grReg(E,F)$. Recall that $a_t\in\Adj(E)_+$, $a_{t^*}\in\Adj(F)_+$, and $b_t=b_{t^*}^*\in\Adj(E,F)$ by Theorem \ref{aabTransform}. The operator $ta_t^{1/2}$ is essentially defined, since $\Range(a_t^{1/2})$ is contained in its domain. Its adjoint is an extension of $a_t^{1/2}t^*$, hence it is also essentially defined. Moreover, by Proposition \ref{Adjoint__Addition_Compositon}(3),  $(a_t^{1/2}t^*)^*=ta_t^{1/2}$. Hence  $ta_t^{1/2}$ is orthogonally closed; in particular, $ta_t^{1/2}$ is closed. 
\begin{dfn} The \emph{bounded transform} $z_t$ of\,  $t\in\grReg(E,F)$ is defined by
\begin{align*}
z_t &:= ta_t^{1/2}\upharpoonright_{\ol{\Def(t^*t)}}.
\end{align*}
\end{dfn}
Since $t\in \grReg(E,F)$, $\Def(t^*t)$ is essential in $E$ and $\Def(tt^*)$ is essential in $F$.

\begin{thm}\label{BoundedTranform_grReg}
Suppose $t\in\grReg(E,F)$. Set $E_0:=\ol{\Def(t^*t)}$ and $F_0:=\ol{\Def(tt^*)}$. Then $z_t\in\mathcal{Z}^d(E_0,F_0)$ and $z_t^*=z_{t^*}$, where the adjoint $z_t^*$ is taken in $\Adj(E_0,F_0)$. Further,
\begin{align*}
z_t &= \ol{a_{t^*}^{1/2}t}\upharpoonright_{\ol{\Def(t^*t)}}, \quad \Null(z_t)=\Null(t), \quad \Range(z_t)\subseteq\Range(t),\\
a_t &= (1-z_{t^*}z_t)^*, \quad a_{t^*} = (1-z_tz_{t^*})^*, \quad b_t=z_ta_t^{1/2},
\end{align*}
and
$t_{\rm reg}:=t_{z_t}\upharpoonright_{E_0}$ is a regular operator from $E_0$ to $F_0$ satisfying\, $t_{\rm reg}\subseteq t =(t_{z_t})^{**}$, 
where $t_{z_t}^{**}$ is the biadjoint of the operator $t_{z_t}:E\to F$.
\end{thm}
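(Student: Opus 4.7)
Write $a := a_t$, $a_* := a_{t^*}$, $b := b_t$. Three facts from the preceding material drive everything: the identities $b^*b = a - a^2$, $bb^* = a_* - a_*^2$, $ab^* = b^*a_*$ with $\Null(a) = \Null(a_*) = \{0\}$ (Theorem \ref{aabTransform}); the intertwining $f(a_*)b = bf(a)$ for $f \in C([0,1])$ (Lemma \ref{Function_atbt}), in particular $a_*^{1/2}b = ba^{1/2}$ and $a^{1/2}b^* = b^*a_*^{1/2}$; and the density equalities $E_0 = \ol{\Range(a)} = \ol{\Range(a^{1/2})}$, $F_0 = \ol{\Range(a_*)} = \ol{\Range(a_*^{1/2})}$ (Lemma \ref{rangelemmalance}).

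\emph{Adjointability of $z_t$.} From $b^*b = a(I-a) \leq a$ follows $\|bx\|^2 = \SP{x}{(a-a^2)x} \leq \|a^{1/2}x\|^2$, so, since $a^{1/2}$ is injective, the prescription $a^{1/2}x \mapsto bx$ is a well-defined contraction on $\Range(a^{1/2})$ and extends to a bounded module map $\tilde z: E_0 \to F$. The image of the dense subspace $\Range(a)$ lies in $\Range(a_*^{1/2})$ via $ba^{1/2} = a_*^{1/2}b$, so $\tilde z(E_0) \subseteq F_0$. Closedness of $ta^{1/2}$ (recalled before the definition of $z_t$) applied to sequences $(a^{1/2}x_n, bx_n) \to (y, \tilde z(y))$ with $a^{1/2}x_n \to y \in E_0$ identifies $\tilde z$ with $ta^{1/2}|_{E_0} = z_t$ and shows $E_0 \subseteq \Def(ta^{1/2})$. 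The symmetric construction yields $z_{t^*}: F_0 \to E_0$, and $a_*^{1/2}b = ba^{1/2}$ gives $\SP{z_t(a^{1/2}x)}{a_*^{1/2}y} = \SP{a^{1/2}x}{z_{t^*}(a_*^{1/2}y)}$ on dense subspaces, so $z_t \in \Adj(E_0, F_0)$ with $z_t^* = z_{t^*}$.

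\emph{Algebraic formulas, kernel, range.} For $y = ax \in \Range(a)$, one computes $z_t y = b(a^{1/2}x) = a_*^{1/2}bx$ and hence $z_{t^*}z_t y = b^*bx = (a-a^2)x$, so $(I - z_{t^*}z_t)y = a^2 x = ay$. Continuity extends this to $I - z_{t^*}z_t = a|_{E_0}$ on $E_0$; since $E_0^\perp = \{0\}$, the adjoint of this essentially defined operator on $E$ equals $a_t$, so $(I - z_{t^*}z_t)^* = a_t$, and symmetrically $(I - z_tz_{t^*})^* = a_{t^*}$; $b_t = z_ta_t^{1/2}$ is immediate. Injectivity of $a$ gives $\Null(I - z_t^*z_t) = \{0\}$, and $\Range(I - z_t^*z_t) \supseteq \Range(a^2)$ is dense in $E_0$ by Lemma \ref{rangelemmalance}, so $z_t \in \mathcal{Z}^d(E_0, F_0)$. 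The inclusion $\Range(z_t) \subseteq \Range(t)$ is immediate from $z_t y = t(a^{1/2}y)$. For $\Null(z_t) = \Null(t)$: if $y \in \Null(t)$ then $ay = y$, hence $y \in \Range(a) \subseteq E_0$ with $a^{1/2}y = y$ and $z_ty = ty = 0$; conversely $z_ty = 0$ gives $ta^{1/2}y = 0$, so $a^{1/2}y \in \Null(t) = \Null(b)$ (Theorem \ref{aabTransform}), and then $0 = ba^{1/2}y = a_*^{1/2}by$ with $a_*^{1/2}$ injective forces $y \in \Null(b) = \Null(t)$. For $z_t = \ol{a_*^{1/2}t}|_{E_0}$, on the dense subspace $\Def(t^*t) = \Range(a)$ one has $a_*^{1/2}t(ax) = a_*^{1/2}bx = ba^{1/2}x = z_t(ax)$; since $z_t$ is continuous on $E_0$, the closure coincides with $z_t$ on $E_0$.

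\emph{The biadjoint identification --- the main obstacle.} Applying Theorem \ref{boundedTransform__tz} inside the module pair $(E_0, F_0)$ to $z_t \in \mathcal{Z}^d(E_0, F_0)$ produces the regular operator $t_{z_t}: E_0 \to F_0$ with $\Def(t_{z_t}) = a^{1/2}(E_0)$ and $t_{z_t}(a^{1/2}w) = z_t w$; this is $t_{\rm reg}$, and $t_{\rm reg} \subseteq t$ because $a^{1/2}w \in \Def(t)$ for $w \in E_0$ and $t(a^{1/2}w) = z_t w$. Viewed as an operator $E \to F$, $t_{z_t}$ is essentially defined (its domain contains $\Range(a^{3/2})$, dense in the essential submodule $E_0$), and $t_{z_t} \subseteq t$ yields $t^* \subseteq (t_{z_t})^*$. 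The main subtlety is the reverse inclusion: since $\Def(t^*t) = \Range(a) \subseteq a^{1/2}(E_0) = \Def(t_{z_t})$, any $u \in \Def((t_{z_t})^*)$ with $\SP{t_{z_t}y}{u} = \SP{y}{v}$ for $y \in \Def(t_{z_t})$ satisfies the same relation on the essential core $\Def(t^*t)$ (Theorem \ref{Basics__graph_regular}(3a)); the essential-core characterization then forces $u \in \Def(t^*)$ with $t^*u = v$. Hence $(t_{z_t})^* = t^*$ and $t = t^{**} = (t_{z_t})^{**}$.
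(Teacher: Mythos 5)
Your proof is correct and reaches every claim of the theorem, but it is worth recording where it departs from the paper's argument. For the boundedness of $z_t$ you use the operator inequality $b_t^*b_t=a_t-a_t^2\le a_t$, so that $a_t^{1/2}x\mapsto b_tx$ is a contraction on $\Range(a_t^{1/2})$; the paper instead first proves $a_{t^*}^{1/2}t\subseteq ta_t^{1/2}$ from $t=a_{t^*}^{-1}b_t$ and then shows that $a_{t^*}^{1/2}t$ is a contraction on all of $\Def(t)$ via $\|a_{t^*}^{1/2}tx\|^2=\|\SP{t^*a_{t^*}tx}{x}\|\le\|1-a_t\|\,\|x\|^2$. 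The arguments are of comparable weight, but yours stays entirely inside the $(a,a_*,b)$-calculus. The more substantial difference is the identity $t=(t_{z_t})^{**}$: you observe that $\Def(t^*t)=\Range(a_t)\subseteq a_t^{1/2}(E_0)=\Def(t_{z_t})\subseteq\Def(t)$ and then squeeze, $t^*\subseteq(t_{z_t})^*\subseteq\bigl(t\upharpoonright_{\Def(t^*t)}\bigr)^*=t^*$, the last equality being exactly Theorem \ref{Basics__graph_regular}(3a). The paper instead proves from scratch that the smaller submodule $\Range(a_t^2)$ is an essential core for $t$, by an explicit orthogonality computation combined with Lemma \ref{technic_core} and the orthogonal complementedness of $\Graph(t)$. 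Your containment $\Range(a_t)\subseteq\Def(t_{z_t})$ makes that extra work unnecessary; this is a genuine shortening of the hardest step.

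One small point to patch. For the formula $z_t=\ol{a_{t^*}^{1/2}t}\upharpoonright_{E_0}$ you verify agreement with $z_t$ only on $\Def(t^*t)$ and appeal to continuity. That determines the values, but the operator being closed off has domain $\Def(t)$, not $\Def(t^*t)$, so you must first know that $a_{t^*}^{1/2}t$ is closable — otherwise $\ol{a_{t^*}^{1/2}t}$ is not a single-valued operator and its restriction to $E_0$ could in principle be contaminated by limits of sequences from $\Def(t)\setminus\Def(t^*t)$. Closability is available for free: $(a_{t^*}^{1/2}t)^*\supseteq t^*a_{t^*}^{1/2}$ has essential domain (it contains $\Range(a_{t^*}^{1/2})$), so Theorem \ref{Characterization__orthognal_closable} gives orthogonal closability, hence closability; alternatively one may use the paper's inclusion $a_{t^*}^{1/2}t\subseteq ta_t^{1/2}$ together with the closedness of $ta_t^{1/2}$. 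Once the closure is known to be an operator, your density argument forces it to agree with $z_t$ on all of $E_0$, and the proof is complete.
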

\begin{proof}
From Theorem \ref{aabTransform} we already know that $t=a_{t^*}^{-1}b_t$. Using this   we derive
\begin{align}\label{atstarsqrt}
a_{t^*}^{1/2}t &= a_{t^*}^{1/2}a_{t^*}^{-1}b_t \subseteq a_{t^*}^{-1}a_{t^*}^{1/2}b_t = a_{t^*}^{-1}b_ta_t^{1/2} = ta_t^{1/2}.
\end{align}
Here the second equality follows from Lemma \ref{Function_atbt} applied with $f(x)=\sqrt{x}$.

 Now we prove that\, $a_{t^*}^{1/2}t$\, is bounded with norm not exceeding $1$. Let $x\in\Def(t)$. Using that\,
  $t^*a_{t^*}t=b_{t^*}t=b_t^*t\subseteq(t^*b_t)^*=(t^*ta_t)^*=(1-a_t)^*=1-a_t$, we derive
\begin{align*}
\|a_{t^*}^{1/2}tx\|^2 &= \|\SP{a_{t^*}^{1/2}tx}{a_{t^*}^{1/2}tx}\| = \|\SP{t^*a_{t^*}tx}{x}\| \\&= \|\SP{(1-a_t)x}{tx}\|
\leq \|1-a_t\|\|x\|^2 \leq \|x\|^2.
\end{align*}

By (\ref{atstarsqrt}) the operators  $a_{t^*}^{1/2}t$  and\, $ta_t^{1/2}$ concides on $\Def(t)$ and so on its subspace $\Def(t^*t).$  Since both operators are  bounded on  $\Def(t^*t)$ and $ta_t^{1/2}$ is closed, we conclude that 
$z_t \equiv ta_t^{1/2}\upharpoonright_{\ol{\Def(t^*t)}}~ =   \ol{a_{t^*}^{1/2}t}\upharpoonright_{\ol{\Def(t^*t)}}$\,.

The latter equality   implies that $\Range(z_t)$ is contained in $\ol{\Range(a_{t^*}^{1/2})}$.  Lemma \ref{rangelemmalance}\, yields $\ol{\Range(a_{t^*}^{1/2})}=\ol{\Range(a_{t^*})}=\ol{\Def(tt^*)}=F_0$, so that $\Range(z_t)\subseteq F_0.$ Hence $z_t$ becomes a bounded operator from $E_0$  into $F_0$. Analogously, $z_{t^*}$ is a  bounded operator from $F_0$ into  $E_0$.

We show that $z_t^*=z_{t^*}$. Let $z_t^{*_E}$ denote the adjoint of $z_t$ considered as an operator from $E$ into $F$. Clearly,
$z_t^{*_E} \supseteq (\ol{a_{t^*}^{1/2}t})^* \supseteq t^*a_{t^*}^{1/2} \supseteq z_{t^*.}$
This implies that $z_t^*=z_t^{*_E}\upharpoonright_{F_0}=z_{t^*}$. Therefore, $z_t\in\Adj(E_0,F_0)$.

Next we verify the formulas for $a_t,a_{t^*}$, and $b_t$. First we note that
\begin{align}\label{ztztszarat}
z_{t^*}z_t\upharpoonright_{\Def(t^*t)} &= t^*a_{t^*}^{1/2}\upharpoonright_{F_0}a_{t^*}^{1/2}t\upharpoonright_{\Def(t^*t)} = t^*a_{t^*}t\upharpoonright_{\Def(t^*t)} = 1-a_t\upharpoonright_{\Def(t^*t)}.
\end{align}
Hence\, $a_t=(1-z_{t^*}z_t)^{**}$ by Example \ref{Core_Adjointable},   so\, $a_t=a_t^*=(1-z_{t^*}z_t)^{***}=(1-z_{t^*}z_t)^*.$ The formula for $a_{t^*}$ is proven in a similar manner. Since $\Range(a_t^{1/2})\subseteq E_0$, we  obtain $z_ta_t^{1/2}=ta_t^{1/2}\upharpoonright_{E_0}a_t^{1/2}=ta_t=b_t$.

Using relation (\ref{ztztszarat}) and again Lemma \ref{rangelemmalance} we get
\begin{align*}
\ol{\Range(1-z_{t^*}z_t)} &= \ol{\Range(a_t\upharpoonright_{\ol{\Def(t^*t)}})} = \ol{\Range(a_t\upharpoonright_{\ol{\Range(a_t)}})} \supseteq \ol{\Range(a_t\upharpoonright_{\Range(a_t)})}\\&= \ol{\Range(a_t^2)} = \ol{\Range(a_t)} = \ol{\Def(t^*t)} = E_0.
\end{align*}
Hence $z_t\in \mathcal{Z}^d(E_0,F_0)$. Therefore, $t_{\rm reg}=t_{z_t}$ is a regular operator from $E_0$ to $F_0$. Clearly, $t_{\rm reg}\subseteq t.$

Now we prove that\, $t=(t_{z_t})^{**}$. Since $t_{z_t}\subseteq t$  and
 \begin{align*}
\Def(t_{z_t})=\Range((1-z_t^*z_t)^{1/2})=\Range(a_t\upharpoonright_{\ol{\Def(t^*t)}})= \Range(a_t\upharpoonright_{\ol{\Range(a_t)}})\supseteq\Range(a_t^{2}),
\end{align*}
it suffices to show that  $\Range(a_t^{2})$ is an essential core for $t$. Assume $(x,tx)\bot\Graph(t\upharpoonright_{\Range(a_t^2)})$ for some $x\in\Def(t)$. Then, for all $y\in E$,
\begin{align*}
0=\SP{(x,tx)}{(a_t^{2}y,ta_t^{2}y)} &= \SP{x}{a_t^{2}y}+\SP{tx}{ta_t^{2}y} = \SP{a_tx}{a_t y}+\SP{tx}{b_ta_t y}\\
&= \SP{a_tx}{a_t y}+\SP{b_t^*tx}{a_t y}=\SP{a_tx+b_t^*tx}{a_t y}
\end{align*}
Therefore,  since $\Range(a_t)$ is essential, $0=(a_t+b_t^*t)x=(a_t+1-a_t)x=x$. That is, $\Graph(t)\cap\Graph(t\upharpoonright_{\Range(a_t^{2})})^\bot=\{0\}$. Since $t$ is graph regular and hence $\Graph(t)\oplus\Graph(t)^\bot=E$, it follows from  Lemma \ref{technic_core}  that
\begin{align*}
\Graph(t\upharpoonright_{\Range(a_t^{2})})^\bot &= \Graph(t\upharpoonright_{\Range(a_t^{2})})^\bot \cap (\Graph(t)\oplus\Graph(t)^\bot) = \Graph(t)^\bot.
\end{align*}
Thus $\Range(a_t^{2})$ is an essential core for $t$ which completes the proof of the equality $t=(t_{z_t})^{**}$.

Clearly, $\Range(z_t)\subseteq\Range(t)$. Finally,  we show that $\Null(z_t)=\Null(t)$.  Let $x\in\Null(t)\subseteq\Def(t^*t)$. Since $z_t\supseteq a_{t^*}^{1/2}t\upharpoonright_{\Def(t^*t)}$, we have  $x\in\Def(z_t)$ and $z_tx=0$, so $\Null(t)\subseteq\Null(z_t)$. Conversely, let $x\in\Null(z_t)$. Then $(1-a_t)x=z_{t^*}z_tx=0$ and $x=a_tx\in\Def(t^*t)\subseteq\Def(t)$. We obtain $(1+t^*t)x=x$ and $t^*tx=0$. From $\SP{tx}{tx}=\SP{t^*tx}{x}=0$ we get $tx=0$. Hence  $\Null(z_t)\subseteq\Null(t)$. 
\end{proof}
\begin{dfn}
The operator $t_{\rm reg}:=t_{z_t}\in \Reg(E_0,F_0)$ from Theorem \ref{BoundedTranform_grReg} is called the \emph{regular operator} associated with the graph regular operator $t\in \grReg(E,F)$. 
\end{dfn}

\begin{rem}
There are two other possibilities to define the bounded transform  for a graph regular operator $t$ and both of them  are natural in some sense. Define 
$$
z^\prime_t:=ta_t^{1/2}\quad {\rm and}\quad z^{\prime\prime}_t:=(b_t(a_t^{1/2})^{-1})^{**}.$$
Note that $z^{\prime}_t\in\oC(E,F)$ and $z_t$ is the restriction of $z^\prime_t$ to\, $\overline{\Def(t^*t)}$. It is easily seen  that  $b_t(a_t^{1/2})^{-1}$ is the restriction of\, $z_t^\prime=ta_t^{1/2}$ to $\Range(a_t^{1/2})$. Hence $b_t(a_t^{1/2})^{-1}$ is essentially defined and orthogonally closable and its orthogonal closure  may also be taken as a bounded transform $z_t^{\prime\prime}$ of\, $t$. Then\, $z^\prime_t=z^{\prime\prime}_t$ if and only if $\Range(a_t^{1/2})$ is an essential core for  $z_t^{\prime}$. We were not able to prove or disprove this. But one can show that 
\begin{align*}
z_t &\subseteq z^{\prime\prime}_t \subseteq z^{\prime}_t, \quad (z^{\prime\prime}_t)^* = z^\prime_{t^*}\equiv t^*a_{t^*}^{1/2}, \quad (z^{\prime}_t)^* = z^{\prime\prime}_{t^*}\equiv (b_{t^*}(a_{t^*}^{1/2})^{-1})^{**}.
\end{align*}
Note that the operator $t$ can be recovered from both transforms $z^{\prime}_t$ and $z^{\prime\prime}_t$ as well.
We dont know wether or not the equality\, $(ta_t^{1/2})^*=t^*a_{t^*}^{1/2}$ holds.
\end{rem}
\begin{rem}
The relationship between  $t\in \grReg(E,F)$,  $t_{\rm reg}\in \Reg(E_0,F_0)$, and their bounded tranforms should be studied be further in detail. Despite of this it seems that graph regular operators form an important notion in its own, because  they act  on the given Hilbert $C^*$-module $E$.  Though each symmetric operator on a Hilbert space is a restriction of a self-adjoint operator in possibly larger space,  symmetric operators are a basic concept. 
\end{rem}

  \subsection{Polar decomposition}

\begin{dfn}
Let $E'\subseteq E$ and $F'\subseteq F$ be orthogonally closed. An operator $v\in\oC(E,F)$ is called a \emph{partial isometry} with \emph{initial space} $E'$ and \emph{final space} $F'$ if $v^*v$ is the projection $p_{E'}$ and $vv^*$ is the projection $p_{F'}$.
\end{dfn}

In this case $v^*$ is also a partial isometry with initial space $F'$ and final space $E'$. Using this 
general notion  of essentially defined partial isometries there is the following theorem on a polar decomposition of adjointable operators.

\begin{thm}\label{PolarDecomposition_AdjointableOperator}
Let $t\in\Adj(E,F)$. Then there is a partial isometry $v\in\oC(E,F)$ with initial space $\ol{\Range(t^*)}$ and final  space $\ol{\Range(t)}$ such that $t=v|t|$, $|t|=v^*t$ and $\Range(v)=\ol{\Range(t)}$, $\Range(v^*)=\ol{\Range(t^*)}$, $\Null(v)=\Null(t)$, and $\Null(v^*)=\Null(t^*)$ if and only if $\ol{\Range(t)}$ and $\ol{\Range(t^*)}$ are orthogonally closed.
\end{thm}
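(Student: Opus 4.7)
I would split the proof according to the two implications. For the ``only if'' direction, if the asserted $v$ exists then by the very definition of a partial isometry $v^*v=p_{\ol{\Range(t^*)}}$ and $vv^*=p_{\ol{\Range(t)}}$ are projections, and Proposition \ref{Characterization_Projectors} forces the corresponding submodules to be orthogonally closed. The substantive direction is the converse.

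Assume both closures are orthogonally closed. The key preliminary is the identity $\ol{\Range(|t|)}=\ol{\Range(t^*)}$, which in fact holds for every $t\in\Adj(E,F)$. The inclusion ``$\subseteq$'' is immediate from $\Range(|t|^2)=\Range(t^*t)\subseteq\Range(t^*)$ together with Lemma \ref{rangelemmalance}. For the converse I would use the intertwining relation $t^*(|t^*|+1/n)^{-1}=(|t|+1/n)^{-1}t^*$, obtained from $t^*|t^*|=|t|t^*$ by polynomial approximation of $\sqrt{\cdot}$: writing $|t|(|t|+1/n)^{-1}t^*w=t^*w-(1/n)\,t^*(|t^*|+1/n)^{-1}w$ and computing $\|(1/n)\,t^*(|t^*|+1/n)^{-1}w\|^2=(1/n)^2\,\|\langle w,[|t^*|(|t^*|+1/n)^{-1}]^2 w\rangle\|\leq\|w\|^2/n^2$, the left side lies in $\Range(|t|)$ and converges to $t^*w$, hence $t^*w\in\ol{\Range(|t|)}$.

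With this identity at hand I construct $v$ as follows. On $\Range(|t|)$ put $v_0(|t|x):=tx$; this is a well-defined isometry because $\|tx\|^2=\langle|t|^2x,x\rangle=\||t|x\|^2$. Extend continuously to an isometric module map $v_1:\ol{\Range(|t|)}\to F$ (its range is automatically closed and must be $\ol{\Range(t)}$), and set $\Def(v):=\ol{\Range(|t|)}\oplus\Null(t)$, $v(y+z):=v_1(y)$. The domain is essential by Lemma \ref{Essential_Anisotropic}, and by the preliminary it coincides with $\Def(p_{\ol{\Range(t^*)}})$. The identity $v|t|=t$ holds by construction; a direct inner-product computation shows $\Def(v^*)=\ol{\Range(t)}\oplus\Null(t^*)$ with $v^*(\lim tx_n+w_2)=\lim|t|x_n$ (the second limit exists because the correspondence $tx\leftrightarrow|t|x$ is isometric), so $v^*t=|t|$ on $E$. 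Composing these identities shows $v^*v$ is the identity on $\ol{\Range(|t|)}$ and vanishes on $\Null(t)$, hence $v^*v=p_{\ol{\Range(|t|)}}=p_{\ol{\Range(t^*)}}$ by Proposition \ref{Characterization_Projectors}; symmetrically $vv^*=p_{\ol{\Range(t)}}$. Orthogonal closedness of $v$ is verified by computing $\Graph(v^{**})$ from Proposition \ref{Graph_tadj} as $\{(u,w)\in E\oplus F:w\in\ol{\Range(t)},\,t^*w=|t|u\}$ and checking that this equals $\Graph(v)$ via the same isometric-Cauchy decomposition. The nullspaces and ranges then fall out immediately: $\Null(v)=\Null(t)$ by construction, $\Null(v^*)=\Null(t^*)$ by symmetry, and $\Range(v)=v_1(\ol{\Range(|t|)})=\ol{\Range(t)}$ because $v_1$ is an isometry with closed range, symmetrically for $v^*$.

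The main obstacle is the Douglas-type equality $\ol{\Range(|t|)}=\ol{\Range(t^*)}$: without it the identification of $\Def(v)$ with $\Def(p_{\ol{\Range(t^*)}})$ would fail and $v^*v$ would match only $p_{\ol{\Range(|t|)}}$, which in the Hilbert $C^*$-module setting need not coincide with $p_{\ol{\Range(t^*)}}$ even under the orthogonal-closedness hypothesis.
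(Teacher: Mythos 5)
Your proof is correct and follows essentially the same route as the paper's: the isometry $|t|x\mapsto tx$ on $\Range(|t|)$, extended by continuity and by zero on $\Null(t)$, with the adjoint identified via the symmetric construction on $\ol{\Range(t)}\oplus\Null(t^*)$. The one place you go beyond the paper is in explicitly proving the Douglas-type identity $\ol{\Range(|t|)}=\ol{\Range(t^*)}$ (via the approximants $(|t|+1/n)^{-1}$), which the paper uses only implicitly when it identifies the initial space of $v$ with $\ol{\Range(t^*)}$; making that step explicit is a genuine gain in rigor, though note that since the identity holds for every adjointable $t$, your closing remark that $p_{\ol{\Range(|t|)}}$ ``need not coincide'' with $p_{\ol{\Range(t^*)}}$ should be read as saying only that the coincidence is not automatic without this lemma.
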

\begin{proof}
The only if direction follows easily from the definition of a partial isometry.

To prove the if part we assume that $\ol{\Range(t)}$ and $\ol{\Range(t^*)}$ are orthogonally closed. 
Define a map $w:\Range(|t|)\to\Range(t)$ by $w(|t|x):=tx$ for $x\in E$. Then $w$ is well-defined and isometric, since $\SP{tx}{tx}=\SP{|t|x}{|t|x}$ for $x\in E$. The continuous extension of $w$ to a map from $\ol{\Range(|t|)}$ onto $\ol{\Range(t)}$ is also an isometry which is denoted again by $w$. Now we define $v:E\to F$ by
\begin{align*}
v(x+y) &:= wx, \quad x\in\ol{\Range(|t|)},\, y\in\Null(t).
\end{align*}
Clearly, $\Def(v)^\bot=(\Null(t)\oplus\ol{\Range(|t|)})^\bot=\Null(t)^\bot\cap\Range(|t|)^\bot=\Null(t)^\bot\cap\Null(|t|)=\{0\}$, so $v$ is essentially defined. Further, $t=v|t|$, $\Null(v)=\Null(t)$ and $\Range(v)=\ol{\Range(|t|)}$. Let
\begin{align*}
v'(x+y) &:= w^{-1}x ,\quad x\in\ol{\Range(t)},\, y\in\Null(t^*).
\end{align*}
As above, $v'$ is essentially defined, $|t|=v't$, $\Null(v')=\Null(t^*)$ and $\Range(v')=\ol{\Range(t)}$. It is easily seen that $v'\subseteq v^*$ and $v\subseteq (v')^*$. Therefore, since $v'$ is essentially defined, so is $v^*$. Hence  $v$ is orthogonally closable by Theorem \ref{Characterization__orthognal_closable}.

We show that $v^*=v'$. Let $y\in\Def(v^*)$. Then there is an element $z\in E$ such that $\SP{v(x+x^\bot)}{y}=\SP{x+x^\bot}{z}$ for all $x\in\ol{\Range(|t|)}$ and $x^\bot\in\Null(t)$. Choosing $x=0$, we conclude that $z\in\Null(t)^\bot=\Range(t^*)^{\bot\bot}=\Range(|t|)^{\bot\bot}=\ol{\Range(|t|)}$. Thus, $\Range(v^*)\subseteq\Range(v')$. Putting now $x^\bot=0$, we get $\SP{tx'}{y}=\SP{v|t|x'}{y}=\SP{|t|x'}{z}$ for all $x'\in E$. Hence $t^*y=|t|z=|t|v^*y$ and $\Null(v^*)\subseteq\Null(t^*)=\Null(v)$. Now $v'\subseteq v^*$, $\Range(v^*)\subseteq\Range(v')$ and $\Null(v^*)\subseteq\Null(v')$ clearly imply that $v'=v^*$. In a similar manner it is shown that  $v'^*=v$. Obviously, $v^*v$ is the projection onto the orthogonally closed submodule $\ol{\Range(t^*)}$ and $vv^*$ the projection onto $\ol{\Range(t)}$.
\end{proof}

\begin{thm}\label{PolarDecomposition_grReg}
Let $z\in\mathcal{Z}(E,F)$. There exists a partial isometry $v\in\oC(E,F)$ with initial space $\ol{\Range(|t_z|)}$ and final space $\ol{\Range(t_z)}$ such that
\begin{align*}
t_z &= v|t_z|, \quad |t_z|=v^*t_z,
\end{align*}
$\Range(v)=\ol{\Range(t_z)}$, $\Range(v^*)=\ol{\Range(t_z^*)}$, $\Null(v)=\Null(t_z)$, and $\Null(v^*)=\Null(t_z^*)$ if and only if $\ol{\Range(z)}$ and $\ol{\Range(z^*)}$ are orthogonally closed. In this case, $z=v|z|$.
\end{thm}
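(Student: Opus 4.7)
The plan is to deduce the equivalence directly from Theorem~\ref{PolarDecomposition_AdjointableOperator} applied to the adjointable operator $z\in\Adj(E,F)$, after identifying the ranges, kernels, and absolute value attached to the graph regular operator $t_z$ with the corresponding objects attached to $z$.

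First I would record the identifications. From the defining formula of $t_z$ one reads off $\Range(t_z)=\Range(z)$; Theorem~\ref{boundedTransform__tz} gives $t_z^*=t_{z^*}$, so the same formula applied to $z^*$ yields $\Range(t_z^*)=\Range(z^*)$. Lemma~\ref{AbsoluteValue_z_tz} gives $|t_z|=t_{|z|}$, hence $\Range(|t_z|)=\Range(|z|)$, together with $\Null(t_z)=\Null(z)$ and (applied to $z^*$) $\Null(t_z^*)=\Null(z^*)$. Finally, $|z|^2=z^*z$ combined with Lemma~\ref{rangelemmalance} and the standard Hilbert $C^*$-module identity $\ol{\Range(z^*z)}=\ol{\Range(z^*)}$ yields $\ol{\Range(|t_z|)}=\ol{\Range(|z|)}=\ol{\Range(z^*)}$ and analogously $\ol{\Range(t_z)}=\ol{\Range(z)}$. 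Under these translations the hypothesis ``$\ol{\Range(z)}$ and $\ol{\Range(z^*)}$ are orthogonally closed'' is exactly ``$\ol{\Range(t_z)}$ and $\ol{\Range(|t_z|)}$ are orthogonally closed''.

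For the \emph{if} direction, assuming $\ol{\Range(z)}$ and $\ol{\Range(z^*)}$ are orthogonally closed, Theorem~\ref{PolarDecomposition_AdjointableOperator} applied to $z$ produces a partial isometry $v\in\oC(E,F)$ with initial space $\ol{\Range(z^*)}$, final space $\ol{\Range(z)}$, satisfying $z=v|z|$ and $|z|=v^*z$, together with the stated range and null-space identities. To promote these to $t_z$, I compute on $\Def(t_z)=(I-z^*z)^{1/2}E$: writing $x=(I-z^*z)^{1/2}y$ one has $|t_z|x=t_{|z|}x=|z|y\in\Range(|z|)\subseteq\Def(v)$, so $v|t_z|x=v(|z|y)=zy=t_zx$, giving $t_z=v|t_z|$; dually $t_zx=zy\in\Range(z)\subseteq\Def(v^*)$, hence $v^*t_zx=v^*(zy)=|z|y=|t_z|x$, giving $|t_z|=v^*t_z$. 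The range and null-space identities for $v$ then transfer through the identifications above, and the initial/final spaces $\ol{\Range(|t_z|)}$ and $\ol{\Range(t_z)}$ match those produced by Theorem~\ref{PolarDecomposition_AdjointableOperator}. The final assertion $z=v|z|$ is just repeated from that theorem.

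For the \emph{only if} direction, if such a $v$ exists then by definition of partial isometry $v^*v$ and $vv^*$ are projections onto $\ol{\Range(|t_z|)}$ and $\ol{\Range(t_z)}$, respectively, and Proposition~\ref{Characterization_Projectors} forces these submodules to be orthogonally closed; by the translations recorded above this is precisely the orthogonal closedness of $\ol{\Range(z^*)}$ and $\ol{\Range(z)}$. I expect the main obstacle to be the bookkeeping of the closure identities such as $\ol{\Range(|z|)}=\ol{\Range(z^*)}$ in the Hilbert $C^*$-module setting, where closure and double orthogonal complement can differ; once these are in hand the theorem is a direct transfer of the adjointable polar decomposition via the $(I-z^*z)^{1/2}$-coordinates on $\Def(t_z)$.
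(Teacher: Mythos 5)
Your proposal is correct and follows essentially the same route as the paper: identify $\Range(t_z)=\Range(z)$, $\Range(t_z^*)=\Range(z^*)$, $\Null(t_z)=\Null(z)$, $|t_z|=t_{|z|}$ and $\ol{\Range(|z|)}=\ol{\Range(z^*)}$, get the only-if direction from the definition of a partial isometry, and for the if direction apply Theorem~\ref{PolarDecomposition_AdjointableOperator} to $z$ and transfer $z=v|z|$, $|z|=v^*z$ to $t_z$ by evaluating on $x=(I-z^*z)^{1/2}y$. Your explicit justification of $\ol{\Range(|t_z|)}=\ol{\Range(z^*)}$ is, if anything, slightly more careful than the paper, which uses that identification without comment.
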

\begin{proof}
Since $\Range(z)=\Range(t_z)$, $\Range(z^*)=\Range(t_z^*)$, $\Null(z)=\Null(t_z)$, and $\Null(z^*)=\Null(t_z^*)$,  one direction follows at once from the definition of a partial isometry.

Conversely, assume that $\ol{\Range(z^*)}=\ol{\Range(|z|)}$ and $\ol{\Range(z)}$ are orthogonally closed. By Proposition \ref{PolarDecomposition_AdjointableOperator}, there is a partial isometry $v\in\oC(E,F)$ with $z=v|z|$, $|z|=v^*z$ and $\Range(v)=\ol{\Range(z)}$, $\Range(v^*)=\ol{\Range(z^*)}$, $\Null(v)=\Null(z)$ and $\Null(v^*)=\Null(z^*)$. Finally, $v|t_z|(1-|z|^2)^{1/2}x=v|z|x=zx=t_z(1-|z|^2)^{1/2}x$ for $x\in E$, so that $v|t_z|=t_z$. Similarly,  $v^*t_z=|t_z|$.
\end{proof}

Note that the partial isometry belongs to $\Adj(E,F)$ if and only if\, $\ol{\Range(z)}$ and $\ol{\Range(z^*)}$ are orthogonal complements: $\Def(v)=\ol{\Range(z^*)}\oplus\Null(z)$ and $\Def(v^*)=\ol{\Range(z)}\oplus\Null(z^*)$.

  \subsection{Graph regular operators on $C_0(X)$}\label{graphC_0(X)}

Before we turn to the functional calculus of graph regular normals we reconsider the commutative case from Section  \ref{Theory_CX}. By Theorem \ref{operatorfunctionstm},  $\oC(C_0(X))$ consists  of multiplication operators. Theorem \ref{maintheoremcont}  characterizes the graph regular operators  among them as those for which $\singsuppr(m)$ is empty.

\begin{lem}\label{Inverse_CommutativeCase}
For any function $m:X\to\C$ the following are equivalent:
\begin{enumerate}
\item $t_m$ is injective if and only if\, $\{x\in\reg(m)|m(x)\neq 0\}$\, is dense in $\reg(m)$.
\item If $t_m$ is injective and $m$ does not vanish on $X$, then $t_m^{-1}=t_{1/m}$.
\end{enumerate}
\end{lem}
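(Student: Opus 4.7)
The plan is to treat (1) and (2) as two separate assertions rather than genuinely equivalent statements. In both parts I would begin by invoking Lemma \ref{Domain_Equivalence__CommutativeCase} to reduce to the case $m=\hat{m}$ (so that $\reg_b(m)=\emptyset$), and I would tacitly assume $\reg(m)$ is dense in $X$, since otherwise $t_m$ fails to be essentially defined by Theorem \ref{MultiplicationOperator__Essential_ssupp} and the claim is vacuous.

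For (1), the $(\Leftarrow)$ direction is a direct consequence of Lemma \ref{Domain_Equivalence__CommutativeCase}: if $t_mf=\widehat{mf}=0$ then $mf\equiv 0$ on $\reg(m)$, which forces $f\equiv 0$ on $U:=\{x\in\reg(m):m(x)\neq 0\}$; by density of $U$ in $\reg(m)$ and continuity of $f$, then $f\equiv 0$ on $\reg(m)$, and the built-in relation $f\equiv 0$ on $X\setminus\reg(m)$ finishes the job. For the contrapositive of $(\Rightarrow)$, I would pick a nonempty open $V\subseteq\reg(m)\setminus\ol{U}$ on which $m\equiv 0$, and use local compactness of $X$ to choose a nonzero $f\in C_c(V)$; then $f\in\Def(t_m)$ with $t_mf=0$, contradicting injectivity.

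For (2), set $n:=\widehat{1/m}$, so that $t_{1/m}=t_n$ by Lemma \ref{Domain_Equivalence__CommutativeCase}, and observe that the nonvanishing of $m$ yields $\reg(m)\subseteq\reg(n)$ with $n\equiv 1/m$ on $\reg(m)$. I would establish $t_m^{-1}=t_n$ by proving both inclusions. For $t_n\subseteq t_m^{-1}$: given $g\in\Def(t_n)$, set $h:=t_ng=\widehat{ng}$; a short calculation on $\reg(m)$ gives $mh=g$ there, and density of $\reg(m)$ together with continuity of $g$ upgrades this to $\widehat{mh}=g$ globally, so all that remains for $h\in\Def(t_m)$ is to verify $h\equiv 0$ on $X\setminus\reg(m)$. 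For $t_m^{-1}\subseteq t_n$: given $g=t_mf$ with $f\in\Def(t_m)$, the symmetric computation reduces membership of $g$ in $\Def(t_n)$ to checking $g\equiv 0$ on $X\setminus\reg(n)$.

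The main obstacle in both inclusions is verifying these vanishing-on-the-complement conditions at points of $\partial\reg(m)$. My plan is to split each such $x$ into two cases: if $x$ is adherent to the open set $X\setminus\ol{\reg(m)}$, then continuity of $h$ (resp.\ $g$) and the already-established vanishing on that open set forces $h(x)=0$ (resp.\ $g(x)=0$); otherwise $x\in(\ol{\reg(m)})^\circ$, and I would argue by contradiction on a neighborhood $V'\subseteq\ol{\reg(m)}$ of $x$. For the first inclusion, $h(x)\neq 0$ on $V'$ would turn $g/h$ into a continuous $\C$-valued extension of $m|_{V'\cap\reg(m)}$ to $V'$, forcing $x\in\reg_b(m)$ and contradicting $m=\hat m$. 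For the second, $g(x)\neq 0$ on $V'$ would turn $g/f$ into a continuous $\ol{\C}$-valued extension of $m$ on $V'$ with $\tilde m(x)=\infty$, placing $x\in\reg_\infty(m)$; then $1/m$ extends continuously by $0$ at $x$, so $x\in\reg_b(1/m)\subseteq\reg(n)$, contradicting $x\in X\setminus\reg(n)$. This dichotomy, in which the nonvanishing of $m$ and the normalization $m=\hat m$ lock together to dictate the regularity type of $m$ at each boundary point, is the subtle core of the argument.
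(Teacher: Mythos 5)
Your treatment of (1) is essentially the paper's own argument: for one direction a nonzero $f\in C_c(V)$ supported in a nonempty open subset $V$ of $\reg(m)$ on which $m$ vanishes gives $t_mf=0$, and for the other direction $mf\equiv 0$ on $\reg(m)$ forces $f\equiv 0$ on the dense subset where $m\neq 0$, hence on $\reg(m)$ by continuity, while Lemma \ref{Domain_Equivalence__CommutativeCase} kills $f$ on $X\setminus\reg(\hat m)$; note this part needs no density of $\reg(m)$, so your blanket density assumption is a (harmless, given how the lemma is used) restriction. For (2) you take a genuinely different route. The paper observes $\reg(1/m)=\reg(m)$ and invokes Lemma \ref{AdditionMultiplication_CommutativeCase}(1) to get $t_mt_{1/m}\subseteq t_1$ and $t_{1/m}t_m\subseteq t_1$, i.e.\ both compositions are restrictions of the identity, whence $t_{1/m}\subseteq t_m^{-1}$ and $t_m\subseteq t_{1/m}^{-1}$, and inverting the latter gives $t_m^{-1}\subseteq t_{1/m}$. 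Your direct two-inclusion verification is correct, and it has the merit of making explicit the domain identity that the paper's one-line deduction glosses over, namely that $t_{1/m}g\in\Def(t_m)$ for \emph{every} $g\in\Def(t_{1/m})$ and not merely for $g$ in the a priori smaller domain of the composition. However, it is considerably heavier than necessary: once you know $mh\equiv g$ on the dense open set $\reg(m)$ (with $g\in E$ continuous), you already have $\widehat{mh}=g\in E$, and Definition \ref{definitiontm} then gives $h\in\Def(t_m)$ with $t_mh=g$ directly. The conditions ``$h\equiv 0$ on $X\setminus\reg(\hat m)$'' etc.\ in Lemma \ref{Domain_Equivalence__CommutativeCase} are an equivalent \emph{description} of $\Def(t_m)$, not extra hypotheses to be verified, so the entire boundary dichotomy through $\reg_b$ and $\reg_\infty$ that you describe as the subtle core of the argument (and likewise its mirror image for the reverse inclusion) can be deleted without loss. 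In short: your approach buys self-containedness and fills a small gap in the paper's domain bookkeeping; the paper's buys brevity by reusing the composition lemma; and your version would be at its best if trimmed to the definitional check of $\widehat{mh}=g$ and $\widehat{ng}=f$.
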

\begin{proof}
(1): Set $N:=\{x\in\reg(m)| m(x)=0\}$. Assume that $N$ contains a nonempty open set $U$. Since $X$ is  locally compact and Hausdorff, there is a non-zero function $f\in E$ with  support contained in $U$. Then $mf=0$, $f\in\Def(t_m)$ and  $t_mf=0$. Hence $t_m$ is not injective. On the other hand, assume that $f\in\Def(t_m)$ and $t_mf=0$. Then  $m(x)f(x)=(t_mf)(x)=0$ for $x\in\reg(m)$ . Thus $f\equiv 0$ on $\reg(m)\setminus N$. If the latter is dense in $\reg(m)$, then $f\equiv 0$\, on $\reg(m)$ by the continuity of $f$. Futher, we have $f\equiv 0$ on $X\setminus\ol{\reg(m)}$ by Lemma \ref{Domain_Equivalence__CommutativeCase}, since $f\in\Def(t_m)$. That is, $f=0$.

(2): We have\, $\reg(1/m)=\reg(m)$. In particular, $t_{1/m}$ is injective. From Lemma \ref{AdditionMultiplication_CommutativeCase}(1) it follows that $t_mt_{1/m}$ and $t_{1/m}t_m$ are restrictions of the identity. Therefore, $t_{1/m}\subseteq t_m^{-1}$ and $t_m\subseteq t_{1/m}^{-1}$. The last inclusion  gives $t_m^{-1}\subseteq t_{1/m}$, so equality is proven.
\end{proof}

\begin{thm}\label{maintheoremcont}
If $m:X\to\C$, then $t_m\in\grReg(C_0(X))$ if and only if\, $\reg(m)$ is dense in $X$ and\, $\singsuppr(m)$ is empty. In this case we have  
$t_m^*t_m = t_{|m|^2}$ and
\begin{align*}
a_{t_m} &= t_{\frac{1}{1+|m|^2}}, \quad b_{t_m} = t_{\frac{m}{1+|m|^2}}.
\end{align*}
\end{thm}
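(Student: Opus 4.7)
The plan is to combine the range description from Theorem \ref{maintheorem}(3) with the surjectivity characterization of graph regularity in Theorem \ref{Basics__graph_regular}(1c), and then to identify the explicit bounded transforms using the algebra of multiplication operators from Lemma \ref{AdditionMultiplication_CommutativeCase}.

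For the equivalence, first I would invoke Theorem \ref{MultiplicationOperator__Essential_ssupp}: $t_m$ is essentially defined (automatically orthogonally closed with $t_m^*=t_{\ol m}$) if and only if $\reg(m)$ is dense in $X$. Under this hypothesis, Theorem \ref{maintheorem}(3) gives $\Range(1+t_m^*t_m)=\{g\in E\mid g\equiv 0 \text{ on }\singsuppr(m)\}$. Since $X$ is locally compact Hausdorff, for each $x\in X$ there is some $f\in C_0(X)$ with $f(x)\neq 0$, so this range equals $E$ precisely when $\singsuppr(m)=\emptyset$. Because $\reg(\ol m)=\reg(m)$ and $\singsuppr(\ol m)=\singsuppr(m)$, applying the same result to $\ol m$ yields $\Range(1+t_mt_m^*)=\Range(1+t_{\ol m}^*t_{\ol m})=E$ under the same condition. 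Theorem \ref{Basics__graph_regular}(1) then gives the announced equivalence.

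For the formulas, I would work under the graph regularity assumption, so $X=\reg(m)\cup\reg_b(m)\cup\reg_\infty(m)$ disjointly. Both $\widehat{1/(1+|m|^2)}$ and $\widehat{m/(1+|m|^2)}$ are then bounded continuous on all of $X$ (with value $0$ on $\reg_\infty(m)$), so $t_{1/(1+|m|^2)}$ and $t_{m/(1+|m|^2)}$ lie in $\Adj(E)$. To identify $t_m^*t_m$, I would recall from Lemma \ref{AdditionMultiplication_CommutativeCase}(2) that $(t_m^*t_m)^{**}=t_{|m|^2}$, hence $t_m^*t_m\subseteq t_{|m|^2}$. Graph regularity makes $1+t_m^*t_m$ surjective onto $E$, while $1+t_{|m|^2}$ is injective (it is positive and self-adjoint by Theorem \ref{MultiplicationOperator__Essential_ssupp}). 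Any proper restriction of an injective operator with the same range as the operator itself must be a strict subset on the domain side, which forces equality: $t_m^*t_m=t_{|m|^2}$. A direct computation on $\reg(m)$ then shows that multiplication by $\widehat{1/(1+|m|^2)}$ is a two-sided inverse of $1+t_{|m|^2}$, giving $a_{t_m}=t_{1/(1+|m|^2)}$. Finally, Lemma \ref{AdditionMultiplication_CommutativeCase}(1) gives $t_m a_{t_m}\subseteq t_{m/(1+|m|^2)}$, and since both sides are adjointable with domain $E$, they coincide, so $b_{t_m}=t_{m/(1+|m|^2)}$.

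The main obstacle is the step $t_m^*t_m=t_{|m|^2}$, upgrading the a priori inclusion $t_m^*t_m\subseteq (t_m^*t_m)^{**}=t_{|m|^2}$ to equality of domains. This is where the condition $\singsuppr(m)=\emptyset$ does the essential work: without it, $\Range(1+t_m^*t_m)$ is strictly smaller than $E$, the domain of $t_m^*t_m$ is a proper subset of $\Def(t_{|m|^2})$, and $a_{t_m}$ does not exist as an adjointable operator at all. Once this equality is in hand, the explicit formulas reduce to elementary identities of continuous functions on $X$.
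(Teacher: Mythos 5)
Your proposal is correct and follows essentially the same route as the paper: the equivalence is read off from Theorem \ref{maintheorem}(3) together with the range characterization of graph regularity in Theorem \ref{Basics__graph_regular}(1), and the formulas come from the multiplication-operator calculus of Lemma \ref{AdditionMultiplication_CommutativeCase}. The only cosmetic difference is the upgrade of $t_m^*t_m\subseteq t_{|m|^2}$ to equality: you use surjectivity of $1+t_m^*t_m$ against injectivity of $1+t_{|m|^2}$, whereas the paper observes that a self-adjoint operator ($t_m^*t_m$, via $a_{t_m}=a_{t_m}^*$) contained in a self-adjoint operator must equal it — both arguments are valid and equally short.
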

\begin{proof}
The first assertion is clearly a corollary of Theorem \ref{maintheorem}. 

Suppose that $t_m\in\grReg(E).$ Then $a_{t_m}$ is self-adjoint, hence $t_m^*t_m$ is self-adjoint. Further, by Lemma \ref{AdditionMultiplication_CommutativeCase}, $t_m^*t_m$ is contained in the self-adjoint operator $t_{|m|^2}$. Hence $t_m^*t_m = t_{|m|^2}$. Finally, using Lemma \ref{AdditionMultiplication_CommutativeCase}(2) and Lemma \ref{Inverse_CommutativeCase}(2), we compute
\begin{align*}
a_{t_m} &= (1+t_m^*t_m)^{-1} = (1+t_{|m|^2})^{-1} = (t_{1+|m|^2})^{-1} = t_{\frac{1}{1+|m|^2}},\\
b_{t_m} &= b_{t_m}^{**} = (t_ma_{t_m})^{**} = (t_mt_{\frac{1}{1+|m|^2}})^{**} = t_{\frac{m}{1+|m|^2}}.
\end{align*}
\end{proof}

\begin{cor}
Suppose that $m:X\to{\C}$ is bounded and $\reg(m)$ dense in $X$. Then $t_m$ is graph regular if and only if\, $\reg(\hat{m})=X$. In particular, we have  $t_m\in\Adj(C_0(X))=C_b(X)$ in this case.
\end{cor}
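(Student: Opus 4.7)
The plan is to read off this characterization essentially for free from Theorem \ref{maintheoremcont}, combined with one new observation: that a bounded $m$ cannot have any points in $\reg_\infty(m)$. Since $\reg(m)$ is assumed dense in $X$, the decomposition
\begin{align*}
X &= \reg(m) \cup \reg_b(m) \cup \reg_\infty(m) \cup \singsuppr(m)
\end{align*}
holds as a disjoint union. By Lemma \ref{Basics_EquivalenceRelation}(3) we have $\reg(\hat{m}) = \reg(m)\cup\reg_b(m)$, so the condition $\reg(\hat{m}) = X$ is equivalent to $\reg_\infty(m)\cup\singsuppr(m) = \emptyset$.

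First I would establish the key lemma that if $m$ is bounded, then $\reg_\infty(m) = \emptyset$. Suppose, for contradiction, that $x\in\reg_\infty(m)$. Pick the corresponding open set $U\subseteq\ol{\reg(m)}$ and continuous $\tilde{m}:U\to\ol{\C}$ with $\tilde{m}(x)=\infty$ and $\tilde{m}\equiv m$ on $U\cap\reg(m)$. Since $\reg(m)$ is open and $U\subseteq\ol{\reg(m)}$, the set $U\cap\reg(m)$ is dense in $U$. Continuity of $\tilde{m}$ at $x$ with value $\infty$ then produces a neighborhood of $x$ on which $|\tilde{m}|$ exceeds any prescribed bound; intersecting with $\reg(m)$ we find points $y\in\reg(m)$ with $|m(y)|$ arbitrarily large, contradicting boundedness of $m$.

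Combining this with Theorem \ref{maintheoremcont} completes the first assertion: under density of $\reg(m)$, the operator $t_m$ is graph regular iff $\singsuppr(m)=\emptyset$, and (using $\reg_\infty(m)=\emptyset$) this in turn is equivalent to $\reg(\hat m)=\reg(m)\cup\reg_b(m)=X$. For the \emph{In particular} statement, assume $\reg(\hat{m})=X$. Then $\hat{m}$ is continuous on $X$ and coincides with $m$ on the dense set $\reg(m)$; hence $\hat{m}$ inherits the uniform bound of $m$, so $\hat{m}\in C_b(X)$. By Lemma \ref{Domain_Equivalence__CommutativeCase}, $t_m=t_{\hat{m}}$. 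Multiplication by $\hat{m}\in C_b(X)$ maps $C_0(X)$ into itself and has adjoint given by multiplication by $\overline{\hat{m}}$, both defined on all of $C_0(X)$; therefore $t_m\in\Adj(C_0(X))$. Together with the standard identification $\Adj(C_0(X))=\Multiplier(C_0(X))=C_b(X)$ this yields the last claim.

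There is no real obstacle; the only delicate step is the density argument showing $\reg_\infty(m)=\emptyset$ under boundedness, and this is a short topological observation as above.
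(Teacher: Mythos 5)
Your proof is correct and follows the same route as the paper: the key point in both is that boundedness of $m$ forces $\reg_\infty(m)=\emptyset$, after which the equivalence is immediate from Theorem \ref{maintheoremcont} together with the disjoint decomposition of $X$ and $\reg(\hat m)=\reg(m)\cup\reg_b(m)$. You merely spell out the short density argument and the \emph{in particular} clause, which the paper leaves implicit.
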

\begin{proof}
Since $m$ is bounded,  $\reg_\infty(m)$ is empty. Hence the statement follows directly from Theorem \ref{maintheoremcont}.
\end{proof}

The next example shows that not all operators $t\in\grReg(C_0(\R))$ are of the form $t_z$ for some $z\in\mathcal{Z}(C_0(\R))$. Moreover there is a representation $\pi$ of $\Adj(C_0(\R))$ such that the domain of $\pi(t)$ consists only of the zero element.

\begin{exa}\label{BoundedTranform_notAdjointbale}
Consider the operator $t:=t_m$ on $C_0(\R)$, where  $m(x)=1/x$ for $x\neq 0$. Then $t$ is self-adjoint and graph regular by Theorem \ref{maintheoremcont}. 

We show that there is no $z\in\mathcal{Z}(C_0(\R))$ such that $t=t_z$. Assume to the contrary that $t=t_z$ for $z\in\mathcal{Z}(C_0(\R))$. Then $z=ta_t^{1/2}$ by Theorem \ref{boundedTransform__tz}. 
Further, we have $a_t=t_{1/(1+|m|^2)}$ and therefore $a_t^{1/2}=t_{1/(\sqrt{1+|m|^2}\,)}$. Choose $g\in C_0(\R)$ with $g(0)\neq 0$. Then $a_t^{1/2}g(x)=|x|g(x)/(\sqrt{1+x^2})$. Since the function $\frac{1}{x}\frac{|x|g(x)}{\sqrt{1+x^2}}$ is continuous on $\R\setminus\{0\}$ and has no continuous extension to $\R$, we have $a_t^{1/2}g\notin\Def(t)$. Hence   $ta_t^{1/2}$ is not everwhere defined which contradicts the equality $z=ta_t^{1/2}.$ 

Let $\pi$ be the $*$-representation of $\Adj(C_0(\R))=\{t_n | n\in C_b(\R)\}$ given by $\pi(t_n)= n(0)$. Then $\pi(a_t)=0$, so the domain of\, $\pi(t)$ is  $\{0\}$ (compare Proposition \ref{grReg__Morphism}).
\end{exa}

\begin{exa}[Continuing Example \ref{Multiplier__different_Dm_Dm*}]\label{StrangeNormalOperator}
Recall that the operator $t_m$ from  Example \ref{Multiplier__different_Dm_Dm*} is normal, but $\Def(t_m)\neq\Def(t_m^*)$. Since $\reg(m)=(0,1]$ and $0\in\reg_\infty(m)$,  $t_m$ is  graph regular by Theorem \ref{maintheoremcont}. That is,  even for graph regular operators $t$ the  statements 
\begin{enumerate}
\item $t^*t=tt^*$ (that is, $t$ is normal),
\item $\Def(t)=\Def(t^*)$ and $\|tf\|=\|t^*f\|$ for all $f\in\Def(t)$,
\end{enumerate}
are \emph{not} equivalent! 
\end{exa}

  \subsection{Functional calculus of graph regular normal operators}

Let $\CAlg{A}$ and $\CAlg{B}$ be $C^*$-algebras, where $\CAlg{A}$ is non-unital and $\CAlg{B}$ is unital. Clearly, each $*$-homomorphism $\phi:\CAlg{A}\to\CAlg{B}$ extends uniquely to a $*$-homomorphism of the unitization $\CAlg{A}^\sim:=\CAlg{A}\oplus\C$ by $\phi(a+\alpha):=\phi(a)+\alpha 1$ for $a\in\CAlg{A}$, $\alpha\in\C$.

Let $\zeta$ denote the identity map of $\C$. Considered  as an operator on $C_0(\C)$, $\zeta$  is a regular operator, but on the   unitization  $C_0(\C)^\sim$ the operator $\zeta$ is no longer regular. On the other hand, since $a_\zeta=a_{\ol{\zeta}}=(1+|\zeta|^2)^{-1}\in C_0(\C)$  the operator $\zeta$ is graph regular according to Theorem \ref{aabTransform}. Further, $b_\zeta=\zeta (1+|\zeta|^2)^{-1}\in C_0(\C)$.

\begin{thm}\label{grReg_FunctionalCalculus}
Let $E$ be a Hilbert $\CAlg{A}$-module and let $t\in\grReg(E)$ be normal. Then there exists a unique $\phi_t\in\Hom(C_0(\C)^\sim,\Adj(E))$ with $\Null(\phi_t(a_\zeta))=\{0\}$ and $\phi_t(\zeta)=t$.
\end{thm}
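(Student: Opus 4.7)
The plan is to exploit Gelfand duality on the commutative $C^*$-subalgebra of $\Adj(E)$ generated by $a_t$ and $b_t$. Since $t$ is a normal graph regular operator, Corollary \ref{Characterization_grReg_normal_ab} gives $a_t=a_{t^*}$, $b_t$ normal, and $a_tb_t=b_ta_t$; together with the identity $I$ these elements generate a commutative unital $C^*$-subalgebra $\CAlg{C}\subseteq \Adj(E)$. By Gelfand duality, $\CAlg{C}\cong C(Y)$ for a compact Hausdorff space $Y$, under which $a_t\leftrightarrow \alpha$ and $b_t\leftrightarrow \beta$. The relations from Definition \ref{defabtransform} translate to $0\le \alpha\le 1$ and $|\beta|^{2}=\alpha(1-\alpha)$ on $Y$.

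Next I would build a continuous classifying map $\psi:Y\to \C_\infty$ (the one-point compactification of $\C$) by
\[
\psi(y):=\begin{cases}\beta(y)/\alpha(y),& \alpha(y)\ne 0,\\ \infty,& \alpha(y)=0.\end{cases}
\]
Continuity on $\{\alpha\ne 0\}$ is immediate. At points where $\alpha(y)\to 0$, the identity $|\beta/\alpha|^{2}=(1-\alpha)/\alpha\to\infty$ forces $\psi(y)\to\infty$, so $\psi$ is continuous into $\C_\infty$. Identifying $C(\C_\infty)\cong C_0(\C)^\sim$ in the canonical way and composing pullback $\psi^*$ with the Gelfand isomorphism, I define
\[
\phi_t:C_0(\C)^\sim \xrightarrow{\psi^*} C(Y)\cong \CAlg{C}\hookrightarrow \Adj(E).
\]
A direct computation using $|\beta|^{2}=\alpha(1-\alpha)$ yields $a_\zeta\circ\psi=\alpha$ and $b_\zeta\circ \psi=\beta$, so $\phi_t(a_\zeta)=a_t$ and $\phi_t(b_\zeta)=b_t$. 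In particular $\Null(\phi_t(a_\zeta))=\Null(a_t)=\{0\}$, hence by Proposition \ref{grReg__Morphism} the extended operator $\phi_t(\zeta)$ on $E$ has $a$-transform $\phi_t(a_\zeta)=a_t$, adjoint $a$-transform $\phi_t(a_{\bar\zeta})=a_t=a_{t^*}$, and $b$-transform $\phi_t(b_\zeta)=b_t$; by the bijectivity statement of Theorem \ref{aabTransform} this forces $\phi_t(\zeta)=t$.

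For uniqueness I would invoke Stone--Weierstrass on $\C_\infty$: the $*$-subalgebra generated by $a_\zeta$, $b_\zeta$, and $1$ separates points of $\C_\infty$ (two distinct finite points are separated by $(a_\zeta,b_\zeta)$, and $\infty$ is separated from any finite point by $a_\zeta$) and is closed under conjugation since $a_\zeta$ is real-valued. Hence it is dense in $C(\C_\infty)=C_0(\C)^\sim$, so any $\phi\in\Hom(C_0(\C)^\sim,\Adj(E))$ is determined by $\phi(a_\zeta)$ and $\phi(b_\zeta)$. The requirement $\phi(\zeta)=t$ together with $\Null(\phi(a_\zeta))=\{0\}$ forces, via Proposition \ref{grReg__Morphism} and the uniqueness part of Theorem \ref{aabTransform}, $\phi(a_\zeta)=a_t$ and $\phi(b_\zeta)=b_t$, and therefore $\phi=\phi_t$.

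The main obstacle I anticipate is not the Gelfand/Stone--Weierstrass machinery, which is routine, but the precise verification that the continuous extension of $\psi$ through the zero set of $\alpha$ to the point $\infty\in\C_\infty$ genuinely realises the $(a,a_*,b)$-transform: one must check carefully that pulling back $a_\zeta,b_\zeta\in C_0(\C)$ through $\psi$ recovers exactly $\alpha,\beta$ rather than mere continuous extensions thereof, and that the interpretation of $\phi_t(\zeta)=t$ as an equality of graph regular operators (and not merely an algebraic identity, since $\zeta\notin C_0(\C)^\sim$) correctly corresponds to $a_{\phi_t(\zeta)}=\phi_t(a_\zeta)$ and $b_{\phi_t(\zeta)}=\phi_t(b_\zeta)$ via Proposition \ref{grReg__Morphism}.
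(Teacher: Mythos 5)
Your proposal is correct and is essentially the paper's proof in different clothing: the paper works with the bivariate functional calculus $f\mapsto f(a_t,b_t)$ for $f$ continuous on the curve $F=\{(z_1,z_2): |z_2|^2=z_1-z_1^2\}$ and the explicit formula $f(z_1,z_2)=g(z_2/z_1)+\beta$, which is precisely your Gelfand-dual picture with the classifying map $\psi=\beta/\alpha$ into $\C_\infty$, and the decisive continuity check at $\alpha=0$ (namely $|\beta/\alpha|=\sqrt{1/\alpha-1}\to\infty$) is the same computation in both. The existence and uniqueness steps likewise both reduce to Proposition \ref{grReg__Morphism} together with the bijectivity of the $(a,a_*,b)$-transform, your Stone--Weierstrass argument replacing the paper's explicit surjectivity of $f\mapsto f(a_\zeta,b_\zeta)$ onto $C_0(\C)^\sim$.
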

\begin{proof}
Set
\begin{align*}
D &:= \{ z\in\C : |z|\leq 1/2 \}, ~ F := \{ (z_1,z_2)\in[0,1]\times D : |z_2|^2=z_1-z_1^2 \} \subseteq [0,1]\times D.
\end{align*}
Then $\partial F=\{(0,0)\}$ and $\mathring{F}=F\setminus\partial F$. By Corollary \ref{Characterization_grReg_normal_ab},  $a_\zeta$ is self-adjoint, $b_\zeta$ is normal, and $a_\zeta$ and $b_\zeta$  commute. Their joint spectrum $\sigma(a_\zeta,b_\zeta)$ is contained in $F$. Similar statements hold for $a_t$ and $b_t$.

Uniqueness: Let $\phi\in\Hom(C_0(\C)^\sim,\Adj(E))$ be such that  $\Null(\phi_t(a_\zeta))=\{0\}$ and $\phi(\zeta)=t$. Then,  by Proposition \ref{grReg__Morphism}, $\phi(a_\zeta) = a_{\phi(\zeta)} = a_t$ and $\phi(b_\zeta) = b_{\phi(\zeta)} = b_t.$
For $f\in C(F)$  the functional calculus of commuting bounded normal  operators yields
\begin{align}\label{Assocition__functional_calculus_hoomorphism}
\phi(f(a_\zeta,b_\zeta)) &= f(\phi(a_\zeta),\phi(b_\zeta)) = f(a_t,b_t).
\end{align}
Each function $g+\beta\in C_0(\C)^\sim$ is of the form $f(a_\zeta,b_\zeta)$ for some function $f\in C(F)$ with $f\upharpoonright_{\partial F}\equiv\beta$. Indeed, we have
\begin{align*}
g(z) &= g(a_\zeta(z)^{-1}b_\zeta(z)) = f(a_\zeta,b_\zeta)(z),
\end{align*}
where
\begin{align*}
f(z_1,z_2) := \begin{cases}
g(z_2/z_1)+\beta & , (z_1,z_2)\in\mathring{F}\\
\beta             & , (z_1,z_2)\in\partial F
\end{cases}.
\end{align*}
To show that $f$ is continuous at $\partial F$, assume $(z_1,z_2)\to(0,0)$. From $|z_2|^2=z_1-z_1^2$ it follows $|z_2/z_1)|=\sqrt{1/z_1-1}\to\infty$ since $z_1\to 0$. Therefore $g(z_2/z_1)\to 0$, since $g$ vanishes at infinity. This proves the  uniqueness assertion.

Existence: Equation (\ref{Assocition__functional_calculus_hoomorphism}) defines a $*$-homomorphism from $C_0(\C)^\sim$ into $\Adj(E)$. Inserting  $f(z_1,z_2):=z_1$  into (\ref{Assocition__functional_calculus_hoomorphism}) it follows that $\Null(\phi(a_\zeta))=\Null(a_t)$. Note the latter is trivial. Similarly, $\phi(b_\zeta)=b_t$. Frm Proposition \ref{grReg__Morphism} we get $a_{\phi(\zeta)}=\phi(a_\zeta)=a_t$ and $b_{\phi(\zeta)}=\phi(b_\zeta)=b_t$. From Theorem \ref{aabTransform} we finally conclude that $\phi(\zeta)=t$.
\end{proof}

  \section{Associated operators and affiliated  operators}\label{associated}

Throughout this section  we assume that the Hilbert $\CAlg{A}$-module $E$ is the $C^*$-algebra $\CAlg{A}$ itself equipped with the $\CAlg{A}$-valued scalar product $\SP{a}{b}:=a^*b$, $a,b \in \CAlg{A}$, and  that $\CAlg{A}$ is realized as a nondegenerate $C^*$-algebra on a Hilbert space $\Hil$. 

Then $\Adj(E)$ is the multiplier algebra $\Multiplier(\CAlg{A})=\{ x\in \Be(\Hil): x\CAlg{A}\subseteq \CAlg{A}, \CAlg{A}x\subseteq \CAlg{A}\}$.
and
$\Reg(E)$ is the set $\CAlg{A}^\eta$ of affiliated operators in the sense of Woronowicz \cite{wor91}. Recall that $\CAlg{A}^\eta$ is the set of operators $t\in \Abg(\Hil)$ for which  $ a_t = (I+t^*t)^{-1}\in\Multiplier(\CAlg{A})$, $b_t = t(I+t^*t)^{-1}\in\Multiplier(\CAlg{A})$, and $a_t \CAlg{A}$ is  dense in $\CAlg{A}$. We write $t\eta \CAlg{A}$ if $t\in \CAlg{A}^\eta$. Note that\,  $t=\ol{b_ta_t^{-1}}$\, for $t\eta \CAlg{A}$.

\begin{dfn}
We say that an operator $t\in \Abg(\Hil)$ is   \emph{associated with $\CAlg{A}$} and write $t\mu \CAlg{A}$\, if\, $t\in \grReg(E)$.
The set of associated operators with $\CAlg{A}$ is denoted by $\CAlg{A}^\mu$.
\end{dfn}
That is, by Theorem \ref{aabTransform},  $t\in \Abg(\Hil)$ is in $\CAlg{A}^\mu$ if and only if $a_t, a_{t^*}, b_t\in \Multiplier(\CAlg{A})$. Note that the density of the set $a_t\CAlg{A}$ in $\CAlg{A}$ is not required for $t\mu \CAlg{A}$. Obviously, $\CAlg{A}^\mu\subseteq \CAlg{A}^\eta$. Further,  $t\in \CAlg{A}^\mu$ is in $\CAlg{A}^\eta$ if and only if $a_t\CAlg{A}$ is dense in $\CAlg{A}$. 

\begin{lem}
$\Multiplier(\CAlg{A}) = \{ t\mu \CAlg{A} | t\in\Be(\Hil) \}.$
\end{lem}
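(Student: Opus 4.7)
The plan is to prove both inclusions by reducing to results already established in the paper, chiefly Theorem \ref{aabTransform}, Theorem \ref{Basics__graph_regular}(1), and the unnumbered lemma (just before Corollary \ref{Characterization_grReg_normal_ab}) saying that a bounded $t\in\oC(E,F)$ with bounded $t^*$ is graph regular iff it is adjointable. Throughout I use the identifications $E=\CAlg{A}$, $\Adj(E)=\Multiplier(\CAlg{A})$, and the fact that on $\CAlg{A}$ the module norm coincides with the $C^*$-norm.

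For the inclusion $\Multiplier(\CAlg{A})\subseteq \{t\mu\CAlg{A} : t\in\Be(\Hil)\}$, I would take $t\in\Multiplier(\CAlg{A})=\Adj(E)$. Then $\Def(t)=\Def(t^*)=E$ and $t^*t,tt^*\in\Adj(E)_+$. Since $I+t^*t\geq I$ and $I+tt^*\geq I$ are positive and bounded below in the $C^*$-algebra $\Adj(E)$, they are invertible there, so $\Range(I+t^*t)=\Range(I+tt^*)=E$. By Theorem \ref{Basics__graph_regular}(1), $t\in\grReg(E)$, i.e.\ $t\mu\CAlg{A}$; since $\Multiplier(\CAlg{A})\subseteq\Be(\Hil)$, this direction is done.

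For the reverse inclusion, suppose $t\in\Be(\Hil)$ satisfies $t\mu\CAlg{A}$, that is $t\in\grReg(E)$. The first observation is that, viewed as an operator on the module $E=\CAlg{A}$, $t$ is bounded on $\Def(t)$: for $a\in\Def(t)$, $\|ta\|_E=\|ta\|_{\CAlg{A}}\leq \|t\|_{\Be(\Hil)}\|a\|_{\CAlg{A}}=\|t\|_{\Be(\Hil)}\|a\|_E$. Next, the module adjoint coincides on its domain with the Hilbert space adjoint: if $y\in\Def(t^*)$ with $z=t^*y$, then the identity $\SP{tx}{y}=\SP{x}{z}$ reads $x^*(t^*_\Hil y-z)=0$ for all $x\in\Def(t)$, and essentiality of $\Def(t)$ forces $z=t^*_\Hil y$. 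Consequently $t^*$ is also bounded (by $\|t^*_\Hil\|=\|t\|_{\Be(\Hil)}$). Because $t$ is orthogonally closed (being graph regular), the lemma preceding Corollary \ref{Characterization_grReg_normal_ab} applies and yields $t\in\Adj(E)=\Multiplier(\CAlg{A})$.

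I do not anticipate any serious obstacle: the genuine content is already packaged into Theorem \ref{Basics__graph_regular} and the cited bounded-implies-adjointable lemma. The only point requiring a brief argument is the compatibility between the module adjoint and the Hilbert-space adjoint on $\CAlg{A}\subseteq\Be(\Hil)$, and that follows immediately from $\SP{a}{b}=a^*b$ together with essentiality of $\Def(t)$.
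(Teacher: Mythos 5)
Your proof is correct in substance, and the forward inclusion is essentially the paper's argument (invertibility of $I+t^*t$ in the unital $C^*$-algebra $\Adj(E)=\Multiplier(\CAlg{A})$). The converse, however, takes a genuinely different route. The paper's converse is a two-line computation that never leaves $\Multiplier(\CAlg{A})$: since $t$ is bounded, $a_t^{-1}=I+t^*t$ is a bounded operator, and an element of the unital $C^*$-subalgebra $\Multiplier(\CAlg{A})\subseteq\Be(\Hil)$ that is invertible in $\Be(\Hil)$ is already invertible in $\Multiplier(\CAlg{A})$ (spectral permanence), whence $t=b_ta_t^{-1}\in\Multiplier(\CAlg{A})$. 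You instead pass entirely to the module picture and invoke the unnumbered lemma that a bounded operator in $\oC(E,F)$ with bounded adjoint is graph regular iff adjointable; that lemma rests on Lance's Theorem 3.2 about orthogonally complemented ranges, so your route is heavier machinery for the same conclusion, but it has the merit of not touching $a_t^{-1}$ explicitly and of isolating exactly where boundedness enters.

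One step needs a better justification. You deduce $z=t^*_{\Hil}y$ from $x^*(t^*_{\Hil}y-z)=0$ for all $x\in\Def(t)$ by ``essentiality of $\Def(t)$''. Essentiality only excludes nonzero elements \emph{of $\CAlg{A}$} orthogonal to $\Def(t)$, whereas $t^*_{\Hil}y-z$ is a priori only an element of $\Be(\Hil)$. The correct argument uses nondegeneracy: $\Def(t)\supseteq\Range(a_t)=a_t\CAlg{A}$, and $a_t\CAlg{A}\Hil$ is dense in $\Hil$ because $\CAlg{A}$ acts nondegenerately and $a_t$ is boundedly invertible for bounded $t$; hence $(t^*_{\Hil}y-z)^*$ vanishes on a dense subspace of $\Hil$ and is zero. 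A similar (easily repaired) gloss occurs where you identify the graph regular module operator attached to $t\mu\CAlg{A}$ with left multiplication by $t$ on its domain; this is the identification underlying Corollary \ref{grReg_association} and should be cited or argued rather than assumed.
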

\begin{proof}
If $t\in\Multiplier(\CAlg{A})$, then $I+t^*t\in\Multiplier(\CAlg{A})$ and so $a_t\in\Multiplier(\CAlg{A})$ and $b_t=ta_t\in\Multiplier(\CAlg{A})$, hence $t\mu \CAlg{A}$. Conversely, suppose that $t$ is bounded.  Then $I+t^*t$ is bounded and $a_t\in\Multiplier(\CAlg{A})$, so  that $a_t^{-1}=I+t^*t\in\Multiplier(\CAlg{A})$. Therefore, $t=b_ta_t^{-1}\in\Multiplier(\CAlg{A})$.
\end{proof}

For  $t\mu \CAlg{A}$ all three operators $a_t, a_{t^*}, b_t$ have to be in the multiplier algebra $\Multiplier(\CAlg{A})$, while for $t\eta \CAlg{A}$ it is only  required that $a_t,b_t\in\Multiplier(\CAlg{A})$ (and the density of $a_t\CAlg{A}$). From $t\eta \CAlg{A}$ it follows  that $a_{t^*}\in\Multiplier(\CAlg{A})$. Therefore, it is natural to ask whether or not $a_t\in\Multiplier(\CAlg{A})$ and $b_t\in\Multiplier(\CAlg{A})$ already  imply that  $t\mu \CAlg{A}$, that is, $a_{t^*}\in\Multiplier(\CAlg{A})$. This is  true if $t\in \Abg(\Hil)$ is normal, since then $a_t=a_{t^*}$. Proposition \ref{Association_Relation_Adjoint} below contains an number of other sufficient conditions. In Example \ref{atstar_not_adjointable} we will show that this is not true in general. The following simple relations appeared already in Definition \ref{defabtransform}.

\begin{lem}\label{Algebraic_Relations_AT_BT}
Let $t\in\Abg(\Hil)$. Then:
\begin{enumerate}
\item $a_{t^*}-a_{t^*}^2=b_tb_{t^*}$ and $a_{t^*}b_t=b_ta_t$.
\item $b_t^*=b_{t^*}$.
\item $a_{t^*}^n-a_{t^*}^{n+1}=b_ta_t^{n-1}b_{t^*}$ for $n\in \N$.
\end{enumerate}
\end{lem}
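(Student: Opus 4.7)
The approach is to observe that every densely defined closed operator on a Hilbert space is graph regular when $\Hil$ is viewed as a Hilbert $\Be(\Hil)$-module (or equivalently, when $\mathcal{K}(\Hil) \subseteq \CAlg{A}$), so that $\Abg(\Hil) = \grReg(\Hil)$ and Theorem~\ref{aabTransform} applies to produce the triple $(a_t, a_{t^*}, b_t) \in \mathcal{AB}(\Hil)$. From Definition~\ref{defabtransform} this gives the relations $b_t^* b_t = a_t - a_t^2$, $b_t b_t^* = a_{t^*} - a_{t^*}^2$, and $a_t b_t^* = b_t^* a_{t^*}$, which is essentially the full content of the lemma modulo the identification $b_t^* = b_{t^*}$ of part (2). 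I would prove (2) first, and then derive (1) and (3) as corollaries.

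For (2), the plan is to show that $b_t^*$ and $b_{t^*}$ agree on the dense subspace $\Def(t^*)$ and invoke boundedness. Since $b_t = t a_t$ is everywhere defined and bounded, so is $b_t^* \in \Be(\Hil)$. For $y \in \Hil$ and $x \in \Def(t^*)$, the computation $\SP{b_t y}{x} = \SP{t a_t y}{x} = \SP{a_t y}{t^* x} = \SP{y}{a_t t^* x}$ shows $b_t^* x = a_t t^* x$ on $\Def(t^*)$. To match this with $t^* a_{t^*} x$, I would apply $a_t^{-1} = I + t^* t$ and check
\[
(I + t^* t) \, t^* a_{t^*} x \;=\; t^*(I + t t^*) a_{t^*} x \;=\; t^* x,
\]
where the first equality is the identity $(I + t^* t) t^* = t^*(I + t t^*)$ applied to $a_{t^*} x \in \Def(t t^*)$ (so that $t^* a_{t^*} x \in \Def(t^* t)$), and the second uses $(I + t t^*) a_{t^*} = I$. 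Injectivity of $a_t^{-1}$ then gives $a_t t^* x = t^* a_{t^*} x = b_{t^*} x$ on $\Def(t^*)$, and density plus boundedness yield $b_t^* = b_{t^*}$ on all of $\Hil$.

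With (2) in hand, the first identity of (1) is just $b_t b_{t^*} = b_t b_t^* = a_{t^*} - a_{t^*}^2$ from the $\mathcal{AB}$-relations. For the commutation $a_{t^*} b_t = b_t a_t$, I take the adjoint of $a_t b_t^* = b_t^* a_{t^*}$ and use self-adjointness of $a_t, a_{t^*}$ together with (2). Finally, (3) follows by induction on $n$: the base case $n = 1$ is the first identity of (1), and for the step,
\[
a_{t^*}^{n+1} - a_{t^*}^{n+2} \;=\; a_{t^*}\bigl(a_{t^*}^n - a_{t^*}^{n+1}\bigr) \;=\; a_{t^*} b_t a_t^{n-1} b_{t^*} \;=\; b_t a_t^n b_{t^*},
\]
where the commutation $a_{t^*} b_t = b_t a_t$ from (1) is used in the last step. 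The only genuinely delicate point is the unbounded-operator identity $(I + t^* t) t^* \supseteq t^*(I + t t^*)$ needed in (2); once past that, all remaining manipulations are among bounded adjointable operators.
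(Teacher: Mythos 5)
Your proof is correct, and it is organized differently from the paper's. The paper proves (1) first by a direct unbounded-operator computation ($a_{t^*}-a_{t^*}^2=tt^*a_{t^*}^2=\ol{ta_t^2t^*}=b_tb_{t^*}$, and similarly for the commutation relation starting from $a_tt^*a_t$), then deduces (2) from (1) by testing $\SP{b_tx}{a_{t^*}y}$ against the dense range $\Range(a_{t^*})$, and finally gets (3) from (1). You instead import the algebraic relations $b_tb_t^*=a_{t^*}-a_{t^*}^2$ and $a_tb_t^*=b_t^*a_{t^*}$ wholesale from Theorem \ref{aabTransform} via the identification $\Abg(\Hil)=\grReg(\Hil)$ (a fact the paper itself invokes in the proof of Corollary \ref{grReg_association}, so there is no circularity), and then only need to verify (2) by hand, which you do by showing $b_t^*x=a_tt^*x=t^*a_{t^*}x=b_{t^*}x$ on the dense set $\Def(t^*)$ using the intertwining identity $(I+t^*t)t^*\supseteq t^*(I+tt^*)$; your domain checks there are correct. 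What your route buys is economy: essentially the whole lemma is a specialization of the $(a,a_*,b)$-transform, and the only genuinely operator-theoretic step left is the identification $b_t^*=b_{t^*}$. What the paper's self-contained computation buys is independence of Section 4 at the Hilbert-space level, which fits the more elementary tone of Section 5. Your induction for (3) is fine and just makes explicit what the paper leaves to the reader.
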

\begin{proof}
(1): We have $a_{t^*}-a_{t^*}^2=tt^*a_{t^*}^2=\ol{ta_t^2t^*}=b_tb_{t^*}$. The second equality follows by a similar reasoning starting with the operator $a_tt^*a_t$. 

(2): Let $x,y\in\Hil$. Then $a_{t^*}y\in\Def(tt^*)\subseteq\Def(t^*)$ and using (1) we obtain $$\SP{b_tx}{a_{t^*}y}=\SP{a_tx}{t^*a_{t^*}y}=\SP{x}{a_tb_{t^*}y}=\SP{x}{b_{t^*}a_{t^*}y}.$$ Therefore $b_t^*=b_{t^*}$, since $\Range(a_{t^*})$ is dense and $b_t$ and $b_{t^*}$ are bounded. 

(3) is easily derived from (1). 
\end{proof}

\begin{prop}\label{Association_Relation_Adjoint}
Suppose that $a_t,b_t\in\Multiplier(\CAlg{A})$. Each of the following conditions imply that $a_{t^*}\in\Multiplier(\CAlg{A})$ and so $t\mu \CAlg{A}$.
\begin{enumerate}
\item $0\in\rho(t)$.
\item $\|a_{t^*}\|<1$, or equivalently, $tt^*\geq\varepsilon$ for some $\epsilon>0$.
\item $\Multiplier(\CAlg{A})_{sa}$ is closed under strong convergence of monotone sequences. 
\item $tt^*=qt^*t$ for some $q>0$.
\end{enumerate}
\end{prop}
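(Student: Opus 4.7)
The common engine will be the telescoping identity from Lemma \ref{Algebraic_Relations_AT_BT}(3),
$$s_N := \sum_{n=1}^{N} b_t\, a_t^{\,n-1}\, b_t^{*} \;=\; a_{t^*} - a_{t^*}^{N+1} \;\in\; \Multiplier(\CAlg{A})_+, \qquad N\geq 1,$$
which holds because $a_t$, $b_t$, and $b_t^{*}=b_{t^*}$ all lie in $\Multiplier(\CAlg{A})$ by hypothesis. My plan in every case is to deduce $a_{t^*}\in\Multiplier(\CAlg{A})$ by controlling the tail $a_{t^*}^{N+1}$ and feeding the resulting convergence into closedness properties of $\Multiplier(\CAlg{A})$.

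For (1) I would first verify that $0\in\rho(t)$ forces $(t^*)^{-1}=(t^{-1})^*$ to be bounded, so $tt^*\geq\varepsilon I$ for some $\varepsilon>0$ and hence $\|a_{t^*}\|\leq(1+\varepsilon)^{-1}<1$; this reduces (1) to (2). The stated equivalence in (2) is immediate from $a_{t^*}=(I+tt^*)^{-1}$. Given $\|a_{t^*}\|<1$, $\|a_{t^*}^{N+1}\|\to 0$, so $s_N\to a_{t^*}$ in the norm topology, and norm-closedness of $\Multiplier(\CAlg{A})$ concludes.

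Case (3) is the real challenge, and this is where most of the work will go. By spectral calculus on $[0,1]$, $a_{t^*}^{N+1}$ decreases monotonically and strongly to $P:=\chi_{\{1\}}(a_{t^*})$, the spectral projection onto $\Null(I-a_{t^*})$; a direct check (using that any $x\in\Null(I-a_{t^*})$ automatically lies in $\Def(tt^*)$ and satisfies $tt^*x=0$) identifies $\Null(I-a_{t^*})=\Null(t^*)=\Null(b_tb_t^*)$. Thus $s_N=a_{t^*}-a_{t^*}^{N+1}$ is monotone increasing in $\Multiplier(\CAlg{A})_+$ with strong limit $a_{t^*}-P$, and (3) yields $a_{t^*}-P\in\Multiplier(\CAlg{A})$. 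To recover $a_{t^*}$ I must still place $P$ itself in the multiplier algebra; for this I plan to approximate $\chi_{(0,\infty)}$ by the continuous, monotone increasing functions $\varphi_n(x)=nx/(1+nx)$ and apply continuous functional calculus inside $\Multiplier(\CAlg{A})$ to $b_tb_t^*$, producing $\varphi_n(b_tb_t^*)\in\Multiplier(\CAlg{A})$ increasing strongly to $I-P_{\Null(b_tb_t^*)}=I-P$. Hypothesis (3) then delivers $P\in\Multiplier(\CAlg{A})$, and combining gives $a_{t^*}\in\Multiplier(\CAlg{A})$.

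For (4) the plan is to realize $a_{t^*}$ as a continuous function of $a_t$. Using $tt^*=q\,t^*t$ together with $t^*t=a_t^{-1}-I$ on $\Def(t^*t)$ rearranges to
$$a_{t^*} \;=\; a_t\bigl((1-q)\,a_t+qI\bigr)^{-1} \;=\; f(a_t), \qquad f(x)=\frac{x}{(1-q)x+q}.$$
Since $(1-q)x+q\geq\min(q,1)>0$ on $[0,1]\supseteq\sigma(a_t)$, $f$ is continuous on the spectrum of $a_t$, and continuous functional calculus in the $C^*$-algebra $\Multiplier(\CAlg{A})$ yields $a_{t^*}=f(a_t)\in\Multiplier(\CAlg{A})$. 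The hardest step is clearly (3): producing the spectral projection $P$ as a multiplier, so that the two monotone-convergence arguments can be paired; the other three cases reduce either to norm convergence or to continuous functional calculus in $\Multiplier(\CAlg{A})$.
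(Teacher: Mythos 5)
Your proof is correct and runs on the same engine as the paper's: Lemma \ref{Algebraic_Relations_AT_BT}(3) gives $a_{t^*}-a_{t^*}^{N+1}=b_t(I+\dots+a_t^{N-1})b_t^{*}\in \Multiplier(\CAlg{A})_{sa}$, case (1) reduces to (2) via $0\in\rho(t^*)$, case (2) is norm convergence plus norm-closedness of $\Multiplier(\CAlg{A})$, and case (4) is the same algebraic identity, $a_{t^*}=q^{-1}(I+(q^{-1}-1)a_t)^{-1}a_t=f(a_t)$. The one place where you genuinely go beyond the printed proof is (3). The paper merely notes that the sequence is monotone and strongly convergent and invokes the hypothesis; but the strong limit of $a_{t^*}-a_{t^*}^{N+1}$ is $a_{t^*}-P$, where $P=\chi_{\{1\}}(a_{t^*})$ is the projection onto $\Null(I-a_{t^*})=\Null(t^*)$, not $a_{t^*}$ itself, so the hypothesis as used there only yields $a_{t^*}-P\in\Multiplier(\CAlg{A})$. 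Your second monotone-convergence argument --- $\varphi_n(b_tb_t^*)\nearrow I-P$ with $\varphi_n(x)=nx/(1+nx)$, using $\Null(b_tb_t^*)=\Null(I-a_{t^*})$ and the fact that each $\varphi_n(b_tb_t^*)$ lies in the $C^*$-algebra generated by $b_tb_t^*\in\Multiplier(\CAlg{A})$ --- is precisely the missing step that places $P$, and hence $a_{t^*}=(a_{t^*}-P)+P$, in $\Multiplier(\CAlg{A})$. So your write-up is not just equivalent to the paper's but repairs an unjustified leap in its treatment of (3); the other three cases match the paper essentially verbatim.
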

\begin{proof}
Clearly, from (1) it follows that  $0\in \rho(t^*)$ which in turn implies (2). 

(2), (3): By Lemma \ref{Algebraic_Relations_AT_BT} and the assumptions $a_t,b_t\in\Multiplier(\CAlg{A})$ we have
$$a_{t^*}-a_{t^*}^{n+1}=b_t(I+\ldots+a_t^{n-1})b_t^*\in \Multiplier(\CAlg{A})_{sa}.$$
If (2) is fulfilled, then $a_{t^*}^{n+1}\to 0$ in $\Multiplier(\CAlg{A})$, hence $a_{t^*}\in\Multiplier(\CAlg{A})$. On the other side, $a_{t^*}^{n+1}\in\Multiplier(\CAlg{A})_{sa}$ is  monotone decreasing and strongly  converging. Hence again by assumption (3)  it follows $a_{t^*}\in\Multiplier(\CAlg{A})$. (4) finally follows from the relations 
\begin{align*}
a_{t^*}=(I+tt^*)^{-1}=(I+qt^*t)^{-1}=q^{-1}(I+(q^{-1}-1)a_t)^{-1}a_t\in\Multiplier(\CAlg{A}).
\end{align*}
\end{proof}

\begin{prop}\label{Characterisation_Assoziation_Resolvent}
Suppose that $t\in\Abg(\Hil)$ and $0\in\rho(t)$. Then $t\mu\CAlg{A}$ if and only if $t^{-1}\in\Multiplier(\CAlg{A})$.
\end{prop}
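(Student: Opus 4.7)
The plan is to reduce both directions to the algebraic identity
\[
b_t=(t^*)^{-1}(I-a_t), \qquad \text{equivalently}\qquad (t^*)^{-1}=b_t(I-a_t)^{-1},
\]
which holds on all of $\Hil$ whenever $0\in\rho(t)$. It is obtained from $(I+t^*t)a_t=I$ by rewriting as $t^*t\,a_t=I-a_t$ and applying $(t^*)^{-1}$ on the left, which is well defined because $0\in\rho(t)$ forces $0\in\rho(t^*)$.

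Before exploiting the identity I need to know that $I-a_t$ is boundedly invertible and that its inverse lies in $\Multiplier(\CAlg{A})$ as soon as $a_t$ does. Since $0\in\rho(t)$, the product $(t^*t)^{-1}=t^{-1}(t^*)^{-1}$ is bounded, so $t^*t\geq c\,I$ for some $c>0$; hence $\sigma(a_t)\subseteq[0,(1+c)^{-1}]$ is bounded away from $1$, and $I-a_t$ is boundedly invertible on $\Hil$. Whenever $a_t\in\Multiplier(\CAlg{A})$, spectral permanence for the unital $C^*$-subalgebra $\Multiplier(\CAlg{A})\subseteq\Be(\Hil)$ then yields $(I-a_t)^{-1}\in\Multiplier(\CAlg{A})$.

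For the direction $t\mu\CAlg{A}\Rightarrow t^{-1}\in\Multiplier(\CAlg{A})$, one simply substitutes $a_t,b_t\in\Multiplier(\CAlg{A})$ into the identity to get $(t^*)^{-1}=b_t(I-a_t)^{-1}\in\Multiplier(\CAlg{A})$, and then $t^{-1}=((t^*)^{-1})^*\in\Multiplier(\CAlg{A})$ because $\Multiplier(\CAlg{A})$ is $*$-closed. For the converse, assume $t^{-1}\in\Multiplier(\CAlg{A})$, so that $(t^*)^{-1}$, $(t^*t)^{-1}=t^{-1}(t^*)^{-1}$ and $(tt^*)^{-1}$ all lie in $\Multiplier(\CAlg{A})$. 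Writing
\[
a_t=\bigl((t^*t)^{-1}+I\bigr)^{-1}(t^*t)^{-1},
\]
the first factor is the inverse of an element of $\Multiplier(\CAlg{A})$ with spectrum in $[1,\infty)$, hence is itself in $\Multiplier(\CAlg{A})$ by spectral permanence; so $a_t\in\Multiplier(\CAlg{A})$, and the same argument gives $a_{t^*}\in\Multiplier(\CAlg{A})$. Plugging back into the identity yields $b_t=(t^*)^{-1}(I-a_t)\in\Multiplier(\CAlg{A})$, proving $t\mu\CAlg{A}$.

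The only real technical point is the repeated use of spectral permanence to transfer inverses from $\Be(\Hil)$ into $\Multiplier(\CAlg{A})$; no unbounded machinery beyond the elementary identity derived from $(I+t^*t)a_t=I$ is required. As an alternative route for the converse, once $a_t,b_t\in\Multiplier(\CAlg{A})$ have been established, Proposition~\ref{Association_Relation_Adjoint}(1) applies directly (its hypothesis being exactly $0\in\rho(t)$) to furnish $a_{t^*}\in\Multiplier(\CAlg{A})$ without any separate computation.
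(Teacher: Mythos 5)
Your proof is correct and follows essentially the same route as the paper: both rest on the algebraic identities expressing $a_t$, $a_{t^*}$, $b_t$ through $t^{-1}$ (your $(t^*)^{-1}=b_t(I-a_t)^{-1}$ and $a_t=((t^*t)^{-1}+I)^{-1}(t^*t)^{-1}$ are, up to swapping $t$ and $t^*$, the paper's $I-a_t=(I+t^{-1}(t^{-1})^*)^{-1}$ and $t^{-1}=b_{t^*}(I-a_{t^*})^{-1}$), combined with invertibility of $I-a_t$ in $\Multiplier(\CAlg{A})$. Your explicit appeal to spectral permanence for the unital $C^*$-subalgebra $\Multiplier(\CAlg{A})\subseteq\Be(\Hil)$ makes precise a step the paper leaves implicit, and the alternative via Proposition~\ref{Association_Relation_Adjoint}(1) is also legitimate.
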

\begin{proof}
Since $0\in \rho(t)$,  $(t^*)^{-1}=(t^{-1})^* \in\Be(\Hil)$. Simple computations show that
\begin{align*}
I-a_t &= (I+t^{-1}(t^{-1})^*)^{-1}, \quad b_t = (t^{-1})^*(I-a_t),\quad
I-a_{t^*} &= (I+(t^{-1})^*t^{-1})^{-1}.
\end{align*}
From these identities we conclude  that $t^{-1}\in\Multiplier(\CAlg{A})$, so $(t^{-1})^* \in\Multiplier(\CAlg{A})$, implies that $a_t,b_t,a_{t^*}\in\Multiplier(\CAlg{A})$, that is, $t\mu \CAlg{A}$. 

Conversely, suppose that $t\mu\CAlg{A}$. Then, by Lemma \ref{Algebraic_Relations_AT_BT},(1), we have $b_{t^*}=(b_t)^*\in \Multiplier(\CAlg{A})$ and $a_{t^*}\in \Multiplier(\CAlg{A})$. Therefore, $t^{-1}=b_{t^*}(I-a_{t^*})^{-1}\in\Multiplier(\CAlg{A})$.
\end{proof}

\begin{cor}
If $t\in\Abg(\Hil)$ and\, $t\mu\CAlg{A}$, then\, $(I+t^*t)\mu \CAlg{A}$.
\end{cor}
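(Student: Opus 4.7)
The plan is to apply Proposition \ref{Characterisation_Assoziation_Resolvent} directly to the operator $s:=I+t^*t$. First I would verify the hypotheses of that proposition: since $t\in\Abg(\Hil)$, the operator $t^*t$ is densely defined, self-adjoint, and positive, hence $s=I+t^*t$ is self-adjoint with $s\geq I$, which gives $s\in\Abg(\Hil)$ and $0\in\rho(s)$ (indeed $s^{-1}\in\Be(\Hil)$ with $\|s^{-1}\|\leq 1$).

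Then by Proposition \ref{Characterisation_Assoziation_Resolvent} applied to $s$, we have $s\mu\CAlg{A}$ if and only if $s^{-1}\in\Multiplier(\CAlg{A})$. But by the very definition of the $(a,a_*,b)$-transform, $s^{-1}=(I+t^*t)^{-1}=a_t$, and the assumption $t\mu\CAlg{A}$ means precisely that $a_t,a_{t^*},b_t\in\Multiplier(\CAlg{A})$. In particular $s^{-1}=a_t\in\Multiplier(\CAlg{A})$, so $s\mu\CAlg{A}$, i.e.\ $(I+t^*t)\mu\CAlg{A}$.

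There is no real obstacle here; the statement is essentially a one-line corollary of Proposition \ref{Characterisation_Assoziation_Resolvent} once one notices that the resolvent of $I+t^*t$ at $0$ is exactly the operator $a_t$ that appears in the definition of association. The only thing to check carefully is that $I+t^*t$ is indeed in $\Abg(\Hil)$ with $0$ in its resolvent set, which is immediate from $t\in\Abg(\Hil)$ and the positivity of $t^*t$.
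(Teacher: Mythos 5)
Your proof is correct and is essentially identical to the paper's: both apply Proposition \ref{Characterisation_Assoziation_Resolvent} to $s=I+t^*t$ after noting that $0\in\rho(s)$ and that $s^{-1}=a_t\in\Multiplier(\CAlg{A})$ follows from $t\mu\CAlg{A}$. Your extra verification that $I+t^*t$ is self-adjoint with $s\geq I$ is a welcome, if routine, amplification of what the paper leaves implicit.
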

\begin{proof}
Since $t\mu\CAlg{A}$, we have $(I+t^*t)^{-1}=a_t\in\Multiplier(\CAlg{A})$. Since $0\in\rho(1+t^*t)$ and $(1+t^*t)^{-1}=a_t\in\Multiplier(\CAlg{A})$, we obtain $(1+t^*t)\mu\CAlg{A}$ by Proposition \ref{Characterisation_Assoziation_Resolvent}.
\end{proof}

\begin{cor}
Suppose that $t\mu\CAlg{A}$ and $s\mu\CAlg{A}$.
\begin{enumerate}
\item If $0\in\rho(t)$ and $\lambda\in\rho(t)$ with $0<|\lambda|<1/\|t^{-1}\|$, then $(t-\lambda)\mu\CAlg{A}$.
\item If $0\in\rho(t)\cap\rho(s)$, then $ts\mu\CAlg{A}$.
\end{enumerate}
\end{cor}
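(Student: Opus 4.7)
The plan is to reduce both parts to Proposition \ref{Characterisation_Assoziation_Resolvent}. For an operator $r\in \Abg(\Hil)$ with $0\in\rho(r)$, that proposition says $r\mu\CAlg{A}$ if and only if $r^{-1}\in\Multiplier(\CAlg{A})$. So for each of (1) and (2) I would carry out two steps: first verify that the relevant operator ($t-\lambda$ resp.\ $ts$) is closed with $0$ in its resolvent, and second identify the inverse as an element of $\Multiplier(\CAlg{A})$, built from the inverses of the given associated operators (which lie in $\Multiplier(\CAlg{A})$ again by Proposition \ref{Characterisation_Assoziation_Resolvent}).

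For (1), I would start from the factorization $t-\lambda I = t(I-\lambda t^{-1})$. Since $t^{-1}\in\Multiplier(\CAlg{A})$ and $\|\lambda t^{-1}\|<1$, the Neumann series $\sum_{n\ge 0}(\lambda t^{-1})^n$ converges in norm to a bounded inverse of $I-\lambda t^{-1}$; because $\Multiplier(\CAlg{A})$ is a norm-closed subalgebra of $\Be(\Hil)$, this inverse lies in $\Multiplier(\CAlg{A})$. (The hypothesis $\lambda\in\rho(t)$ is then automatic, but I would just note it.) Combining, $(t-\lambda I)^{-1}=(I-\lambda t^{-1})^{-1}t^{-1}\in\Multiplier(\CAlg{A})$, and $0\in\rho(t-\lambda I)$ by assumption. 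Proposition \ref{Characterisation_Assoziation_Resolvent} applied to $t-\lambda I$ yields $(t-\lambda I)\mu\CAlg{A}$.

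For (2), put $u:=s^{-1}t^{-1}$. By Proposition \ref{Characterisation_Assoziation_Resolvent} both $t^{-1}$ and $s^{-1}$ are in $\Multiplier(\CAlg{A})$, so $u\in\Multiplier(\CAlg{A})\subseteq\Be(\Hil)$. I would then check that $u$ is injective (clear from injectivity of $t^{-1}$ and $s^{-1}$) and has dense range (equivalently, $u^\ast=(t^\ast)^{-1}(s^\ast)^{-1}$ is injective, which holds since $(s^\ast)^{-1}$ and $(t^\ast)^{-1}$ are bounded injections because $0\in\rho(t)\cap\rho(s)$ implies $0\in\rho(t^\ast)\cap\rho(s^\ast)$). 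A direct computation on $x\in\Hil$ shows $uy\in\Def(ts)$ and $(ts)uy=y$, giving that $ts$ is surjective with right inverse $u$; combined with injectivity this yields $(ts)^{-1}=u\in\Be(\Hil)\cap\Multiplier(\CAlg{A})$, so $ts$ is closed, densely defined, and $0\in\rho(ts)$. A second appeal to Proposition \ref{Characterisation_Assoziation_Resolvent} concludes $ts\mu\CAlg{A}$.

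The only genuine technical point, and thus the main obstacle, is the verification in (2) that $ts$ actually belongs to $\Abg(\Hil)$: namely that the algebraic product of two closed densely defined operators is again closed and densely defined. This is precisely where the bounded two-sided inverse $u$ pays off, since the existence of such an inverse forces closedness automatically, and the density of $\Def(ts)=\Range(u)$ follows from injectivity of $u^\ast$ as sketched above. Everything else is a direct manipulation.
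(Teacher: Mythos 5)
Your proposal is correct and follows the same route as the paper: both parts are reduced to Proposition \ref{Characterisation_Assoziation_Resolvent}, with $(t-\lambda I)^{-1}$ expressed through $t^{-1}$ and a Neumann-series inverse (the paper writes this as $\lambda^{-1}t^{-1}(\lambda^{-1}-t^{-1})^{-1}$, which is the same identity) and $(ts)^{-1}=s^{-1}t^{-1}$. Your extra verification that $ts$ is closed and densely defined with $0\in\rho(ts)$ fills in a step the paper merely asserts, and is carried out correctly.
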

\begin{proof}
Both assertions follow immediately from Proposition \ref{Characterisation_Assoziation_Resolvent}. For
(1) we use the equality  $(t-\lambda I)^{-1}=\lambda^{-1}t^{-1} (\lambda^{-1}-t^{-1})^{-1}\in \Multiplier(\CAlg{A})$, while  for (2) we note that $0\in\rho(ts)$ and $(ts)^{-1}=s^{-1}t^{-1}\in \Multiplier(\CAlg{A})$.
\end{proof}

Next we investigate affiliated operators and their resolvents. Before we turn to the main result we prove two simple lemmas.

\begin{lem}\label{auxlem1}
Let $\CAlg{A}$ be a $C^*$-algebra acting on a Hilbert space $\Hil$. Let $s\in\Be(\Hil)$ and $x,y\in\Multiplier(\CAlg{A})$. Suppose that $x\CAlg{A}$ and $y \CAlg{A}$ are dense in $\CAlg{A}$. If $sx\in\Multiplier(\CAlg{A})$ and $s^*y\in\Multiplier(\CAlg{A})$, then $s\in\Multiplier(\CAlg{A})$.
\end{lem}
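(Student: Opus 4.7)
The plan is to verify the two defining conditions of the multiplier algebra separately, namely $s\CAlg{A}\subseteq\CAlg{A}$ and $\CAlg{A}s\subseteq\CAlg{A}$, using in each case the density hypothesis together with norm closedness of $\CAlg{A}$ in $\Be(\Hil)$. Boundedness of $s$ (and $s^*$) will make the approximation arguments go through in operator norm.

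For the first inclusion, fix $a\in\CAlg{A}$. By the density of $x\CAlg{A}$ in $\CAlg{A}$, choose a sequence $(a_n)\subseteq\CAlg{A}$ with $xa_n\to a$ in norm. Since $s\in\Be(\Hil)$ is bounded, $s(xa_n)\to sa$ in operator norm. But $s(xa_n)=(sx)a_n\in\CAlg{A}$ because $sx\in\Multiplier(\CAlg{A})$ and $a_n\in\CAlg{A}$. Since $\CAlg{A}$ is norm closed in $\Be(\Hil)$, this forces $sa\in\CAlg{A}$.

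For the second inclusion we pass to adjoints. Since $y\CAlg{A}$ is dense in $\CAlg{A}$ and $\CAlg{A}$ is $*$-closed, $\CAlg{A}y^{*}=(y\CAlg{A})^{*}$ is dense in $\CAlg{A}$ as well. Given $a\in\CAlg{A}$, pick $(b_n)\subseteq\CAlg{A}$ with $b_ny^{*}\to a$. The hypothesis $s^{*}y\in\Multiplier(\CAlg{A})$ gives $y^{*}s=(s^{*}y)^{*}\in\Multiplier(\CAlg{A})$. Hence $b_n(y^{*}s)=(b_ny^{*})s\in\CAlg{A}$, and $(b_ny^{*})s\to as$ in norm by boundedness of $s$, so again norm closedness of $\CAlg{A}$ yields $as\in\CAlg{A}$.

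Combining both inclusions gives $s\in\Multiplier(\CAlg{A})$. There is no real obstacle here: the only point that has to be handled with a touch of care is the second inclusion, where one must take adjoints to convert the left-module density of $y\CAlg{A}$ into the right-module density of $\CAlg{A}y^{*}$ and simultaneously convert $s^{*}y\in\Multiplier(\CAlg{A})$ into $y^{*}s\in\Multiplier(\CAlg{A})$; once this symmetry is set up, the argument mirrors the first inclusion verbatim.
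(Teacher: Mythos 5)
Your proof is correct and follows essentially the same approximation argument as the paper: approximate $a$ by $xa_n$, use boundedness of $s$ and norm-closedness of $\CAlg{A}$ to get $s\CAlg{A}\subseteq\CAlg{A}$, and then handle the other module condition via $y$. The only cosmetic difference is that you phrase the second half as $\CAlg{A}s\subseteq\CAlg{A}$ using $\CAlg{A}y^*$ and $y^*s$, whereas the paper proves $s^*\CAlg{A}\subseteq\CAlg{A}$ directly by swapping $(x,s)$ for $(y,s^*)$; the two are adjoints of one another.
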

\begin{proof}
Let $a\in \CAlg{A}$.  Since  $x \CAlg{A}$ is dense  in $\CAlg{A}$, there are elements $a_n\in \CAlg{A}$, $n\in N$,  such that $xa_n\to a$ in  $\CAlg{A}$.  Hence $sxa_n\to sa$ in $\CAlg{A}$. Since $sx\in \Multiplier(\CAlg{A})$ by assumption, $sxa_n\in\CAlg{A}$ and so $sa\in \CAlg{A}$. Replacing $x$ by $y$ and $s$ by $s^*$ it follows that $s^*a\in \CAlg{A}$. Therefore, $s\in\Multiplier(\CAlg{A})$.
\end{proof}

\begin{lem}\label{auxlem2}
Let $\CAlg{A}$ be a $C^*$-algebra and $x,y\in\Multiplier(\CAlg{A})$. Suppose that $\lambda y\geq xx^*$ for some $\lambda >0$.
If \, $x\CAlg{A}$ is dense in $\CAlg{A}$, so is $y\CAlg{A}$. In particular, $x\CAlg{A}$ is dense in $\CAlg{A}$ if and only if $xx^*\CAlg{A}$ is.
\end{lem}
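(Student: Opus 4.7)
The plan is to reach the conclusion through two approximation arguments of the same flavor. For a positive $c \in \Multiplier(\CAlg{A})_+$, introduce the bounded ``approximate unit'' $e_n(c) := c(1/n+c)^{-1}$ and its complement $f_n(c) := (1/n)(1/n+c)^{-1} = I - e_n(c)$, both lying in $\Multiplier(\CAlg{A})$. First observe that $\lambda y \geq xx^* \geq 0$ forces $y \in \Multiplier(\CAlg{A})_+$.

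The crucial auxiliary fact I would establish is the following \emph{order inclusion}: whenever $0 \leq a \leq \lambda b$ in $\Multiplier(\CAlg{A})$, one has $a\CAlg{A} \subseteq \overline{b\CAlg{A}}$. For $c \in \CAlg{A}$ the element $e_n(b)(ac) = b \cdot [(1/n+b)^{-1} ac]$ plainly sits in $b\CAlg{A}$, so it is enough to show $f_n(b) ac \to 0$ in norm. Using $cc^* \leq \|c\|^2 I$ and $a^2 \leq \|a\| a$ one obtains $\|f_n(b) ac\|^2 \leq \|c\|^2 \|a\| \, \|f_n(b) a f_n(b)\|$, and $f_n(b) a f_n(b) \leq \lambda f_n(b) b f_n(b)$ by the order hypothesis. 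A routine functional-calculus computation on $b$ gives $\|f_n(b) b f_n(b)\| = \sup_{t\geq 0} t/(1+nt)^2 = 1/(4n) \to 0$. Applying the order inclusion with $a = xx^*$ and $b = y$ yields $xx^*\CAlg{A} \subseteq \overline{y\CAlg{A}}$.

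The second step is to show $\overline{x\CAlg{A}} = \overline{xx^*\CAlg{A}}$. The inclusion $\overline{xx^*\CAlg{A}} \subseteq \overline{x\CAlg{A}}$ is immediate from $xx^*\CAlg{A} = x(x^*\CAlg{A}) \subseteq x\CAlg{A}$. For the reverse inclusion I would reuse the same approximation scheme with $b := xx^*$: the intertwining identity $(1/n+xx^*)^{-1} x = x(1/n+x^*x)^{-1}$ permits us to write $e_n(xx^*)\, xc = x \cdot x^*x(1/n+x^*x)^{-1} c \in xx^*\CAlg{A}$, so the remainder is $xc - e_n(xx^*) xc = x \cdot (1/n)(1/n+x^*x)^{-1} c$. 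Functional calculus on $x^*x$ yields $\|x(1/n+x^*x)^{-1}\|^2 = \sup_{t\geq 0} t/(1/n+t)^2 = n/4$, hence $\|x(1/n)(1/n+x^*x)^{-1}\| \leq 1/(2\sqrt n) \to 0$, so $xc \in \overline{xx^*\CAlg{A}}$.

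Combining the two steps: if $x\CAlg{A}$ is dense, then $\CAlg{A} = \overline{x\CAlg{A}} = \overline{xx^*\CAlg{A}} \subseteq \overline{y\CAlg{A}}$, which gives the main assertion, and the ``in particular'' equivalence is precisely $\overline{x\CAlg{A}} = \overline{xx^*\CAlg{A}}$. I expect the main obstacle to be the order inclusion: a tempting $C^*$-algebraic Douglas-type factorization $a^{1/2} = b^{1/2} w$ produces a contraction $w$ that typically lives only in the enveloping von Neumann algebra $\CAlg{A}^{**}$, so it cannot be employed to rewrite elements of $a\CAlg{A}$ as members of $b\CAlg{A}$; the explicit approximants $e_n(b)(ac)$ circumvent this by relying purely on the operator-order sandwich estimate.
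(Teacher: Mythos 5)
Your proof is correct, and it takes a genuinely different route from the paper. The paper argues by contradiction: if $y\CAlg{A}$ is not dense, then $\overline{(y\CAlg{A})^*}$ is a proper left ideal, so (by \cite[Lemma 2.9.4]{dixmier}) some state $\omega$ annihilates it; in the GNS representation this forces $\pi_\omega(y)\varphi_\omega=0$, and then Cauchy--Schwarz together with $\pi_\omega(xx^*)\leq\lambda\,\pi_\omega(y)$ shows $\omega$ also annihilates $x\CAlg{A}$, contradicting density. The ``in particular'' clause is then obtained by specializing to $y=xx^*$. You instead prove everything directly and quantitatively: your order-inclusion step (if $0\leq a\leq\lambda b$ then $a\CAlg{A}\subseteq\overline{b\CAlg{A}}$, via the approximants $e_n(b)=b(1/n+b)^{-1}$ and the bound $\|f_n(b)bf_n(b)\|=1/(4n)$) and your intertwining step ($\overline{x\CAlg{A}}=\overline{xx^*\CAlg{A}}$, via $(1/n+xx^*)^{-1}x=x(1/n+x^*x)^{-1}$ and the estimate $\|x\,(1/n)(1/n+x^*x)^{-1}\|\leq 1/(2\sqrt{n})$) are both sound; I checked the norm computations and the membership claims ($(1/n+b)^{-1}ac\in\CAlg{A}$ because the resolvent lies in $\Multiplier(\CAlg{A})$), and your remark about why a Douglas-type factorization would fail (the contraction only lives in $\CAlg{A}^{**}$) is exactly the right caution. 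What each approach buys: the paper's state argument is shorter once one grants the existence of an annihilating state and the GNS machinery; yours is more elementary and self-contained, avoids representations entirely, and isolates a reusable quantitative lemma (the order inclusion) that is slightly stronger than what the statement asks for.
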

\begin{proof}
Assume to the contrary that\, $\overline{y\CAlg{A}}\neq \overline{x\CAlg{A}}=\CAlg{A}.$ Then the  closure of $(y\CAlg{A})^*$ is a proper left ideal. Hence there exists a state $\omega$ of $\CAlg{A}$   that annihilates $(y\CAlg{A})^*$ (see e.g. \cite[Lemma 2.9.4]{dixmier}). Let $\pi_\omega$ be the GNS representation of $\CAlg{A}$ associated with the state $\omega$ and let $\varphi_\omega$ be the corresponding cyclic vector $\varphi_\omega$. We denote the extension of $\pi_\omega$ to the multiplier algebra $\Multiplier(\CAlg{A})$ also by the symbol $\pi_\omega$. Then 
\begin{align}\label{yvarphi}
0=\omega((ya)^*)=\langle \pi_\omega (a^*y)\varphi_\omega,\varphi_\omega\rangle=
 \langle \pi_\omega (y)\varphi_\omega,\pi_\omega(a)\varphi_\omega\rangle
\end{align}
for all $a\in\CAlg{A}$, so that   $\pi_\omega (y)\varphi_\omega=0$. Therefore,
\begin{align*}
|\omega(xa)|^2&=|\langle \pi_\omega (a)\varphi_\omega, \pi_\omega (x^*)\varphi_\omega\rangle|^2=
 \| \pi_\omega (a)\varphi_\omega\|^2 \|\pi_\omega(x^*)\varphi_\omega\|^2\\&=
 \| \pi_\omega (a)\varphi_\omega\|^2 \langle \pi_\omega(xx^*)\varphi_\omega,\varphi_\omega\rangle \leq  \| \pi_\omega (a)\varphi_\omega\|^2\lambda \langle \pi_\omega( y)\varphi_\omega,\varphi_\omega\rangle =0\
\end{align*}
for $a\in \CAlg{A}$, that is, $\omega$ annihilates $x\CAlg{A}$. Hence $x\CAlg{A}$ is not dense  in $\CAlg{A}$ which is a contradiction, since we assumed that $\overline{x\CAlg{A}}=\CAlg{A}.$ 

Applying this to the case $y=xx^*$ we conclude that $xx^*\CAlg{A}$ is dense provided that $x\CAlg{A} $ is dense. Since the converse implication is trivial, it follows that $xx^*\CAlg{A}$ is dense if and only if $x\CAlg{A} $ is dense.
\end{proof} 

The following theorem appeared in \cite{schm2005}.
\begin{thm}\label{Characterisation__affiliation_resolvent}
Suppose  $\CAlg{A}$ is a $C^*$-algebra acting on a Hilbert space $\Hil$ and $t$ is a densely defined closed operator on $\Hil$ with non-empty resolvent set. Let $\lambda\in\rho(t)$. Then  $t\eta\CAlg{A}$ if and only if $(t-\lambda I)^{-1}\in\Multiplier(\CAlg{A})$ and $(t-\lambda I)^{-1}\CAlg{A}$ and $(t^*-\overline{\lambda} I)^{-1}\CAlg{A}$ are dense in $\CAlg{A}$.
\end{thm}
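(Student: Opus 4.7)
Set $s := (t-\lambda I)^{-1}$, so that $s \in \Be(\Hil)$ with $s^{-1} = t-\lambda I$ and $(s^*)^{-1} = t^* - \bar\lambda I$. The plan is to prove both implications by combining Propositions \ref{GraphRegular_Addition_Composition} and \ref{Characterisation_Assoziation_Resolvent} with Lemma \ref{auxlem2}.

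\emph{Forward direction.} Assume $t\eta\CAlg{A}$, i.e.\ $t \in \Reg(\CAlg{A})$. Since regular operators are graph regular, $t\mu\CAlg{A}$, and Proposition \ref{GraphRegular_Addition_Composition} (applied with $-\lambda I \in \Adj(E) = \Multiplier(\CAlg{A})$) gives $(t-\lambda I)\mu\CAlg{A}$. Because $0 \in \rho(t-\lambda I)$, Proposition \ref{Characterisation_Assoziation_Resolvent} yields $s \in \Multiplier(\CAlg{A})$. For the density of $s\CAlg{A}$: for each $a \in \CAlg{A}$, one has $a_ta \in \Def(t^*t) \cap \CAlg{A} \subseteq \Def(t) \cap \CAlg{A}$, so $(t-\lambda I)(a_ta) = b_ta - \lambda a_ta \in \CAlg{A}$; applying $s$ gives $a_ta = s(b_ta - \lambda a_ta) \in s\CAlg{A}$. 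Hence $a_t\CAlg{A} \subseteq s\CAlg{A}$, and density of $a_t\CAlg{A}$ forces density of $s\CAlg{A}$. The same argument applied to $t^* \in \Reg(\CAlg{A})$ (by Theorem \ref{Basics__graph_regular}(3b)) gives $s^*\CAlg{A}$ dense.

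\emph{Backward direction.} Assume $s \in \Multiplier(\CAlg{A})$ and $s\CAlg{A}, s^*\CAlg{A}$ are dense in $\CAlg{A}$. Proposition \ref{Characterisation_Assoziation_Resolvent} applied to $t - \lambda I$ (with $0 \in \rho(t-\lambda I)$) gives $(t-\lambda I)\mu\CAlg{A}$, and Proposition \ref{GraphRegular_Addition_Composition} then yields $t\mu\CAlg{A}$; in particular $a_t, b_t \in \Multiplier(\CAlg{A})$. It remains to show that $a_t\CAlg{A}$ is dense. I would apply Lemma \ref{auxlem2} with $x := s$ and $y := a_t$, for which an operator inequality $c\cdot a_t \geq ss^*$ for some $c > 0$ must be supplied.

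\emph{The key inequality and main obstacle.} From the triangle inequality $\|tx\| \leq \|(t-\lambda I)x\| + |\lambda|\|x\|$ valid for $x \in \Def(t)$, the elementary estimate $(a+b)^2 \leq 2(a^2+b^2)$ yields $\|x\|^2 + \|tx\|^2 \leq (2|\lambda|^2+2)(\|x\|^2 + \|(t-\lambda I)x\|^2)$, i.e.\ the quadratic-form inequality $I+t^*t \leq (2|\lambda|^2+2)(I+|t-\lambda I|^2)$ on the common form domain $\Def(t)$. Passing to inverses gives $a_t \geq (2|\lambda|^2+2)^{-1}(I+|t-\lambda I|^2)^{-1}$. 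Since $|t-\lambda I|^2 = (t^*-\bar\lambda I)(t-\lambda I) = (ss^*)^{-1}$ as unbounded self-adjoint operators, one computes $(I+|t-\lambda I|^2)^{-1} = ss^*(I+ss^*)^{-1} \geq (1+\|ss^*\|)^{-1}ss^*$, so $a_t \geq c\cdot ss^*$ with $c := (2|\lambda|^2+2)^{-1}(1+\|ss^*\|)^{-1}$, and Lemma \ref{auxlem2} finishes the proof. This quadratic-form inequality is the only genuinely analytic ingredient; everything else reduces to algebraic manipulation with multipliers. Note that $s^*\CAlg{A}$ dense is not strictly required in the backward implication but arises naturally in the forward one, producing a symmetric statement.
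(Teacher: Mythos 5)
Much of your outline is sound, and two ingredients genuinely improve on the paper's own argument: the inclusion $a_t\CAlg{A}\subseteq s\CAlg{A}$ obtained from $a_ta=s(b_ta-\lambda a_ta)$ is a clean route to the density of $s\CAlg{A}$, and the form inequality $I+t^*t\le(2+2|\lambda|^2)(I+|t-\lambda I|^2)$, combined with operator monotonicity of the inverse on the common form domain $\Def(t)$, lets you work at a general $\lambda$ instead of first invoking Woronowicz's translation invariance $t\eta\CAlg{A}\Leftrightarrow(t-\lambda I)\eta\CAlg{A}$ to reduce to $\lambda=0$, which is how the paper begins (it then uses the cruder bound $I+t^*t\le\epsilon^{-1}t^*t$). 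That inequality, and the identity $(I+(ss^*)^{-1})^{-1}=ss^*(I+ss^*)^{-1}\ge(1+\|ss^*\|)^{-1}ss^*$, are correct.

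The backward direction nevertheless has a genuine gap, and your remark that ``$s^*\CAlg{A}$ dense is not strictly required'' is the symptom. What you actually establish is $a_t,b_t\in\Multiplier(\CAlg{A})$ together with $a_t\CAlg{A}$ dense. This is \emph{not} sufficient for $t\eta\CAlg{A}$: affiliation requires the bounded transform $z_t=t(I+t^*t)^{-1/2}$ to lie in $\Multiplier(\CAlg{A})$, equivalently that the module operator be regular, so that $\Def(t^*)\supseteq a_{t^*}\CAlg{A}$ is also dense --- and neither follows from what you have. (The paraphrased description of $\CAlg{A}^\eta$ at the head of Section \ref{associated} cannot be read literally here.) Indeed, Example \ref{counterdensity} exhibits $t$ with $0\in\rho(t)$, $t^{-1}\in\Multiplier(\CAlg{A})$ and $t^{-1}\CAlg{A}$ dense but $(t^*)^{-1}\CAlg{A}$ not dense; there $t\mu\CAlg{A}$ and $a_t\CAlg{A}$ is dense (by exactly your inequality and Lemma \ref{auxlem2}), yet $t$ is not affiliated --- otherwise the forward direction would force $(t^*)^{-1}\CAlg{A}$ to be dense. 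So your argument, as written, would certify affiliation in a case where it fails. The second density hypothesis is precisely the missing input: your own inequality applied to $t^*$ and $\bar\lambda$ gives $a_{t^*}\ge c\,s^*(s^*)^*$, so Lemma \ref{auxlem2} with $x=s^*$ yields $a_{t^*}\CAlg{A}$ dense, after which the graph regular module operator has dense $\Def(t)$ and $\Def(t^*)$ and $\Range(1+t^*t)=\CAlg{A}$, hence is regular. (The paper instead feeds both densities into Lemma \ref{auxlem1} to land $z_t\in\Multiplier(\CAlg{A})$ directly.) A secondary point: both of your appeals to Proposition \ref{GraphRegular_Addition_Composition} to pass between $t\mu\CAlg{A}$ and $(t-\lambda I)\mu\CAlg{A}$ silently identify graph regularity of the module operator $t-\lambda$ with the Hilbert-space statement that $a_{t-\lambda I}$, $a_{(t-\lambda I)^*}$, $b_{t-\lambda I}$ lie in $\Multiplier(\CAlg{A})$; the paper's corollary following Proposition \ref{Characterisation_Assoziation_Resolvent} proves such translation statements only under a restriction on $\lambda$, so this identification needs justification (for the forward direction the cheap fix is again $t\eta\CAlg{A}\Leftrightarrow(t-\lambda I)\eta\CAlg{A}$ from \cite{wor91}).
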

\begin{proof}
Since $t\eta\CAlg{A}$ is equivalent to $(t-\lambda I)\eta\CAlg{A}$ (see \cite{wor91}, p. 412, Example 1), we can assume without restriction of generality  that $\lambda=0$. Then $t^{-1}$ and $(t^*)^{-1}$ are in $\Be(\Hil)$.

First we suppose that $t\eta\CAlg{A}$. Set $x:=(I+(tt^*)^{-1})^{-1}$ and $s:=t^{-1}$. Since $t\eta\CAlg{A}$ implies $t^*\eta\CAlg{A}$, it follows that $z_{t^*}=t^*(I+tt^*)^{-1/2} =(z_t)^*\in\Multiplier(\CAlg{A})$. Therefore, we obtain $(I+tt^*)^{-1}=I-z_tz_t^*\in\Multiplier(\CAlg{A})$ and hence $(I+tt^*)^{-1/2}\in\Multiplier(\CAlg{A})$. 
These relations imply that
\begin{align}\label{sx} 
sx&\equiv t^{-1}(I+(tt^*)^{-1})^{-1}=t^*(tt^*)^{-1}(I+(tt^*)^{-1})^{-1}\nonumber \\&=t^*(I+tt^*)^{-1}=z_{t^*}(I+tt^*)^{-1/2}\in\Multiplier(\CAlg{A}).
\end{align} 
Since $x:=(I+(tt^*)^{-1})^{-1}=I-(I+tt^*)^{-1}\in\Multiplier(\CAlg{A})$ and $x^{-1}$ is also bounded, we have $x^{-1}\in\Multiplier(\CAlg{A})$ and hence $x\CAlg{A}=\CAlg{A}$. Recall that $sx\in \Multiplier(\CAlg{A})$ by (\ref{sx}). Now we interchange the roles of $t$ and $t^*$ and set $y:=(I+(t^*t)^{-1})^{-1}$. By a similar reasoning as in (\ref{sx}) we derive $s^*y\in\Multiplier(\CAlg{A})$. Further, $y\in\Multiplier(\CAlg{A})$ and $y\CAlg{A}=\CAlg{A}$. Hence the assumptions of Lemma \ref{auxlem1} are satisfied, so we obtain $t^{-1}=s\in\Multiplier(\CAlg{A})$. 

Recall that $(I+t^*t)^{-1}\CAlg{A}$ is dense in $\CAlg{A}$, because $t\eta\CAlg{A}$. Therefore, since
$$
(I+t^*t)^{-1}\CAlg{A}=(t^*t)^{-1}(I+(t^*t)^{-1})^{-1}\CAlg{A}\subseteq (t^*t)^{-1}\CAlg{A}= t^{-1}(t^*)^{-1}\CAlg{A}\subseteq t^{-1}\CAlg{A},
$$
$t^{-1}\CAlg{A}$ is dense in $\CAlg{A}$. Replacing $t$ by $t^*$, it follows that $(t^*)^{-1}\CAlg{A}$ is dense in $\CAlg{A}$. This completes the proof of the only if part.

Conversely, let us assume that  $t^{-1}\in\Multiplier(\CAlg{A})$ and that $t^{-1}\CAlg{A}$ and $(t^*)^{-1}\CAlg{A}$ are dense in $\CAlg{A}$. Then $I-z_t^*z_t=(I+t^*t)^{-1}=t^{-1}(t^{-1})^*(I+t^{-1}(t^{-1})^*)^{-1}\in\Multiplier(\CAlg{A})$ and $z_t(I-z_t^*z_t)^{1/2}=t(I+t^*t)^{-1}=(t^{-1})^*(I+t^{-1}(t^{-1})^*)^{-1}\in\Multiplier(\CAlg{A})$. Therefore, setting $x:=(I-z_t^*z_t)^{1/2}$ and $s:=z_t$, we have $x\in\Multiplier(\CAlg{A})$ and $sx\in\Multiplier(\CAlg{A})$. Since $t$ has a bounded inverse, there exists $\epsilon\in(0,1/4)$ such that $t^*t\geq 2\epsilon I$. Then  $I+t^*t\leq\frac{1}{2\epsilon}t^*t+t^*t\leq\frac{1}{\epsilon}t^*t$ and hence $(I+t^*t)^{-1}\geq\epsilon t^{-1}(t^{-1})^*$. Therefore, since $t^{-1}\CAlg{A}$ is dense in $\CAlg{A}$ by assumption, $(I+t^*t)^{-1}\CAlg{A}=(I-z_t^*z_t)\CAlg{A}=x^2\CAlg{A}$ is dense in $\CAlg{A}$ by Lemma \ref{auxlem2}. Since $x\geq 0$, $x\CAlg{A}$ dense in $\CAlg{A}$ again by Lemma \ref{auxlem2}. By the assumptions we can interchange the roles of $t$ and $t^*$. Then we obtain $y:=(I-z_tz_t^*)^{1/2}\in\Multiplier(\CAlg{A})$ and $s^*y=z_t^*y\in\Multiplier(\CAlg{A})$. Further, $(I+tt^*)^{-1}\CAlg{A}=(I-z_tz_t^*)\CAlg{A}=y^2\CAlg{A}$ in $\CAlg{A}$ and hence $y\CAlg{A}$ are dense in $\CAlg{A}$. Thus, $z_t\in\Multiplier(\CAlg{A})$ by Lemma \ref{auxlem1} and hence $t\eta\CAlg{A}$.
\end{proof}

The preceding theorem has a  number of interesting corollaries. For these results we assume that $\CAlg{A}$ is a $C^*$-algebra acting on a Hilbert space $\Hil$ and $t$ and $s$ are densely defined closed operators on $\Hil$.

\begin{cor}\label{Affiliation_of_product__general_resolvent}
Suppose that $t,s\eta\CAlg{A}$, $\lambda\in\rho(t)$ and $\mu\in\rho(s)$. Then we have $-\lambda\mu\in\rho(ts-\lambda s-\mu t)$ and $(ts-\lambda s-\mu t)\eta\CAlg{A}$.
\end{cor}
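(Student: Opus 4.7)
The plan is to exploit the factorization
$$ts - \lambda s - \mu t + \lambda\mu I \;=\; (t-\lambda I)(s-\mu I),$$
and to apply the resolvent characterization of affiliated operators (Theorem \ref{Characterisation__affiliation_resolvent}) to $T:=(t-\lambda I)(s-\mu I)$ at the point $0$. Since $\lambda\in\rho(t)$ and $\mu\in\rho(s)$, both factors of $T$ are bijections with bounded inverses, so $T$ is a densely defined (its domain is $(s-\mu I)^{-1}\Def(t)$, the image of a dense set under a homeomorphism) closed (inverse of a bounded operator) operator with bounded inverse
$$T^{-1}\;=\;(s-\mu I)^{-1}(t-\lambda I)^{-1}.$$
In particular $0\in\rho(T)$, which immediately gives $-\lambda\mu\in\rho(T-\lambda\mu I)=\rho(ts-\lambda s-\mu t)$.

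To apply Theorem \ref{Characterisation__affiliation_resolvent} to $T$ at the point $0$ I need three things: that $T^{-1}\in\Multiplier(\CAlg{A})$, that $T^{-1}\CAlg{A}$ is dense in $\CAlg{A}$, and that $(T^*)^{-1}\CAlg{A}$ is dense in $\CAlg{A}$. The first is immediate from Theorem \ref{Characterisation__affiliation_resolvent} applied to $t$ and $s$: both resolvents $(t-\lambda I)^{-1}$ and $(s-\mu I)^{-1}$ lie in $\Multiplier(\CAlg{A})$, hence so does their product. For the second, I will use the density in $\CAlg{A}$ of $(t-\lambda I)^{-1}\CAlg{A}$ (from $t\eta\CAlg{A}$) together with the continuity of left multiplication by the multiplier $(s-\mu I)^{-1}$ on $\CAlg{A}$: given $b\in\CAlg{A}$, first approximate $b$ by elements $(s-\mu I)^{-1}c_n$, then approximate each $c_n$ by $(t-\lambda I)^{-1}d_{n,k}$, and pass to the diagonal to get $T^{-1}d_{n,k_n}\to b$. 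For the third, since $T$ has bounded everywhere-defined inverse,
$$(T^*)^{-1}\;=\;(T^{-1})^*\;=\;(t^*-\overline\lambda I)^{-1}(s^*-\overline\mu I)^{-1},$$
and the same diagonal approximation argument, this time invoking the density of $(s^*-\overline\mu I)^{-1}\CAlg{A}$ and $(t^*-\overline\lambda I)^{-1}\CAlg{A}$ (again via Theorem \ref{Characterisation__affiliation_resolvent} applied to $s$ and $t$), does the job. Theorem \ref{Characterisation__affiliation_resolvent} then yields $T\eta\CAlg{A}$.

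It only remains to pass from $T$ to $T-\lambda\mu I=ts-\lambda s-\mu t$. This is standard: $\eta$ is stable under adding scalar multiples of the identity (cf.\ \cite{wor91}, p.\,412, Example 1), which gives $(ts-\lambda s-\mu t)\eta\CAlg{A}$.

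The main technical point will be verifying the two density conditions $\overline{T^{-1}\CAlg{A}}=\CAlg{A}$ and $\overline{(T^*)^{-1}\CAlg{A}}=\CAlg{A}$; everything else is routine resolvent bookkeeping built around the single operator identity $(t-\lambda I)(s-\mu I)=ts-\lambda s-\mu t+\lambda\mu I$.
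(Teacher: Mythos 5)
Your proposal is correct and follows essentially the same route as the paper: both rest on the factorization $(t-\lambda I)(s-\mu I)=ts-\lambda s-\mu t+\lambda\mu I$ and a double application of Theorem \ref{Characterisation__affiliation_resolvent} (the only-if part to $t$ and $s$, the if part to the product), with the density of the range of the composed resolvents following from continuity of multiplication by a multiplier. The only cosmetic difference is that you apply the criterion to $T$ at the point $0$ and then shift by $\lambda\mu I$, whereas the paper applies it directly at $-\lambda\mu$; these are interchangeable since affiliation is stable under adding scalars.
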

\begin{proof}
By some straightforward arguments one verifies that
\begin{align}\label{ts1} 
&(ts-\lambda s-\mu t+\lambda\mu I)^{-1}=(s-\mu I)^{-1}(t-\lambda I)^{-1},\\&((ts-\lambda s-\mu t)^*+\overline{\lambda\mu} I)^{-1}=(t^*-\overline{\lambda} I)^{-1}(s^*-\overline{\lambda}I)^{-1}.\label{ts2} 
\end{align} 
Hence $-\lambda\mu\in\rho(ts-\lambda s-\mu t)$. From the only if part of Theorem \ref{Characterisation__affiliation_resolvent} it follows that the operators in (\ref{ts1}) and in (\ref{ts2}) belong to  $\Multiplier(\CAlg{A})$ and that they maps $\CAlg{A}$ densely into $\CAlg{A}$. Therefore, by the if part of Theorem \ref{Characterisation__affiliation_resolvent}, $(ts-\lambda s-\mu t)\eta\CAlg{A}$.
\end{proof}

\begin{prop}\label{Affiliation_of_sum__analyticity}
Suppose that $\lambda\in\rho(t)$, $s(t-\lambda I)^{-1}\in\Multiplier(\CAlg{A})$ and $\|s(t-\lambda I)^{-1}\|<1$. Then $(t+s)\eta\CAlg{A}$.
\end{prop}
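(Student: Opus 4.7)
The natural strategy is a Neumann--series perturbation argument: I would first show that on $\Def(t)$ the shifted operator $t+s-\lambda I$ factors as a bounded invertible multiplier times $t-\lambda I$, and then invoke Theorem \ref{Characterisation__affiliation_resolvent} to translate this back into affiliation. As in the preceding results, I implicitly use $t\eta\CAlg{A}$, which is needed in order for $(t-\lambda I)^{-1}$ to land in $\Multiplier(\CAlg{A})$.

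Set $x:=s(t-\lambda I)^{-1}\in\Multiplier(\CAlg{A})$. Since $x\in\Be(\Hil)$, one has $\Def(t)=\Range((t-\lambda I)^{-1})\subseteq\Def(s)$, and for $y\in\Def(t)$ a direct calculation gives $(I+x)(t-\lambda I)y=(t-\lambda I)y+sy=(t+s-\lambda I)y$. Because $\|x\|<1$, the Neumann series $\sum_{n\geq 0}(-x)^n$ converges in norm and shows that $(I+x)^{-1}\in\Multiplier(\CAlg{A})$; in particular $I+x$ is invertible in $\Be(\Hil)$. The factorization therefore yields $\lambda\in\rho(t+s)$ and
\begin{align*}
(t+s-\lambda I)^{-1}=(t-\lambda I)^{-1}(I+x)^{-1}\in\Multiplier(\CAlg{A}).
\end{align*}

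To apply Theorem \ref{Characterisation__affiliation_resolvent} at $\mu=\lambda$ I also need density in $\CAlg{A}$ of $(t+s-\lambda I)^{-1}\CAlg{A}$ and of $((t+s)^*-\ol{\lambda}I)^{-1}\CAlg{A}$. For the first, right multiplication by the invertible multiplier $(I+x)^{-1}$ is a bijection of $\CAlg{A}$, so $(t+s-\lambda I)^{-1}\CAlg{A}=(t-\lambda I)^{-1}\CAlg{A}$, which is dense by $t\eta\CAlg{A}$. Taking adjoints in the factorization yields $((t+s)^*-\ol{\lambda}I)^{-1}=(I+x^*)^{-1}(t^*-\ol{\lambda}I)^{-1}$, and the same argument reduces the second density to that of $(t^*-\ol{\lambda}I)^{-1}\CAlg{A}$ in $\CAlg{A}$, which holds because $t^*\eta\CAlg{A}$. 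Theorem \ref{Characterisation__affiliation_resolvent} then gives $(t+s)\eta\CAlg{A}$.

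The only technical worry is that $t+s$ be interpreted as the closed operator on $\Def(t)=\Def(t)\cap\Def(s)$, and that the adjoint of the bounded-times-unbounded product $(I+x)(t-\lambda I)$ behave as expected; both are immediate since $I+x$ is a bounded operator invertible on all of $\Hil$. This is the only nontrivial step and is quite routine.
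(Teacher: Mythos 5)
Your proof is correct and follows essentially the same route as the paper: factor $t+s-\lambda I=(I+r)(t-\lambda I)$ with $r:=s(t-\lambda I)^{-1}$, invert $I+r$ in $\Multiplier(\CAlg{A})$ via the Neumann series, and feed the resulting resolvent identities (and the two density conditions, transported by the invertible multiplier) into Theorem \ref{Characterisation__affiliation_resolvent}. You were also right to flag the implicit hypothesis $t\eta\CAlg{A}$, which the paper's proof uses in its first line even though it is omitted from the statement.
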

\begin{proof}
By Theorem \ref{Characterisation__affiliation_resolvent}, $(t-\lambda I)^{-1}\in\Multiplier(\CAlg{A})$ and $(t-\lambda I)^{-1}\CAlg{A}$ and $(t^*-\ol{\lambda}I)^{-1}\CAlg{A}$ are dense in $\CAlg{A}$. By the  assumption we have $r:=s(t-\lambda I)^{-1}\in\Multiplier(\CAlg{A})$ and $\|r\|<1$. Therefore $(I+r)^{-1}$ is bounded and an element of\, $\Multiplier(\CAlg{A})$, since $I+r\in\Multiplier(\CAlg{A})$. Further, since $t-\lambda I$ and $I+r$ are bijective and $(t+s-\lambda I)\varphi=(I+r)(t-\lambda I)\varphi$\, for $\varphi\in\Def(t)\subseteq\Def(s)$, the map $t+s-\lambda I:\Def(t)\to\Hil$ is bijective. Hence $\lambda\in\rho(t+s)$ and $(t+s-\lambda I)^{-1}=(t-\lambda I)^{-1}(I+r)^{-1}\in\Multiplier(\CAlg{A})$. Because $I+r$ is an invertible element of $\Multiplier(\CAlg{A})$, the density of $(t-\lambda I)^{-1}\CAlg{A}$ implies the density of $(t+s-\lambda I)^{-1}\CAlg{A}$ in $\CAlg{A}$. Finally, $(t^*+s^*-\ol{\lambda}I)^{-1}=(I+r^*)^{-1}(t^*-\ol{\lambda} I)^{-1}$  maps $\CAlg{A}$ densely into $\CAlg{A}$, since $\|r^*\|<1$. Now applying again Theorem \ref{Characterisation__affiliation_resolvent} we obtain $(t+s)\eta\CAlg{A}$.
\end{proof}

\begin{cor}
Let $\CAlg{A}\subseteq\Be(\Hil)$ a $C^*$-algebra and suppose that\,  $t,s\eta\CAlg{A}$. If\,  $\lambda\in\rho(t)$, $0\in\rho(s)$, and $\|\lambda(t-\lambda I)^{-1}\|<1$,  then\, $ts\eta\CAlg{A}$.
\end{cor}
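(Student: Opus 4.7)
The plan is to reduce the claim to the two results immediately preceding it by writing $ts=(ts-\lambda s)+\lambda s$, applying Corollary \ref{Affiliation_of_product__general_resolvent} to the first summand, and then using Proposition \ref{Affiliation_of_sum__analyticity} to treat $\lambda s$ as a small perturbation. The hypothesis $\|\lambda(t-\lambda I)^{-1}\|<1$ is exactly the smallness one would need for such a perturbative argument.

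First I would apply Corollary \ref{Affiliation_of_product__general_resolvent} with $\mu=0$: since $0\in\rho(s)$ and $\lambda\in\rho(t)$, it yields $0\in\rho(ts-\lambda s)$ and $(ts-\lambda s)\eta\CAlg{A}$, together with the factorization
\[
(ts-\lambda s)^{-1}=s^{-1}(t-\lambda I)^{-1}
\]
read off from the proof of that corollary. Set $t_0:=ts-\lambda s$ and $r:=\lambda s$. Next I would verify the hypotheses of Proposition \ref{Affiliation_of_sum__analyticity} applied at the resolvent point $0\in\rho(t_0)$: using $ss^{-1}=I$ on $\Hil$ (valid because $0\in\rho(s)$), the relevant product telescopes to
\[
r\,t_0^{-1}=\lambda s\cdot s^{-1}(t-\lambda I)^{-1}=\lambda(t-\lambda I)^{-1}.
\]
By Theorem \ref{Characterisation__affiliation_resolvent} applied to $t\eta\CAlg{A}$ at the point $\lambda\in\rho(t)$, this element lies in $\Multiplier(\CAlg{A})$, and its norm is strictly less than $1$ by hypothesis. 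Proposition \ref{Affiliation_of_sum__analyticity} then gives $(t_0+r)\eta\CAlg{A}$.

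To finish, I would identify $t_0+r$ with $ts$. Since $\Def(ts)=\{\varphi\in\Def(s):s\varphi\in\Def(t)\}\subseteq\Def(s)$, we have $\Def(t_0)=\Def(ts)\cap\Def(s)=\Def(ts)$, hence $\Def(t_0+r)=\Def(t_0)\cap\Def(r)=\Def(ts)$, and on this common domain the algebraic cancellation $(ts-\lambda s)+\lambda s=ts$ is literal. Therefore $ts\eta\CAlg{A}$. The only places where one must be careful are this final domain check and the composition identity $\lambda s\cdot s^{-1}(t-\lambda I)^{-1}=\lambda(t-\lambda I)^{-1}$ (which uses $0\in\rho(s)$ essentially); once these are in place the result is a clean two-step invocation of the preceding corollary and proposition.
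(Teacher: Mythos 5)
Your proposal is correct and follows essentially the same route as the paper: apply Corollary \ref{Affiliation_of_product__general_resolvent} with $\mu=0$ to get $(ts-\lambda s)\eta\CAlg{A}$ with $0\in\rho(ts-\lambda s)$, compute $\lambda s(ts-\lambda s)^{-1}=\lambda(t-\lambda I)^{-1}\in\Multiplier(\CAlg{A})$ of norm less than $1$, and invoke Proposition \ref{Affiliation_of_sum__analyticity}. Your additional care with the domain identification $(ts-\lambda s)+\lambda s=ts$ is a reasonable explicit check that the paper leaves implicit.
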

\begin{proof}
By Corollary \ref{Affiliation_of_product__general_resolvent} we have $0\in\rho(ts-\lambda s)$ and $(ts-\lambda s)\eta\CAlg{A}$. Since $t\eta\CAlg{A}$ and $\lambda\in\rho(t)$, it follows from Theorem \ref{Characterisation__affiliation_resolvent} that $(t-\lambda)^{-1}\in\Multiplier(\CAlg{A})$. Therefore,
$$\lambda s(ts-\lambda s)^{-1}=\lambda s((t-\lambda I)s)^{-1}=\lambda(t-\lambda I)^{-1}\in\Multiplier(\CAlg{A}).$$ 
Hence, since $\|\lambda(t-\lambda)^{-1}\|<1$ by assumption, Proposition \ref{Affiliation_of_sum__analyticity}  applies to the operators $\tilde{t}:=ts-\lambda s$ and $\tilde{s}:=\lambda s$ and implies that\, $\tilde{t}+\tilde{s} =ts \eta\CAlg{A}$.
\end{proof}

\begin{cor}\label{Resolvent_equation__more_general}
Let $\CAlg{A}\subseteq\Be(\Hil)$ a $C^*$-algebra. Suppose that $t\eta\CAlg{A}$ and $\lambda,\mu\in\rho(t)$. For an operator $s$ on $\Hil$ we have $s(t-\lambda I)^{-1}\in\Multiplier(\CAlg{A})$ if and only if $s(t-\mu I)^{-1}\in\Multiplier(\CAlg{A})$.
\end{cor}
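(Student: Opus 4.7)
The plan is to use the first resolvent identity together with the fact, furnished by Theorem \ref{Characterisation__affiliation_resolvent}, that $t\eta\CAlg{A}$ with $\lambda,\mu\in\rho(t)$ already places both resolvents $(t-\lambda I)^{-1}$ and $(t-\mu I)^{-1}$ in $\Multiplier(\CAlg{A})$. By symmetry in $\lambda$ and $\mu$ it suffices to prove one direction; so I would assume $s(t-\mu I)^{-1}\in\Multiplier(\CAlg{A})$ and derive the same for $s(t-\lambda I)^{-1}$.

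First, starting from the trivial identity $(t-\mu I)-(t-\lambda I)=(\lambda-\mu)I$ and multiplying by the two resolvents I would record the standard resolvent equation
\begin{align*}
(t-\lambda I)^{-1} &= (t-\mu I)^{-1}+(\lambda-\mu)\,(t-\mu I)^{-1}(t-\lambda I)^{-1}.
\end{align*}
Next I would check the domain issue: for any $\psi\in\Hil$, the vector $(t-\lambda I)^{-1}\psi$ equals $(t-\mu I)^{-1}\psi+(\lambda-\mu)(t-\mu I)^{-1}(t-\lambda I)^{-1}\psi$, which lies in $(t-\mu I)^{-1}\Hil$. Since $s(t-\mu I)^{-1}$ is assumed to be (the restriction to $\Hil$ of) an element of $\Multiplier(\CAlg{A})$, we have $(t-\mu I)^{-1}\Hil\subseteq\Def(s)$, so $(t-\lambda I)^{-1}\Hil\subseteq\Def(s)$ as well, and it is legitimate to apply $s$ on the left of the resolvent equation.

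Doing so yields
\begin{align*}
s(t-\lambda I)^{-1} &= s(t-\mu I)^{-1}+(\lambda-\mu)\,[s(t-\mu I)^{-1}](t-\lambda I)^{-1}.
\end{align*}
By hypothesis $s(t-\mu I)^{-1}\in\Multiplier(\CAlg{A})$, and by Theorem \ref{Characterisation__affiliation_resolvent} applied to $t\eta\CAlg{A}$ at the point $\lambda\in\rho(t)$ we also have $(t-\lambda I)^{-1}\in\Multiplier(\CAlg{A})$. Since $\Multiplier(\CAlg{A})$ is a $*$-algebra closed under products and sums, the right-hand side lies in $\Multiplier(\CAlg{A})$, whence $s(t-\lambda I)^{-1}\in\Multiplier(\CAlg{A})$.

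There is no real obstacle here beyond verifying the domain inclusion that permits multiplying the resolvent identity by $s$ on the left; once that is in place, the argument is a one-line consequence of the resolvent equation and Theorem \ref{Characterisation__affiliation_resolvent}. The converse direction follows by interchanging the roles of $\lambda$ and $\mu$.
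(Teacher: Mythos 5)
Your proof is correct and follows essentially the same route as the paper: both rest on the identity $s(t-\lambda I)^{-1}-s(t-\mu I)^{-1}=(\lambda-\mu)\,[s(t-\mu I)^{-1}](t-\lambda I)^{-1}$ together with the fact from Theorem \ref{Characterisation__affiliation_resolvent} that both resolvents lie in $\Multiplier(\CAlg{A})$. Your extra remark on the domain of $s$ is a harmless refinement of what the paper leaves implicit.
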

\begin{proof}
Since $(t-\lambda I)^{-1}\in\Multiplier(\CAlg{A})$ and $(t-\mu I)^{-1}\in\Multiplier(\CAlg{A})$ by Theorem \ref{Characterisation__affiliation_resolvent}, the assertion follows from the identity 
$$s(t-\lambda I)^{-1}-s(t-\mu I)^{-1}=(\lambda-\mu )s(t-\mu I)^{-1}(t-\lambda I)^{-1}=(\lambda-\mu)s(t-\lambda I)^{-1}(t-\mu I)^{-1}.$$
\end{proof}

\begin{cor}
Let $\CAlg{A}\subseteq\Be(\Hil)$ a $C^*$-algebra. Suppose that  $t\eta\CAlg{A}$ is a self-adjoint operator and $s$ is a symmetric $t$-bounded operator  on $\Hil$ with $t$-bound less than $1$. If\, $s(t-\lambda I)^{-1}\in\Multiplier(\CAlg{A})$ for some $\lambda\in\rho(t)$, then $(t+s)\eta\CAlg{A}$  and $t+s$ is self-adjoint.
\end{cor}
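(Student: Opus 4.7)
The plan is to split the proof into two pieces: self-adjointness of $t+s$, followed by its affiliation with $\CAlg{A}$. For self-adjointness, I would simply invoke the classical Kato--Rellich perturbation theorem: since $t$ is self-adjoint on $\Hil$ and $s$ is symmetric with $t$-bound strictly less than $1$, the sum $t+s$ is self-adjoint on $\Def(t+s)=\Def(t)$. This is a purely Hilbert space statement and requires no $C^*$-algebraic input.

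For affiliation, the natural path is Proposition \ref{Affiliation_of_sum__analyticity}, which delivers $(t+s)\eta\CAlg{A}$ once we can exhibit some $\mu\in\rho(t)$ simultaneously satisfying $s(t-\mu I)^{-1}\in\Multiplier(\CAlg{A})$ and $\|s(t-\mu I)^{-1}\|<1$. The hypothesis supplies the multiplier condition for a specific $\lambda$, but not the norm bound, so the task reduces to shifting the spectral parameter to a location where the norm bound becomes automatic.

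Fix constants $a\in(0,1)$ and $b\geq 0$ witnessing the $t$-bound, so that $\|s\varphi\|\leq a\|t\varphi\|+b\|\varphi\|$ for $\varphi\in\Def(t)$. Since $t$ is self-adjoint, $i\nu\in\rho(t)$ for every nonzero real $\nu$, and the standard functional-calculus estimates $\|(t-i\nu I)^{-1}\|\leq|\nu|^{-1}$ and $\|t(t-i\nu I)^{-1}\|\leq 1$ yield $\|s(t-i\nu I)^{-1}\|\leq a+b/|\nu|$, which becomes strictly less than $1$ as soon as $|\nu|>b/(1-a)$. To secure the multiplier condition at $\mu=i\nu$, I would invoke Corollary \ref{Resolvent_equation__more_general}, which says that membership of $s(t-\cdot\,I)^{-1}$ in $\Multiplier(\CAlg{A})$ at one point of $\rho(t)$ is equivalent to such membership at any other point. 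With both conditions in place at $\mu=i\nu$, Proposition \ref{Affiliation_of_sum__analyticity} yields $(t+s)\eta\CAlg{A}$.

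The main technical ingredient is the norm estimate $\|s(t-i\nu I)^{-1}\|\leq a+b/|\nu|$, but this is a routine consequence of the spectral theorem for $t$ combined with the defining $t$-bound inequality; no real obstacle is anticipated. The substantive work has already been carried out in the preceding results, and the present corollary amounts to assembling Kato--Rellich, Corollary \ref{Resolvent_equation__more_general}, and Proposition \ref{Affiliation_of_sum__analyticity} in the correct order.
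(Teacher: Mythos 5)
Your proposal is correct and follows exactly the route the paper indicates: its proof is just the one-line remark that one should repeat the standard Kato--Rellich argument using Proposition \ref{Affiliation_of_sum__analyticity} and Corollary \ref{Resolvent_equation__more_general}, which is precisely what you do (with the details of the shift to $\mu=i\nu$ and the estimate $\|s(t-i\nu I)^{-1}\|\leq a+b/|\nu|$ correctly filled in).
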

\begin{proof}
The proof can be given by repeating the standard proof of the Kato-Rellich theorem and using Lemma \ref{Affiliation_of_sum__analyticity} and Corollary \ref{Resolvent_equation__more_general}.
\end{proof}

It is natural to ask whether or not the  second density assumption in Theorem \ref{Characterisation__affiliation_resolvent} can be omitted, that is, when does the density of $(t-\lambda I)^{-1}\CAlg{A}$ for $(t-\lambda I)^{-1}\in\Multiplier(\CAlg{A})$ imply the density of $(t^*-\overline{\lambda} I)^{-1}\CAlg{A}$ in $\CAlg{A}$?  A counterexample is provided by Example \ref{counterdensity} below. 
The next proposition shows that the answer is affirmative if the distance of $(t-\lambda I)^{-1}$ to the set $\CAlg{A}^{-1}$ of invertible elements of $\CAlg{A}$ is zero.

\begin{prop} Let $t$ be a densely defined closed operator and  $\CAlg{A}$ a  $C^*$-algebra  acting on a Hilbert space $\Hil$. Suppose that $t$ has a bounded inverse $t^{-1}$ contained in $\Multiplier(\CAlg{A})$. Assume that\,  ${\rm dist} (t^{-1}, \CAlg{A}^{-1})=0$. If\, $t^{-1}\CAlg{A}$ dense in $\CAlg{A}$, so is  $(t^*)^{-1}\CAlg{A}$.
\end{prop}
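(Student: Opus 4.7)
My plan is to pass to the polar decomposition $s = u|s|$ of $s := t^{-1}\in\Multiplier(\CAlg{A})$ in $\Be(\Hil)$. Because $s = t^{-1}$ is injective with dense range $\Range(s) = \Def(t)$, the factor $u$ is a genuine unitary on $\Hil$, and $|s| = (s^*s)^{1/2}$ lies in $\Multiplier(\CAlg{A})_+$ by continuous functional calculus applied to $s^*s\in\Multiplier(\CAlg{A})$. The crux is to show that $u$ is a unitary element of $\Multiplier(\CAlg{A})$; once that is in hand, the identities $s = u|s|$ and $s^* = |s|u^*$, combined with $u\CAlg{A} = u^*\CAlg{A} = \CAlg{A}$, give $\overline{s\CAlg{A}} = \overline{|s|\CAlg{A}} = \overline{s^*\CAlg{A}}$, and the density assumption on $s\CAlg{A}$ transfers immediately to density of $s^*\CAlg{A}$.

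The inclusion $u^*\CAlg{A}\subseteq\CAlg{A}$ comes essentially for free from $\overline{s\CAlg{A}} = \CAlg{A}$: for $a\in\CAlg{A}$ the identity $u^*(sa) = |s|a\in\CAlg{A}$ shows that $u^*$ maps the dense subset $s\CAlg{A}$ into $\CAlg{A}$; since $u^*$ is an isometry on $\Hil$ and $\CAlg{A}$ is norm-closed in $\Be(\Hil)$, this extends by continuity to $u^*\CAlg{A}\subseteq\CAlg{A}$, with $u^*\CAlg{A} = \overline{|s|\CAlg{A}}$. Taking adjoints yields $\CAlg{A} u\subseteq\CAlg{A}$, so $u$ is at least a right multiplier of $\CAlg{A}$.

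The opposite inclusion $u\CAlg{A}\subseteq\CAlg{A}$ is where the hypothesis $\dist(t^{-1},\CAlg{A}^{-1}) = 0$ is indispensable. I would pick $a_n\in\CAlg{A}^{-1}$ with $\|a_n - s\|\to 0$ and polar-decompose $a_n = u_n|a_n|$ with $u_n$ a unitary in $\CAlg{A}^\sim\subseteq\Multiplier(\CAlg{A})$ and $|a_n|$ positive and invertible (these are available because $a_n$ is invertible). Using the standard estimate $\||a_n| - |s|\|\to 0$ together with the identity $u_n|a_n| = a_n$, one obtains $(u_n - u)|s|\xi\to 0$ in norm for every $\xi\in\Hil$; since $\Range(|s|)$ is dense in $\Hil$ (as $|s|$ has trivial kernel) and $\|u_n - u\|\leq 2$, the sequence $u_n$ converges to $u$ in the strong operator topology on $\Hil$.

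The main obstacle I expect is upgrading this strong convergence to the conclusion that $ub\in\CAlg{A}$ for every $b\in\CAlg{A}$: each product $u_nb$ already lies in $\CAlg{A}$ (since $u_n\in\Multiplier(\CAlg{A})$ and $b\in\CAlg{A}$), but strong convergence of $u_n$ does not automatically force $ub\in\CAlg{A}$, which would require norm convergence of $u_nb\to ub$ in $\Be(\Hil)$. The plan to overcome this is to exploit the invertibility of $|a_n|$ in $\CAlg{A}^\sim$ to factor $b = |a_n|c_n$ with $c_n := |a_n|^{-1}b\in \CAlg{A}^\sim\cdot\CAlg{A}\subseteq\CAlg{A}$, so that $u_nb = a_nc_n\in \CAlg{A}$; the norm analysis of $a_n c_n$ as $n\to\infty$, exploiting $a_n\to s$ and the approximate-unit structure of $\CAlg{A}$, should then identify $ub$ as a norm-limit of such elements and hence place $ub$ in the closed algebra $\CAlg{A}$, completing the verification that $u\in\Multiplier(\CAlg{A})$ and thus the proof.
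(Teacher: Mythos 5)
Your reduction of the problem to the single claim that the polar isometry $u$ is a unitary in $\Multiplier(\CAlg{A})$ is coherent, and the pieces you do prove are correct: $u$ is indeed unitary on $\Hil$, the inclusion $u^*\CAlg{A}\subseteq\CAlg{A}$ does follow from the density of $s\CAlg{A}$ by norm continuity, and $u_n\to u$ strongly. But the remaining inclusion $u\CAlg{A}\subseteq\CAlg{A}$ is exactly where the proof is missing, and the mechanism you propose does not close the gap: writing $b=|a_n|c_n$ with $c_n=|a_n|^{-1}b$ gives $a_nc_n=u_n|a_n|\,|a_n|^{-1}b=u_nb$ \emph{identically}, so the ``norm analysis of $a_nc_n$'' is verbatim the question of whether $u_nb\to ub$ in norm --- the very point at issue. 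Moreover $\|c_n\|=\||a_n|^{-1}b\|$ is unbounded in every interesting case (if $t$ is unbounded, $|s|$ is not invertible, so $\||a_n|^{-1}\|\to\infty$), so there is no uniform control to exploit. The obstruction is not cosmetic: since $s^*\CAlg{A}=|s|u^*\CAlg{A}\subseteq|s|\CAlg{A}$ and $\ol{|s|\CAlg{A}}=\ol{|s|^2\CAlg{A}}=\ol{s^*s\CAlg{A}}\subseteq\ol{s^*\CAlg{A}}$, the density of $|s|\CAlg{A}$ is \emph{equivalent} to the conclusion of the proposition, and given that density your own continuity argument applied to $u(|s|a)=sa$ yields $u\CAlg{A}\subseteq\CAlg{A}$. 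So ``$u\in\Multiplier(\CAlg{A})$'' is essentially a restatement of the proposition; any direct proof of it must do the real work somewhere, and your outline does not.

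The paper avoids ever putting $u$ itself into $\Multiplier(\CAlg{A})$. It uses the hypothesis $\dist(t^{-1},\CAlg{A}^{-1})=0$ only through Pedersen's factorization theorem, which places the \emph{truncations} $u f_\varepsilon(|s|)=u_\varepsilon f_\varepsilon(|s|)$ in $\Multiplier(\CAlg{A})$ with $u_\varepsilon$ unitary there, for cut-off functions $f_\varepsilon$ vanishing near $0$. It then argues by contradiction: a state $\omega$ annihilating $(s^*\CAlg{A})^*$ satisfies $\pi_\omega(s)\varphi_\omega=0$ in its GNS representation; since $uf_\varepsilon(|s|)\to s$ in norm and $\pi_\omega(u_\varepsilon)$ is unitary, this forces $\pi_\omega(|s|)\varphi_\omega=0$, hence $s^*s\CAlg{A}$ and therefore (by Lemma \ref{auxlem2}) $s\CAlg{A}$ is not dense, contradicting the hypothesis. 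If you want to salvage your approach, the viable move is to work with the truncated isometries $uf_\varepsilon(|s|)$ rather than with $u$.
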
    
\begin{proof} Set $x:=t^{-1}$. Then $(t^*)^{-1}=(t^{-1})^*=x^*$. Assume to the contrary that $x^*\CAlg{A}=(t^*)^{-1}\CAlg{A}$ is not dense in $\CAlg{A}$. Then, as in the  proof of Lemma \ref{auxlem2},  there is a state $\omega$ on $\CAlg{A}$ that annihilates $(x^*\CAlg{A})^*$. Arguing as in line (\ref{yvarphi}) it follows that $\pi_\omega(x)\varphi_\omega=0$. 

Let $x=v|x|$ be the polar decomposition of the operator $x$. Since $x\in\Multiplier(\CAlg{A})$, we have $|x|=(x^*x)^{1/2}\in\Multiplier(\CAlg{A})$. For $\varepsilon>0$ let $f_\varepsilon$ denote a continuous function on $\R$ such that $f_\varepsilon (\tau) =0$ on $[0,\frac{\varepsilon}{2}]$, $|f_\varepsilon(\tau) |\leq \varepsilon$ on $[\frac{\varepsilon}{2},\varepsilon]$ and $f_\varepsilon (\tau) =u$ on $[\varepsilon,+\infty)$. 
By \cite{pedersen} Theorem 6.1, applied to the multiplier algebra $\Multiplier(\CAlg{A})$, there exists a unitary operator $u_\varepsilon \in\Multiplier(\CAlg{A})$ such that
$$
vf_\varepsilon(|x|)= u_\varepsilon f_\varepsilon(|x|)\in\Multiplier(\CAlg{A}).
$$
Clearly, $|x|=\lim_{\varepsilon\to +0} f_\varepsilon(|x|)$ in $\Multiplier(\CAlg{A})$. Therefore, $$0=\pi_\omega(x)\varphi_\omega =\pi_\omega(v|x|)\varphi_\omega=\lim_{\varepsilon\to +0} \pi_\omega(vf_\varepsilon(|x|))\varphi_\omega=\lim_{\varepsilon\to +0} \pi_\omega(u_\varepsilon)\pi_\omega(f_\varepsilon(|x|))\varphi_\omega,$$
so that $0=\lim_{\varepsilon\to +0} \pi_\omega(f_\varepsilon(|x|))\varphi_\omega= \pi_\omega(|x|)\varphi_\omega$. For $a\in \CAlg{A}$ we now obtain
\begin{align*}
0 &=\langle  \pi_\omega (|x|)^2\varphi_\omega, \pi_\omega (a)\varphi_\omega\rangle=
 \langle \pi_\omega (x^*x)\varphi_\omega\,\pi_\omega(a)\varphi_\omega\rangle \\
&=\langle \varphi_\omega, \pi_\omega(x^*xa)\varphi_\omega\rangle=\omega((x^*xa)^*).
\end{align*}
This implies that $x^*x\CAlg{A}$ is not dense in $\CAlg{A}$. Hence $x\CAlg{A}$ is not dense by Lemma \ref{auxlem2} which is the desired contradiction.
\end{proof}

\begin{exa}\label{counterdensity}
Let $\Hil$ be the Hilbert space $l_2(\N^2)$ and let $\CAlg{A}$ be the $C^*$-algebra
\begin{align}\label{algexample}
 \CAlg{A} =\begin{pmatrix} \K(\Hil)& \K(\Hil)\\ \K(\Hil) &\Be(\Hil) \end{pmatrix}.
\end{align}
The multiplier algebra of $\CAlg{A}$ is 
\begin{align}\label{maexample}
 \Multiplier(\CAlg{A}) =\begin{pmatrix} \Be(\Hil)& \K(\Hil)\\ \K(\Hil) &\Be(\Hil) \end{pmatrix}.
\end{align}
Let $\{e_{kl}\}_{k,l\in \N_0}$ be the standard orthonormal basis of $\Hil$. Let $s\in \Be(\Hil)$ be the shift operator given by $se_{kl}=e_{k+1,l}$ and let $P_0$ be the orthogonal projection  onto $\Null(s^*)$. Clearly, $\{e_{0,l}\}_{l\in \N_0}$ is an orthonormal basis of $P_0\Hil$. Further, let $\{\lambda_{kl}\}_{k,l\in \N_0}$ be a double sequence of positive numbers such that $\lim_{k,l\to \infty} \lambda_{kl}=0$. Define a self-adjoint compact operator on $ \Hil$  by $re_{kl}:=\lambda_{kl}e_{kl}$, $k,l\in \N_0$.

Let $x\in\Be(\Hil \oplus \Hil)$ defined by the operator matrix
\begin{align}\label{tmatrix}
  x: =\begin{pmatrix} s& r\\ 0& s^* \end{pmatrix}.
\end{align}
Since $\lambda_{kl}>0$ for all $k,l$, the compression  $P_0r\upharpoonright P_0\Hil$ of $r$ to $P_0\Hil$ has trivial kernel and  dense range. Using this fact it is easily seen that $\Null(x)=\{0\}$ and $\Range(x)$ is dense in $\Hil\oplus\Hil$. Hence $t:=x^{-1}$ is a densely defined closed operator on the Hilbert space $\Hil\oplus \Hil$. By (\ref{maexample}), we have $t^{-1}=x\in\Multiplier(\CAlg{A})$. 
\smallskip\\
{\bf Statement:}  $t^{-1}\CAlg{A}=x\CAlg{A}$ is dense in $\CAlg{A}$, while $(t^*)^{-1}\CAlg{A}=x^*\CAlg{A}$ is not dense in $\CAlg{A}$.
\begin{proof}
Let $y$ be an element of $\CAlg{A}$. Then $y$ given by an operator matrix  
\begin{align*}
  y: =\begin{pmatrix} a& b\\ c& d \end{pmatrix},
\end{align*}  
$a,b,c\in \K(\Hil)$ and $d\in \Be(\Hil)$, and  have
\begin{align}\label{xyproduct}
  xy =\begin{pmatrix} s a+rc& s b+rd\\ s^* c& s^* d\end{pmatrix}.
\end{align}
Since $\K(\Hil)=s^*s\K(\Hil)\subseteq s^*\K(\Hil)\subseteq \K(\Hil)$, we have  $s^*\K(\Hil)=\K(\Hil)$. Similarly, $s^*\Be(\Hil)=\Be(\Hil)$. Since the range of $r$ contains all rank one operators $e_{kl}\otimes e_{nm}$, $s\K(\Hil)+r\K(\Hil)$ is dense in $\K(\Hil)$. Therefore, by (\ref{algexample}) and (\ref{xyproduct}),  $x\CAlg{A}$ is dense in $\CAlg{A}$.

Next we prove the second  assertion. First we note that $$P_0(r\K(\Hil)+s\Be(\Hil))=P_0(r\K(\Hil))\subseteq P_0\K(\Hil).$$ 
This implies that $r\K(\Hil)+s\Be(\Hil)$ is not dense in $\Be(\Hil)$. 
Therefore, since
\begin{align*}
  x^*y =\begin{pmatrix} s^*a& s^* b\\ ra +s c& rb+s d\end{pmatrix},
\end{align*}
it follows from (\ref{algexample}) that the set $x^*\CAlg{A}$ is not dense in $\CAlg{A}$.
\end{proof}
\end{exa}

  \section{Examples}\label{examples}

  \subsection{Matrices of commutative $C^*$-algebras and its multipliers}\label{matrices}

In this subsection  we use matrices over commutative $C^*$-algebras to construct  simple examples of operators that help to delimit the general theory.

Let $\CAlg{A}$ be a $C^*$-algebra. If $A_{ij}\subseteq\CAlg{A}$ for $i,j\in\{1,2\}$ set
\begin{align*}
\left(\begin{array}{cc}
A_{11} & A_{22} \\ 
A_{21} & A_{22}
\end{array} \right) := \left\{ \left(\begin{array}{cc}
a_{11} & a_{12} \\ 
a_{21} & a_{22}
\end{array}\right) | a_{ij}\in A_{ij} \text{ for } i,j\in\{1,2\} \right\}.
\end{align*}

Let $X$ be a locally compact non-compact Hausdorff space and set
\begin{align*}
\CAlg{A}_0 &:= \left(\begin{array}{cc}
C_0(X) & C_0(X) \\ 
C_0(X) & C_0(X)
\end{array} \right), \quad \CAlg{A} := \left(\begin{array}{cc}
C_0(X) & C_0(X) \\ 
C_0(X) & C_0(X)^\sim
\end{array} \right),
\end{align*}
where $C_0(X)^\sim:=C_0(X) +\C\cdot 1$. 
A straightforward computation shows that the left multiplier algebra $\LeftMultiplier(\CAlg{A})$ and the multiplier algebra $\Multiplier(\CAlg{A})$ are given by
\begin{align*}
\LeftMultiplier(\CAlg{A}) &= \left(\begin{array}{cc}
C_b(X) & C_0(X) \\ 
C_b(X) & C_0(X)^\sim
\end{array} \right), \quad \Multiplier(\CAlg{A}) = \left(\begin{array}{cc}
C_b(X) & C_0(X) \\ 
C_0(X) & C_0(X)^\sim
\end{array} \right).
\end{align*}
From this we can read off that all elements of the form
\begin{align*}
\left(\begin{array}{cc}
* & * \\ 
f & *
\end{array} \right)\in\LeftMultiplier(\CAlg{A}) \quad \text{with} \quad f\in C_b(X)\setminus C_0(X)
\end{align*}
act as operators $t$ on $\CAlg{A}$ defined on the whole space such that the adjoints are  not defined on the whole space. From now on let $X=\R$.

\begin{exa}
Let $f=1$ and set the other matrix entries zero. Then  $t$ acts as
\begin{align*}
\Def(t) &= \CAlg{A}, \quad t = \left(\begin{array}{cc}
0 & 0 \\ 
1 & 0
\end{array} \right), \quad \rm{and} \quad \Def(t^*) = \CAlg{A}_0, \quad t^* = \left(\begin{array}{cc}
0 & 1 \\ 
0 & 0
\end{array} \right).
\end{align*}
Hence $t^*$ is essentially defined. Further,  it is easily checked that $t^{**}=t$ and
\begin{align*}
\Def(1+t^*t) &= \CAlg{A},& 1+t^*t = \left(\begin{array}{cc}
2 & 0 \\ 
0 & 1
\end{array} \right), \quad \Def(1+tt^*) &= \CAlg{A}_0,& 1+tt^* = \left(\begin{array}{cc}
1 & 0 \\ 
0 & 2
\end{array} \right).
\end{align*}
From these formuals we read off that\, $\Range(1+t^*t)=\CAlg{A}$ and\, $\Range(1+tt^*)=\CAlg{A}_0\subsetneq\CAlg{A}$. Hence $a_t$ is adjointable, while $a_{t^*}$ is not. In fact, $b_t$ is also not adjointable, since
\begin{align*}
\Def(b_t) = \CAlg{A}, \quad b_t = \left(\begin{array}{cc}
0 & 0 \\ 
1/2 & 0
\end{array} \right) \notin \Multiplier(\CAlg{A}).
\end{align*}
\end{exa}
In the following slighty more sophisticated example   $a_t$ and $b_t$ are both adjointable, while  $a_{t^*}$  is not adjointable.

\begin{exa}\label{atstar_not_adjointable}
Let $f,g\in C(\R)$ be functions given by $f(x):=x\sqrt{1+\sin^2(x)}$ and $g(x):=x\sqrt{1+\cos^2(x)}$. Then $|f(x)|^2+|g(x)|^2=3x^2$. Define $t:\CAlg{A}\to\CAlg{A}$ by
\begin{align*}
\Def(t) &:= \bigg\{\left(\begin{array}{cc}
a & b \\ 
c & d
\end{array} \right)\in\CAlg{A} |\, fc,fd,gc\in C_0(\R),gd\in C_0(\R)^\sim\bigg\}, \quad t = \left(\begin{array}{cc}
0 & f \\ 
0 & g
\end{array} \right).
\end{align*}
For the adjoint $t^*$ we obtain
\begin{align*}
\Def(t^*) &:= \bigg\{\left(\begin{array}{cc}
a & b \\ 
c & d
\end{array} \right)\in\CAlg{A} | \, \ol{f}a+\ol{g}c\in C_0(\R),\ol{f}b+\ol{g}d\in C_0(\R)^\sim\bigg\}, \quad t = \left(\begin{array}{cc}
0 & 0 \\ 
\ol{f} & \ol{g}
\end{array} \right).
\end{align*}
It is now easily verified that $1+t^*t$ is surjective and
\begin{align*}
a_t &= \left(\begin{array}{cc}
1 & 0 \\ 
0 & \frac{1}{1+|f|^2+|g|^2}
\end{array} \right)\in\CAlg{A}, \quad b_t = \left(\begin{array}{cc}
0 & \frac{f}{1+|f|^2+|g|^2} \\ 
0 & \frac{g}{1+|f|^2+|g|^2}
\end{array} \right)\in\CAlg{A}.
\end{align*}
The operator $a_{t^*}$ is computed as 
\begin{align*}
\Def(a_{t^*}) &= \CAlg{A}_0, \quad a_{t^*} = \frac{1}{1+|f|^2+|g|^2}\left(\begin{array}{cc}
1+|g|^2 & -\ol{f}g \\ 
-\ol{g}f & 1+|f|^2
\end{array} \right).
\end{align*}
That is, $a_t\in \CAlg{A}$ and $b_t\in\CAlg{A}$ are adjointable, but  $a_{t^*}\notin \Multiplier(\CAlg{A})$  is not adjointable.
\end{exa}

\begin{exa}
Now we consider the operator $t$ given by
\begin{align*}
\Def(t) &= \CAlg{A}, \quad t = \left(\begin{array}{cc}
0 & 0 \\ 
1 & 1
\end{array} \right), \quad \rm{and} \quad \Def(t^*) = \CAlg{A}_0, \quad t^* = \left(\begin{array}{cc}
0 & 1 \\ 
0 & 1
\end{array} \right).
\end{align*}
Then one easily proves that $\Range(t)\nsubseteq\Def(t^*)$ and $\Def(t^*t)=\CAlg{A}_0\subsetneq\CAlg{A}=\Def(t)$. In particular, $\Def(t^*t)$ is not dense in the domain $\Def(t)$!
\end{exa}

  \subsection{A fraction algebra related to the Weyl algebra}\label{fractionWeyl}

Let  $P=-{\ii} \frac{d}{dt}$ and $Q=t$ be the momentum and  position operators acting as self-adjoint operators on the Hilbert space  $L^2(\R)$. Fix $\alpha,\beta \in\R\backslash \{0\}$  and define bounded operators $x$ and $y$  by
$$
x:=(Q-\alpha {\ii} I)^{-1}\quad {\rm  and}\quad y:=(P-\beta {\ii} I)^{-1}.$$ 
It is not difficult to verify that these operators satisfy the commutation relations 
\begin{align}
x-x^*=2\alpha {\ii} x^*x=2\alpha {\ii} xx^*,& ~y-y^*=2\beta {\ii} y^*y=2\beta {\ii} yy^*,\label{xyrel1}\\
xy-yx= -{\ii} xy^2x=-{\ii} yx^2y,& ~xy^*-y^*x= -{\ii} x(y^*)^2x=-{\ii} y^*x^2y^*\label{xyrel2}.
\end{align}

Let $\cX$ be the unital $*$-subalgebra of ${\bf B}(L^2(\R))$ generated by  $x$ and $y$. 
Since the operators $x, x^*, y, y^*$ are bijections of the Schwartz space $\cS(\R)$, so are their inverses.   Hence $\tau_x(\cdot):= x\cdot x^{-1}$ and $\tau_y(\cdot):=y\cdot y^{-1}$ are automorphisms of the algebra $L(\cS(\R))$ of linear operators on the Schwartz space $\cS(\R)$. From the relations (\ref{xyrel1}) and (\ref{xyrel2}) we conclude that $\tau_x$ and $\tau_y$ leave the algebra $\cX$ invariant, so they are algebra automorphisms of $\cX$. Hence
\begin{align}\label{defio}
\cJ_0:=yx \cX=xy\cX=\cX yx =\cX xy
\end{align}
and $\cJ_0$ is a two-sided  $*$-ideal of the $*$-algebra $\cX$. 

Let $\cF_x$ be the unital $*$-subalgebra of $\cX$ generated by $x$, that is, $\cF_x$ is the commutative $*$-algebra of polynomials $f(x,x^*)$ in $x$ and $x^*$ with complex coefficients. Likewise, $\cF_y$ denotes the unital $*$-subalgebra of $\cX$ generated by $y$. Note that $\cF_x \cap\cF_y=\C\cdot 1$. From the relations it follows easily that $\cX$ is the direct sum of vector spaces $\cF_x+\cF_y$ and $\cJ_0$. Hence each element $a\in \cX$ can be written as 
\begin{align}\label{arepfgb1}
a=f_1(x,x^*)+g_1(y,y^*)+yxb_1=f_2(x,x^*)+g_2(y,y^*)+xyb_2,
\end{align} 
where $f_1,g_1, f_2, g_2$ are  polynomials with $g_1(0,0)=f_2(0,0)=0$ and $b_1,b_2\in \cX$. Moreover,  these triples  $\{f_1,g_1,b_1\}$  and $\{f_2,g_2,b_2\}$ are uniquely determined  by  $a$. 

Let $\varepsilon >0$ and $\lambda \in \R$. We denote by  $\chi_\varepsilon$ the characteristic function of the interval $[0,\varepsilon]$.
Put~ $\omega_{\varepsilon,\lambda}(t):=\frac{1}{\sqrt{\varepsilon}}\chi_\varepsilon (t-\lambda).$
\begin{lem} For polynomials $f\in \C[x,x^*]$ and $g\in \C[y,y^*]$, where  $g(0,0)=0$, and $b\in \cX$ we have
\begin{align}
\lim_{\varepsilon\to +0}~ \langle f(x,x^*)~ \omega_{\varepsilon,\lambda}, \omega_{\varepsilon,\lambda}\rangle &=f((\lambda-\alpha {\ii})^{-1},(\lambda+ \alpha {\ii} )^{-1}),\label{xlimit}\\
\lim_{\varepsilon\to +0}~ \langle g(y,y^*)~ \omega_{\varepsilon,\lambda}, \omega_{\varepsilon,\lambda}\rangle&=0,\label{ylimit}\\
 \lim_{\varepsilon\to +0}~  \langle yxb~ \omega_{\varepsilon,\lambda}, \omega_{\varepsilon,\lambda}\rangle&=0.\label{xyblimit}
\end{align}
\end{lem}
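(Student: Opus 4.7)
My plan is to exploit the fact that $x$ acts on $L^2(\R)$ as multiplication by $(t-\alpha\ii)^{-1}$, while $y$, after Fourier transform, becomes multiplication by $(s-\beta\ii)^{-1}$. Thus polynomials in $x,x^*$ are continuous multipliers, polynomials in $y,y^*$ are Fourier multipliers, and $\omega_{\varepsilon,\lambda}$ is chosen as a normalized indicator concentrated at $\lambda$. The three limits then reduce to three standard facts: Lebesgue differentiation, dominated convergence after a $u=\varepsilon s$ rescaling, and Cauchy--Schwarz.

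For (\ref{xlimit}) I would observe that $f(x,x^*)$ is multiplication by the bounded continuous function $\phi(t):=f((t-\alpha\ii)^{-1},(t+\alpha\ii)^{-1})$. Since $|\omega_{\varepsilon,\lambda}(t)|^2=\varepsilon^{-1}\chi_{[\lambda,\lambda+\varepsilon]}(t)$,
\[
\langle f(x,x^*)\,\omega_{\varepsilon,\lambda},\omega_{\varepsilon,\lambda}\rangle \,=\, \frac{1}{\varepsilon}\int_{\lambda}^{\lambda+\varepsilon}\phi(t)\,dt,
\]
which tends to $\phi(\lambda)$ by continuity of $\phi$ at $\lambda$.

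For (\ref{ylimit}) I would compute
\[
|\hat{\omega}_{\varepsilon,\lambda}(s)|^2 \,=\, \frac{2\sin^2(\varepsilon s/2)}{\pi\varepsilon s^2},
\]
which is independent of $\lambda$. Setting $h(s):=g((s-\beta\ii)^{-1},(s+\beta\ii)^{-1})$, we have $h\in C_b(\R)$ with $\lim_{|s|\to\infty}h(s)=g(0,0)=0$. Plancherel yields
\[
\langle g(y,y^*)\,\omega_{\varepsilon,\lambda},\omega_{\varepsilon,\lambda}\rangle \,=\, \int_{\R}h(s)\,\frac{2\sin^2(\varepsilon s/2)}{\pi\varepsilon s^2}\,ds \,=\, \int_{\R}h(u/\varepsilon)\,\frac{2\sin^2(u/2)}{\pi u^2}\,du,
\]
after the substitution $u=\varepsilon s$. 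For every $u\neq 0$, $h(u/\varepsilon)\to 0$ as $\varepsilon\to +0$, and $|h(u/\varepsilon)|\le\|h\|_\infty$. Since $\frac{2\sin^2(u/2)}{\pi u^2}$ is integrable on $\R$, dominated convergence gives the limit $0$.

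For (\ref{xyblimit}) I would apply Cauchy--Schwarz to move $y$ across:
\[
|\langle yxb\,\omega_{\varepsilon,\lambda},\omega_{\varepsilon,\lambda}\rangle| \,=\, |\langle xb\,\omega_{\varepsilon,\lambda},y^*\omega_{\varepsilon,\lambda}\rangle| \,\le\, \|x\|\,\|b\|\,\|y^*\omega_{\varepsilon,\lambda}\|.
\]
Now $\|y^*\omega_{\varepsilon,\lambda}\|^2=\langle yy^*\omega_{\varepsilon,\lambda},\omega_{\varepsilon,\lambda}\rangle$, and $yy^*=g_0(y,y^*)$ for the polynomial $g_0(z_1,z_2)=z_1 z_2$ with $g_0(0,0)=0$. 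Applying (\ref{ylimit}) to $g_0$ shows $\|y^*\omega_{\varepsilon,\lambda}\|\to 0$, whence (\ref{xyblimit}). The main technical point is the rescaling argument in (\ref{ylimit}): it is the only step where one must see that the vanishing of $g$ at the origin (equivalently, of the Fourier symbol of $y$ at infinity) combines with the dilation-invariance of $|\hat\omega_{\varepsilon,\lambda}|^2\,ds$ to force the limit to be zero; (\ref{xlimit}) and (\ref{xyblimit}) then follow routinely.
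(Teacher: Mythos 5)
Your proposal is correct, and for the first and third limits it essentially coincides with the paper's argument: the paper also proves (\ref{xlimit}) by noting that the average of a continuous function over $[\lambda,\lambda+\varepsilon]$ tends to its value at $\lambda$, and also obtains (\ref{xyblimit}) from Cauchy--Schwarz together with $\|y^*\omega_{\varepsilon,\lambda}\|\to 0$. The real difference is in (\ref{ylimit}). The paper works in position space: it writes out the resolvent of $P=-\ii\frac{d}{dt}$ as an explicit integral operator, computes $(y\,\omega_{\varepsilon,\lambda})(t)$ piecewise, reads off $\lim_{\varepsilon\to+0}y\,\omega_{\varepsilon,\lambda}=\lim_{\varepsilon\to+0}y^*\omega_{\varepsilon,\lambda}=0$ in $L^2(\R)$, and then deduces (\ref{ylimit}) by observing that every monomial of $g$ contains at least one factor $y$ or $y^*$ (this is where $g(0,0)=0$ enters). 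You instead pass to the Fourier side, where $g(y,y^*)$ is multiplication by the symbol $h(s)=g((s-\beta\ii)^{-1},(s+\beta\ii)^{-1})$, which vanishes at infinity precisely because $g(0,0)=0$, and you exploit that $|\hat\omega_{\varepsilon,\lambda}|^2\,ds$ is a fixed integrable profile under the dilation $u=\varepsilon s$; dominated convergence then gives the limit. Both routes are valid. Yours treats a general $g$ in one stroke and makes the role of the hypothesis $g(0,0)=0$ conceptually transparent, at the cost of a Plancherel/rescaling computation; the paper's is more elementary and hands-on but needs the reduction to monomials. Your derivation of $\|y^*\omega_{\varepsilon,\lambda}\|\to 0$ by applying (\ref{ylimit}) to the polynomial $z_1z_2$ is a tidy self-contained shortcut that the paper gets instead as a byproduct of its explicit formulas.
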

\begin{proof}
Suppose that $\varphi\in C^1(\R)$. Then we have
$$
\langle \varphi \omega_{\varepsilon,\lambda}, \omega_{\varepsilon,\lambda}\rangle = \varphi(\lambda)+\int_\lambda^{\lambda+\varepsilon}  \varepsilon^{-1}
(\varphi(t) -\varphi (\lambda))dt \to \varphi(\lambda)$$
which in turn implies (\ref{xlimit}).

Next we prove (\ref{ylimit}). The crucial step for  this is to show that 
\begin{align}\label{ylimit1}
\lim_{\varepsilon\to +0} y \omega_{\varepsilon,\lambda}=\lim_{\varepsilon\to +0} y^* \omega_{\varepsilon,\lambda}=0
\end{align} in $L^2(\R)$. Without loss of generality we can assume that  $\beta <0$. 
Since $\beta <0$, for the resolvent of $P=-{\ii} \frac{d}{dt}$ we obtain
$$
(y\varphi)(t)=((P-\beta {\ii} I)^{-1}\varphi)(t)=-{\ii} \int_t^\infty e^{\beta (s-t)} \varphi(s) ds, \quad \varphi \in L^2(\R).
$$
Hence we compute $(y \omega_{\varepsilon,\lambda})(t)=0$ for $t\geq \lambda+\varepsilon$, 
\begin{align*}
(y \omega_{\varepsilon,\lambda})(t)= \frac{-{\ii}}{\sqrt{\varepsilon}} e^{-\beta t}\int_t^{\lambda+\varepsilon}e^{\beta s} ~ dt =
\frac{-{\ii}}{\beta \sqrt{\varepsilon}}(e^{\beta(\lambda+\varepsilon-t)}-1)
\end{align*}
for $ \lambda\leq t\leq \lambda+\varepsilon$, and
\begin{align*}(y \omega_{\varepsilon,\lambda})(t) &= \frac{-{\ii}}{\sqrt{\varepsilon}} e^{-\beta t}\int_\lambda^{\lambda+\varepsilon}e^{\beta s} ~ dt =\frac{-{\ii}}{\beta \sqrt{\varepsilon}}  e^{-\beta t}(e^{\beta(\lambda +\varepsilon)}-e^{\beta(\lambda -\varepsilon)})
\end{align*}
 for $ t\leq \lambda.$ From these formulas we easily derive that\, $\lim_{\varepsilon\to +0} y \omega_{\varepsilon,\lambda}=0$. Replacing $\beta$ by $-\beta$ a similar reasoning yields $\lim_{\varepsilon\to +0} y^* \omega_{\varepsilon,\lambda}=0$. Since the operator $y$ is bounded and $g(0,0)=0$, 
(\ref{ylimit1}) implies (\ref{ylimit}).
Since $\|\omega_{\varepsilon,\lambda}\|=1$, it follows from (\ref{ylimit1}) that
 $$|\langle yxb~ \omega_{\varepsilon,\lambda}, \omega_{\varepsilon,\lambda}\rangle| =|\langle xb~ \omega_{\varepsilon,\lambda},y^* \omega_{\varepsilon,\lambda}\rangle|\leq \|xb\| ~\|y^*\omega_{\varepsilon,\lambda}\|\to 0$$
which proves (\ref{xyblimit}).
\end{proof}

Next we define two circles $K_\alpha$ and $K_\beta$ intersecting in the origin  by
$$K_\alpha:=\{(z,0)\in  \C^2:z-\overline{z}=2\alpha{\ii} |z|^2\},\quad K_\beta:=\{(0,w)\in\C^2:w-\overline{w}=2\beta{\ii} |w|^2\}.$$

Let $\overline{\R}=\R \cup \{\infty\}$ be the one point compactification of the real line. The maps $\lambda \to (z_\lambda,0):=((\lambda-\alpha {\ii})^{-1},0)$ and $\lambda \to(0,w_\lambda):=(0,(\lambda-\beta {\ii})^{-1})$, where $z_\infty :=0$ and $w_\infty :=0$, are bijection of $\overline{\R}$ onto $K_\alpha$ and $K_\beta$, respectively. 
Let $z \in K_\alpha$ and $w\in K_\beta$. For $a$ as in (\ref{arepfgb1}) we define
$$\pi_{x,z} (a)=f_1(z,\overline{z}),\quad \pi_{y,w} (a)=g_2(w,\overline{w}).$$

\begin{lem} For all $z \in K_\alpha$ and $w\in K_\beta$,
$\pi_{x,z}$ and $ \pi_{y,w}$ are one dimensional $*$-representations of the $*$-algebra $\cX$ such that 
\begin{align}\label{conituitypi}|\pi_{x,z} (a)|\leq \|a\|\quad{\rm and}\quad  |\pi_{y,w}(a)|\leq \|a\|\quad {\rm for}\quad a\in \cX.
\end{align} 
Moreover, for $f\in \cF_x$ and $g\in \cF_y$ we have 
\begin{align}\label{normest}
\|f\|=\sup_{z\in K_\alpha}\, |f(z,\overline{z})|\quad {\rm and}\quad \|g\|=\sup_{w\in K_\beta}\, |g(w,\overline{w})|.
\end{align}
\end{lem}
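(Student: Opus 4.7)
I will identify $\pi_{x,z_\lambda}$ (for $\lambda\in\overline\R$, $z_\lambda=(\lambda-\alpha\ii)^{-1}$) with the pointwise limit of the vector states at $\omega_{\varepsilon,\lambda}$ and then exploit an asymptotic eigenvector property. For any $a\in\cX$ written as in (\ref{arepfgb1}), combining the three formulas (\ref{xlimit}), (\ref{ylimit}) (applicable since $g_1(0,0)=0$) and (\ref{xyblimit}) yields
$$\pi_{x,z_\lambda}(a)=f_1(z_\lambda,\overline{z_\lambda})=\lim_{\varepsilon\to+0}\langle a\,\omega_{\varepsilon,\lambda},\omega_{\varepsilon,\lambda}\rangle.$$
Because $\|\omega_{\varepsilon,\lambda}\|=1$, this identity immediately gives the well-definedness and linearity of $\pi_{x,z_\lambda}$, the conjugate-symmetry $\pi_{x,z_\lambda}(a^*)=\overline{\pi_{x,z_\lambda}(a)}$, and the continuity bound $|\pi_{x,z_\lambda}(a)|\leq\|a\|$ of (\ref{conituitypi}).

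The main step is multiplicativity. I first establish the asymptotic eigenvector relations
\begin{align*}
&\|x\omega_{\varepsilon,\lambda}-z_\lambda\omega_{\varepsilon,\lambda}\|\to 0,\qquad \|x^*\omega_{\varepsilon,\lambda}-\overline{z_\lambda}\omega_{\varepsilon,\lambda}\|\to 0,\\
&\|y\omega_{\varepsilon,\lambda}\|\to 0,\qquad \|y^*\omega_{\varepsilon,\lambda}\|\to 0.
\end{align*}
The first pair is a direct computation: $x$ acts as multiplication by $(t-\alpha\ii)^{-1}$ on $L^2(\R)$ and $\omega_{\varepsilon,\lambda}$ is supported in $[\lambda,\lambda+\varepsilon]$, giving $\|(x-z_\lambda)\omega_{\varepsilon,\lambda}\|^2=O(\varepsilon)$; the second pair was established in the proof of the preceding lemma. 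Let $\tau$ denote the character of the free unital $*$-algebra on generators $x,y$ sending $x\mapsto z_\lambda$, $y\mapsto 0$. Induction on word length based on
$$\|(ab-\tau(a)\tau(b))\omega_{\varepsilon,\lambda}\|\leq\|a\|\,\|(b-\tau(b))\omega_{\varepsilon,\lambda}\|+|\tau(b)|\,\|(a-\tau(a))\omega_{\varepsilon,\lambda}\|$$
extends the property to every word, and hence by linearity to every $a\in\cX$: $\|(a-\tau(a))\omega_{\varepsilon,\lambda}\|\to 0$. Taking inner products with $\omega_{\varepsilon,\lambda}$ identifies $\tau(a)$ with $\pi_{x,z_\lambda}(a)$ defined via the decomposition, and multiplicativity then falls out of $\langle ab\,\omega_{\varepsilon,\lambda},\omega_{\varepsilon,\lambda}\rangle=\tau(a)\langle b\,\omega_{\varepsilon,\lambda},\omega_{\varepsilon,\lambda}\rangle+o(1)$. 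The analogous statement for $\pi_{y,w}$ follows by the same argument after conjugation with the Fourier transform, which interchanges $P$ and $Q$ (up to signs) and thus interchanges the roles of $x$ and $y$.

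For the norm identities (\ref{normest}), the direction $\sup_{z\in K_\alpha}|f(z,\overline z)|\leq\|f\|$ is immediate from (\ref{conituitypi}) applied to $a=f(x,x^*)$. Conversely, $f(x,x^*)$ acts on $L^2(\R)$ as multiplication by $\varphi(t):=f((t-\alpha\ii)^{-1},(t+\alpha\ii)^{-1})$, whose operator norm equals $\|\varphi\|_\infty$; via the homeomorphism $\overline\R\to K_\alpha$, $\lambda\mapsto z_\lambda$ (a Möbius map restricted to the compactified real line), this supremum equals $\sup_{z\in K_\alpha}|f(z,\overline z)|$ by continuity of the polynomial $f$ on the compact circle $K_\alpha$. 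The argument for $\cF_y$ is identical in the Fourier picture.

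The main obstacle is the multiplicativity of $\pi_{x,z_\lambda}$; once the asymptotic eigenvector relations are in hand, every other piece reduces either to the preceding lemma or to a standard multiplication-operator norm computation. One subtlety worth flagging is that the inductive extension must be carried out for \emph{arbitrary} words in $x,x^*,y,y^*$, after which the resulting character of the free algebra is identified with $\pi_{x,z_\lambda}$ via the uniqueness of the decomposition stated just after (\ref{arepfgb1}).
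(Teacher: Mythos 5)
Your proposal is correct, but on the key point -- the multiplicativity of $\pi_{x,z}$ -- it takes a genuinely different route from the paper. The paper disposes of the $*$-homomorphism property by a purely algebraic check: the assignment $x\mapsto z$, $y\mapsto 0$ respects the relations (\ref{xyrel1}) and (\ref{xyrel2}) precisely because $z\in K_\alpha$ satisfies $z-\overline{z}=2\alpha\ii|z|^2$, so the character is well defined on the quotient $\cX$ of the free $*$-algebra; the limit formulas (\ref{xlimit})--(\ref{xyblimit}) are then used only for the norm bound (\ref{conituitypi}). You instead derive multiplicativity analytically from the asymptotic eigenvector relations $\|(x-z_\lambda)\omega_{\varepsilon,\lambda}\|\to 0$, $\|y\omega_{\varepsilon,\lambda}\|\to 0$ (and their adjoints), propagated to arbitrary words by the triangle-inequality induction; this simultaneously shows that the word-character $\tau$ depends only on the operator and not on its word representative, so well-definedness and multiplicativity come for free from the same estimate. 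Your route is longer but arguably more self-contained, since it never has to verify the relations case by case, and it makes transparent why membership of $z$ in the circle $K_\alpha$ is forced (it is the limit of $(\lambda-\alpha\ii)^{-1}$). The treatments of the bound (\ref{conituitypi}) and of the norm identities (\ref{normest}) coincide with the paper's, except that you spell out the multiplication-operator computation where the paper merely invokes $\sigma(Q)=\sigma(P)=\R$.

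One small point to patch: the vectors $\omega_{\varepsilon,\lambda}$ exist only for finite $\lambda$, so your argument as written establishes multiplicativity and the bound only for $z=z_\lambda$ with $\lambda\in\R$, i.e.\ for $K_\alpha\setminus\{0\}$. The point $z_\infty=0$ must be reached by letting $|\lambda|\to\infty$ in the already-proved identities $\pi_{x,z_\lambda}(ab)=\pi_{x,z_\lambda}(a)\pi_{x,z_\lambda}(b)$ and $|\pi_{x,z_\lambda}(a)|\leq\|a\|$, using that $a\mapsto f_1(z,\overline{z})$ is, for fixed $a$, a polynomial in $(z,\overline{z})$ and hence continuous on $K_\alpha$. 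This is exactly the limiting step the paper performs explicitly, and it closes the only gap in your write-up.
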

\begin{proof}
A simple computation based on the relations (\ref{xyrel1}) and (\ref{xyrel2})  shows that $\pi_{x,z}$ and $ \pi_{y,w}$ are well-defined $*$-homomorphisms of $\cX$.  

Let $\lambda \in \R$. 
From the formulas (\ref{xlimit}), (\ref{ylimit}) and (\ref{xyblimit}) we infer that
$$ 
\pi_{x,z_\lambda}(a)=f_1((\lambda-\alpha {\ii})^{-1},(\lambda+ \alpha {\ii} )^{-1})=\lim_{\varepsilon\to +0}~ \langle a \omega_{\varepsilon,\lambda}, \omega_{\varepsilon,\lambda}\rangle. $$
Therefore, since $\|\omega_{\varepsilon,\lambda}\|=1$, we obtain $|\pi_{x,z_\lambda}(a)|\leq \|a\|$. Passing to the limit $|\lambda|\to \infty$, we get
$|\pi_{x,z_0}(a)|\leq \|a\|$. This proves the first inequality of (\ref{conituitypi}) for all  $z\in K_\alpha$.
The inequality $|\pi_{y,w}(a)|\leq \|a\|$ follows in a similar manner by interchanging the role of $x$ and $y$ and using the counterparts of formulas (\ref{xlimit}), (\ref{ylimit}) and (\ref{xyblimit}).

Finally, we prove (\ref{normest}). One inequality of the equality (\ref{normest})  was shown by (\ref{conituitypi}). The reversed inequality follows at once from the fact that the spectra of the self-adjoint operators $P$ and $Q$ are equal to $\R$.
\end{proof}

It was noted in \cite{schm2010} (and is easily verified by using  (\ref{xyrel1})), the $*$-algebra $\cX$ is algebraically bounded, that is, given $a\in\cX$ there exist $\gamma_a>0$ and finitely many elements $a_i\in \cX$ such that $\gamma_a=a^*a+\sum_i a_i^*a_i$. Hence 
$$
\|a\|_{un}:= \sup_\pi \, \|\pi(a)\|,\quad a\in \cX,
$$
defines a $C^*$-norm on $\cX$, where the supremum is taken over all $*$-representation of $\cX$, and the completion $\cX_{un}$ of $(\cX;\|\cdot\|_{un})$ is called the universal $C^*$-algebra of $\cX$. (The supremum is finite, since $\|\pi(a)\|\leq \gamma^{1/2}_a$ for all $a\in \cX$.) By decomposition theory it suffices to take all {\it{irreducible}} $*$-representations $\pi$. As proved in \cite{schm2010}, the irreducible $*$-representation of $\cX$ are the one-dimensional representations $\pi_{x,z}$ and $ \pi_{y,w}$,where $z\in\cX_x$ and $w\in K_\beta$, and the identity representation $\pi_0$ acting on the Hilbert space $L^2(\R)$. From (\ref{conituitypi})  it follows that the universal $C^*$-norm $\|\cdot \|_{un}$ coincides with the operator norm $\|\cdot\|$ on $L^2(\R)$. Hence the universal $C^*$-algebra $\cX_{un}$ is just the closure $\overline{\cX}$ of $\cX$ in ${\bf B}(L^2(\R)).$ By (\ref{normest}) the closures of $\cF_x$ and $\cF_y$ in ${\bf B}(L^2(\R))$ are the commutative $C^*$-subalgebras $C(K_\alpha)$ and $C(K_\beta)$, respectively, and the closure $\cJ$ of $\cJ_0$ in  ${\bf B}(L^2(\R))$  is a two-sided $*$-ideal of the $C^*$-algebra $\cX_{un}=\overline{\cX}.$ 
\begin{lem}\label{essentialio}
$\cJ_0$ is a two-sided essential ideal of $\cX_{un}$.
\end{lem}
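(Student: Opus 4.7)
The approach is to verify essentiality via the standard criterion that the annihilator
\[
\cJ_0^{\perp}:=\{a\in\cX_{un}:a\cJ_0=\{0\}\}
\]
is trivial; since $\cJ_0$ is a $*$-ideal, the right annihilator $\{a\in\cX_{un}:\cJ_0 a=\{0\}\}$ is obtained from $\cJ_0^{\perp}$ by taking adjoints and so vanishes as well. The key reduction is that $\cJ_0=yx\,\cX$ contains $yx\cdot 1=yx$, so the hypothesis $a\cJ_0=\{0\}$ immediately forces the single operator identity $a\cdot yx=0$ in $\mathbf{B}(L^2(\R))$. Everything then rests on properties of the bounded operator $yx=(P-\ii\beta I)^{-1}(Q-\ii\alpha I)^{-1}$.

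I would then verify that $yx$ has dense range in $L^2(\R)$ by the elementary spectral identity
\[
\overline{\Range(yx)}^{\perp}=\Null((yx)^{*})=\Null\bigl((Q+\ii\alpha I)^{-1}(P+\ii\beta I)^{-1}\bigr)=\{0\},
\]
which uses only that both resolvents $(Q+\ii\alpha I)^{-1}$ and $(P+\ii\beta I)^{-1}$ of the self-adjoint operators $Q$ and $P$ are injective. Once this is in hand, the bounded operator $a\in\cX_{un}\subseteq\mathbf{B}(L^2(\R))$ with $a\cdot yx=0$ vanishes on the dense subspace $yx(L^2(\R))$ and therefore on all of $L^2(\R)$, giving $a=0$ and hence $\cJ_0^{\perp}=\{0\}$.

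I do not anticipate a genuine obstacle: the entire content of the lemma is the observation that essentiality of $\cJ_0$ inside the concrete $C^*$-algebra $\cX_{un}\subseteq\mathbf{B}(L^2(\R))$ is controlled by the single element $yx$, and both of its relevant properties --- injectivity (not needed for the left annihilator argument above, but yielding triviality of the right annihilator directly via $yx\cdot a=0\Rightarrow a\varphi\in\Null(yx)=\{0\}$) and density of range --- are immediate from the fact that resolvents of self-adjoint operators have trivial kernel.
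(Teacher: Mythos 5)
Your proof is correct and follows essentially the same route as the paper's: reduce the annihilator condition to the single identity $a\cdot yx=0$ (the paper uses $a\cdot xy=0$) and conclude $a=0$ from density of the range of that operator. The only cosmetic difference is that the paper justifies the density by noting that $x$ and $y$ are bijections of the Schwartz space, while you obtain it from $\Null((yx)^*)=\{0\}$; both are immediate.
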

\begin{proof}
Let $a\in \cX_{un}$ be such that $a\cJ_0=\{0\}.$ Then $axy=0$ in $L^2(\R)$. Since $x$ and $y$ are bijection, this implies $a=0$.
\end{proof}
The operator $x^*y^*yx$ is an integral operator on $L^2(\R)$ with kernel $$K(t,s):(2|\beta|)^{-1}(t+\alpha {\ii})^{-1}(s-\alpha {\ii})^{-1} e^{-|\beta||t-s|}.$$ Since  $K\in L^2(\R^2)$, the operator $x^*y^*yx=|yx|^2$ is compact,  so are $|yx|$ and hence $yx$. Hence $\cJ_0\subseteq {\bf K}(L^2(\R))$ by (\ref{defio}) and therefore $\cJ= {\bf K}(L^2(\R))$. 

Now we define two operators, denoted  by $Q$ and $P$, on the $C^*$-algebra $\cX_{un}$ by 
$$Q:=\alpha {\ii} I+x^{-1},\, \Def{(Q)}:=x\cX_{un}\quad {\rm and}\quad P:=\beta {\ii} +y^{-1}, \, \Def{(P)}:=y\cX_{un},$$
that is, $Q(xa)=\alpha {\ii} xa+a$~ and~ $P(ya)=\beta {\ii}ya +a$, where  $a\in\cX_{un}$.
\begin{thm}
$Q$ and $P$ are graph regular self-adjoint operators on the $C^*$-algebra $\cX_{un}=\overline{\cX}.$ 
\end{thm}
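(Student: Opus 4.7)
By symmetry I would treat only $Q=\alpha\ii I+x^{-1}$; the argument for $P$ is identical after replacing $x,\alpha$ by $y,\beta$. The strategy is three step: first use Corollary~\ref{Inverse_Multiplier__GraphRegular} to see that $x^{-1}$ is graph regular on the Hilbert $\cX_{un}$-module $\cX_{un}$; then add the bounded adjointable perturbation $\alpha\ii I$ via Proposition~\ref{GraphRegular_Addition_Composition} to conclude $Q\in\grReg(\cX_{un})$; finally, verify $Q=Q^*$ by matching the two a priori distinct domains $x\cX_{un}$ and $x^*\cX_{un}$ through an algebraic identity extracted from the commutation relations~(\ref{xyrel1}).

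First, $x\in\cX_{un}$ acts on the module $\cX_{un}$ by left multiplication and is adjointable with adjoint the left multiplication by $x^*$, since $\SP{xa}{b}=a^*x^*b=\SP{a}{x^*b}$. Because $x=(Q_{L^2}-\alpha\ii I)^{-1}$ and $x^*=(Q_{L^2}+\alpha\ii I)^{-1}$ are bijections of $L^2(\R)$ (as $\pm\alpha\ii\in\rho(Q_{L^2})$), and because $\cX_{un}\subseteq\Be(L^2(\R))$, left multiplication by $x$ or $x^*$ has trivial kernel on $\cX_{un}$; that is, $\Null(x)=\Null(x^*)=\{0\}$ in $\Adj(\cX_{un})$. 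Corollary~\ref{Inverse_Multiplier__GraphRegular} then yields $x^{-1}\in\grReg(\cX_{un})$ with $(x^{-1})^*=(x^*)^{-1}$, and Proposition~\ref{GraphRegular_Addition_Composition} combined with Proposition~\ref{Adjoint__Addition_Compositon}(2) gives $Q\in\grReg(\cX_{un})$ together with
\begin{equation*}
Q^*=-\alpha\ii I+(x^*)^{-1}, \qquad \Def(Q^*)=x^*\cX_{un}.
\end{equation*}

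The decisive step is to identify $\Def(Q)=x\cX_{un}$ with $\Def(Q^*)=x^*\cX_{un}$. The relation $x-x^*=2\alpha\ii x^*x=2\alpha\ii xx^*$ from~(\ref{xyrel1}) rewrites as $x=x^*(1+2\alpha\ii x)$, and a direct computation
\begin{equation*}
(1+2\alpha\ii x)(1-2\alpha\ii x^*)=1+2\alpha\ii(x-x^*)+4\alpha^2 xx^*=1
\end{equation*}
(and similarly on the other side) shows that $1+2\alpha\ii x$ is invertible in $\cX_{un}$ with inverse $1-2\alpha\ii x^*$. Hence $x\cX_{un}=x^*(1+2\alpha\ii x)\cX_{un}=x^*\cX_{un}$. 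For any $a\in\cX_{un}$, setting $b:=(1+2\alpha\ii x)a=a+2\alpha\ii xa$ gives $xa=x^*b$, and
\begin{equation*}
Q^*(x^*b)=-\alpha\ii x^*b+b=-\alpha\ii xa+a+2\alpha\ii xa=\alpha\ii xa+a=Q(xa),
\end{equation*}
so $Q=Q^*$. The main obstacle is precisely this domain-matching: the two ideals $x\cX_{un}$ and $x^*\cX_{un}$ are not obviously equal, and the entire argument hinges on the $C^*$-algebraic invertibility of $1+2\alpha\ii x$, which is forced by the specific resolvent commutation identity. Everything else is a routine application of the general framework already established.
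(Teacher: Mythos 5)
Your proof is correct and follows essentially the same route as the paper: Corollary~\ref{Inverse_Multiplier__GraphRegular} plus Proposition~\ref{GraphRegular_Addition_Composition} for graph regularity, and the first relation of (\ref{xyrel1}) for self-adjointness. The only differences are cosmetic: you verify the trivial-kernel hypotheses directly from the injectivity of $x$ and $x^*$ on $L^2(\R)$, whereas the paper deduces essentiality of $x\cX_{un}$ and $x^*\cX_{un}$ from the essential ideal $\cJ_0$ (Lemma~\ref{essentialio}), and you spell out the domain identity $x\cX_{un}=x^*\cX_{un}$ via the invertibility of $1+2\alpha\ii x$, a step the paper leaves implicit in the phrase ``it follows that $-\alpha\ii I+(x^*)^{-1}=\alpha\ii I+x^{-1}$.''
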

\begin{proof}
We carry out  the proof for $Q$; a similar reasoning yields the assertions for $P$. Since $x\cX_{un}$ and $x^*\cX_{un}$ contain the essential ideal $\cJ_0$ (by Lemma \ref{essentialio}), $x\cX_{un}$ and $x^*\cX_{un}$ are essential  in the $C^*$-algebra $\cX_{un}.$ Therefore, by Theorem \ref{Inverse_Multiplier__GraphRegular}, $x^{-1}$ and $(x^*)^{-1}$ are graph regular operators on $\cX_{un}$, so $Q$ and $P$ are graph regular by Proposition Corollary \ref{GraphRegular_Addition_Composition}.

Further, $(x^{-1})^*=(x^*)^{-1}$ and hence $Q^*=-\alpha {\ii} I+(x^*)^{-1}$  by Theorem \ref{Inverse_Multiplier__GraphRegular} and Proposition \ref{Adjoint__Addition_Compositon} (2).
From the first relation of (\ref{xyrel1})
it follows that $-\alpha {\ii} I+(x^*)^{-1}=\alpha {\ii} I+x^{-1}$. Hence $Q=Q^*$, that is, $Q$ is self-adjoint.
\end{proof}

The operators $Q$ and $P$ are not regular on $\cX_{un}$, since neither $x\cX_{un}$ nor $y\cX_{un}$ is dense  in $\cX_{un}$. Note that the corresponding restrictions of $Q$ and $P$ are affiliated with the essential ideal $\cJ= {\mathcal K}(L^2(\R))$ of the $C^*$-algebra $\cX_{un}$.

  \subsection{Unbounded Toeplitz operators}\label{unboundedToeplitz}

Let $ L^2(\Circle)$ be the Hilbert space  of square integrable functions on the unit circle $\Circle$ with  scalar product  
\begin{align*}
\SP{f}{g} &:= \int_0^1\ol{f(e^{2\pi \ii t})}g(e^{2\pi \ii t})\, dt \quad f,g\in L^2(\Circle),
\end{align*}
and let $P$ denote the projection of $ L^2(\Circle)$ on the closed subspace $H^2(\Circle)$ generated by $\{z^n:=e^{2\pi \ii tn} | n\in \N_0\}$. For $\phi\in L^\infty(\Circle)$ the Toeplitz operator $T_\phi$ is the bounded operator on the Hilbert space $H^2(\Circle)$ is defined by $T_\phi f:=P\phi f$,  $f\in H^2(\Circle)$. 

The $C^*$-algebra generated by the unilateral shift  $S:=T_z$ is the Toeplitz algebra 
\begin{align*}
\cT := \{T_\phi | \phi\in C(\Circle)\}\dotplus\Komp(H^2(\Circle)).
\end{align*}

Our aim is to construct a class of examples of graph regular (unbounded) Toeplitz operators on the $C^*$-algebra $\cT$. Let $p,q\in\C[z]$\,  be relatively prime polynomials such that $q$ has no zeros in the open unit disc $\Disc$. Then the Toeplitz operator with rational symbol $p/q$ is defined by
\begin{align*}
\Def(T_{p/q}) &:= \{f\in\Hil^2(\Circle) | \frac{p}{q}f\in\Hil^2(\Circle)\}, \quad T_{p/q}f := \frac{p}{q}f \quad (f\in\Def(T_{p/q})),
\end{align*}
Since $T_{p/q}$ is  a multiplication operator, $T_{p/q}$ is a closed densely defined operator  on the Hilbert space $\Hil^2(\Circle)$.

\begin{thm}\label{Toeplitz_AssociatedAffiliated}
Suppose that $p,q$ are relatively prime polynomials such that $q$ has no zero in the open unit disc. Then the Toeplitz operator $T_{p/q}$ is associated with the Toeplitz algebra $\cT$. Further, $T_{p/q}$ is affiliated with the Toeplitz algebra if and only if in addition $q$ has  no zero on the unit circle.
\end{thm}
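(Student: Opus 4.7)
I plan to check the hypotheses of Corollary \ref{grReg_association}(1) for $T = T_{p/q}$ regarded as a densely defined closed operator on $H^2(\Circle)$. Since $q$ is a nonzero polynomial with all zeros outside the open disk, $T_q = q(S)$ is injective on $H^2$, and $\Null(T_{\bar q}) = \Null\bigl(\prod_i(S^* - \bar a_i\,I)\bigr) = \{0\}$ because each factor has trivial kernel in $\ell^2$ (any solution $f_n = \bar a_i^{\,n}f_0$ fails square-summability when $|a_i|\geq 1$), and the commuting factors inherit trivial kernel for their product. Hence $\overline{qH^2} = H^2$. Using the Bezout identity $\alpha p+\beta q = 1$ in $\C[z]$, whenever $pf = qg$ with $f, g\in H^2$ we get $f = (\alpha p+\beta q)f = \alpha(qg)+q(\beta f) = q(\alpha g+\beta f)\in qH^2$; combined with the trivial reverse inclusion this yields $\Def(T_{p/q}) = qH^2$, and $T_{p/q}$ is closed as a multiplication operator.

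The crucial step is to exhibit the $(a_T, a_{T^*}, b_T)$-transform of $T = T_{p/q}$ explicitly inside $\cT$. Let $M := (T_{|p|^2+|q|^2})^{-1}$; because $p$ and $q$ have no common zero, $|p|^2+|q|^2\in C(\Circle)$ is strictly positive, so $T_{|p|^2+|q|^2}$ is positive invertible in $\cT$ and $M\in\cT$. Using the identities $T_{\bar q}T_q = T_{|q|^2}$ and $T_{\bar p}T_p = T_{|p|^2}$ (each being analytic-times-anti-analytic with the analytic factor on the right), I would verify directly that for every $y\in H^2$, the element $x := T_qMT_{\bar q}y$ lies in $\Def(T^*T)$ and solves $(I+T^*T)x = y$: with $u := MT_{\bar q}y$ one has $Tx = T_p u$, $T^*Tx = T_{\bar q}^{-1}T_{|p|^2}u$, and
\begin{align*}
(I+T^*T)x &= T_q u+T_{\bar q}^{-1}T_{|p|^2}u = T_{\bar q}^{-1}\bigl(T_{|q|^2}u+T_{|p|^2}u\bigr) = T_{\bar q}^{-1}T_{\bar q}y = y.
\end{align*}
Hence $a_T = T_qMT_{\bar q}\in\cT$, and symmetric computations give $a_{T^*} = T_{\bar q}MT_q\in\cT$ and $b_T = Ta_T = T_pMT_{\bar q}\in\cT$. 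Injectivity of $a_T$ and $a_{T^*}$ on $\cT$ follows from their injectivity on $H^2$ together with $\Komp(H^2)\subseteq\cT$, so Corollary \ref{grReg_association}(1) produces $T_{p/q}\in\grReg(\cT)$, i.e.\ $T_{p/q}\mu\cT$.

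For the affiliation clause: if $q$ has no zero on $\Circle$, then $1/q\in C(\Circle)\cap H^\infty$, so $T_{1/q}\in\cT$ is bounded and $T_{p/q} = T_pT_{1/q}\in\cT$, trivially affiliated. Conversely, suppose $q(z_0) = 0$ for some $z_0\in\Circle$. Applying the symbol homomorphism $\sigma\colon\cT\to C(\Circle)$ to $a_T = T_qMT_{\bar q}$ gives $\sigma(a_T) = |q|^2/(|p|^2+|q|^2)$, which vanishes at $z_0$ since $p(z_0)\neq 0$. Thus
\[
\sigma(a_T\cT) = \sigma(a_T)\,C(\Circle)\subseteq \{\phi\in C(\Circle) : \phi(z_0) = 0\}\subsetneq C(\Circle),
\]
so $\sigma(I) = 1\notin\overline{\sigma(a_T\cT)}$, and by continuity of $\sigma$ the unit $I$ is not in $\overline{a_T\cT}$. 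Hence $a_T\cT$ is not dense in $\cT$, precluding affiliation.

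The main technical obstacle I anticipate is the careful handling of $T^*$ and the implicit domain issues in the identity $(I+T^*T)(T_qMT_{\bar q}) = I$: one must verify that $T_{|p|^2}u\in\Range(T_{\bar q})$ before applying $T_{\bar q}^{-1}$, which is precisely the rearranged identity above and so is self-consistent. Beyond this, everything is driven by the semi-commutator identities for Toeplitz operators with an analytic or anti-analytic factor on the correct side, together with the symbol exact sequence $0\to\Komp(H^2)\to\cT\to C(\Circle)\to 0$.
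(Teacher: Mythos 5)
Your overall strategy is sound and genuinely different in flavor from the paper's (which normalizes via the Riesz--Fej\'er factorization $|p|^2+|q|^2=|r|^2$ and Sarason's representation $T_{p/q}=T_gT_f^{-1}$ with $f=q/r$, $g=p/r$, obtaining $a_T=T_fT_{\bar f}$, $a_{T^*}=I-T_gT_{\bar g}$, $b_T=T_gT_{\bar f}$). Your identifications $\Def(T_{p/q})=qH^2$, $a_T=T_qMT_{\bar q}$ and $b_T=T_pMT_{\bar q}$ are correct, and your non-affiliation argument via the symbol map is equivalent to the paper's use of the characters $\omega_z$.

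However, the formula $a_{T^*}=T_{\bar q}MT_q$ is \emph{false}, and the phrase ``symmetric computations'' hides the gap. The computation for $a_T$ relied on the semi-commutator identities $T_{\bar q}T_q=T_{|q|^2}$ and $T_{\bar p}T_p=T_{|p|^2}$, which hold only with the analytic factor on the right; the mirror-image computation for $I+TT^*$ would need $T_qT_{\bar q}=T_{|q|^2}$ and $T_pT_{\bar p}=T_{|p|^2}$, which fail whenever the polynomial is nonconstant (the defect is $T_{|p|^2}-T_pT_{\bar p}=H_{\bar p}^*H_{\bar p}\geq 0$). Concretely, take $p=z$, $q=1$, so $T=S$: then $a_{T^*}=(I+SS^*)^{-1}=(2I-P_0)^{-1}=\tfrac12 I+\tfrac12 P_0$ where $P_0$ is the projection onto the constants, while your formula gives $T_{\bar 1}MT_1=M=\tfrac12 I$. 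The theorem is not in danger, because the correct operator still lies in $\cT$: from $b_{T^*}=b_T^*$ and $a_{T^*}=I-Tb_{T^*}$ one gets
\begin{align*}
a_{T^*} \;=\; I-T\,(T_pMT_{\bar q})^{*} \;=\; I-T_{p/q}\,T_qMT_{\bar p} \;=\; I-T_pMT_{\bar p}\;\in\;\cT ,
\end{align*}
which agrees with the paper's $I-T_gT_{\bar g}$ and reproduces $\tfrac12 I+\tfrac12 P_0$ in the example above. You should replace the claimed symmetric computation by this (or an equivalent) derivation; as written, that step is incorrect.
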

\begin{proof}
Since  $q$ has no zero in $\Disc$, 
$q$ is an outer function (see e.g. \cite{RR85}).

Now we use an argument from \cite[Section 3]{Sar2}. Since $p$ and $q$ are relatively prime, we  have $|p|^2+|q|^2>0$ on the closed unit disc $\ol{\Disc}$. Therefore, by the Riezs-Fej\'er Theorem \cite{RR85}, there exists a polynomial $r\in \C[z]$ such that $r$ has no zero in $\ol{\Disc}$ and $|p|^2+|q|^2=|r|^2$ on $\Circle$. Let $f:=q/r$ and $g:=p/r$. Then $f$ and $g$ are continuous and in the unit ball of $H^\infty(\Circle)$, $f$ is outer, $|f|^2+|g|^2=1$ on $\Circle$. Upon multiplying $r$ by some constant of modulus one we can assume  that $f(0)>0$.

From \cite[Proposition 5.3]{Sar1} it follows that $\Def(T_{p/q})=fH^2(\Circle)$ and $T_{p/q}=T_gT_f^{-1}$. Moreover, $T_{p/q}^*=(T_f^{-1})^*T_g^*=T_{\ol{f}}^{-1}T_{\ol{g}}$. Using these facts we compute
\begin{align*}
1+T_{p/q}^*T_{p/q} &= 1+T_{\ol{f}}^{-1}T_{\ol{g}}T_gT_f = T_{\ol{f}}^{-1}(T_{\ol{f}}T_f+T_{\ol{g}}T_g)T_f^{-1} = T_{\ol{f}}^{-1}(T_{|f|^2}+T_{|g|^2})T_f^{-1}\\
&= T_{\ol{f}}^{-1}T_f^{-1} = (T_fT_{\ol{f}})^{-1},\\
1+T_{p/q}T_{p/q}^* &= 1+T_gT_f^{-1}T_{\ol{f}}^{-1}T_{\ol{g}} = 1+T_g(T_{\ol{f}}T_f)^{-1}T_{\ol{g}} = 1+T_g(1-T_{\ol{g}}T_g)^{-1}T_{\ol{g}}\\
&= 1+(1-T_gT_{\ol{g}})^{-1}T_gT_{\ol{g}} = (1-T_gT_{\ol{g}})^{-1}.
\end{align*}
Hence\, $a_{T_{p/q}}=T_fT_{\ol{f}}$\, and\, $a_{T_{p/q}^*}=I-T_gT_{\ol{g}}$\, are in $\mathcal{T}$. Further,
\begin{align*}
b_{T_{p/q}} = T_{p/q}A_{T_{p/q}} = T_gT_f^{-1}T_fT_{\ol{f}} = T_gT_{\ol{f}} \in \mathcal{T}.
\end{align*}
Since\, $a_{T_{p/q}}, a_{T_{p/q}^*}, b_{T_{p/q}} \in \cT$,  $T_{p/q}$ is associated with the $C^*$-algebra $\mathcal{T}$.

Suppose now $q$ has a zero at some $\lambda\in\Circle$. Then $a$ has a zero at $\lambda$ as well. For $z\in\Circle$ let $\omega_z$ be the character on $\mathcal{T}$ given by
\begin{align}\label{characterz}
\omega_z(T_\phi+ K)=\phi(z) \quad  (\phi\in C(\Circle), K\in\Komp(H^2(\Circle)).
\end{align}
If $T_\phi+K\in\mathcal{T}$, then $T_fT_{\ol{f}}(T_\phi+K)=T_{|f|^2\phi}+\tilde{K}$ for some $\tilde{K}\in\Komp(\Hil^2(\Circle))$. Hence
\begin{align*}
\omega_\lambda(a_{T_{p/q}}(T_\phi+K)) &= \omega_\lambda(T_{|f|^2\phi}+\tilde{K}) = |f(\lambda)|^2\phi(\lambda) = 0.
\end{align*}
Therefore, $a_{T_{p/q}}\mathcal{T}$ is not dense in $\mathcal{T}$ and hence $T_{p/q}$ is not affiliated with $\mathcal{T}$. 

On the other hand, if $q$ has no zero on $\Circle$, then $p/q\in C(\Circle)$ and  hence $T_{p/q}\in\mathcal{T}$, so  in particular, $T_{p/q}$ is affiliated with $\cT$.
\end{proof}
The simplest interesting example is the following.
\begin{exa}
Set $p(z)=1$ and $q(z)=1-z$, so  that $p/q=1/(1-z)$. Then, by Theorem \ref{Toeplitz_AssociatedAffiliated}, $T_{1/(1-z)}$ associated with $\cT$, but $T_{1/(1-z)}$ is  not affiliated with $\mathcal{T}$. In fact, $T_{1/(1-z)}=(I-S)^{-1}$.
\end{exa}

  \subsection{Heisenberg group}\label{Heisenberg}

Let $H$ be the 3-dimensional Heisenberg group, that is, $H$ is the Lie group whose differential manifold is the vector space  $\R^3$ and whose multiplication is given by
$$
(x_1,x_2,x_3)(x^\prime_1,x^\prime_2,x^\prime_3):=(x_1+x^\prime_1,x_2+x^\prime_2,x_3+x^\prime_3+\frac{1}{2}(x_1x^\prime_2- x^\prime_1x_2)).
$$
The $C^*$-algebra $C^*(H)$ of the Lie group $H$ was described in \cite{ludwigtour}. We briefly repeat this result.
First we recall that $C^*(H)$ is defined as the  completion of $L^1(H)$ with respect to the norm
$$
\|f\|= \sup \, \{ \|\pi^U
(f)\|: U~ {\rm unitary~ representation~ of}~~ H\}.
$$
where $\pi^U$ is the $*$-representation of $L^1(H)$ associated with $U$, that is,
$$
\pi^U(f):=\int_{\R^3} U(x_1,x_2,x_3) f(x_1,x_2,x_3)\, dx_1dx_2dx_3, \quad f\in L^1(H).
$$
The irreducible unitary  representations of  $H$ consist of a  series $U_\lambda$,   $\lambda\in  \R^\times$, of infinite dimensional representations  acting on $L^2(\R)$ and of a series $U_{a}$, $a\in \R^2$, of one dimensional representations. For $(x_1,x_2,x_3)\in H$, these  representations act as
\begin{align*}
(U_\lambda(x_1,x_2,x_3)\xi)(s)&=e^{-2{\pi \ii \lambda} ( x_3 +\frac{1}{2} x_1x_2 + sx_2)} \xi(s-x_1), \quad \xi\in L^2(\R),\, \lambda\in  \R^\times,\\
U_a(x_1,x_2,x_3)&=e^{-2{\pi \ii } (a_1 x_1 +a_2x_2)}, \quad a=(a_1,a_2)\in \R^2.
\end{align*}
The Lie algebra of $H$   has a  basis $\{X,Y,Z\}$ with commutation relations\, $$[X,Y]=Z,\quad [X,Z]=[Y,Z]=0$$
and we have  $dU_\lambda({\ii} Z)= 2\pi \lambda I$ and $dU_a({\ii} Z)=0$. 

Now let $\CAlg{F}$ be the $C^*$-algebra  of all operator fields $F=(F(\lambda);\lambda \in \R)$ satifying the following conditions:\\
$(i)$ $F(\lambda)$ is a compact operator on $L^2(\R)$ for each $\lambda \in \R^\times$,\\
$(ii)$ $F(0)\in C_0(\R^2)$,\\
$(iii)$ $\R^\times \ni \lambda \to F(\lambda)\in \Be(L^2(\R)$ is norm continuous,\\
$(iv)$ $\lim_{\lambda \to \infty}~ \|F(\lambda)\|=0$.\\
Let $\eta$ be a fixed  function of the Schwartz space $\cS(\R)$ of norm one in $L^2(\R)$. For $\xi\in L^2(\R)$, let $P_\xi$ denote the projection on the one dimensional subspace $\C\cdot \xi$.  Then for $h\in C_0(\R^2)$ and $\lambda \in \R^\times:=\R\backslash \{0\}$, the operator $\nu_\lambda(h)$ is defined by 
\begin{align}\label{nudef}
\nu_\lambda(h):=\int_{\R^2}\, \hat{h}(x_1,x_2)P_{\eta(\lambda;x_1,x_2)} |\lambda|^{-1} dx_1 dx_2,
\end{align}
where  $\hat{h}$ denotes the Fourier transform of $h$ and
$$
\eta(\lambda;x_1,x_2)(s):=|\lambda|^{1/4} e^{2\pi i x_1 s} ~\eta\big( |\lambda|^{1/2}(s+x_2\lambda^{-1})
\big),\quad x_1,x_2,s\in \R.
$$
By Proposition 2.14 in \cite{ludwigtour}, we have
\begin{align}\label{limitnu0}
\lim_{\lambda\to 0}\, \|\nu_\lambda (h)\|=\hat{h}\|_\infty\quad{\rm for} \quad h\in C_0(\R^2).
\end{align}

Then, according to Theorem 2.16 in \cite{ludwigtour}, the $C^*$-algebra $C^*(H)$ is the $C^*$-subalgebra of $C^*(H)$ formed by all operator fields $F\in \CAlg{F}$ such that
\begin{align}\label{limitnu}
\lim_{\lambda \to 0}~ \|F(\lambda)-\nu_\lambda(F(0))\|=0,
\end{align}
where $\nu_\lambda:C_0(\R^2) \to \CAlg{F}$ is defined by (\ref{nudef}), and  for $c\in C^*(H)$, we have  $F(c)(\lambda)=\pi_{U_\lambda}$,  $\lambda \in \R^\times$, and $F(c)(0)(a)=\pi_{U_a}(c)$,  $a\in \R^2$.

On the other hand, it was proved in \cite{wor92} that the Lie algebra generators $X,Y,Z$ act as skew-adjoint regular operators on the $C^*$-algebra $C^*(H)$. 

We show that the range $\cR({\ii} Z)$ is essential in $C^*(H)$. Assume that $G(\lambda)\in  C^*(H)$ and $G(\lambda) \in \cR({\ii} Z)^\bot$. Since $dU_\lambda({\ii} Z)= 2\pi \lambda I$ for $\lambda \in \R^\times$,  $\cR({\ii} Z)$ contains all vector fields $F(\lambda)\in \cF$ of compact support contained in $\R^\times$. This implies that $G(\lambda)=0$ on  $\R^\times$. Therefore, $\lim_{\lambda\to 0}\, \nu_\lambda(G(0))=0$ by (\ref{limitnu}) and hence $\widehat{G(0)}=0$ by (\ref{limitnu0}), so  $G(0)\in C_0(\R^2)$ is zero. Thus $G=0$ in $C^*(H)$ which proves that  $\cR({\ii} Z)$ is essential. Further, ${\ii} Z$ is self-adjoint, so $({\ii}Z)^{-1}$ is by Proposition \ref{Graph_tadj}.

Since ${\ii} Z$ is graph regular, so is  $({\ii}Z)^{-1}$  by Proposition \ref{Inverse_Multiplier__GraphRegular}. Note that $({\ii}Z)^{-1}$  is  not regular, because $dU_a({\ii} Z)=0$ for $a\in \R^2$ and hence $({\ii}Z)^{-1}$ is not densely defined.   
\begin{thm}
$({\ii}Z)^{-1}$ is a graph regular self-adjoint operator on the $C^*$-algebra $C^*(H)$.
\end{thm}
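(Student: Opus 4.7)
The plan is to assemble the proof from the Woronowicz regularity result for $\ii Z$, together with the structural description of $C^\ast(H)$ recalled in the preceding paragraphs, and then invoke the general machinery from Sections \ref{orthognally} and \ref{graphregular} to pass from $\ii Z$ to its inverse. The backbone is: (i) $\ii Z$ is self-adjoint and regular on $C^\ast(H)$; (ii) $\Range(\ii Z)$ is essential; (iii) inversion then yields a self-adjoint, graph regular operator.

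First I would record that by \cite{wor92} the operator $Z$ is skew-adjoint and regular on $C^\ast(H)$, so $\ii Z$ is a self-adjoint regular operator, hence in particular $\ii Z\in\grReg(C^\ast(H))$. Second, I would prove that $\Range(\ii Z)$ is essential, following the argument already sketched just before the theorem: take $G\in \Range(\ii Z)^{\bot}\cap C^\ast(H)$. Since $dU_\lambda(\ii Z)=2\pi\lambda\, I$ is invertible for every $\lambda\in\R^\times$, every operator field in $\CAlg{F}$ with compact support in $\R^\times$ lies in $\Range(\ii Z)$. Essentiality tested against such fields forces $G(\lambda)=0$ for $\lambda\neq 0$. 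The compatibility condition \eqref{limitnu} at $\lambda=0$ then gives $\lim_{\lambda\to 0}\|\nu_\lambda(G(0))\|=0$, and \eqref{limitnu0} upgrades this to $\|\widehat{G(0)}\|_\infty=0$, so $G(0)=0$ and hence $G=0$.

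Third, I would deduce self-adjointness and graph regularity of $(\ii Z)^{-1}$ as follows. Because $\ii Z$ is self-adjoint, the essentiality just proved gives $\Range(\ii Z)=\Range((\ii Z)^\ast)$ essential; Theorem \ref{Basics_adjointable}(2b) then shows that $\ii Z$ is injective and $(\ii Z)^{-1}$ is essentially defined and orthogonally closed, while Proposition \ref{Graph_tadj}(3) yields $((\ii Z)^{-1})^\ast=((\ii Z)^\ast)^{-1}=(\ii Z)^{-1}$, so $(\ii Z)^{-1}$ is self-adjoint. For graph regularity, I would observe that the flip $w\in\Adj(E\oplus E)$, $w(a,b):=(b,a)$, is a unitary adjointable operator and $\Graph((\ii Z)^{-1})=w(\Graph(\ii Z))$. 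Since $\ii Z\in\grReg(C^\ast(H))$, $\Graph(\ii Z)$ is orthogonally complemented in $E\oplus E$; applying $w$ preserves orthogonal complementedness, so $\Graph((\ii Z)^{-1})$ is orthogonally complemented. Combined with orthogonal closedness and essential definedness this gives $(\ii Z)^{-1}\in\grReg(C^\ast(H))$, as desired; alternatively one may cite Corollary \ref{Inverse_Multiplier__GraphRegular} applied after passing to the bounded transform, since $a_{\ii Z}, b_{\ii Z}\in\Adj(C^\ast(H))$ by the regularity of $\ii Z$.

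The main obstacle is step (ii), the verification that $\Range(\ii Z)$ is essential: away from $\lambda=0$ this is immediate from the invertibility of $dU_\lambda(\ii Z)$, but the whole subtlety of graph regularity (as opposed to regularity) is concentrated at $\lambda=0$, where the one-dimensional representations $U_a$ make $\ii Z$ degenerate. Handling that fibre cleanly requires the Ludwig--Turowska compatibility condition \eqref{limitnu} together with the sharp norm asymptotics \eqref{limitnu0} for $\nu_\lambda$; everything else in the argument is a routine application of the general framework.
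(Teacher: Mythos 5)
Your proof is correct and follows essentially the same route as the paper: regularity of $\ii Z$ from \cite{wor92}, essentiality of $\Range(\ii Z)$ via compactly supported fields over $\R^\times$ together with (\ref{limitnu}) and (\ref{limitnu0}), then self-adjointness and graph regularity of the inverse. Your explicit flip-unitary argument for the orthogonal complementedness of $\Graph((\ii Z)^{-1})$ is in fact a cleaner justification than the paper's citation of Corollary \ref{Inverse_Multiplier__GraphRegular}, which is stated only for adjointable operators.
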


\bibliographystyle{amsalpha}

\bigskip

\author{K. Schm\"udgen};
\address{Universit\"at Leipzig, Mathematisches Institut, Augustusplatz 10/11, D-04109 Leipzig, Germany};
\email{E-Mail: schmuedgen@math.uni-leipzig.de}
\smallskip

\author{R. Gebhardt};
\address{Max Planck Institute for Mathematics in the Sciences, Inselstra\ss e 22, D-04103 Leipzig, Germany};
\email{E-Mail: rene.gebhardt@mis.mpg.de}

\end{document}